\tikzset{cross/.style={cross out, draw=black, fill=none, minimum size=2*(#1-\pgflinewidth), inner sep=0pt, outer sep=0pt}, cross/.default={2pt}}
\tikzset{>=latex}
\tikzset{cross/.style={cross out, draw=black, fill=none, minimum size=2*(#1-\pgflinewidth), inner sep=0pt, outer sep=0pt}, cross/.default={2pt}}
\newcommand{\rotateRPY}[3]
{   \pgfmathsetmacro{\rollangle}{#1}
    \pgfmathsetmacro{\pitchangle}{#2}
    \pgfmathsetmacro{\yawangle}{#3}

    \pgfmathsetmacro{\newxx}{cos(\yawangle)*cos(\pitchangle)}
    \pgfmathsetmacro{\newxy}{sin(\yawangle)*cos(\pitchangle)}
    \pgfmathsetmacro{\newxz}{-sin(\pitchangle)}
    \path (\newxx,\newxy,\newxz);
    \pgfgetlastxy{\nxx}{\nxy};

    \pgfmathsetmacro{\newyx}{cos(\yawangle)*sin(\pitchangle)*sin(\rollangle)-sin(\yawangle)*cos(\rollangle)}
    \pgfmathsetmacro{\newyy}{sin(\yawangle)*sin(\pitchangle)*sin(\rollangle)+ cos(\yawangle)*cos(\rollangle)}
    \pgfmathsetmacro{\newyz}{cos(\pitchangle)*sin(\rollangle)}
    \path (\newyx,\newyy,\newyz);
    \pgfgetlastxy{\nyx}{\nyy};

    \pgfmathsetmacro{\newzx}{cos(\yawangle)*sin(\pitchangle)*cos(\rollangle)+ sin(\yawangle)*sin(\rollangle)}
    \pgfmathsetmacro{\newzy}{sin(\yawangle)*sin(\pitchangle)*cos(\rollangle)-cos(\yawangle)*sin(\rollangle)}
    \pgfmathsetmacro{\newzz}{cos(\pitchangle)*cos(\rollangle)}
    \path (\newzx,\newzy,\newzz);
    \pgfgetlastxy{\nzx}{\nzy};
}
\tikzset{RPY/.style={x={(\nxx,\nxy)},y={(\nyx,\nyy)},z={(\nzx,\nzy)}}}
\newtheorem{prop}{Proposition}[section]
\newtheorem{cor}[prop]{Corollary}
\newtheorem{lem}[prop]{Lemma}
\newtheorem{conj}[prop]{Conjecture}
\newtheorem{thmintro}{Theorem}
\newtheorem{conjintro}{Conjecture}
\theoremstyle{definition}
\newtheorem{defi}[prop]{Definition}
\newtheorem{con}[prop]{Construction}
\newtheorem{expl}[prop]{Example}
\newtheorem*{defiintro}{Definition}
\newtheorem*{explintro}{Example}
\theoremstyle{remark}
\newtheorem{rem}[prop]{Remark}
\newcommand{\bigslant}[2]{{\raisebox{.2em}{$#1$}\left/\raisebox{-.2em}{$#2$}\right.}}
\newcommand{\iso}{\xrightarrow{\raisebox{-0.7ex}[0ex][0ex]{$\sim$}}}
\begin{document}

\title[Smoothings from log resolutions and divisorial extractions]{Smoothings from zero mutable Laurent polynomials via log resolutions and divisorial extractions}

\author{Tim Gr\"afnitz}
\address{Leibniz-Universit\"at Hannover \\ Institut f\"ur Algebraische Geometrie \\ Welfengarten 1, 30167 Hannover \\ Germany} 
\email{graefnitz@math.uni-hannover.de}

\begin{abstract}
A conjecture by Corti, Filip and Petracci \cite{CFP}, inspired by mirror symmetry, states that smoothing types of affine Gorenstein toric $3$-folds correspond to zero mutable Laurent polynomials. We propose a method to prove this conjecture via log crepant log resolutions constructed from compatible collections of divisorial extractions. For affine cones over weighted projective planes $\mathbb{P}(1,a,b)$ we prove for several infinite families of zero mutable Laurent polynomials that they indeed describe curves that admit a compatible collection of divisorial extractions. The construction of log crepant log resolutions and smoothings will be worked out in joint work with Alessio Corti and Helge Ruddat, starting with \cite{CGR}.
\end{abstract}

\maketitle

\thispagestyle{empty}

\setcounter{tocdepth}{1}

\renewcommand{\baselinestretch}{1.2}\normalsize
\tableofcontents
\renewcommand{\baselinestretch}{1.25}\normalsize

\section*{Introduction}														

There is a deep connection between $\mathbb{Q}$-Gorenstein deformations of toric varieties and mutations of the associated polytopes and Laurent polynomials. 
This relationship manifests in several ways, which we will briefly outline before presenting a new approach to these problems and introducing the main conjectures and results of this paper.

A projective Gorenstein toric Fano variety is defined by the face fan of a reflexive polytope. If two reflexive polytopes are related by mutations, their corresponding toric varieties are $\mathbb{Q}$-Gorenstein deformation equivalent (\cite{Ilt}, Theorem 1.3). It is conjectured that $\mathbb{Q}$-Gorenstein equivalence classes of projective Fano varieties with mild singularities correspond to mutation equivalence classes of rigid maximally mutable Laurent polynomials (\cite{CKPT}, Conjecture 5.1).

In this paper, we focus on the affine case. The affine charts of a projective Gorenstein toric Fano variety correspond to the cones over the facets of the associated reflexive polytope. More generally, an affine Gorenstein toric variety $X_0$ is defined by the cone over a polytope $\triangle$ placed at height 1. A key aspect of the definition of maximally mutable Laurent polynomials is that their restrictions to each facet can be mutated to the trivial Laurent polynomial $1$, supported on a single point. This naturally leads to the concept of zero mutable Laurent polynomials (Definition \ref{defi:zeromut}) and the following conjecture.

Let $X_0$ be a affine Gorenstein toric variety defined by the cone over the polytope $\triangle$.

\begin{defiintro}
Let $\textup{Def}(X_0)$ denote the versal $\mathbb{Q}$-Gorenstein deformation space of $X_0$. A connected component of $\textup{Def}(X_0)$ is called a \emph{smoothing component} if it contains an element corresponding to a variety $X$ with only terminal singularities. Such a variety $X$ is called a \emph{smoothing} of $X_0$. Note that in the $3$-dimensional case there are no Gorenstein terminal quotient singularities and $X$ is smooth.
\end{defiintro}

\begin{conjintro}[\cite{CFP}, Conjecture A]
\label{conj:affine}
Smoothing components of $\textup{Def}(X_0)$ are in bijection with zero mutable Laurent polynomials $f$ with Newton polytope $\triangle$.
\end{conjintro}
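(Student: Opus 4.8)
The plan is to realise the bijection in both directions through a single intermediate geometric object, a log crepant log resolution of $X_0$, with the combinatorics of mutations serving as the bridge between this resolution and the algebra of Laurent polynomials. For the forward map I would begin with a zero mutable Laurent polynomial $f$ supported on $\triangle$ and read off, from its defining tree of mutations, a finite collection of divisorial valuations on the function field of $X_0$, each of which I expect to be realised by a divisorial extraction $\pi_i \colon Y_i \to X_0$. The zero mutability hypothesis --- that every facet restriction mutates to the trivial polynomial $1$ --- should translate precisely into the statement that these extractions form a \emph{compatible collection}, meaning they can be simultaneously resolved into one birational morphism $\pi \colon Y \to X_0$.

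I would then check two properties of the pair $(Y,D)$, where $D$ is the total transform boundary. First, that it is log smooth (toroidal), so that $\pi$ is a genuine log resolution; and second, that it is log crepant, i.e.\ $K_Y + D = \pi^{\ast}(K_{X_0} + \partial X_0)$ with all discrepancies zero. The Gorenstein hypothesis and the height-one placement of $\triangle$ make the crepancy computation a linear condition on the weights of the extractions, which the mutation data should satisfy by construction. The passage from $(Y,D)$ to an actual smoothing $X$ of $X_0$ is where I would invoke the deformation-theoretic input developed in \cite{CGR}: a log crepant log resolution of a Gorenstein toric $3$-fold carries a canonical log smooth structure whose Gross--Siebert type smoothing produces a $\mathbb{Q}$-Gorenstein total space, necessarily smooth in dimension $3$ as observed after the definition above, hence a point of a smoothing component of $\textup{Def}(X_0)$.

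To see that this assignment descends to mutation classes of $f$, I would argue that mutation-equivalent presentations yield log resolutions related by crepant flops, which do not alter the connected component of the deformation space. Injectivity then reduces to distinguishing the resulting components: since $\triangle$ is fixed, the dual intersection complex of $(Y,D)$ --- equivalently the fan refinement recorded by the collection of extractions --- recovers the combinatorial type of $f$, and the associated deformation invariants, such as the rank of the vanishing cohomology of the smoothing read off from that complex, separate distinct zero mutable Laurent polynomials. This would give a well-defined injection from zero mutable Laurent polynomials with Newton polytope $\triangle$ into smoothing components.

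The reverse direction, surjectivity onto all smoothing components, I expect to be the principal obstacle, for two intertwined reasons. First, given an arbitrary $\mathbb{Q}$-Gorenstein smoothing one must show that, after base change, it degenerates to a toroidal crossing model whose dual complex is the log resolution of \emph{some} compatible collection of extractions, and that the Laurent polynomial extracted from this model --- essentially the reconstructed Landau--Ginzburg mirror potential --- is zero mutable; controlling which polynomials arise is exactly what mirror symmetry predicts but does not supply automatically. Second, and already binding in the forward direction, the existence of a compatible collection of divisorial extractions for a given zero mutable Laurent polynomial is a nontrivial combinatorial problem: the body of this paper secures it only for specific infinite families over $\mathbb{P}(1,a,b)$, and a uniform construction valid for every $\triangle$ and every $f$ is precisely the gap between the method proposed here and a full proof of the conjecture.
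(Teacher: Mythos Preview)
The statement is a \emph{conjecture}; the paper does not prove it but proposes a strategy (the numbered ``proposal'' in the introduction) and verifies pieces of it for certain families. So the comparison is between two proposed strategies, neither complete.

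There is a substantive conceptual mismatch. You take the intermediate object to be a log crepant log resolution $\pi\colon Y\to X_0$ of $X_0$ itself, assembled from divisorial extractions $Y_i\to X_0$ read off from the mutation tree. The paper instead first \emph{degenerates} $X_0$ into a reducible variety $\cup X_i$ via the central subdivision of the dual cone (Construction~\ref{con:toricdeg}); it is this normal crossings space, carrying a log structure over the standard log point that is singular along curves $C_{ij}\subset X_i\cap X_j$, that gets resolved. The divisorial extractions $Y_i\to X_i$ are of the irreducible components of the degeneration, not of $X_0$; compatibility (Definition~\ref{defi:compatible}) means they glue along the double loci to a toroidal crossing space $\cup Y_i$. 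The smoothing is then a smoothing of $\cup Y_i$ in the sense of \cite{FFR}, which deforms simultaneously back to $X_0$ and forward to the terminal $X$. Your sentence ``a log crepant log resolution of a Gorenstein toric $3$-fold carries a canonical log smooth structure whose Gross--Siebert type smoothing produces a $\mathbb{Q}$-Gorenstein total space'' conflates two objects: a birational model of $X_0$ (irreducible, nothing to smooth in the Gross--Siebert sense) and a log smooth degenerate fibre (reducible, and the thing that actually gets smoothed).

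A second difference is in what is extracted from $f$. You propose reading valuations from the mutation tree. The paper extracts, for each edge $e$ of $\triangle$, the partition $\mathbf{divstep}_e(f)^\vee$ dual to the divisibility steps (Definitions~\ref{defi:divstep} and~\ref{defi:dualtuple}), and these partitions prescribe the branching of the log-singular curves $C_{ij}$ (Construction~\ref{con:singlog}). The mutation sequence enters only indirectly, via Proposition~\ref{prop:zeromut}, which guarantees these partitions are well-defined and determine $f$.

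Your closing paragraph correctly names the two hard gaps --- surjectivity, and existence of compatible extractions for every ZMLP --- and these are precisely the paper's Conjectures~\ref{conj:main} and~\ref{conj:log}. But without the degeneration step your forward map does not connect to the smoothing mechanism the paper (and \cite{CGR}, \cite{FFR}) actually builds.
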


If $X_0$ has only an isolated singularity, then all edges of $\triangle$ have length $1$ and zero mutable Laurent polynomials correspond to maximal Minkowski decompositions of $\triangle$ via part (iii) of Definition \ref{defi:zeromut}. This special case of Conjecture \ref{conj:affine} was proved in \cite{Alt}.

\subsection*{The proposal: smoothings from log resolutions}

Let $X_0$ be an affine Gorenstein toric $3$-fold defined by the cone $\sigma$ over a polygon $\triangle$ (placed at height $1$).
\begin{enumerate}[(1)]
\item A zero mutable Laurent polynomial $f$ with Netwon polytope $\triangle$ defines, for each edge $e$ of $\triangle$, a decreasing partition $\mathbf{divstep}_e(f)$ of the lattice length $\ell(e)$ by its \emph{divisibility steps} (see Definition \ref{defi:divstep} and Proposition \ref{prop:zeromut}). Denote the dual partitions by $\mathbf{divstep}_e(f)^\vee$.
\item For any collection $(\mathbf{p}_e)_e$ of partitions $\mathbf{p}_e$ of $\ell(e)$, we build a simple normal crossings space $\cup X_i$ together with a log structure on $\cup X_i \setminus C$, where $C$ has codimension $2$, as follows.
\begin{enumerate}[(i)]
\item The central subdivision of the dual cone $\sigma^\vee$ induces a degeneration of $X_0$ into a union of toric varieties $\cup X_i$ (Construction \ref{con:toricdeg}). The intersections of irreducible components $X_i \cap X_j$ correspond to edges $e$ of $\triangle$.
\item For each edge $e$, let $\mathbf{p}_e=(d_1,\ldots,d_r)$ be a partition of the lattice length $\ell(e)$. Let $C_{ij} \subset X_i\cap X_j \simeq \mathbb{A}^2$ be a curve of degree $\ell(e)$ that intersects $\cap_k X_k  \simeq \mathbb{A}^1 \subset X_i \cap X_j$ at a single point $P$ such that locally at $P$ it has $r$ branches whose intersection multiplicities with $\cap_k X_k$ at $P$ are given by $d_1,\ldots,d_r$ (Construction \ref{con:singlog}). 
\item Following \cite{GSdata}, we can construct a log structure on $\cup X_i \setminus \cup C_{ij}$ which is log smooth over the standard log point (Proposition \ref{prop:joint}). 
\end{enumerate}
\item Under a compatibility condition, we construct a smoothing of $X_0$ as follows.
\begin{compactenum}[(i)]
\item Construct a log crepant log resolution of $\cup X_i \setminus \cup C_{ij}$ by a compatible collection of divisorial extractions $Y_i\rightarrow X_i$ of the log singular locus $C=\cup C_{ij}$. 
\item Use \cite{CR} to glue the divisorial extractions to a toroidal crossing space $\cup Y_i$ with log structure that is (away from $P=\cap C_{ij}$) smooth over the standard log point. 
\item Generalize \cite{FFR} to obtain a smoothing of $\cup Y_i$, hence a smoothing of $X_0$.
\end{compactenum}
That steps (i)-(iii) above work is the content of the conjectures below.
\end{enumerate}

\begin{conjintro}
\label{conj:main}
If the collection of partitions $(\mathbf{p}_e)_e$ in part (2) of the proposal above is given by $\mathbf{divstep}_e(f)^\vee$ for a ZMLP $f$ as in part (1), then there exists a compatible collection $Y_i\rightarrow X_i$ of divisorial extractions of $\cup C_{ij}\subset\cup X_i$ (Definition \ref{defi:compatible}).
\end{conjintro}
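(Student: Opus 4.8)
\emph{Strategy.} The assertion is local along the common line $L=\cap_i X_i\simeq\mathbb{A}^1$, with all of the essential geometry concentrated at the single point $P=\cap C_{ij}$. My plan is to produce each divisorial extraction $Y_i\to X_i$ as a weighted blowup centred on the curves $C_{ij}$ lying in the two boundary planes of $X_i$, with weights dictated by the partitions $\divstep_e(f)^\vee$, and to verify crepancy and compatibility by passing to an explicit toric model near $P$. Concretely I would choose coordinates $(x,y,z)$ on $X_i$ in which $L$ is the $z$-axis and the two bounding planes $X_{i-1}\cap X_i$ and $X_i\cap X_{i+1}$ are $\{x=0\}$ and $\{y=0\}$; by Construction \ref{con:singlog} the curve in each plane then meets $L$ at $P$ with $r$ branches of contact orders $d_1,\dots,d_r$, where $\divstep_e(f)^\vee=(d_1,\dots,d_r)$, and $\sum_k d_k=\ell(e)$ matches the degree of the curve.

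\emph{Building each extraction.} I would define the extraction of $C_{i,i+1}$ as the weighted blowup along the curve whose weights are governed by the contact orders $d_k$, after first separating the $r$ branches if necessary. Crepancy is then a discrepancy computation: with respect to the log canonical class $K+\Delta$, where $\Delta$ is the sum of the two boundary planes, the condition that the extracted divisor have discrepancy zero translates, near $P$, into the statement that the corresponding ray lands on the height-$1$ roof of the Gorenstein cone $\sigma_i$. This is exactly where the divisibility interpretation of $\divstep_e(f)$ (Proposition \ref{prop:zeromut}) is needed: the divisibility steps are precisely the numerics that put the required lattice points at height one. The blowup separates the branches of the curve and turns the log structure of Proposition \ref{prop:joint}, singular along $C_{i,i+1}$, into a simple normal crossings one, so that $Y_i$ is log smooth over the standard log point away from the fibre over $P$.

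\emph{Compatibility --- the main obstacle.} It remains to establish the compatibility of Definition \ref{defi:compatible}: the extractions of $C_{ij}$ induced from $X_i$ and from $X_j$ must agree on the shared plane $X_i\cap X_j$, and, running around the cycle of components glued along $L$, the local modifications must assemble into a single toroidal crossing space $\cup Y_i$. Agreement across one plane is symmetric in the two components and should be automatic; the genuine obstruction lies at $P$, where every curve $C_{ij}$ passes through a common point and the local model ceases to be toric. There one must show that the weighted blowups prescribed by the various $\divstep_e(f)^\vee$ can be performed \emph{simultaneously}---equivalently, that the induced subdivisions of the cones around $L$ close up consistently. I expect this to be the crux, and it is not a formal matter: it is exactly the point at which the global structure of the ZMLP enters, since the partitions on different edges are constrained by the requirement that $f$ mutate to $1$. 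The plan is therefore to check this closing-up condition by making the toric data fully explicit, which is what is carried out in the rest of the paper for the affine cones over $\mathbb{P}(1,a,b)$ and for several infinite families of ZMLPs; the statement in full generality remains Conjecture \ref{conj:main}.
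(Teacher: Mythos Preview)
First, note that the statement in question is a \emph{conjecture}: the paper does not prove it in general, but only for the families called Tom, Jerry and Tyke in Theorem~\ref{thm:main}. Your final paragraph acknowledges this, so what you have written is really a proposed \emph{strategy}. As such it should be compared with the strategy the paper actually implements in \S\ref{S:rect}--\S\ref{S:jerrytyke}, and there the two diverge in important ways.

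\medskip
\noindent\textbf{Where the paper's approach differs.} The paper does not attempt to build each $Y_i\to X_i$ as a weighted blowup of the full curve $C_{i-1,i}\cup C_{i,i+1}$ in the two boundary planes of $X_i$. Instead, for rectangular triangles it fixes an \emph{order} (Construction~\ref{con:procedure}): one first blows up a line in the singular component $X_2$ (or $X_3$); independently one blows up the curve $C_{\mathbf a}$ (or $C_{\mathbf b}$) in $X_1=\mathbb A^3$; and only \emph{then} performs a divisorial extraction of the proper transform $\tilde C_{\mathbf b}$ (or $\tilde C_{\mathbf a}$) inside the resulting threefold. Compatibility is then reduced, via Propositions~\ref{prop:stepa} and~\ref{prop:divisor}, to the single statement of Conjecture~\ref{conj:divex}: the final extraction must produce a specific cyclic quotient singularity $\tfrac1a(1,-1,b)$ (or $\tfrac1b(1,-1,a)$) matching the one appearing on the other side. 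The verification is then case-by-case, using Ducat's explicit constructions of divisorial extractions of \emph{singular} curves in smooth threefolds (Propositions~\ref{prop:ducat}, \ref{prop:ducat2}) together with an induction along the triangular mutations $\alpha,\beta$ (Proposition~\ref{prop:induction}).

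\medskip
\noindent\textbf{Gaps in your proposal.} Two points deserve flagging. First, the phrase ``weighted blowup along the curve whose weights are governed by the contact orders $d_k$'' is not well-defined when the curve has several branches through $P$: the extractions the paper needs are genuinely not weighted blowups---Ducat's constructions involve auxiliary blowups followed by flops and contractions (see the discussion after Corollary~\ref{cor:ducat}), and these do not arise from a single toric subdivision. Second, your appeal to Proposition~\ref{prop:zeromut} for crepancy is misplaced: that proposition is a purely combinatorial statement about required divisibilities of a ZMLP and says nothing about discrepancies of birational morphisms. The numerical coincidence you have in mind (partitions summing to edge lengths) governs the \emph{degree} of $C_{ij}$ and the joint compatibility of the log structure (Construction~\ref{con:singlog}), not the log-crepancy of an extraction. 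In short, your outline correctly locates the difficulty at $P$, but the mechanism you propose to resolve it---simultaneous weighted blowups plus a toric closing-up check---is not the one the paper uses and, as stated, does not supply the nontrivial input (Ducat-type extractions, mutation induction) that the proved cases actually require.
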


\begin{conjintro}[c.f. \cite{CGR}, Conjecture 18]
\label{conj:log}
Let $\cup X_i$ and $\cup C_{ij}$ be as in part (2) of the proposal. 
\begin{compactenum}[(1)]
\item A compatible collection $Y_i\rightarrow X_i$ of divisorial extractions of $\cup C_{ij}$ in $\cup X_i$ gives a generically toroidal crossing space $\cup Y_i$ that is log smooth over the standard log point away from a single point $P$, where \'etale locally it is isomorphic to $(xy=0)\subset\tfrac{1}{r}(1,-1,a,-a)$.
\item $\cup Y_i$ can be smoothed to an orbifold with terminal singularities $X$.
\end{compactenum}
\end{conjintro}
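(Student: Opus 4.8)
The plan to establish Conjecture~\ref{conj:log} is to treat the generic toroidal locus and the single exceptional point $P=\cap C_{ij}$ separately, and then to glue the resulting deformations. For part~(1) I would work \'etale-locally on $\cup Y_i$. Away from $P$ each log singular curve $C_{ij}\subset X_i\cap X_j\simeq\mathbb{A}^2$ is smooth and meets the triple locus $\cap_k X_k\simeq\mathbb{A}^1$ transversally (with only the branches dictated by the divisibility steps), so the compatible divisorial extraction $Y_i\to X_i$ is locally a toric weighted blow-up centred on $C_{ij}$. The first task is to check, on the charts of Construction~\ref{con:singlog} glued as in \cite{CR}, that extracting the divisor over $C_{ij}$ removes the log singular locus and leaves a generically toroidal crossing space whose divisorial log structure is log smooth over the standard log point. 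The heart of part~(1) is the local model at $P$: here the $r$ branches of $C_{ij}$ with divisibility data $(d_1,\dots,d_r)$ collide, and I would show that the matching of weights imposed by Definition~\ref{defi:compatible} forces the extracted chart to be exactly the cyclic quotient $(xy=0)\subset\tfrac{1}{r}(1,-1,a,-a)$, reading off $r$ and $a$ from the partition $\mathbf{p}_e$ and its dual. This is an explicit, if bookkeeping-heavy, toric computation.

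For part~(2) the strategy is logarithmic smoothing. On $\cup Y_i\setminus\{P\}$, which is log smooth over the standard log point, a generalization of \cite{FFR} should produce a formal and then analytic smoothing whose general fibre is an orbifold; the terminal quotient singularities of that fibre are precisely the frozen orbifold points of the divisorial extractions $Y_i$, which lie off the normal crossing locus and are terminal by construction. Near $P$ I would smooth the local model directly: deforming $(xy=0)$ to $(xy=t)$ inside $\tfrac{1}{r}(1,-1,a,-a)$ is permissible because $xy$ has weight $0$, and for $t\ne0$ the cyclic action on $\{xy=t\}$ is free -- the coordinate $x$ alone forces the group element to be trivial -- so the local smoothing is in fact \emph{smooth}. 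Thus $P$ contributes no singularity to $X$, and the terminal locus of $X$ originates entirely from the edges of $\triangle$ of lattice length $>1$ through the orbifold structure of $\cup Y_i$.

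The main obstacle, as the separate treatment above already signals, is gluing the two smoothings across $P$, that is, extending the generalization of \cite{FFR} over the one point where log smoothness fails. The existing deformation theory demands log smoothness everywhere, whereas here the log structure degenerates in codimension~$3$ at $P$. I expect the crux to be showing that the local smoothing of $(xy=0)\subset\tfrac{1}{r}(1,-1,a,-a)$ is compatible with the standard-log-point smoothing on a punctured neighbourhood of $P$ -- concretely, that the obstruction to gluing, living in a local cohomology group supported at $P$, vanishes, and that the glued first-order deformation is unobstructed to all orders (a logarithmic Bogomolov--Tian--Todorov statement for this mildly non-log-smooth space). Once the smoothing $X$ exists, identifying its deformation class with a smoothing component of $\textup{Def}(X_0)$, and hence with the ZMLP $f$ fixed in Conjecture~\ref{conj:main}, is the remaining bookkeeping that links back to Conjecture~\ref{conj:affine}.
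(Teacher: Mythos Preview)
The paper does not prove Conjecture~\ref{conj:log}. It is stated in the introduction as a conjecture (indeed, with a cross-reference to \cite{CGR}, Conjecture~18), and the only discussion the paper offers is the paragraph at the end of \S\ref{S:divex}, which says that one can glue the $Y_i$ using \cite{CR} to a generically toroidal crossing space, that one \emph{expects} $\mathcal{LS}_Y=f^\star\mathcal{LS}_X(-C)$ and the \'etale-local model at $P$ to be $(xy=0)\subset\tfrac{1}{r}(1,-1,a,-a)$, and that one \emph{expects} \cite{FFR}, Theorem~1.7, to generalise so as to yield the smoothing. The actual work is explicitly deferred to the companion paper \cite{CGR}.

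Your proposal is therefore not to be compared against a proof in the paper, because there is none. That said, your outline is essentially the same programme the paper sketches: treat the toroidal locus via \cite{CR}, identify the local model at $P$, and then run a log-smoothing argument \`a la \cite{FFR} adapted to the single non-log-smooth point. You correctly isolate the genuine difficulty, namely extending the deformation theory of \cite{FFR} across the point $P$ where log smoothness fails, and your local observation that $(xy=t)\subset\tfrac{1}{r}(1,-1,a,-a)$ is smooth for $t\neq 0$ (since $x\neq 0$ there forces the group element to be trivial) is correct and is exactly the kind of input one needs. What remains entirely open, both in your proposal and in the paper, is the vanishing of the gluing obstruction at $P$ and the unobstructedness to all orders; these are not bookkeeping but the substantive content of the conjecture, and neither you nor the paper supplies an argument for them.
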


\subsection*{Results for rectangular triangles}

In this paper we will focus on the case when $X_0$ is the affine cone over a weighted projective plane $\mathbb{P}(1,a,b)$. Then $\triangle$ is a rectangular triangle.

\begin{defiintro}
A lattice polygon $\triangle$ is a \emph{rectangular triangle} if it can be mapped to the standard triangle $\triangle(a,b)=\textup{Conv}\{(0,0),(a,0),(0,b)\}$ for some $a,b\ge1$ under a transformation in the integral affine transformation group $\textup{GL}_2(\mathbb{Z})\ltimes\mathbb{Z}^2$. Equivalently, $\triangle$ is a lattice triangle with a vertex whose primitive integral outgoing edge direction vectors have determinant $1$. We call a rectangular triangle $\triangle(a,b)$ \emph{primitive} if $\gcd(a,b)=1$.
\end{defiintro}

\begin{explintro}[Tom and Jerry]
The central subdivision of  $\triangle(2,3)$ gives a toric degeneration with three components in the central fiber identified as $X_1=\mathbb{A}^3$, $X_2=A_1\times\mathbb{A}^1$ and $X_3=A_2\times\mathbb{A}^2$.
There exist precisely two distinct zero mutable Laurent polynomials for $\triangle(2,3)$ that we refer to as Tom and Jerry. They are displayed together with their coefficient decorated Newton polygons in 
Figure \ref{fig:intro1}. 

\begin{figure}[h!]
\centering
\begin{tikzpicture}[scale=.9]
\draw (1.5,2.5) node{Tom};
\draw (0,0) -- (0,2) -- (3,0) -- cycle;
\fill (0,0) circle (2pt) node[below]{$1$};
\fill (1,0) circle (2pt) node[below]{$3$};
\fill (2,0) circle (2pt) node[below]{$3$};
\fill (3,0) circle (2pt) node[below]{$1$};
\fill (0,1) circle (2pt) node[left]{$2$};
\fill (1,1) circle (2pt) node[below]{$2$};
\fill (0,2) circle (2pt) node[left]{$1$};
\draw (1.5,-1.3) node{$(1+x)^3+2y(1+x)+y^2$};
\end{tikzpicture}
\hspace{1cm}
\begin{tikzpicture}[scale=.9]
\draw (1.5,2.5) node{Jerry};
\draw (0,0) -- (0,2) -- (3,0) -- cycle;
\fill (0,0) circle (2pt) node[below]{$1$};
\fill (1,0) circle (2pt) node[below]{$3$};
\fill (2,0) circle (2pt) node[below]{$3$};
\fill (3,0) circle (2pt) node[below]{$1$};
\fill (0,1) circle (2pt) node[left]{$2$};
\fill (1,1) circle (2pt) node[below]{$3$};
\fill (0,2) circle (2pt) node[left]{$1$};
\draw (1.5,-1.3) node{$(1+y)^2+3x(1+y)+3x^2+x^3$};
\end{tikzpicture}
\caption{Zero-mutable Laurent polynomials for Tom and Jerry.}
\label{fig:intro1}
\end{figure}
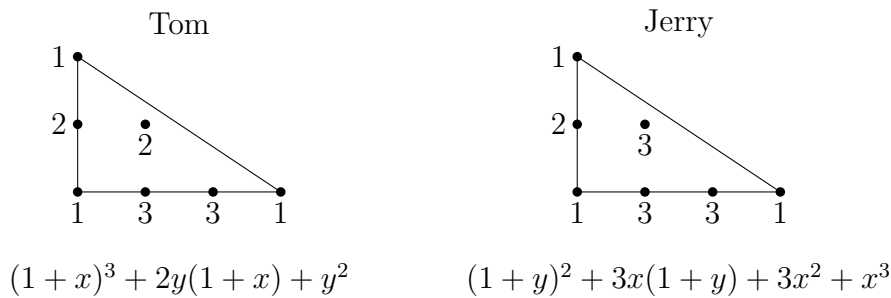

The rows of Tom from bottom to top give the polynomials  $(1+x)^3$, $2y(1+x)$ and  $y^2$ which are divisible by $(1+x)$ to order $3,1,0$ respectively. 
Taking differences, we obtain a partition of the integer length $3$ of the bottom edge as $2+1=3$. 
Transposing the corresponding partition diagram gives the dual (or conjugate) partition which is also $1+2=3$.
Doing a similar analysis on the columns of Tom yields divisibilities $2,0,0$, so a partition $2=2$ whose dual is $1+1=2$. The third edge of Tom has length $1$ so, following a similar analysis, we always just get the partition $1=1$. 
The full set of dual partitions for Tom and Jerry are
\begin{itemize}
    \item[Tom:] $(\mathbf{divstep}_e^\vee)_e=((1),(1,1),(2,1))$,
    \item[Jerry:] $(\mathbf{divstep}_e^\vee)_e=((1),(2),(1,1,1))$.
\end{itemize}     
    For both of Tom and Jerry, the degree of the curves $C_{ij}\subset X_i\cap X_j \simeq \mathbb{A}^2$ matched the corresponding edge length of the edge in $\triangle(2,3)$, so we have degrees $1$, $2$ and $3$. The actual geometry of these curves is governed by the dual partitions, i.e., for 
\begin{itemize}
    \item[Tom:] $C_{23}$ is a line, $C_{13}$ is a union of two lines and $C_{12}$ is a union of a conic and a line;
    \item[Jerry:] $C_{23}$ is a line, $C_{13}$ is a smooth conic and $C_{12}$ is a union of three lines.
\end{itemize} 

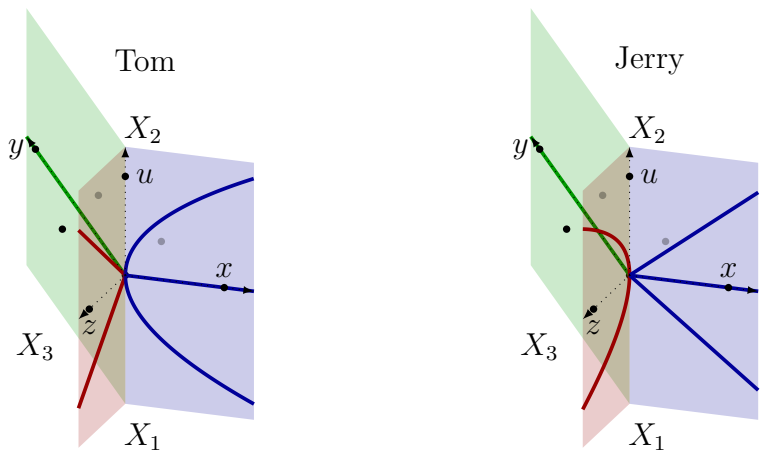
\begin{figure}[h!]
\centering
\begin{tikzpicture}[scale=1.4,tdplot_main_coords]
\draw (0,.2,2.2) node{Tom};
\fill (0,0,0) circle (1pt);
\fill[black!40!green,opacity=0.2] (0,0,-1.3) -- (-3.3,-2.2,-1.3) -- (-3.3,-2.2,1.3) -- (0,0,1.3);
\fill[black!40!blue,opacity=0.2] (0,0,-1.3) -- (0,1.3,-1.3) -- (0,1.3,1.3) -- (0,0,1.3);
\draw[thick,black!40!green,line width=1.4pt] (0,0,0) -- (-3.3,-2.2,0);
\draw[thick,black!40!red,line width=1.4pt] (0,0,0) -- (1.3,0,.9);
\draw[thick,black!40!red,line width=1.4pt] (0,0,0) -- (1.3,0,-.9);
\draw[thick,black!40!blue,line width=1.4pt,smooth,samples=100,domain=0:1.14] plot (0,\x^2,\x);
\draw[thick,black!40!blue,line width=1.4pt,smooth,samples=100,domain=-1.14:0] plot (0,-\x^2,\x);
\draw[thick,black!40!blue,line width=1.4pt] (0,0,0) -- (0,1.3,0);
\fill[black!40!red,opacity=0.2] (0,0,-1.3) -- (1.3,0,-1.3) -- (1.3,0,1.3) -- (0,0,1.3);
\draw[dotted,->] (0,0,0) -- (0,0,1.3);
\draw[dotted,->] (0,0,0) -- (-3.3,-2.2,0);
\draw[dotted,->] (0,0,0) -- (1.3,0,0);
\draw[dotted,->] (0,0,0) -- (0,1.3,0);
\fill (-1,-1,0) circle (1pt);
\fill[opacity=.3] (-1,0,0) circle (1pt);
\fill[opacity=.3] (-2,-1,0) circle (1pt);
\fill (0,0,1) circle (1pt) node[right]{$u$};
\fill (-3,-2,0) circle (1pt) node[left]{$y$};
\fill (1,0,0) circle (1pt) node[below]{$z$};
\fill (0,1,0) circle (1pt) node[above]{$x$};
\draw (-.5,0,-1.8) node{$X_1$};
\draw (-.5,0,1.3) node{$X_2$};
\draw (-3,-2,-2) node{$X_3$};
\end{tikzpicture}
\hspace{3cm}
\begin{tikzpicture}[scale=1.4,tdplot_main_coords]
\draw (0,.2,2.2) node{Jerry};
\fill (0,0,0) circle (1pt);
\fill[black!40!green,opacity=0.2] (0,0,-1.3) -- (-3.3,-2.2,-1.3) -- (-3.3,-2.2,1.3) -- (0,0,1.3);
\fill[black!40!blue,opacity=0.2] (0,0,-1.3) -- (0,1.3,-1.3) -- (0,1.3,1.3) -- (0,0,1.3);
\draw[thick,black!40!green,line width=1.4pt] (0,0,0) -- (-3.3,-2.2,0);
\draw[thick,black!40!red,line width=1.4pt,smooth,samples=100,domain=0:1.14] plot (\x^2,0,.8*\x);
\draw[thick,black!40!red,line width=1.4pt,smooth,samples=100,domain=-1.14:0] plot (-\x^2,0,.8*\x);
\draw[thick,black!40!blue,line width=1.4pt] (0,0,0) -- (0,1.3,1);
\draw[thick,black!40!blue,line width=1.4pt] (0,0,0) -- (0,1.3,0);
\draw[thick,black!40!blue,line width=1.4pt] (0,0,0) -- (0,1.3,-1);
\fill[black!40!red,opacity=0.2] (0,0,-1.3) -- (1.3,0,-1.3) -- (1.3,0,1.3) -- (0,0,1.3);
\draw[dotted,->] (0,0,0) -- (0,0,1.3);
\draw[dotted,->] (0,0,0) -- (-3.3,-2.2,0);
\draw[dotted,->] (0,0,0) -- (1.3,0,0);
\draw[dotted,->] (0,0,0) -- (0,1.3,0);
\fill (-1,-1,0) circle (1pt);
\fill[opacity=.3] (-1,0,0) circle (1pt);
\fill[opacity=.3] (-2,-1,0) circle (1pt);
\fill (0,0,1) circle (1pt) node[right]{$u$};
\fill (-3,-2,0) circle (1pt) node[left]{$y$};
\fill (1,0,0) circle (1pt) node[below]{$z$};
\fill (0,1,0) circle (1pt) node[above]{$x$};
\draw (-.5,0,-1.8) node{$X_1$};
\draw (-.5,0,1.3) node{$X_2$};
\draw (-3,-2,-2) node{$X_3$};
\end{tikzpicture}
\caption{Log singular loci for Tom and Jerry.}
\label{fig:intro2}
\end{figure}

We show that for both cases we obtain a compatible collection of divisorial extractions by both of the following sequences, see Construction \ref{con:procedure}
\begin{enumerate}[(1)]
\item 
\begin{enumerate}[(a)]
\item The blow up $Y_2\rightarrow X_2$ of $C_{23}\subset D_{23}\subset X_2=A_{a-1}\times\mathbb{A}^1$.
\item The blow up $Y\rightarrow X_1$ of $C_{12}\subset D_{12}\subset X_1=\mathbb{A}^3$.
\item A divisorial extraction $Z\rightarrow Y$ of the proper transform $\tilde{C}_{13}\subset\tilde{D}_{13}\subset Y$.
\end{enumerate}
\item 
\begin{enumerate}[(a)]
\item The blow up $Y_3\rightarrow X_3$ of $C_{23}\subset D_{23}\subset X_3=A_{b-1}\times\mathbb{A}^1$.
\item The blow up $Y\rightarrow X_1$ of $C_{13}\subset D_{13}\subset X_1=\mathbb{A}^3$.
\item A divisorial extraction $Z\rightarrow Y$ of the proper transform $\tilde{C}_{12}\subset\tilde{D}_{12}\subset Y$.
\end{enumerate}
\end{enumerate}
\end{explintro}

We classify zero mutable Laurent polynomials (ZMLPs) on several families of rectangular triangles and use this to prove Conjecture \ref{conj:main} in some cases.

\begin{thmintro}[Propositions \ref{prop:a}, \ref{prop:b}, \ref{prop:d}, \ref{prop:c}]\
\label{thm:classification}
\begin{enumerate}[(a)]
\item On $\triangle(1,k)$, i.e. an $A_{k-1}$-triangle, there is exactly $1$ ZMLP, called Tom.
\item On $\triangle(2,k)$, for $k \notin 2\mathbb{Z}$, there are exactly $2$ ZMLPs, called Tom and Jerry.
\item On $\triangle(k,k+1)$, for $k\geq 3$, there are exactly $3$ ZMLPs, called Tom, Jerry and Spike.
\item On $\triangle(3,k)$, for $k\geq 7$, $k\notin 3\mathbb{Z}$, there are exactly $4$ ZMLPs, Tom, Jerry, Spike and Tyke.
\end{enumerate}
In all cases above, there is a chain of mutations $f \mapsto f_r \mapsto \ldots \mapsto f_1 \mapsto f_0=1$ such that all steps $f_i$ are zero mutable Laurent polynomials on primitive rectangular triangles.
\end{thmintro}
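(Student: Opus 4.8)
The plan is to convert the counting problem into one about admissible tuples of partitions and then run an induction driven by mutations. By Proposition \ref{prop:zeromut}, a ZMLP $f$ with Newton polytope $\triangle$ is recorded, up to its forced interior coefficients, by the tuple $(\mathbf{divstep}_e(f)^\vee)_e$ of conjugate divisibility-step partitions (Definition \ref{defi:divstep}), one partition of the lattice length $\ell(e)$ per edge $e$; conversely an admissible such tuple reconstructs a unique ZMLP. For a primitive rectangular triangle $\triangle(a,b)$ the hypotenuse has length $\gcd(a,b)=1$ and hence always contributes the partition $(1)$, so the classification reduces to determining which pairs $(\mathbf{p}_a,\mathbf{p}_b)$, with $\mathbf{p}_a\vdash a$ and $\mathbf{p}_b\vdash b$ the partitions assigned to the two legs, are realized by a genuinely (recursively) zero-mutable Laurent polynomial. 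Thus I would first recast each of (a)--(d) as counting admissible leg-partition pairs.

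Second, I would treat existence together with the mutation-chain claim by writing down the named polynomials explicitly, generalizing the $\triangle(2,3)$ patterns of the Tom/Jerry example: Jerry is the pair (trivial partition on one leg, finest partition $(1^{b})$ on the other), while Tom, Spike and Tyke are the staircase-type pairs visible in the examples. For each such $f$ I would exhibit a single mutation along the appropriate leg. On the polytope level this mutation sends $\triangle(a,b)$ to a strictly smaller rectangular triangle; the point to check is that it carries $f$ to a ZMLP $f'$ whose triangle is again primitive (so that $\gcd$ stays $1$) and whose leg-partition pair is the reduced one. Iterating realizes the required chain $f\mapsto f_r\mapsto\cdots\mapsto f_0=1$ through primitive rectangular triangles, and at the same time proves that each named tuple is admissible.

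Third, for completeness I would induct on $a+b$, equivalently running the subtractive Euclidean algorithm on $(a,b)$, which replaces $b$ by $b-a$ and preserves primitivity since $\gcd(a,b-a)=\gcd(a,b)$. The base case is part (a): on $\triangle(1,k)$ the two unit-length edges force the partition $(1)$ and the length-$k$ leg admits only one compatible partition, giving the unique ZMLP Tom. For the inductive step I would show that an arbitrary ZMLP on $\triangle(a,b)$ necessarily admits a leading mutation of the above type---dictated by its leg partitions---landing on a ZMLP of a smaller primitive rectangular triangle already classified by the inductive hypothesis. Pulling the finitely many reduced tuples back through the finitely many admissible leading mutations yields exactly the named pairs and nothing more, producing the counts $2$, $3$ and $4$ in (b), (c) and (d). The near-diagonal family $\triangle(k,k+1)$ of part (c) reduces in a single subtractive step to the $\triangle(1,k)$-type base case, so there the entire count comes from the branching in that one step.

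The main obstacle will be the completeness half, and within it the verification that only the named leg-partition pairs are admissible. The difficulty is that admissibility is not a condition on the two legs separately: the interior coefficients of $f$ are pinned down by simultaneous compatibility along all three edges---this is precisely what distinguishes Tom from Jerry in Figure \ref{fig:intro1}, where only the single coefficient at $(1,1)$ differs---so I must show that these coupled constraints leave no partition freedom beyond the listed shapes, and in particular that no admissible leading mutation can output a non-primitive triangle and thereby escape the classified families. Controlling this coupling, rather than the explicit constructions or the routine bookkeeping of the Euclidean reduction, is where the substance of Propositions \ref{prop:a}--\ref{prop:c} will lie.
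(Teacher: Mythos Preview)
Your existence strategy is fine and close to the paper's, but the completeness argument has a genuine gap. You propose to induct on $a+b$: show that every ZMLP on $\triangle(a,b)$ admits a triangular reducing mutation to a smaller primitive rectangular triangle, and then pull back the classified tuples. But this step is precisely the assertion $\textup{ZMLP}(a,b)=\textup{ZMLP}_\triangle(a,b)$, and there is no a priori reason it should hold; indeed Example~\ref{expl:nontriangular} exhibits $(a,b)=(5,7)$ with a ZMLP admitting no triangular chain at all. So ``every ZMLP necessarily admits a leading mutation of the above type'' cannot be the inductive engine---it is a \emph{consequence} of the classification, not an input to it. Your final paragraph senses the difficulty but does not resolve it: ``pulling back reduced tuples through admissible mutations'' only enumerates $\textup{ZMLP}_\triangle(a,b)$, never bounding $\textup{ZMLP}(a,b)$ from above.

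The paper avoids this circularity by replacing the inductive descent with direct numerical constraints: the convexity of $\mathbf{reqdiv}_e(f)$ (Proposition~\ref{prop:zeromut}(1)) together with the lattice-point identity $\textup{reqdiv}(f)=|\triangle\cap M|$ (Proposition~\ref{prop:zeromut}(2)), which for primitive $\triangle(a,b)$ becomes $\sum a_i^2+\sum b_j^2=ab+1$ (Lemma~\ref{lem:divint}), plus the bounds of Lemma~\ref{lem:exists}. These cut the candidate leg-partition pairs down to a finite list \emph{before} any mutation is invoked; only then does one check each candidate is realized and lies in $\textup{ZMLP}_\triangle$. Two smaller points: your description of Jerry as $(\text{trivial},(1^b))$ matches only the $\triangle(k,k+1)$ case---for $\triangle(2,k)$ and $\triangle(3,k)$ the second leg partition contains parts equal to $a$ (see Table~\ref{tab:classification}); and Spike on $\triangle(k,k+1)$ does not reduce in a single subtractive step to $\triangle(1,k)$ but requires iterated $\tau\beta$ down through smaller Spike cases.
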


\begin{thmintro}[Propositions \ref{prop:maindivex}, \ref{prop:An}, \ref{prop:tom1}, \ref{prop:tom2}, \ref{prop:jerrytyke}]
\label{thm:main}
Conjecture \ref{conj:main} holds for all cases in Theorem \ref{thm:classification} called Tom, Jerry or Tyke.
\end{thmintro}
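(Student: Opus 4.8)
The plan is to prove the theorem by exhibiting, in each of the named cases, an explicit compatible collection of divisorial extractions and verifying the compatibility condition of Definition \ref{defi:compatible} directly. The statement aggregates the five cited propositions, so the real work is distributed: Proposition \ref{prop:maindivex} should provide the local model for a single divisorial extraction of a curve $C_{ij}\subset D_{ij}\subset X_i$ with prescribed branch data, while Propositions \ref{prop:An}, \ref{prop:tom1}, \ref{prop:tom2} and \ref{prop:jerrytyke} assemble these local models into a global compatible collection for Tom (on the $A_{k-1}$-triangles $\triangle(1,k)$ and on the remaining families) and for Jerry and Tyke. Since Tom occurs in every family of Theorem \ref{thm:classification}, I would first treat Tom uniformly and then reduce Jerry and Tyke to the same framework; the case called Spike is deliberately not covered by this theorem and is left aside.

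First I would fix toric coordinates on each component of the central-subdivision degeneration of Construction \ref{con:toricdeg}, namely $X_1=\mathbb{A}^3$, $X_2=A_{a-1}\times\mathbb{A}^1$ and $X_3=A_{b-1}\times\mathbb{A}^1$, realizing the double loci $D_{ij}=X_i\cap X_j$ and the triple locus $\cap_k X_k\simeq\mathbb{A}^1$ as coordinate (sub)spaces. Using the dual partitions $\mathbf{divstep}_e(f)^\vee$ recorded in Theorem \ref{thm:classification}, I would then write down the curves $C_{ij}$ of degree $\ell(e)$ together with their branch structure at $P$, following Construction \ref{con:singlog}. The named cases differ only in these explicit partitions: for Tom one branch of the top-degree curve carries the maximal tangency recorded by the leading part of $\mathbf{divstep}_e(f)^\vee$, whereas Jerry and Tyke redistribute this tangency among several branches.

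Second I would carry out the sequence of extractions of Construction \ref{con:procedure}: the blow-up $Y_2\to X_2$ (resp. $Y_3\to X_3$) of $C_{23}$ inside the quotient singularity, then the blow-up $Y\to X_1$ of $C_{12}$ (resp. $C_{13}$) in the smooth chart $\mathbb{A}^3$, and finally the divisorial extraction $Z\to Y$ of the proper transform $\tilde C_{13}$ (resp. $\tilde C_{12}$). At each step I would compute the exceptional divisor, the proper transforms of the $D_{ij}$ and of the remaining curves, and check that each map is a genuine divisorial extraction and that it is log crepant, so that the log canonical class is preserved. The first blow-up requires extra care because its center lies in the singular locus of $A_{a-1}\times\mathbb{A}^1$; I would handle this by passing to the toric/orbifold description and verifying the extraction there.

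The main obstacle will be the verification of compatibility itself: that the three extractions agree along the double loci $D_{ij}$, so that they glue to a global toroidal crossing space $\cup Y_i$, and that near the triple point $P=\cap C_{ij}$ the configuration is the controlled one. Concretely, I expect the delicate point to be tracking the proper transform $\tilde C_{13}$ through the first two blow-ups and confirming that its branch multiplicities at the transform of $P$ are exactly those for which step (c) is again a crepant divisorial extraction; this is precisely where the duality between $\mathbf{divstep}_e(f)$ and $\mathbf{divstep}_e(f)^\vee$ enters, and where Jerry and Tyke must be checked separately from Tom. To keep the computation organized I would finally invoke the mutation chain $f\mapsto f_r\mapsto\cdots\mapsto f_0=1$ of Theorem \ref{thm:classification}, reducing each case along the chain to a primitive rectangular triangle where the partitions, and hence the extractions, are simplest.
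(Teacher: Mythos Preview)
Your overall architecture---extract on each component following Construction \ref{con:procedure}, then reduce along a mutation chain---matches the paper, but you have misidentified where the work is and are missing the key ingredient for Jerry and Tyke.

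First, Proposition \ref{prop:maindivex} is not a ``local model'' for a single extraction; it is simply the statement that Conjecture \ref{conj:main} follows from Conjecture \ref{conj:divex}, packaged from Propositions \ref{prop:stepa} and \ref{prop:divisor}. Those two propositions do exactly the compatibility bookkeeping you flag as the ``main obstacle'': step (a) is a toric blow-up of a line \emph{not} lying in the singular locus of $A_{a-1}\times\mathbb{A}^1$, and the resulting $\tilde{D}_{13}\simeq A_{a-1}$ on the $X_2$ side is matched on the $X_1$ side by Proposition \ref{prop:divisor}. Once you know that the final extraction $Z\to Y$ carries an isolated singularity of type $\tfrac{1}{a}(1,-1,b)$ (or $\tfrac{1}{b}(1,-1,a)$), compatibility is automatic. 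So the gluing along $D_{ij}$ is not the hard step.

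The genuine difficulty is the \emph{existence} of the divisorial extraction in step (c) with that prescribed singularity type; this is not a blow-up you can just compute in coordinates. For Tom the paper reduces, via the induction step Proposition \ref{prop:induction}, to the $A_n$ base case of Proposition \ref{prop:An}, where the extraction is written down by hand as a quotient map $\mathbb{A}^3\to\tfrac{1}{n}(1,-1,-1)\to Y$. For Jerry and Tyke your plan has a real gap: after blowing up the smooth curve $C_{(a)}\subset\mathbb{A}^3$ you must extract a curve with $a{+}1$ branches through the $A_{a-1}$ point of $Y$, and the paper imports this from Ducat's classification of divisorial extractions of singular curves in smooth $3$-folds (Proposition \ref{prop:ducat}, Corollary \ref{cor:ducat2}, Proposition \ref{prop:ducat2}). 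Without that external input, step (c) for Jerry/Tyke does not go through.

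Finally, be careful with the mutation reduction. Proposition \ref{prop:induction} only transports Conjecture \ref{conj:divex}(1) along $\alpha,\beta$ and (2) along $\tau\alpha\tau,\tau\beta\tau$; it does not let you cross $\tau$. Hence you cannot simply ``reduce along the chain of Theorem \ref{thm:classification}'' to an arbitrary small triangle---you must reduce each case to a base case reachable by $\alpha,\beta$ alone (or their $\tau$-conjugates alone), which is exactly how the paper organizes Propositions \ref{prop:tom1}, \ref{prop:tom2} and \ref{prop:jerrytyke}.
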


\begin{conjintro}
\label{conj:spike}
Conjecture \ref{conj:main} holds for all cases in Theorem \ref{thm:classification} called Spike.
\end{conjintro}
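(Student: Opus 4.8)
The plan is to handle the Spike cases by exactly the scheme that establishes Theorem \ref{thm:main} for Tom, Jerry and Tyke: produce an explicit compatible collection of divisorial extractions via the procedure of Construction \ref{con:procedure}. The first task is to make the combinatorial input explicit. From the classification of ZMLPs underlying Theorem \ref{thm:classification} (Propositions \ref{prop:c} and \ref{prop:d}) one reads off the Spike polynomial on $\triangle(k,k+1)$ and on $\triangle(3,k)$ together with its divisibility steps, hence the dual partitions $\mathbf{divstep}_e^\vee$ on the three edges, of lattice lengths $k,k+1,1$ (resp. $3,k,1$). By part (2) of the proposal these partitions prescribe the branch data of the three curves $C_{12},C_{13},C_{23}$ sitting in the double loci $D_{12},D_{13},D_{23}$ of the toric degeneration $\cup X_i$ with $X_1=\mathbb{A}^3$, $X_2=A_{a-1}\times\mathbb{A}^1$ and $X_3=A_{b-1}\times\mathbb{A}^1$. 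The problem is thereby reduced to resolving this configuration by divisorial extractions $Y_i\to X_i$ whose restrictions to the overlaps agree, so that they glue as in Conjecture \ref{conj:log}(1).

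Concretely, for each family I would run the three-step sequence of Construction \ref{con:procedure}: first the (weighted) blow up of $C_{23}$ inside its divisor in $X_2$ or $X_3$, realised by Proposition \ref{prop:An}; then the blow up of $C_{12}\subset D_{12}\subset X_1=\mathbb{A}^3$; and finally a divisorial extraction of the proper transform $\tilde{C}_{13}\subset\tilde{D}_{13}$, controlled by the main extraction tool of Proposition \ref{prop:maindivex}. At each stage one must check that the center is smooth (or has the controlled singularity type Proposition \ref{prop:maindivex} allows), that the extraction is log crepant, and --- crucially --- that along each wall $D_{ij}$ the two extractions coming from $X_i$ and $X_j$ induce the same modification, i.e. the compatibility of Definition \ref{defi:compatible}. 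A further check is the local model at the triple point $P=\cap C_{ij}$, which must be the quotient $(xy=0)\subset\tfrac{1}{r}(1,-1,a,-a)$ demanded by Conjecture \ref{conj:log}(1). It is natural to organise the whole argument as an induction along the mutation chain $f\mapsto f_r\mapsto\cdots\mapsto f_0=1$ furnished by Theorem \ref{thm:classification}, matching each mutation of the Spike polynomial with a birational step so that the extraction built on a smaller primitive rectangular triangle bootstraps the one on the larger.

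The main obstacle is exactly the feature that distinguishes Spike from Tom, Jerry and Tyke. For the latter the dual partitions split the long-edge curve into several low-degree branches (lines and conics grouped together in Proposition \ref{prop:jerrytyke}, or the mildly more involved Tom configurations of Propositions \ref{prop:tom1} and \ref{prop:tom2}), so the relevant center is smooth inside a smooth divisor and the blow up is an ordinary crepant extraction. The Spike partition instead concentrates in a single long block, so that $C_{ij}$ is, up to lower-order branches, one smooth branch of high degree tangent to the double locus $\mathbb{A}^1=\cap_kX_k$ to maximal order. Extracting such a branch divisorially forces a weighted blow up of large weight, and the difficulty is twofold: ensuring this extraction is log crepant (the discrepancy computation no longer reduces to the $A_n$ model of Proposition \ref{prop:An}), and matching its exceptional divisor across the wall so that the two sides glue to the required $(xy=0)\subset\tfrac{1}{r}(1,-1,a,-a)$ model at $P$. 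I expect compatibility to fail for the naive weighted blow up and to demand either a carefully reweighted or iterated extraction, or a genuinely new input showing that the mutation step preserves compatibility in the Spike configuration; this is why the statement is recorded here as a conjecture rather than proved alongside Theorem \ref{thm:main}.
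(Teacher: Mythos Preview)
The statement you are asked to prove is recorded in the paper as a \emph{conjecture}; there is no proof of it in the paper to compare against. Your write-up correctly arrives at this conclusion in its final paragraph, and your overall framing --- attempt Construction \ref{con:procedure} and isolate where it breaks --- is exactly the spirit in which the paper leaves the question open.

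That said, your diagnosis of \emph{why} Spike resists the method is imprecise, and a couple of citations are off. You say the Spike partition ``concentrates in a single long block,'' forcing a high-weight extraction of one highly tangent branch. But from Table \ref{tab:classification} the Spike dual pairs are $\mathbf{a}=(2,1)$ on $\triangle(3,k)$ and $\mathbf{a}\approx(\lceil k/2\rceil,\lfloor k/2\rfloor)$ on $\triangle(k,k+1)$: two medium parts, not one long one. The actual obstruction is that $\mathbf{a}$ is neither $(1,\ldots,1)$ (Tom, where $C_{\mathbf{a}}$ is a union of lines and Proposition \ref{prop:An} plus the induction of Proposition \ref{prop:induction} applies) nor $(a)$ (Jerry/Tyke, where $C_{\mathbf{a}}$ is a single smooth curve so the first blow up is smooth and Ducat's results, Propositions \ref{prop:ducat} and \ref{prop:ducat2}, handle the second extraction). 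With two branches of intermediate multiplicity, the blow up $Y\to\mathbb{A}^3$ of $C_{\mathbf{a}}$ is already singular in a way not covered by either toolbox, and there is no available classification of divisorial extractions of $\tilde{C}_{\mathbf{b}}$ from such a $Y$. Minor corrections: the step-(a) extraction is governed by Proposition \ref{prop:stepa}, not Proposition \ref{prop:An}; and Proposition \ref{prop:maindivex} is not an ``extraction tool'' but the reduction statement saying Conjecture \ref{conj:main} follows from Conjecture \ref{conj:divex}.
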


\subsection*{Acknowledgements}
This paper can be seen as part of a project with Alessio Corti and Helge Ruddat to understand smoothings of toric varieties via log geometry \cite{CGR}. I am grateful to them for many ideas and discussions that helped to shape this paper.

\section{Zero mutable Laurent polynomials}											

Let $M\simeq \mathbb{Z}^n$ be an affine lattice and consider a Laurent polynomial
\[ f = \sum_{m\in M} c_m z^m \in \mathbb{C}[M]\simeq\mathbb{C}[x_1^{\pm 1},\ldots,x_n^{\pm 1}]. \] 

\begin{defi}
The \emph{Newton polytope} of $f$ is the convex hull of the exponents with nonzero coefficients,
\[ \textup{Newt}(f) = \textup{Conv}(\{m \in M \ | \ c_m \neq 0 \}). \]
If $\triangle=\textup{Newt}(f)$ we say that $f$ is \emph{supported on} the polytope $\triangle$ or simply \emph{on} $\triangle$.
\end{defi}

\begin{defi}
\label{defi:mut}
Let $\varphi : M \rightarrow \mathbb{Z}$ be an integral affine linear function on $M$, let $\varphi_0$ be its linear part, and let $h \in \mathbb{C}[\textup{ker }\varphi_0]\setminus\mathbb{C}$ be a Laurent polynomial defined on the zero locus of $\varphi_0$. Write
\[ f = \sum_{k\in\mathbb{Z}} f_k \quad \textup{where} \quad f_k\in \mathbb{C}[(\varphi=k)\cap M]. \]
We call $f$ \emph{mutable} with respect to $(\varphi,h)$ if for all $k<0$ the Laurent polynomial $f_k$ is divisible by $h^{-k}$. In this case we define the \emph{mutation} of $f$ with respect to $(\varphi,h)$ to be
\[ \textup{mut}_{(\varphi,h)}(f) = \sum_{k\in\mathbb{Z}} h^kf_k. \]
This induces a notion of mutations of polytopes via the Newton polytope construction.
\end{defi}

\begin{defi}
\label{defi:zeromut}
We define \emph{zero-mutable Laurent polynomials (ZMLPs)} recursively as follows.
\begin{enumerate}[(i)]
\item A Laurent polynomial in one variable $f \in \mathbb{C}[x^{\pm1}]$ is zero mutable if it is of the form $f = (1+x^{\pm1})^k x^l$ for $k\in\mathbb{N}$ and $l\in\mathbb{Z}$. 
\item A Laurent polynomial $f\in\mathbb{C}[M]$ supported on a saturated affine sublattice $M'\subset M$ is zero mutable if it is zero mutable as a Laurent polynomial $f\in\mathbb{C}[M']$.
\item A reducible Laurent polynomial $f=f_1f_2$ is zero mutable if both factors $f_1$, $f_2$ are zero mutable.
\item An irreducible Laurent polynomial $f$ is zero mutable if it is mapped to the Laurent polynomial $1$ via a sequence of mutations $\textup{mut}_{(\varphi,h)}$ involving only zero mutable $h$.
\end{enumerate}
\end{defi}

\begin{expl}
Figure \ref{fig:graph} was produced by a \texttt{SageMath} code that can be found on the second authors webpage \href{https://timgraefnitz.com/}{\texttt{timgraefnitz.com}}. It zero mutable Laurent polynomials on polygons of size $\leq 3$. Edges indicate mutations between them. In the third column, row $2$ and $3$ from above, we see the zero mutable Laurent polynomials corresponding to Tom and Jerry.

We did not consider products, so some reducible zero mutable Laurent polynomials are missing, e.g. the two for the affine cone over $dP_6$. Also other zero mutable Laurent polynomials are missing, like $f=(1+x)^2+xy^2$, since this would occur with its isomorphic representative $f=(1+xy^2)^2+x$, which has size $4$. The picture shows that even for small polytopes, a description of all zero mutable Laurent polynomials can be very complicated.
\end{expl}

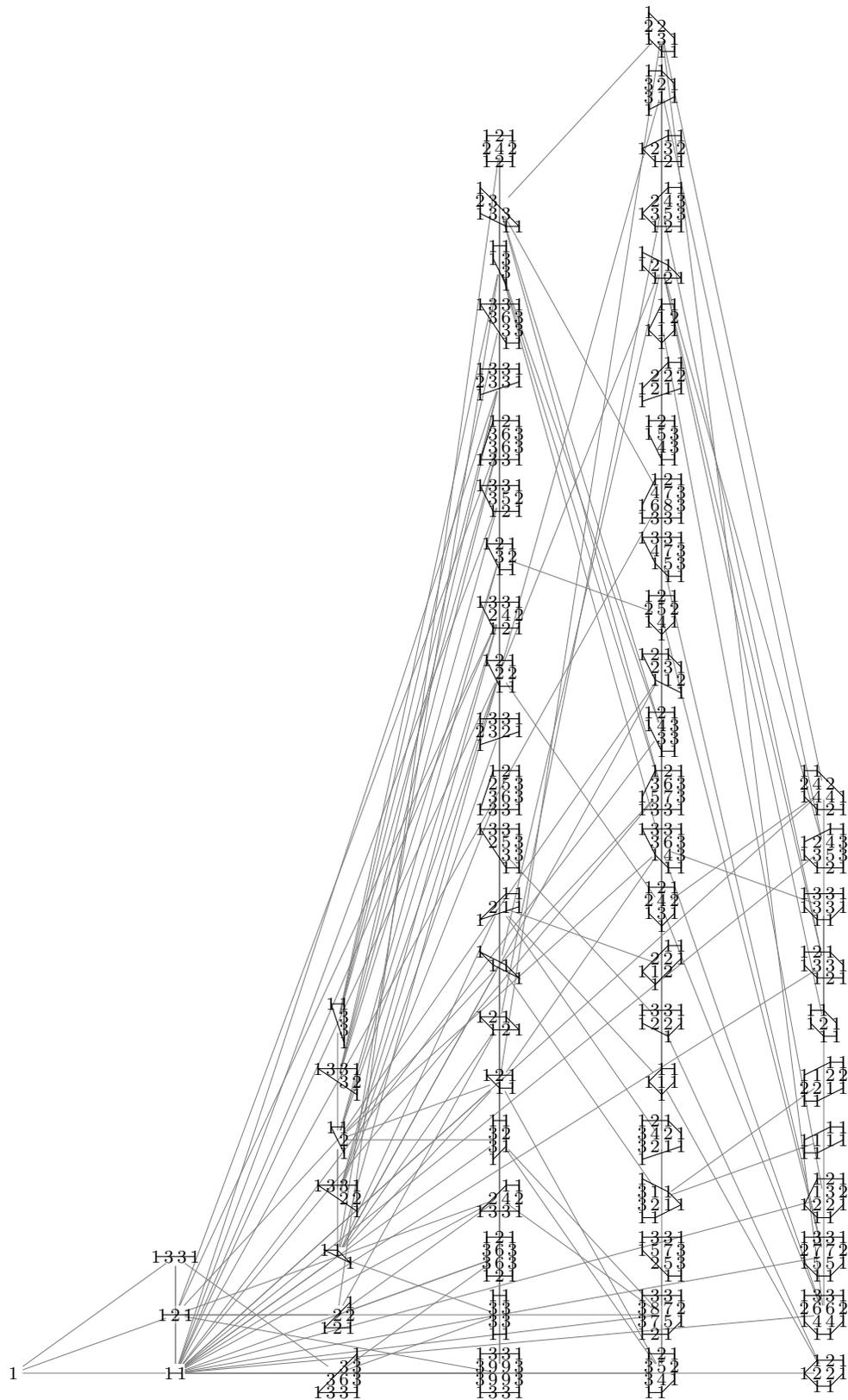
\begin{figure}[h!]
\centering
\begin{tikzpicture}[xscale=2.5,yscale=.9]
\node (0) at (1,0) {}; \node (1) at (2,0) {}; \node (2) at (2,1) {}; \node (3) at (2,2) {}; \node (4) at (3,0) {}; \node (5) at (4,0) {}; \node (6) at (4,1) {}; \node (7) at (4,2) {}; \node (8) at (3,1) {}; \node (9) at (4,3) {}; \node (10) at (4,4) {}; \node (11) at (5,0) {}; \node (12) at (5,1) {}; \node (13) at (5,2) {}; \node (14) at (4,5) {}; \node (15) at (4,6) {}; \node (16) at (3,2) {}; \node (17) at (4,7) {}; \node (18) at (5,3) {}; \node (19) at (5,4) {}; \node (20) at (4,8) {}; \node (21) at (5,5) {}; \node (22) at (6,0) {}; \node (23) at (5,6) {}; \node (24) at (4,9) {}; \node (25) at (4,10) {}; \node (26) at (5,7) {}; \node (27) at (4,11) {}; \node (28) at (3,3) {}; \node (29) at (3,4) {}; \node (30) at (4,12) {}; \node (31) at (4,13) {}; \node (32) at (5,8) {}; \node (33) at (6,1) {}; \node (34) at (5,9) {}; \node (35) at (5,10) {}; \node (36) at (5,11) {}; \node (37) at (5,12) {}; \node (38) at (4,14) {}; \node (39) at (4,15) {}; \node (40) at (3,5) {}; \node (41) at (4,16) {}; \node (42) at (5,13) {}; \node (43) at (6,2) {}; \node (44) at (5,14) {}; \node (45) at (5,15) {}; \node (46) at (5,16) {}; \node (47) at (3,6) {}; \node (48) at (4,17) {}; \node (49) at (4,18) {}; \node (50) at (4,19) {}; \node (51) at (5,17) {}; \node (52) at (5,18) {}; \node (53) at (6,3) {}; \node (54) at (6,4) {}; \node (55) at (6,5) {}; \node (56) at (5,19) {}; \node (57) at (6,6) {}; \node (58) at (6,7) {}; \node (59) at (6,8) {}; \node (60) at (5,20) {}; \node (61) at (5,21) {}; \node (62) at (6,9) {}; \node (63) at (5,22) {}; \node (64) at (5,23) {}; \node (65) at (6,10) {}; \node (66) at (4,20) {}; \node (67) at (4,21) {};
\draw[gray] (0) -- (1); \draw[gray] (0) -- (2); \draw[gray] (0) -- (3); \draw[gray] (1) -- (2); \draw[gray] (1) -- (3); \draw[gray] (1) -- (5); \draw[gray] (1) -- (7); \draw[gray] (1) -- (9); \draw[gray] (1) -- (10); \draw[gray] (1) -- (11); \draw[gray] (1) -- (12); \draw[gray] (1) -- (14); \draw[gray] (1) -- (16); \draw[gray] (1) -- (22); \draw[gray] (1) -- (30); \draw[gray] (1) -- (31); \draw[gray] (1) -- (33); \draw[gray] (1) -- (35); \draw[gray] (1) -- (36); \draw[gray] (1) -- (39); \draw[gray] (1) -- (43); \draw[gray] (1) -- (45); \draw[gray] (1) -- (48); \draw[gray] (1) -- (53); \draw[gray] (1) -- (58); \draw[gray] (1) -- (62); \draw[gray] (1) -- (65); \draw[gray] (2) -- (3); \draw[gray] (2) -- (5); \draw[gray] (2) -- (8); \draw[gray] (2) -- (9); \draw[gray] (2) -- (12); \draw[gray] (2) -- (31); \draw[gray] (2) -- (35); \draw[gray] (2) -- (41); \draw[gray] (3) -- (4); \draw[gray] (4) -- (5); \draw[gray] (4) -- (6); \draw[gray] (4) -- (7); \draw[gray] (5) -- (6); \draw[gray] (5) -- (7); \draw[gray] (6) -- (7); \draw[gray] (6) -- (10); \draw[gray] (6) -- (14); \draw[gray] (6) -- (16); \draw[gray] (7) -- (8); \draw[gray] (7) -- (9); \draw[gray] (7) -- (67); \draw[gray] (8) -- (9); \draw[gray] (8) -- (15); \draw[gray] (8) -- (67); \draw[gray] (9) -- (10); \draw[gray] (9) -- (12); \draw[gray] (9) -- (15); \draw[gray] (9) -- (67); \draw[gray] (10) -- (11); \draw[gray] (10) -- (12); \draw[gray] (10) -- (14); \draw[gray] (10) -- (15); \draw[gray] (10) -- (17); \draw[gray] (10) -- (29); \draw[gray] (10) -- (50); \draw[gray] (11) -- (12); \draw[gray] (11) -- (14); \draw[gray] (11) -- (64); \draw[gray] (12) -- (13); \draw[gray] (14) -- (15); \draw[gray] (14) -- (16); \draw[gray] (14) -- (17); \draw[gray] (14) -- (29); \draw[gray] (14) -- (34); \draw[gray] (14) -- (50); \draw[gray] (14) -- (64); \draw[gray] (14) -- (66); \draw[gray] (15) -- (16); \draw[gray] (15) -- (60); \draw[gray] (16) -- (17); \draw[gray] (16) -- (20); \draw[gray] (16) -- (29); \draw[gray] (16) -- (37); \draw[gray] (16) -- (50); \draw[gray] (16) -- (63); \draw[gray] (17) -- (18); \draw[gray] (17) -- (56); \draw[gray] (18) -- (19); \draw[gray] (18) -- (54); \draw[gray] (18) -- (55); \draw[gray] (19) -- (20); \draw[gray] (19) -- (51); \draw[gray] (20) -- (21); \draw[gray] (20) -- (26); \draw[gray] (20) -- (27); \draw[gray] (21) -- (22); \draw[gray] (21) -- (23); \draw[gray] (21) -- (63); \draw[gray] (22) -- (23); \draw[gray] (23) -- (24); \draw[gray] (24) -- (25); \draw[gray] (24) -- (27); \draw[gray] (25) -- (28); \draw[gray] (26) -- (63); \draw[gray] (27) -- (28); \draw[gray] (28) -- (29); \draw[gray] (28) -- (30); \draw[gray] (28) -- (31); \draw[gray] (29) -- (30); \draw[gray] (29) -- (31); \draw[gray] (29) -- (34); \draw[gray] (29) -- (37); \draw[gray] (29) -- (38); \draw[gray] (29) -- (47); \draw[gray] (30) -- (31); \draw[gray] (30) -- (32); \draw[gray] (30) -- (56); \draw[gray] (32) -- (33); \draw[gray] (32) -- (36); \draw[gray] (32) -- (65); \draw[gray] (33) -- (34); \draw[gray] (33) -- (64); \draw[gray] (33) -- (65); \draw[gray] (34) -- (35); \draw[gray] (34) -- (50); \draw[gray] (34) -- (59); \draw[gray] (35) -- (36); \draw[gray] (35) -- (50); \draw[gray] (35) -- (66); \draw[gray] (36) -- (50); \draw[gray] (36) -- (66); \draw[gray] (38) -- (39); \draw[gray] (38) -- (40); \draw[gray] (38) -- (42); \draw[gray] (39) -- (40); \draw[gray] (39) -- (66); \draw[gray] (40) -- (41); \draw[gray] (40) -- (48); \draw[gray] (41) -- (47); \draw[gray] (41) -- (49); \draw[gray] (42) -- (43); \draw[gray] (42) -- (46); \draw[gray] (43) -- (44); \draw[gray] (44) -- (45); \draw[gray] (45) -- (46); \draw[gray] (45) -- (66); \draw[gray] (47) -- (48); \draw[gray] (47) -- (49); \draw[gray] (47) -- (50); \draw[gray] (48) -- (49); \draw[gray] (50) -- (66); \draw[gray] (51) -- (52); \draw[gray] (52) -- (53); \draw[gray] (56) -- (57); \draw[gray] (56) -- (58); \draw[gray] (56) -- (61); \draw[gray] (56) -- (62); \draw[gray] (57) -- (58); \draw[gray] (57) -- (59); \draw[gray] (58) -- (59); \draw[gray] (58) -- (60); \draw[gray] (62) -- (63); \draw[gray] (64) -- (65); \draw[gray] (64) -- (66);
\draw (0) node {\begin{tikzpicture}[scale=1/5] \fill (0,0) circle (1pt); \fill (0, 0) node{\tiny$1$} circle (.5pt);\end{tikzpicture}};
\draw (1) node {\begin{tikzpicture}[scale=1/5] \draw (1, 0) -- (0, 0); \fill (1, 0) node{\tiny$1$} circle (.5pt);\fill (0, 0) node{\tiny$1$} circle (.5pt);\end{tikzpicture}};
\draw (2) node {\begin{tikzpicture}[scale=1/5] \draw (0, 0) -- (2, 0); \fill (0, 0) node{\tiny$1$} circle (.5pt);\fill (2, 0) node{\tiny$1$} circle (.5pt);\fill (1, 0) node{\tiny$2$} circle (.5pt);\end{tikzpicture}};
\draw (3) node {\begin{tikzpicture}[scale=1/5] \draw (0, 0) -- (3, 0); \fill (0, 0) node{\tiny$1$} circle (.5pt);\fill (3, 0) node{\tiny$1$} circle (.5pt);\fill (1, 0) node{\tiny$3$} circle (.5pt);\fill (2, 0) node{\tiny$3$} circle (.5pt);\end{tikzpicture}};
\draw (4) node {\begin{tikzpicture}[scale=1/5] \draw (3, 0) -- (0, 0) -- (3, 3) -- cycle; \fill (0, 0) node{\tiny$1$} circle (.5pt);\fill (3, 3) node{\tiny$1$} circle (.5pt);\fill (3, 0) node{\tiny$1$} circle (.5pt);\fill (1, 1) node{\tiny$3$} circle (.5pt);\fill (2, 2) node{\tiny$3$} circle (.5pt);\fill (1, 0) node{\tiny$3$} circle (.5pt);\fill (2, 1) node{\tiny$6$} circle (.5pt);\fill (3, 2) node{\tiny$3$} circle (.5pt);\fill (2, 0) node{\tiny$3$} circle (.5pt);\fill (3, 1) node{\tiny$3$} circle (.5pt);\end{tikzpicture}};
\draw (5) node {\begin{tikzpicture}[scale=1/5] \draw (0, 3) -- (0, 0) -- (3, 0) -- (3, 3) -- cycle; \fill (0, 0) node{\tiny$1$} circle (.5pt);\fill (3, 3) node{\tiny$1$} circle (.5pt);\fill (3, 0) node{\tiny$1$} circle (.5pt);\fill (0, 3) node{\tiny$1$} circle (.5pt);\fill (0, 1) node{\tiny$3$} circle (.5pt);\fill (0, 2) node{\tiny$3$} circle (.5pt);\fill (1, 0) node{\tiny$3$} circle (.5pt);\fill (1, 1) node{\tiny$9$} circle (.5pt);\fill (1, 2) node{\tiny$9$} circle (.5pt);\fill (1, 3) node{\tiny$3$} circle (.5pt);\fill (2, 0) node{\tiny$3$} circle (.5pt);\fill (2, 1) node{\tiny$9$} circle (.5pt);\fill (2, 2) node{\tiny$9$} circle (.5pt);\fill (2, 3) node{\tiny$3$} circle (.5pt);\fill (3, 1) node{\tiny$3$} circle (.5pt);\fill (3, 2) node{\tiny$3$} circle (.5pt);\end{tikzpicture}};
\draw (6) node {\begin{tikzpicture}[scale=1/5] \draw (2, 3) -- (2, 0) -- (3, 0) -- (3, 3) -- cycle; \fill (2, 0) node{\tiny$1$} circle (.5pt);\fill (3, 3) node{\tiny$1$} circle (.5pt);\fill (3, 0) node{\tiny$1$} circle (.5pt);\fill (2, 3) node{\tiny$1$} circle (.5pt);\fill (2, 1) node{\tiny$3$} circle (.5pt);\fill (2, 2) node{\tiny$3$} circle (.5pt);\fill (3, 1) node{\tiny$3$} circle (.5pt);\fill (3, 2) node{\tiny$3$} circle (.5pt);\end{tikzpicture}};
\draw (7) node {\begin{tikzpicture}[scale=1/5] \draw (2, 3) -- (2, 0) -- (4, 0) -- (4, 3) -- cycle; \fill (2, 0) node{\tiny$1$} circle (.5pt);\fill (4, 3) node{\tiny$1$} circle (.5pt);\fill (4, 0) node{\tiny$1$} circle (.5pt);\fill (2, 3) node{\tiny$1$} circle (.5pt);\fill (2, 1) node{\tiny$3$} circle (.5pt);\fill (2, 2) node{\tiny$3$} circle (.5pt);\fill (3, 0) node{\tiny$2$} circle (.5pt);\fill (3, 1) node{\tiny$6$} circle (.5pt);\fill (3, 2) node{\tiny$6$} circle (.5pt);\fill (3, 3) node{\tiny$2$} circle (.5pt);\fill (4, 1) node{\tiny$3$} circle (.5pt);\fill (4, 2) node{\tiny$3$} circle (.5pt);\end{tikzpicture}};
\draw (8) node {\begin{tikzpicture}[scale=1/5] \draw (4, 0) -- (2, 0) -- (4, 2) -- cycle; \fill (2, 0) node{\tiny$1$} circle (.5pt);\fill (4, 2) node{\tiny$1$} circle (.5pt);\fill (4, 0) node{\tiny$1$} circle (.5pt);\fill (3, 1) node{\tiny$2$} circle (.5pt);\fill (3, 0) node{\tiny$2$} circle (.5pt);\fill (4, 1) node{\tiny$2$} circle (.5pt);\end{tikzpicture}};
\draw (9) node {\begin{tikzpicture}[scale=1/5] \draw (4, 2) -- (2, 0) -- (5, 0) -- (5, 2) -- cycle; \fill (2, 0) node{\tiny$1$} circle (.5pt);\fill (5, 2) node{\tiny$1$} circle (.5pt);\fill (5, 0) node{\tiny$1$} circle (.5pt);\fill (4, 2) node{\tiny$1$} circle (.5pt);\fill (3, 0) node{\tiny$3$} circle (.5pt);\fill (4, 0) node{\tiny$3$} circle (.5pt);\fill (3, 1) node{\tiny$2$} circle (.5pt);\fill (4, 1) node{\tiny$4$} circle (.5pt);\fill (5, 1) node{\tiny$2$} circle (.5pt);\end{tikzpicture}};
\draw (10) node {\begin{tikzpicture}[scale=1/5] \draw (4, 2) -- (4, -1) -- (5, 0) -- (5, 2) -- cycle; \fill (4, -1) node{\tiny$1$} circle (.5pt);\fill (5, 2) node{\tiny$1$} circle (.5pt);\fill (5, 0) node{\tiny$1$} circle (.5pt);\fill (4, 2) node{\tiny$1$} circle (.5pt);\fill (4, 0) node{\tiny$3$} circle (.5pt);\fill (4, 1) node{\tiny$3$} circle (.5pt);\fill (5, 1) node{\tiny$2$} circle (.5pt);\end{tikzpicture}};
\draw (11) node {\begin{tikzpicture}[scale=1/5] \draw (4, 2) -- (4, -1) -- (5, -1) -- (6, 0) -- (6, 2) -- cycle; \fill (4, -1) node{\tiny$1$} circle (.5pt);\fill (6, 2) node{\tiny$1$} circle (.5pt);\fill (6, 0) node{\tiny$1$} circle (.5pt);\fill (4, 2) node{\tiny$1$} circle (.5pt);\fill (5, -1) node{\tiny$1$} circle (.5pt);\fill (4, 0) node{\tiny$3$} circle (.5pt);\fill (4, 1) node{\tiny$3$} circle (.5pt);\fill (5, 0) node{\tiny$4$} circle (.5pt);\fill (5, 1) node{\tiny$5$} circle (.5pt);\fill (5, 2) node{\tiny$2$} circle (.5pt);\fill (6, 1) node{\tiny$2$} circle (.5pt);\end{tikzpicture}};
\draw (12) node {\begin{tikzpicture}[scale=1/5] \draw (4, 2) -- (4, -1) -- (6, -1) -- (7, 0) -- (7, 2) -- cycle; \fill (4, -1) node{\tiny$1$} circle (.5pt);\fill (7, 2) node{\tiny$1$} circle (.5pt);\fill (7, 0) node{\tiny$1$} circle (.5pt);\fill (4, 2) node{\tiny$1$} circle (.5pt);\fill (6, -1) node{\tiny$1$} circle (.5pt);\fill (4, 0) node{\tiny$3$} circle (.5pt);\fill (4, 1) node{\tiny$3$} circle (.5pt);\fill (5, -1) node{\tiny$2$} circle (.5pt);\fill (5, 0) node{\tiny$7$} circle (.5pt);\fill (5, 1) node{\tiny$8$} circle (.5pt);\fill (5, 2) node{\tiny$3$} circle (.5pt);\fill (6, 0) node{\tiny$5$} circle (.5pt);\fill (6, 1) node{\tiny$7$} circle (.5pt);\fill (6, 2) node{\tiny$3$} circle (.5pt);\fill (7, 1) node{\tiny$2$} circle (.5pt);\end{tikzpicture}};
\draw (13) node {\begin{tikzpicture}[scale=1/5] \draw (4, 2) -- (4, 1) -- (6, -1) -- (7, -1) -- (7, 2) -- cycle; \fill (4, 1) node{\tiny$1$} circle (.5pt);\fill (4, 2) node{\tiny$1$} circle (.5pt);\fill (7, 2) node{\tiny$1$} circle (.5pt);\fill (7, -1) node{\tiny$1$} circle (.5pt);\fill (6, -1) node{\tiny$1$} circle (.5pt);\fill (7, 0) node{\tiny$3$} circle (.5pt);\fill (6, 0) node{\tiny$5$} circle (.5pt);\fill (5, 0) node{\tiny$2$} circle (.5pt);\fill (7, 1) node{\tiny$3$} circle (.5pt);\fill (6, 1) node{\tiny$7$} circle (.5pt);\fill (5, 1) node{\tiny$5$} circle (.5pt);\fill (6, 2) node{\tiny$3$} circle (.5pt);\fill (5, 2) node{\tiny$3$} circle (.5pt);\end{tikzpicture}};
\draw (14) node {\begin{tikzpicture}[scale=1/5] \draw (5, 1) -- (4, 2) -- (6, 2) -- (6, 1) -- cycle; \fill (4, 2) node{\tiny$1$} circle (.5pt);\fill (5, 1) node{\tiny$1$} circle (.5pt);\fill (6, 2) node{\tiny$1$} circle (.5pt);\fill (6, 1) node{\tiny$1$} circle (.5pt);\fill (5, 2) node{\tiny$2$} circle (.5pt);\end{tikzpicture}};
\draw (15) node {\begin{tikzpicture}[scale=1/5] \draw (5, 1) -- (4, 2) -- (6, 2) -- (7, 1) -- cycle; \fill (4, 2) node{\tiny$1$} circle (.5pt);\fill (5, 1) node{\tiny$1$} circle (.5pt);\fill (7, 1) node{\tiny$1$} circle (.5pt);\fill (6, 2) node{\tiny$1$} circle (.5pt);\fill (6, 1) node{\tiny$2$} circle (.5pt);\fill (5, 2) node{\tiny$2$} circle (.5pt);\end{tikzpicture}};
\draw (16) node {\begin{tikzpicture}[scale=1/5] \draw (5, 2) -- (7, 1) -- (6, 2) -- cycle; \fill (7, 1) node{\tiny$1$} circle (.5pt);\fill (6, 2) node{\tiny$1$} circle (.5pt);\fill (5, 2) node{\tiny$1$} circle (.5pt);\end{tikzpicture}};
\draw (17) node {\begin{tikzpicture}[scale=1/5] \draw (5, 2) -- (7, 1) -- (6, 2) -- (4, 3) -- cycle; \fill (7, 1) node{\tiny$1$} circle (.5pt);\fill (6, 2) node{\tiny$1$} circle (.5pt);\fill (5, 2) node{\tiny$1$} circle (.5pt);\fill (4, 3) node{\tiny$1$} circle (.5pt);\end{tikzpicture}};
\draw (18) node {\begin{tikzpicture}[scale=1/5] \draw (4, 3) -- (4, 0) -- (5, 0) -- (7, 1) -- (6, 2) -- cycle; \fill (4, 0) node{\tiny$1$} circle (.5pt);\fill (7, 1) node{\tiny$1$} circle (.5pt);\fill (6, 2) node{\tiny$1$} circle (.5pt);\fill (4, 3) node{\tiny$1$} circle (.5pt);\fill (5, 0) node{\tiny$1$} circle (.5pt);\fill (4, 1) node{\tiny$3$} circle (.5pt);\fill (4, 2) node{\tiny$3$} circle (.5pt);\fill (5, 1) node{\tiny$2$} circle (.5pt);\fill (5, 2) node{\tiny$1$} circle (.5pt);\fill (6, 1) node{\tiny$1$} circle (.5pt);\end{tikzpicture}};
\draw (19) node {\begin{tikzpicture}[scale=1/5] \draw (4, 3) -- (4, 0) -- (7, 1) -- (7, 2) -- (6, 3) -- cycle; \fill (4, 0) node{\tiny$1$} circle (.5pt);\fill (7, 2) node{\tiny$1$} circle (.5pt);\fill (7, 1) node{\tiny$1$} circle (.5pt);\fill (4, 3) node{\tiny$1$} circle (.5pt);\fill (6, 3) node{\tiny$1$} circle (.5pt);\fill (4, 2) node{\tiny$3$} circle (.5pt);\fill (4, 1) node{\tiny$3$} circle (.5pt);\fill (5, 3) node{\tiny$2$} circle (.5pt);\fill (5, 2) node{\tiny$4$} circle (.5pt);\fill (5, 1) node{\tiny$2$} circle (.5pt);\fill (6, 2) node{\tiny$2$} circle (.5pt);\fill (6, 1) node{\tiny$1$} circle (.5pt);\end{tikzpicture}};
\draw (20) node {\begin{tikzpicture}[scale=1/5] \draw (6, 2) -- (4, 0) -- (7, 1) -- (7, 2) -- cycle; \fill (4, 0) node{\tiny$1$} circle (.5pt);\fill (7, 2) node{\tiny$1$} circle (.5pt);\fill (7, 1) node{\tiny$1$} circle (.5pt);\fill (6, 2) node{\tiny$1$} circle (.5pt);\fill (5, 1) node{\tiny$2$} circle (.5pt);\fill (6, 1) node{\tiny$1$} circle (.5pt);\end{tikzpicture}};
\draw (21) node {\begin{tikzpicture}[scale=1/5] \draw (6, 0) -- (5, 1) -- (6, 2) -- (7, 2) -- (7, 1) -- cycle; \fill (5, 1) node{\tiny$1$} circle (.5pt);\fill (6, 0) node{\tiny$1$} circle (.5pt);\fill (7, 2) node{\tiny$1$} circle (.5pt);\fill (6, 2) node{\tiny$1$} circle (.5pt);\fill (7, 1) node{\tiny$1$} circle (.5pt);\fill (6, 1) node{\tiny$1$} circle (.5pt);\end{tikzpicture}};
\draw (22) node {\begin{tikzpicture}[scale=1/5] \draw (6, 0) -- (5, 1) -- (6, 2) -- (8, 2) -- (8, 1) -- (7, 0) -- cycle; \fill (5, 1) node{\tiny$1$} circle (.5pt);\fill (6, 0) node{\tiny$1$} circle (.5pt);\fill (8, 2) node{\tiny$1$} circle (.5pt);\fill (6, 2) node{\tiny$1$} circle (.5pt);\fill (7, 0) node{\tiny$1$} circle (.5pt);\fill (8, 1) node{\tiny$1$} circle (.5pt);\fill (7, 1) node{\tiny$2$} circle (.5pt);\fill (6, 1) node{\tiny$2$} circle (.5pt);\fill (7, 2) node{\tiny$2$} circle (.5pt);\end{tikzpicture}};
\draw (23) node {\begin{tikzpicture}[scale=1/5] \draw (4, 2) -- (4, 1) -- (6, 0) -- (7, 1) -- (7, 2) -- cycle; \fill (4, 1) node{\tiny$1$} circle (.5pt);\fill (4, 2) node{\tiny$1$} circle (.5pt);\fill (7, 2) node{\tiny$1$} circle (.5pt);\fill (7, 1) node{\tiny$1$} circle (.5pt);\fill (6, 0) node{\tiny$1$} circle (.5pt);\fill (6, 2) node{\tiny$3$} circle (.5pt);\fill (5, 2) node{\tiny$3$} circle (.5pt);\fill (6, 1) node{\tiny$2$} circle (.5pt);\fill (5, 1) node{\tiny$2$} circle (.5pt);\end{tikzpicture}};
\draw (24) node {\begin{tikzpicture}[scale=1/5] \draw (6, -1) -- (4, 2) -- (7, 2) -- (7, -1) -- cycle; \fill (4, 2) node{\tiny$1$} circle (.5pt);\fill (7, 2) node{\tiny$1$} circle (.5pt);\fill (7, -1) node{\tiny$1$} circle (.5pt);\fill (6, -1) node{\tiny$1$} circle (.5pt);\fill (7, 0) node{\tiny$3$} circle (.5pt);\fill (6, 0) node{\tiny$3$} circle (.5pt);\fill (7, 1) node{\tiny$3$} circle (.5pt);\fill (6, 1) node{\tiny$5$} circle (.5pt);\fill (5, 1) node{\tiny$2$} circle (.5pt);\fill (6, 2) node{\tiny$3$} circle (.5pt);\fill (5, 2) node{\tiny$3$} circle (.5pt);\end{tikzpicture}};
\draw (25) node {\begin{tikzpicture}[scale=1/5] \draw (5, 2) -- (4, -1) -- (7, -1) -- (7, 2) -- cycle; \fill (4, -1) node{\tiny$1$} circle (.5pt);\fill (7, 2) node{\tiny$1$} circle (.5pt);\fill (7, -1) node{\tiny$1$} circle (.5pt);\fill (5, 2) node{\tiny$1$} circle (.5pt);\fill (6, -1) node{\tiny$3$} circle (.5pt);\fill (5, -1) node{\tiny$3$} circle (.5pt);\fill (7, 0) node{\tiny$3$} circle (.5pt);\fill (6, 0) node{\tiny$6$} circle (.5pt);\fill (5, 0) node{\tiny$3$} circle (.5pt);\fill (7, 1) node{\tiny$3$} circle (.5pt);\fill (6, 1) node{\tiny$5$} circle (.5pt);\fill (5, 1) node{\tiny$2$} circle (.5pt);\fill (6, 2) node{\tiny$2$} circle (.5pt);\end{tikzpicture}};
\draw (26) node {\begin{tikzpicture}[scale=1/5] \draw (5, -1) -- (4, 0) -- (6, 2) -- (7, 2) -- (7, 1) -- cycle; \fill (4, 0) node{\tiny$1$} circle (.5pt);\fill (5, -1) node{\tiny$1$} circle (.5pt);\fill (7, 2) node{\tiny$1$} circle (.5pt);\fill (7, 1) node{\tiny$1$} circle (.5pt);\fill (6, 2) node{\tiny$1$} circle (.5pt);\fill (6, 0) node{\tiny$2$} circle (.5pt);\fill (6, 1) node{\tiny$2$} circle (.5pt);\fill (5, 0) node{\tiny$1$} circle (.5pt);\fill (5, 1) node{\tiny$2$} circle (.5pt);\end{tikzpicture}};
\draw (27) node {\begin{tikzpicture}[scale=1/5] \draw (4, 2) -- (4, 0) -- (7, 1) -- (7, 2) -- cycle; \fill (4, 0) node{\tiny$1$} circle (.5pt);\fill (7, 2) node{\tiny$1$} circle (.5pt);\fill (4, 2) node{\tiny$1$} circle (.5pt);\fill (7, 1) node{\tiny$1$} circle (.5pt);\fill (5, 2) node{\tiny$3$} circle (.5pt);\fill (6, 2) node{\tiny$3$} circle (.5pt);\fill (4, 1) node{\tiny$2$} circle (.5pt);\fill (5, 1) node{\tiny$3$} circle (.5pt);\fill (6, 1) node{\tiny$2$} circle (.5pt);\end{tikzpicture}};
\draw (28) node {\begin{tikzpicture}[scale=1/5] \draw (7, 0) -- (4, 2) -- (7, 2) -- cycle; \fill (4, 2) node{\tiny$1$} circle (.5pt);\fill (7, 2) node{\tiny$1$} circle (.5pt);\fill (7, 0) node{\tiny$1$} circle (.5pt);\fill (6, 2) node{\tiny$3$} circle (.5pt);\fill (5, 2) node{\tiny$3$} circle (.5pt);\fill (7, 1) node{\tiny$2$} circle (.5pt);\fill (6, 1) node{\tiny$2$} circle (.5pt);\end{tikzpicture}};
\draw (29) node {\begin{tikzpicture}[scale=1/5] \draw (7, 0) -- (6, 2) -- (7, 2) -- cycle; \fill (6, 2) node{\tiny$1$} circle (.5pt);\fill (7, 0) node{\tiny$1$} circle (.5pt);\fill (7, 2) node{\tiny$1$} circle (.5pt);\fill (7, 1) node{\tiny$2$} circle (.5pt);\end{tikzpicture}};
\draw (30) node {\begin{tikzpicture}[scale=1/5] \draw (7, 0) -- (6, 2) -- (8, 2) -- (8, 0) -- cycle; \fill (6, 2) node{\tiny$1$} circle (.5pt);\fill (7, 0) node{\tiny$1$} circle (.5pt);\fill (8, 2) node{\tiny$1$} circle (.5pt);\fill (8, 0) node{\tiny$1$} circle (.5pt);\fill (8, 1) node{\tiny$2$} circle (.5pt);\fill (7, 1) node{\tiny$2$} circle (.5pt);\fill (7, 2) node{\tiny$2$} circle (.5pt);\end{tikzpicture}};
\draw (31) node {\begin{tikzpicture}[scale=1/5] \draw (7, 0) -- (6, 2) -- (9, 2) -- (9, 0) -- cycle; \fill (6, 2) node{\tiny$1$} circle (.5pt);\fill (7, 0) node{\tiny$1$} circle (.5pt);\fill (9, 2) node{\tiny$1$} circle (.5pt);\fill (9, 0) node{\tiny$1$} circle (.5pt);\fill (8, 0) node{\tiny$2$} circle (.5pt);\fill (9, 1) node{\tiny$2$} circle (.5pt);\fill (8, 1) node{\tiny$4$} circle (.5pt);\fill (7, 1) node{\tiny$2$} circle (.5pt);\fill (8, 2) node{\tiny$3$} circle (.5pt);\fill (7, 2) node{\tiny$3$} circle (.5pt);\end{tikzpicture}};
\draw (32) node {\begin{tikzpicture}[scale=1/5] \draw (6, 2) -- (6, 0) -- (7, -1) -- (8, 0) -- (8, 2) -- cycle; \fill (6, 0) node{\tiny$1$} circle (.5pt);\fill (8, 2) node{\tiny$1$} circle (.5pt);\fill (6, 2) node{\tiny$1$} circle (.5pt);\fill (7, -1) node{\tiny$1$} circle (.5pt);\fill (8, 0) node{\tiny$1$} circle (.5pt);\fill (8, 1) node{\tiny$2$} circle (.5pt);\fill (7, 2) node{\tiny$2$} circle (.5pt);\fill (7, 1) node{\tiny$4$} circle (.5pt);\fill (7, 0) node{\tiny$3$} circle (.5pt);\fill (6, 1) node{\tiny$2$} circle (.5pt);\end{tikzpicture}};
\draw (33) node {\begin{tikzpicture}[scale=1/5] \draw (6, 2) -- (6, 0) -- (7, -1) -- (8, -1) -- (9, 0) -- (9, 2) -- cycle; \fill (6, 0) node{\tiny$1$} circle (.5pt);\fill (9, 2) node{\tiny$1$} circle (.5pt);\fill (6, 2) node{\tiny$1$} circle (.5pt);\fill (7, -1) node{\tiny$1$} circle (.5pt);\fill (9, 0) node{\tiny$1$} circle (.5pt);\fill (8, -1) node{\tiny$1$} circle (.5pt);\fill (8, 0) node{\tiny$4$} circle (.5pt);\fill (7, 0) node{\tiny$4$} circle (.5pt);\fill (9, 1) node{\tiny$2$} circle (.5pt);\fill (8, 1) node{\tiny$6$} circle (.5pt);\fill (7, 1) node{\tiny$6$} circle (.5pt);\fill (6, 1) node{\tiny$2$} circle (.5pt);\fill (8, 2) node{\tiny$3$} circle (.5pt);\fill (7, 2) node{\tiny$3$} circle (.5pt);\end{tikzpicture}};
\draw (34) node {\begin{tikzpicture}[scale=1/5] \draw (7, 0) -- (6, 2) -- (9, 2) -- (9, -1) -- (8, -1) -- cycle; \fill (6, 2) node{\tiny$1$} circle (.5pt);\fill (7, 0) node{\tiny$1$} circle (.5pt);\fill (9, 2) node{\tiny$1$} circle (.5pt);\fill (9, -1) node{\tiny$1$} circle (.5pt);\fill (8, -1) node{\tiny$1$} circle (.5pt);\fill (9, 0) node{\tiny$3$} circle (.5pt);\fill (8, 0) node{\tiny$4$} circle (.5pt);\fill (9, 1) node{\tiny$3$} circle (.5pt);\fill (8, 1) node{\tiny$6$} circle (.5pt);\fill (7, 1) node{\tiny$3$} circle (.5pt);\fill (8, 2) node{\tiny$3$} circle (.5pt);\fill (7, 2) node{\tiny$3$} circle (.5pt);\end{tikzpicture}};
\draw (35) node {\begin{tikzpicture}[scale=1/5] \draw (6, 0) -- (6, -1) -- (9, -1) -- (9, 2) -- (7, 2) -- cycle; \fill (6, -1) node{\tiny$1$} circle (.5pt);\fill (6, 0) node{\tiny$1$} circle (.5pt);\fill (9, 2) node{\tiny$1$} circle (.5pt);\fill (9, -1) node{\tiny$1$} circle (.5pt);\fill (7, 2) node{\tiny$1$} circle (.5pt);\fill (7, -1) node{\tiny$3$} circle (.5pt);\fill (8, -1) node{\tiny$3$} circle (.5pt);\fill (7, 0) node{\tiny$5$} circle (.5pt);\fill (8, 0) node{\tiny$7$} circle (.5pt);\fill (9, 0) node{\tiny$3$} circle (.5pt);\fill (7, 1) node{\tiny$3$} circle (.5pt);\fill (8, 1) node{\tiny$6$} circle (.5pt);\fill (9, 1) node{\tiny$3$} circle (.5pt);\fill (8, 2) node{\tiny$2$} circle (.5pt);\end{tikzpicture}};
\draw (36) node {\begin{tikzpicture}[scale=1/5] \draw (6, 2) -- (6, 1) -- (7, -1) -- (8, -1) -- (8, 2) -- cycle; \fill (6, 1) node{\tiny$1$} circle (.5pt);\fill (6, 2) node{\tiny$1$} circle (.5pt);\fill (7, -1) node{\tiny$1$} circle (.5pt);\fill (8, 2) node{\tiny$1$} circle (.5pt);\fill (8, -1) node{\tiny$1$} circle (.5pt);\fill (8, 0) node{\tiny$3$} circle (.5pt);\fill (8, 1) node{\tiny$3$} circle (.5pt);\fill (7, 0) node{\tiny$3$} circle (.5pt);\fill (7, 1) node{\tiny$4$} circle (.5pt);\fill (7, 2) node{\tiny$2$} circle (.5pt);\end{tikzpicture}};
\draw (37) node {\begin{tikzpicture}[scale=1/5] \draw (12, 2) -- (11, 4) -- (13, 4) -- (14, 3) -- (14, 1) -- cycle; \fill (11, 4) node{\tiny$1$} circle (.5pt);\fill (12, 2) node{\tiny$1$} circle (.5pt);\fill (14, 3) node{\tiny$1$} circle (.5pt);\fill (14, 1) node{\tiny$1$} circle (.5pt);\fill (13, 4) node{\tiny$1$} circle (.5pt);\fill (12, 4) node{\tiny$2$} circle (.5pt);\fill (13, 3) node{\tiny$3$} circle (.5pt);\fill (14, 2) node{\tiny$2$} circle (.5pt);\fill (12, 3) node{\tiny$2$} circle (.5pt);\fill (13, 2) node{\tiny$1$} circle (.5pt);\end{tikzpicture}};
\draw (38) node {\begin{tikzpicture}[scale=1/5] \draw (7, 0) -- (6, 2) -- (8, 2) -- (8, 0) -- cycle; \fill (6, 2) node{\tiny$1$} circle (.5pt);\fill (7, 0) node{\tiny$1$} circle (.5pt);\fill (8, 2) node{\tiny$1$} circle (.5pt);\fill (8, 0) node{\tiny$1$} circle (.5pt);\fill (8, 1) node{\tiny$2$} circle (.5pt);\fill (7, 1) node{\tiny$3$} circle (.5pt);\fill (7, 2) node{\tiny$2$} circle (.5pt);\end{tikzpicture}};
\draw (39) node {\begin{tikzpicture}[scale=1/5] \draw (7, 0) -- (6, 2) -- (9, 2) -- (9, 0) -- cycle; \fill (6, 2) node{\tiny$1$} circle (.5pt);\fill (7, 0) node{\tiny$1$} circle (.5pt);\fill (9, 2) node{\tiny$1$} circle (.5pt);\fill (9, 0) node{\tiny$1$} circle (.5pt);\fill (8, 0) node{\tiny$2$} circle (.5pt);\fill (9, 1) node{\tiny$2$} circle (.5pt);\fill (8, 1) node{\tiny$5$} circle (.5pt);\fill (7, 1) node{\tiny$3$} circle (.5pt);\fill (8, 2) node{\tiny$3$} circle (.5pt);\fill (7, 2) node{\tiny$3$} circle (.5pt);\end{tikzpicture}};
\draw (40) node {\begin{tikzpicture}[scale=1/5] \draw (7, 0) -- (4, 2) -- (7, 2) -- cycle; \fill (4, 2) node{\tiny$1$} circle (.5pt);\fill (7, 2) node{\tiny$1$} circle (.5pt);\fill (7, 0) node{\tiny$1$} circle (.5pt);\fill (6, 2) node{\tiny$3$} circle (.5pt);\fill (5, 2) node{\tiny$3$} circle (.5pt);\fill (7, 1) node{\tiny$2$} circle (.5pt);\fill (6, 1) node{\tiny$3$} circle (.5pt);\end{tikzpicture}};
\draw (41) node {\begin{tikzpicture}[scale=1/5] \draw (5, 2) -- (4, -1) -- (7, -1) -- (7, 2) -- cycle; \fill (4, -1) node{\tiny$1$} circle (.5pt);\fill (7, 2) node{\tiny$1$} circle (.5pt);\fill (7, -1) node{\tiny$1$} circle (.5pt);\fill (5, 2) node{\tiny$1$} circle (.5pt);\fill (6, -1) node{\tiny$3$} circle (.5pt);\fill (5, -1) node{\tiny$3$} circle (.5pt);\fill (7, 0) node{\tiny$3$} circle (.5pt);\fill (6, 0) node{\tiny$6$} circle (.5pt);\fill (5, 0) node{\tiny$3$} circle (.5pt);\fill (7, 1) node{\tiny$3$} circle (.5pt);\fill (6, 1) node{\tiny$6$} circle (.5pt);\fill (5, 1) node{\tiny$3$} circle (.5pt);\fill (6, 2) node{\tiny$2$} circle (.5pt);\end{tikzpicture}};
\draw (42) node {\begin{tikzpicture}[scale=1/5] \draw (6, 2) -- (6, 0) -- (7, -1) -- (8, 0) -- (8, 2) -- cycle; \fill (6, 0) node{\tiny$1$} circle (.5pt);\fill (8, 2) node{\tiny$1$} circle (.5pt);\fill (6, 2) node{\tiny$1$} circle (.5pt);\fill (7, -1) node{\tiny$1$} circle (.5pt);\fill (8, 0) node{\tiny$1$} circle (.5pt);\fill (8, 1) node{\tiny$2$} circle (.5pt);\fill (7, 2) node{\tiny$2$} circle (.5pt);\fill (7, 1) node{\tiny$5$} circle (.5pt);\fill (7, 0) node{\tiny$4$} circle (.5pt);\fill (6, 1) node{\tiny$2$} circle (.5pt);\end{tikzpicture}};
\draw (43) node {\begin{tikzpicture}[scale=1/5] \draw (6, 2) -- (6, 0) -- (7, -1) -- (8, -1) -- (9, 0) -- (9, 2) -- cycle; \fill (6, 0) node{\tiny$1$} circle (.5pt);\fill (9, 2) node{\tiny$1$} circle (.5pt);\fill (6, 2) node{\tiny$1$} circle (.5pt);\fill (7, -1) node{\tiny$1$} circle (.5pt);\fill (9, 0) node{\tiny$1$} circle (.5pt);\fill (8, -1) node{\tiny$1$} circle (.5pt);\fill (8, 0) node{\tiny$5$} circle (.5pt);\fill (7, 0) node{\tiny$5$} circle (.5pt);\fill (9, 1) node{\tiny$2$} circle (.5pt);\fill (8, 1) node{\tiny$7$} circle (.5pt);\fill (7, 1) node{\tiny$7$} circle (.5pt);\fill (6, 1) node{\tiny$2$} circle (.5pt);\fill (8, 2) node{\tiny$3$} circle (.5pt);\fill (7, 2) node{\tiny$3$} circle (.5pt);\end{tikzpicture}};
\draw (44) node {\begin{tikzpicture}[scale=1/5] \draw (7, 0) -- (6, 2) -- (9, 2) -- (9, -1) -- (8, -1) -- cycle; \fill (6, 2) node{\tiny$1$} circle (.5pt);\fill (7, 0) node{\tiny$1$} circle (.5pt);\fill (9, 2) node{\tiny$1$} circle (.5pt);\fill (9, -1) node{\tiny$1$} circle (.5pt);\fill (8, -1) node{\tiny$1$} circle (.5pt);\fill (9, 0) node{\tiny$3$} circle (.5pt);\fill (8, 0) node{\tiny$5$} circle (.5pt);\fill (9, 1) node{\tiny$3$} circle (.5pt);\fill (8, 1) node{\tiny$7$} circle (.5pt);\fill (7, 1) node{\tiny$4$} circle (.5pt);\fill (8, 2) node{\tiny$3$} circle (.5pt);\fill (7, 2) node{\tiny$3$} circle (.5pt);\end{tikzpicture}};
\draw (45) node {\begin{tikzpicture}[scale=1/5] \draw (6, 0) -- (6, -1) -- (9, -1) -- (9, 2) -- (7, 2) -- cycle; \fill (6, -1) node{\tiny$1$} circle (.5pt);\fill (6, 0) node{\tiny$1$} circle (.5pt);\fill (9, 2) node{\tiny$1$} circle (.5pt);\fill (9, -1) node{\tiny$1$} circle (.5pt);\fill (7, 2) node{\tiny$1$} circle (.5pt);\fill (7, -1) node{\tiny$3$} circle (.5pt);\fill (8, -1) node{\tiny$3$} circle (.5pt);\fill (7, 0) node{\tiny$6$} circle (.5pt);\fill (8, 0) node{\tiny$8$} circle (.5pt);\fill (9, 0) node{\tiny$3$} circle (.5pt);\fill (7, 1) node{\tiny$4$} circle (.5pt);\fill (8, 1) node{\tiny$7$} circle (.5pt);\fill (9, 1) node{\tiny$3$} circle (.5pt);\fill (8, 2) node{\tiny$2$} circle (.5pt);\end{tikzpicture}};
\draw (46) node {\begin{tikzpicture}[scale=1/5] \draw (6, 2) -- (6, 1) -- (7, -1) -- (8, -1) -- (8, 2) -- cycle; \fill (6, 1) node{\tiny$1$} circle (.5pt);\fill (6, 2) node{\tiny$1$} circle (.5pt);\fill (7, -1) node{\tiny$1$} circle (.5pt);\fill (8, 2) node{\tiny$1$} circle (.5pt);\fill (8, -1) node{\tiny$1$} circle (.5pt);\fill (8, 0) node{\tiny$3$} circle (.5pt);\fill (8, 1) node{\tiny$3$} circle (.5pt);\fill (7, 0) node{\tiny$4$} circle (.5pt);\fill (7, 1) node{\tiny$5$} circle (.5pt);\fill (7, 2) node{\tiny$2$} circle (.5pt);\end{tikzpicture}};
\draw (47) node {\begin{tikzpicture}[scale=1/5] \draw (7, -1) -- (6, 2) -- (7, 2) -- cycle; \fill (6, 2) node{\tiny$1$} circle (.5pt);\fill (7, -1) node{\tiny$1$} circle (.5pt);\fill (7, 2) node{\tiny$1$} circle (.5pt);\fill (7, 0) node{\tiny$3$} circle (.5pt);\fill (7, 1) node{\tiny$3$} circle (.5pt);\end{tikzpicture}};
\draw (48) node {\begin{tikzpicture}[scale=1/5] \draw (4, 2) -- (4, 0) -- (7, 1) -- (7, 2) -- cycle; \fill (4, 0) node{\tiny$1$} circle (.5pt);\fill (7, 2) node{\tiny$1$} circle (.5pt);\fill (4, 2) node{\tiny$1$} circle (.5pt);\fill (7, 1) node{\tiny$1$} circle (.5pt);\fill (5, 2) node{\tiny$3$} circle (.5pt);\fill (6, 2) node{\tiny$3$} circle (.5pt);\fill (4, 1) node{\tiny$2$} circle (.5pt);\fill (5, 1) node{\tiny$3$} circle (.5pt);\fill (6, 1) node{\tiny$3$} circle (.5pt);\end{tikzpicture}};
\draw (49) node {\begin{tikzpicture}[scale=1/5] \draw (6, -1) -- (4, 2) -- (7, 2) -- (7, -1) -- cycle; \fill (4, 2) node{\tiny$1$} circle (.5pt);\fill (7, 2) node{\tiny$1$} circle (.5pt);\fill (7, -1) node{\tiny$1$} circle (.5pt);\fill (6, -1) node{\tiny$1$} circle (.5pt);\fill (7, 0) node{\tiny$3$} circle (.5pt);\fill (6, 0) node{\tiny$3$} circle (.5pt);\fill (7, 1) node{\tiny$3$} circle (.5pt);\fill (6, 1) node{\tiny$6$} circle (.5pt);\fill (5, 1) node{\tiny$3$} circle (.5pt);\fill (6, 2) node{\tiny$3$} circle (.5pt);\fill (5, 2) node{\tiny$3$} circle (.5pt);\end{tikzpicture}};
\draw (50) node {\begin{tikzpicture}[scale=1/5] \draw (6, 2) -- (6, 1) -- (7, -1) -- (7, 2) -- cycle; \fill (6, 1) node{\tiny$1$} circle (.5pt);\fill (6, 2) node{\tiny$1$} circle (.5pt);\fill (7, -1) node{\tiny$1$} circle (.5pt);\fill (7, 2) node{\tiny$1$} circle (.5pt);\fill (7, 0) node{\tiny$3$} circle (.5pt);\fill (7, 1) node{\tiny$3$} circle (.5pt);\end{tikzpicture}};
\draw (51) node {\begin{tikzpicture}[scale=1/5] \draw (4, 1) -- (4, 0) -- (7, 1) -- (7, 3) -- (6, 3) -- cycle; \fill (4, 0) node{\tiny$1$} circle (.5pt);\fill (4, 1) node{\tiny$1$} circle (.5pt);\fill (7, 3) node{\tiny$1$} circle (.5pt);\fill (7, 1) node{\tiny$1$} circle (.5pt);\fill (6, 3) node{\tiny$1$} circle (.5pt);\fill (5, 2) node{\tiny$2$} circle (.5pt);\fill (5, 1) node{\tiny$2$} circle (.5pt);\fill (6, 2) node{\tiny$2$} circle (.5pt);\fill (6, 1) node{\tiny$1$} circle (.5pt);\fill (7, 2) node{\tiny$2$} circle (.5pt);\end{tikzpicture}};
\draw (52) node {\begin{tikzpicture}[scale=1/5] \draw (6, 0) -- (5, 1) -- (6, 3) -- (7, 3) -- (7, 1) -- cycle; \fill (5, 1) node{\tiny$1$} circle (.5pt);\fill (6, 0) node{\tiny$1$} circle (.5pt);\fill (7, 3) node{\tiny$1$} circle (.5pt);\fill (7, 1) node{\tiny$1$} circle (.5pt);\fill (6, 3) node{\tiny$1$} circle (.5pt);\fill (7, 2) node{\tiny$2$} circle (.5pt);\fill (6, 1) node{\tiny$1$} circle (.5pt);\fill (6, 2) node{\tiny$1$} circle (.5pt);\end{tikzpicture}};
\draw (53) node {\begin{tikzpicture}[scale=1/5] \draw (6, 0) -- (5, 1) -- (6, 3) -- (8, 3) -- (8, 1) -- (7, 0) -- cycle; \fill (5, 1) node{\tiny$1$} circle (.5pt);\fill (6, 0) node{\tiny$1$} circle (.5pt);\fill (8, 3) node{\tiny$1$} circle (.5pt);\fill (8, 1) node{\tiny$1$} circle (.5pt);\fill (6, 3) node{\tiny$1$} circle (.5pt);\fill (7, 0) node{\tiny$1$} circle (.5pt);\fill (7, 1) node{\tiny$2$} circle (.5pt);\fill (6, 1) node{\tiny$2$} circle (.5pt);\fill (8, 2) node{\tiny$2$} circle (.5pt);\fill (7, 2) node{\tiny$3$} circle (.5pt);\fill (6, 2) node{\tiny$1$} circle (.5pt);\fill (7, 3) node{\tiny$2$} circle (.5pt);\end{tikzpicture}};
\draw (54) node {\begin{tikzpicture}[scale=1/5] \draw (4, 1) -- (4, 0) -- (5, 0) -- (7, 1) -- (7, 2) -- (6, 2) -- cycle; \fill (4, 0) node{\tiny$1$} circle (.5pt);\fill (4, 1) node{\tiny$1$} circle (.5pt);\fill (5, 0) node{\tiny$1$} circle (.5pt);\fill (7, 2) node{\tiny$1$} circle (.5pt);\fill (7, 1) node{\tiny$1$} circle (.5pt);\fill (6, 2) node{\tiny$1$} circle (.5pt);\fill (5, 1) node{\tiny$1$} circle (.5pt);\fill (6, 1) node{\tiny$1$} circle (.5pt);\end{tikzpicture}};
\draw (55) node {\begin{tikzpicture}[scale=1/5] \draw (4, 2) -- (4, 0) -- (5, 0) -- (7, 1) -- (7, 3) -- (6, 3) -- cycle; \fill (4, 0) node{\tiny$1$} circle (.5pt);\fill (7, 3) node{\tiny$1$} circle (.5pt);\fill (4, 2) node{\tiny$1$} circle (.5pt);\fill (5, 0) node{\tiny$1$} circle (.5pt);\fill (7, 1) node{\tiny$1$} circle (.5pt);\fill (6, 3) node{\tiny$1$} circle (.5pt);\fill (4, 1) node{\tiny$2$} circle (.5pt);\fill (5, 1) node{\tiny$2$} circle (.5pt);\fill (5, 2) node{\tiny$1$} circle (.5pt);\fill (6, 1) node{\tiny$1$} circle (.5pt);\fill (6, 2) node{\tiny$2$} circle (.5pt);\fill (7, 2) node{\tiny$2$} circle (.5pt);\end{tikzpicture}};
\draw (56) node {\begin{tikzpicture}[scale=1/5] \draw (4, 3) -- (4, 2) -- (5, 1) -- (7, 1) -- (6, 2) -- cycle; \fill (4, 2) node{\tiny$1$} circle (.5pt);\fill (4, 3) node{\tiny$1$} circle (.5pt);\fill (5, 1) node{\tiny$1$} circle (.5pt);\fill (7, 1) node{\tiny$1$} circle (.5pt);\fill (6, 2) node{\tiny$1$} circle (.5pt);\fill (6, 1) node{\tiny$2$} circle (.5pt);\fill (5, 2) node{\tiny$2$} circle (.5pt);\end{tikzpicture}};
\draw (57) node {\begin{tikzpicture}[scale=1/5] \draw (4, 3) -- (4, 2) -- (5, 1) -- (6, 1) -- (6, 2) -- (5, 3) -- cycle; \fill (4, 2) node{\tiny$1$} circle (.5pt);\fill (4, 3) node{\tiny$1$} circle (.5pt);\fill (5, 1) node{\tiny$1$} circle (.5pt);\fill (6, 2) node{\tiny$1$} circle (.5pt);\fill (5, 3) node{\tiny$1$} circle (.5pt);\fill (6, 1) node{\tiny$1$} circle (.5pt);\fill (5, 2) node{\tiny$2$} circle (.5pt);\end{tikzpicture}};
\draw (58) node {\begin{tikzpicture}[scale=1/5] \draw (4, 3) -- (4, 2) -- (5, 1) -- (7, 1) -- (7, 2) -- (6, 3) -- cycle; \fill (4, 2) node{\tiny$1$} circle (.5pt);\fill (4, 3) node{\tiny$1$} circle (.5pt);\fill (5, 1) node{\tiny$1$} circle (.5pt);\fill (7, 2) node{\tiny$1$} circle (.5pt);\fill (7, 1) node{\tiny$1$} circle (.5pt);\fill (6, 3) node{\tiny$1$} circle (.5pt);\fill (6, 1) node{\tiny$2$} circle (.5pt);\fill (5, 2) node{\tiny$3$} circle (.5pt);\fill (6, 2) node{\tiny$3$} circle (.5pt);\fill (5, 3) node{\tiny$2$} circle (.5pt);\end{tikzpicture}};
\draw (59) node {\begin{tikzpicture}[scale=1/5] \draw (4, 3) -- (4, 2) -- (5, 1) -- (6, 1) -- (7, 2) -- (7, 3) -- cycle; \fill (4, 2) node{\tiny$1$} circle (.5pt);\fill (4, 3) node{\tiny$1$} circle (.5pt);\fill (5, 1) node{\tiny$1$} circle (.5pt);\fill (7, 3) node{\tiny$1$} circle (.5pt);\fill (7, 2) node{\tiny$1$} circle (.5pt);\fill (6, 1) node{\tiny$1$} circle (.5pt);\fill (6, 2) node{\tiny$3$} circle (.5pt);\fill (5, 2) node{\tiny$3$} circle (.5pt);\fill (6, 3) node{\tiny$3$} circle (.5pt);\fill (5, 3) node{\tiny$3$} circle (.5pt);\end{tikzpicture}};
\draw (60) node {\begin{tikzpicture}[scale=1/5] \draw (5, 1) -- (4, 2) -- (6, 4) -- (7, 4) -- (7, 1) -- cycle; \fill (4, 2) node{\tiny$1$} circle (.5pt);\fill (5, 1) node{\tiny$1$} circle (.5pt);\fill (7, 4) node{\tiny$1$} circle (.5pt);\fill (7, 1) node{\tiny$1$} circle (.5pt);\fill (6, 4) node{\tiny$1$} circle (.5pt);\fill (6, 1) node{\tiny$2$} circle (.5pt);\fill (7, 2) node{\tiny$3$} circle (.5pt);\fill (6, 2) node{\tiny$5$} circle (.5pt);\fill (5, 2) node{\tiny$3$} circle (.5pt);\fill (7, 3) node{\tiny$3$} circle (.5pt);\fill (6, 3) node{\tiny$4$} circle (.5pt);\fill (5, 3) node{\tiny$2$} circle (.5pt);\end{tikzpicture}};
\draw (61) node {\begin{tikzpicture}[scale=1/5] \draw (5, 1) -- (4, 2) -- (6, 3) -- (7, 3) -- (7, 1) -- cycle; \fill (4, 2) node{\tiny$1$} circle (.5pt);\fill (5, 1) node{\tiny$1$} circle (.5pt);\fill (7, 3) node{\tiny$1$} circle (.5pt);\fill (7, 1) node{\tiny$1$} circle (.5pt);\fill (6, 3) node{\tiny$1$} circle (.5pt);\fill (6, 1) node{\tiny$2$} circle (.5pt);\fill (7, 2) node{\tiny$2$} circle (.5pt);\fill (6, 2) node{\tiny$3$} circle (.5pt);\fill (5, 2) node{\tiny$2$} circle (.5pt);\end{tikzpicture}};
\draw (62) node {\begin{tikzpicture}[scale=1/5] \draw (4, 3) -- (4, 2) -- (5, 1) -- (7, 1) -- (7, 4) -- (6, 4) -- cycle; \fill (4, 2) node{\tiny$1$} circle (.5pt);\fill (4, 3) node{\tiny$1$} circle (.5pt);\fill (5, 1) node{\tiny$1$} circle (.5pt);\fill (7, 4) node{\tiny$1$} circle (.5pt);\fill (7, 1) node{\tiny$1$} circle (.5pt);\fill (6, 4) node{\tiny$1$} circle (.5pt);\fill (6, 1) node{\tiny$2$} circle (.5pt);\fill (7, 2) node{\tiny$3$} circle (.5pt);\fill (6, 2) node{\tiny$5$} circle (.5pt);\fill (5, 2) node{\tiny$3$} circle (.5pt);\fill (7, 3) node{\tiny$3$} circle (.5pt);\fill (6, 3) node{\tiny$4$} circle (.5pt);\fill (5, 3) node{\tiny$2$} circle (.5pt);\end{tikzpicture}};
\draw (63) node {\begin{tikzpicture}[scale=1/5] \draw (5, 2) -- (5, -1) -- (7, 0) -- (7, 1) -- (6, 2) -- cycle; \fill (5, -1) node{\tiny$1$} circle (.5pt);\fill (7, 1) node{\tiny$1$} circle (.5pt);\fill (7, 0) node{\tiny$1$} circle (.5pt);\fill (5, 2) node{\tiny$1$} circle (.5pt);\fill (6, 2) node{\tiny$1$} circle (.5pt);\fill (5, 1) node{\tiny$3$} circle (.5pt);\fill (5, 0) node{\tiny$3$} circle (.5pt);\fill (6, 1) node{\tiny$2$} circle (.5pt);\fill (6, 0) node{\tiny$1$} circle (.5pt);\end{tikzpicture}};
\draw (64) node {\begin{tikzpicture}[scale=1/5] \draw (4, 4) -- (4, 2) -- (5, 1) -- (6, 1) -- (6, 2) -- cycle; \fill (4, 2) node{\tiny$1$} circle (.5pt);\fill (6, 2) node{\tiny$1$} circle (.5pt);\fill (4, 4) node{\tiny$1$} circle (.5pt);\fill (5, 1) node{\tiny$1$} circle (.5pt);\fill (6, 1) node{\tiny$1$} circle (.5pt);\fill (4, 3) node{\tiny$2$} circle (.5pt);\fill (5, 3) node{\tiny$2$} circle (.5pt);\fill (5, 2) node{\tiny$3$} circle (.5pt);\end{tikzpicture}};
\draw (65) node {\begin{tikzpicture}[scale=1/5] \draw (4, 4) -- (4, 2) -- (5, 1) -- (7, 1) -- (7, 2) -- (5, 4) -- cycle; \fill (4, 2) node{\tiny$1$} circle (.5pt);\fill (7, 2) node{\tiny$1$} circle (.5pt);\fill (4, 4) node{\tiny$1$} circle (.5pt);\fill (5, 1) node{\tiny$1$} circle (.5pt);\fill (7, 1) node{\tiny$1$} circle (.5pt);\fill (5, 4) node{\tiny$1$} circle (.5pt);\fill (6, 1) node{\tiny$2$} circle (.5pt);\fill (5, 2) node{\tiny$4$} circle (.5pt);\fill (4, 3) node{\tiny$2$} circle (.5pt);\fill (6, 2) node{\tiny$4$} circle (.5pt);\fill (5, 3) node{\tiny$4$} circle (.5pt);\fill (6, 3) node{\tiny$2$} circle (.5pt);\end{tikzpicture}};
\draw (66) node {\begin{tikzpicture}[scale=1/5] \draw (4, 4) -- (4, 2) -- (6, 1) -- (7, 1) -- cycle; \fill (4, 2) node{\tiny$1$} circle (.5pt);\fill (7, 1) node{\tiny$1$} circle (.5pt);\fill (4, 4) node{\tiny$1$} circle (.5pt);\fill (6, 1) node{\tiny$1$} circle (.5pt);\fill (5, 3) node{\tiny$3$} circle (.5pt);\fill (6, 2) node{\tiny$3$} circle (.5pt);\fill (4, 3) node{\tiny$2$} circle (.5pt);\fill (5, 2) node{\tiny$3$} circle (.5pt);\end{tikzpicture}};
\draw (67) node {\begin{tikzpicture}[scale=1/5] \draw (2, 2) -- (2, 0) -- (4, 0) -- (4, 2) -- cycle; \fill (2, 0) node{\tiny$1$} circle (.5pt);\fill (4, 2) node{\tiny$1$} circle (.5pt);\fill (2, 2) node{\tiny$1$} circle (.5pt);\fill (4, 0) node{\tiny$1$} circle (.5pt);\fill (2, 1) node{\tiny$2$} circle (.5pt);\fill (3, 0) node{\tiny$2$} circle (.5pt);\fill (3, 1) node{\tiny$4$} circle (.5pt);\fill (3, 2) node{\tiny$2$} circle (.5pt);\fill (4, 1) node{\tiny$2$} circle (.5pt);\end{tikzpicture}};
\end{tikzpicture}
\caption{A mutation graph showing mutation relationships between zero mutable Laurent polynomials (ZMLPs) with small Netwon polytopes.}
\label{fig:graph}
\end{figure}

\begin{expl}
\label{expl:tomjerrymut}
Consider the triangle $\triangle=\textup{Conv}\{(0,0),(0,2),(3,0)\}$. The corresponding toric variety is the affine cone over the weighted projective plane $\mathbb{P}(1,2,3)$. This has two smoothing components, called Tom and Jerry in the literature \cite{BKR}\cite{BRS}. Figure \ref{fig:tomjerrymut} shows the Laurent polynomials corresponding to Tom and Jerry and mutations to $f=1$, showing that they are zero mutable. The mutations are presented in blue, and $\varphi$ is given by its level sets.
\end{expl}

\begin{figure}[h!]
\centering
\begin{tikzpicture}[scale=.95]
\draw (1.5,3) node{Tom};
\draw[blue] (-.5,0) -- (3.5,0) node[right]{$\varphi=-3$};
\draw[blue] (-.5,1) -- (2,1) node[right]{$\varphi=-1$};
\draw[blue] (-.5,2) -- (.5,2) node[right]{$\varphi=1$};
\draw[blue] (2,2) node[right]{$h=1+x$};
\draw (0,0) -- (0,2) -- (3,0) -- cycle;
\fill (0,0) circle (2pt) node[below]{$1$};
\fill (1,0) circle (2pt) node[below]{$3$};
\fill (2,0) circle (2pt) node[below]{$3$};
\fill (3,0) circle (2pt) node[below]{$1$};
\fill (0,1) circle (2pt) node[left]{$2$};
\fill (1,1) circle (2pt) node[below]{$2$};
\fill (0,2) circle (2pt) node[left]{$1$};
\draw (1.5,-1.5) node{$(1+x)^3+2y(1+x)+y^2$};
\draw[->] (4,1) -- (5,1);
\draw[|->] (4,-1.5) -- (4.5,-1.5);
\draw[blue] (6,2.5) -- (6,-.5) node[below]{$\varphi=-2$};
\draw[blue] (7,2.5) -- (7,1) node[below]{$\varphi=0$};
\draw[blue] (6.5,0) node[right]{$h=1+y^{-1}$};
\draw (6,0) -- (6,2) -- (7,2) -- cycle;
\fill (6,0) circle (2pt) node[left]{$1$};
\fill (6,1) circle (2pt) node[left]{$2$};
\fill (6,2) circle (2pt) node[left]{$1$};
\fill (7,2) circle (2pt) node[right]{$1$};
\draw (6.5,-1.5) node{$y^2(1+y^{-1})^2+xy^2$};
\draw[->] (8,1) -- (9,1);
\draw[|->] (8.5,-1.5) -- (9,-1.5);
\draw[blue] (9.5,2) -- (11.5,2) node[right]{$\varphi=-1$};
\draw[blue] (10.5,0) node{$h=y^2(1+x)$};
\draw (10,2) -- (11,2);
\fill (10,2) circle (2pt) node[below]{$1$};
\fill (11,2) circle (2pt) node[below]{$1$};
\draw (10.5,-1.5) node{$y^2(1+x)$};
\draw[->] (12,1) -- (13,1);
\draw[|->] (12.5,-1.5) -- (13,-1.5);
\fill(14,0) circle (2pt) node[left]{$1$};
\draw (14,-1.5) node{$1$};
\end{tikzpicture}
\begin{tikzpicture}[scale=.95]
\draw (1.9,3) node{Jerry};
\draw[blue] (0,-.5) -- (0,2.5) node[above]{$\varphi=-2$};
\draw[blue] (1,-.5) -- (1,1.75) node[above]{$\varphi=-1$};
\draw[blue] (2,-.5) -- (2,1) node[above]{$\varphi=-0$};
\draw[blue] (3,-.5) -- (3,.25) node[above]{$\varphi=1$};
\draw[blue] (2,2) node[right]{$h=1+y$};
\draw (0,0) -- (0,2) -- (3,0) -- cycle;
\fill (0,0) circle (2pt) node[below]{$1$};
\fill (1,0) circle (2pt) node[below]{$3$};
\fill (2,0) circle (2pt) node[below]{$3$};
\fill (3,0) circle (2pt) node[below]{$1$};
\fill (0,1) circle (2pt) node[left]{$2$};
\fill (1,1) circle (2pt) node[below]{$3$};
\fill (0,2) circle (2pt) node[left]{$1$};
\draw (1.5,-1.5) node{$(1+y)^2+3x(1+y)+3x^2+x^3$};
\draw[->] (4,1) -- (5,1);
\draw[|->] (4.5,-1.5) -- (5,-1.5);
\draw[blue] (5.5,0) node[left]{$\varphi=-3$} -- (9.5,0);
\draw[blue] (7.5,1) node[left]{$\varphi=0$} -- (9.5,1);
\draw[blue] (7.5,2) node[right]{$h=1+x^{-1}$};
\draw (6,0) -- (9,0) -- (9,1) -- cycle;
\fill (6,0) circle (2pt) node[below]{$1$};
\fill (7,0) circle (2pt) node[below]{$3$};
\fill (8,0) circle (2pt) node[below]{$3$};
\fill (9,0) circle (2pt) node[below]{$1$};
\fill (9,1) circle (2pt) node[right]{$1$};
\draw (7.5,-1.5) node{$x^3(1+x^{-1})^3+x^3y$};
\draw[->] (10,1) -- (11,1);
\draw[|->] (10,-1.5) -- (10.5,-1.5);
\draw[blue] (12,-.5) -- (12,1.5) node[above]{$\varphi=-1$};
\draw[blue] (12,2.5) node{$h=x^3(1+y)$};
\draw (12,0) -- (12,1);
\fill (12,0) circle (2pt) node[right]{$1$};
\fill (12,1) circle (2pt) node[right]{$1$};
\draw (12,-1.5) node{$x^3(1+y)$};
\draw[->] (13,1) -- (14,1);
\draw[|->] (13.5,-1.5) -- (14,-1.5);
\fill(15,0) circle (2pt) node[left]{$1$};
\draw (15,-1.5) node{$1$};
\end{tikzpicture}
\caption{Sequences of mutations showing that Tom and Jerry is zero mutable.}
\label{fig:tomjerrymut}
\end{figure}
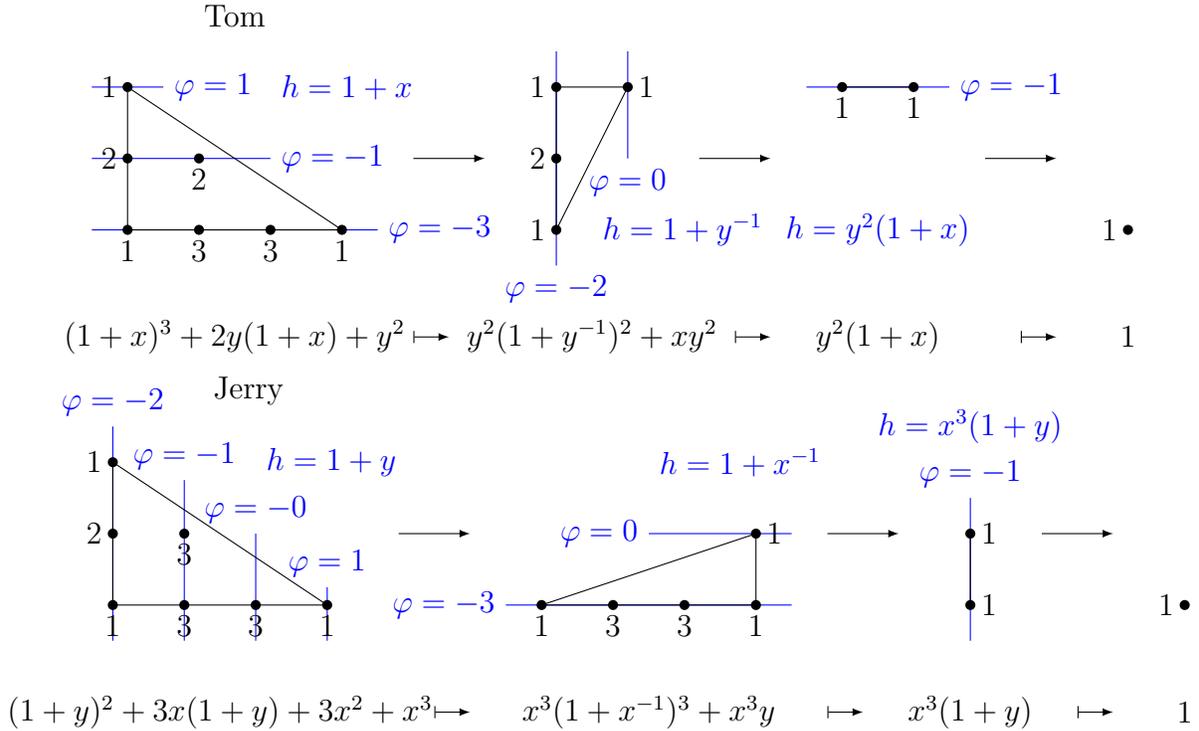

\begin{rem}
\begin{enumerate}[(1)]
\item The property of being zero mutable is preserved under automorphisms of $\mathbb{C}[M]$ and multiplication with monomials. If we represent $f$ by the coefficients on its Newton polytope, then zero mutability only depends on the isomorphisms class under transformations in the integral affine transformation group $\textup{GL}_n(\mathbb{Z})\ltimes\mathbb{Z}^n$ of $M\simeq\mathbb{Z}^n$.
\item In the projective case one considers mutations of Laurent polynomials supported on reflexive polytopes. A reflexive polytope has a unique interior lattice point that can be taken to be the origin. Then only mutations with respect to linear functions $\varphi$ (i.e., $\varphi(0)=0$) are considered, as in \cite{CKPT}.
\item Mutation acts on monomials by $\textup{mut}_{(\varphi,h)}(z^m) = z^m f^{\varphi(m)}$. If $\varphi$ is linear and $n_\varphi$ is the normal vector to the level sets of $\varphi$, then $\varphi(m)=\braket{m,n_\varphi}$ and $\textup{mut}_{(\varphi,h)}(z^m) = z^m f^{\braket{m,n_\varphi}}$ resembles a wall crossing morphism of a scattering diagram, see e.g. \cite{GS}.
\item Under mirror symmetry, Fano varieties correspond to Laurent polynomials \cite{CCG+}\cite{ACGK}. Hence, Conjecture \ref{conj:affine} is somewhat motivated by mirror symmetry. See \cite{CFP}, \S4.2, for more details. Together with the remark (3) above and the relation between scattering diagrams and toric degenerations \cite{GS} this motivated our proposal outlined in the introduction.
\end{enumerate}
\end{rem}

\begin{prop}
If $f$ is zero mutable, then its coefficients are positive integers.
\end{prop}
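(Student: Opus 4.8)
The plan is to argue by structural induction following the recursive Definition \ref{defi:zeromut}, proving the slightly cleaner statement that every zero mutable $f$ has nonnegative integer coefficients, so that in particular its nonzero coefficients are positive integers.

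The three non-recursive clauses are immediate. For a one-variable $f=(1+x^{\pm1})^kx^l$ as in (i), the coefficients are the binomial coefficients $\binom{k}{j}$, which are positive integers. Clause (ii) changes nothing, since the coefficients of $f$ as an element of $\mathbb{C}[M]$ are exactly those of $f\in\mathbb{C}[M']$. For a product $f=f_1f_2$ as in (iii), the coefficient of $z^m$ is $\sum_{m_1+m_2=m}c_{m_1}c'_{m_2}$; by the inductive hypothesis the $c_{m_1}$ and $c'_{m_2}$ are nonnegative integers, so there is no cancellation and the coefficients of $f$ are again nonnegative integers.

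The real content is clause (iv). Here I would reconstruct $f$ from $1$ by inverse mutations: if $\textup{mut}_{(\varphi,h)}(f)=f'$ then $f=\textup{mut}_{(-\varphi,h)}(f')$, so $f$ is obtained from $1$ by finitely many mutations, each with a zero mutable $h$ that is nonnegative by the outer induction. Inducting on the length of this sequence, it suffices to show that a single mutation $\textup{mut}_{(\psi,h)}$ sends a Laurent polynomial $g$ with nonnegative integer coefficients to one with nonnegative integer coefficients, where $h$ is zero mutable and $g$ is mutable with respect to $(\psi,h)$. Writing $g=\sum_k g_k$, the slices with $k\ge 0$ are multiplied by $h^k$ and cause no trouble. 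Integrality of the remaining slices $g_k/h^{|k|}$ for $k<0$ is also unproblematic: a zero mutable $h$ carries the coefficient $1$ at each vertex of its Newton polytope (an easy addition to the induction, via the same three easy clauses), so after factoring out the vertex monomial $h$ becomes a unit $1+(\cdots)$ in the relevant completed local ring, and division by $h^{|k|}$ preserves integrality.

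The main obstacle is \emph{positivity} of these division steps. It is \emph{not} true that dividing an arbitrary nonnegative polynomial by a power of a nonnegative $h$ yields a nonnegative quotient, as $1+x^3=(1+x)(1-x+x^2)$ shows; so positivity genuinely uses that $g_k$ is a slice of a zero mutable polynomial and not an arbitrary multiple of $h^{|k|}$. My plan is therefore to strengthen the inductive hypothesis to record the structure of the slices: I would carry along the assertion that, in every mutation direction occurring in the recursion, each slice $g_k$ factors as a monomial times a power of $h$ times a zero mutable (hence nonnegative) polynomial, with the power of $h$ at least $|k|$ when $k<0$. Granting this, the quotient $g_k/h^{|k|}$ is again a monomial times a power of $h$ times a nonnegative polynomial, hence nonnegative, which closes the induction. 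Establishing this slice decomposition --- equivalently, that the divisibility forced by mutability is always realized by an honest nonnegative factorization --- is the crux, and is precisely where the combinatorics of how mutations assemble zero mutable polynomials (as in the Tom and Jerry computations of Figure \ref{fig:tomjerrymut}) has to be used.
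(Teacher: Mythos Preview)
Your inductive scheme is exactly the paper's: its entire proof is the single sentence ``$f=1$ has positive integer coefficients and all products and mutations of Laurent polynomials with positive integer coefficients have positive integer coefficients.'' You are right that the second clause, read literally, is where all the content lies and is not justified there; your example $1+x^3=(1+x)(1-x+x^2)$ shows that dividing a positive-coefficient slice by a power of a zero mutable $h$ need not yield a positive result, so the paper's one-line argument glosses over the very gap you isolate. In that sense you have already gone further than the paper.

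However, your proposed repair does not go through as stated. In dimension two the factor you call ``zero mutable'' must, by clause~(i), be of the form $(1+x^{\pm1})^e x^m$, so your strengthened invariant forces every relevant slice to be a scalar multiple of a single power of $h$. This already fails for the backward chain of the $\triangle(5,7)$ polynomial in Example~\ref{expl:nontriangular}: undoing the first mutation of Figure~\ref{fig:57} (with $h=1+x$) requires dividing the $y=4$ slice $5+14x+13x^2+4x^3=(1+x)^2(5+4x)$ of the intermediate polynomial by $(1+x)^2$, and the cofactor $5+4x$ is neither a monomial nor one-variable zero mutable. The quotient is still positive, so the proposition survives here, but not via your invariant. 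Replacing ``zero mutable'' by merely ``nonnegative'' in your hypothesis collapses it to the statement being proved and gives no inductive leverage. A genuine proof needs an invariant weak enough to accommodate cofactors like $5+4x$ yet stable under all earlier mutations in the chain; neither your proposal nor the paper's proof supplies one.
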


\begin{proof}
$f=1$ has positive integer coefficients and all products and mutations of Laurent polynomials with positive  integer coefficients have positive integer coefficients.
\end{proof}

\begin{prop}
If $f$ is zero mutable, then its restrictions to all facets are zero mutable.
\end{prop}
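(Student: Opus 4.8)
The plan is to prove the stronger statement that the restriction $f|_F = \sum_{m\in F} c_m z^m$ of a zero mutable $f$ to \emph{any} face $F$ of $\textup{Newt}(f)$ is again zero mutable; the case of facets is then a special case. Here $f|_F$ is just the initial form of $f$ with respect to an outer normal $u$ of $F$. I would argue by induction along the recursive structure of Definition \ref{defi:zeromut}, treating the four clauses (i)--(iv) in turn.

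Clauses (i)--(iii) are the routine part. In (i) the Newton polytope is a segment, its proper faces are the two endpoints, and the corresponding restrictions are the corner monomials $z^m$ (coefficient $1$), which are zero mutable as $(1+x^{\pm1})^0 x^l$. Clause (ii) is immediate, since passing to a saturated affine sublattice $M'$ changes neither the faces nor the restrictions of $\textup{Newt}(f)$, and zero mutability in $M'$ implies zero mutability in $M$ by (ii) again. For (iii), if $f=f_1f_2$ then $\textup{Newt}(f)=\textup{Newt}(f_1)+\textup{Newt}(f_2)$, and the face of a Minkowski sum in a direction $u$ splits as $F=F_1+F_2$ with $F_i$ the $u$-face of $\textup{Newt}(f_i)$; multiplicativity of initial forms gives $f|_F=(f_1|_{F_1})(f_2|_{F_2})$. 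By the inductive hypothesis each $f_i|_{F_i}$ is zero mutable, so the product is zero mutable by (iii). It is precisely here that I need the statement for \emph{all} faces rather than only facets, since $F_i$ may have any dimension even when $F$ is a facet.

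The essential case is (iv), where $f$ is irreducible and a chain of mutations $f=g_0\to g_1\to\cdots\to g_N=1$ with zero mutable factors $h_i$ is given. I would fix a face $F$ of $\textup{Newt}(f)$ and track a corresponding face $F^{(i)}$ of each $\textup{Newt}(g_i)$ through the chain, using that a mutation $\textup{mut}_{(\varphi_i,h_i)}$ shears the slices of the Newton polytope within the level sets of $\varphi_i$. Writing $u=(u_1,u')$ for the normal, there are two behaviours. If $\varphi_i$ is non-constant on $F^{(i)}$, then restriction commutes with mutation: $g_{i+1}|_{F^{(i+1)}}=\textup{mut}_{(\varphi_i|_{F},\,\textup{in}_{u'}h_i)}(g_i|_{F^{(i)}})$, and the factor $\textup{in}_{u'}h_i$ is a face restriction of $h_i$, hence zero mutable by the inductive hypothesis applied to $h_i$ (whose derivation is strictly smaller). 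If instead $F^{(i)}$ lies in a single level set $\{\varphi_i=k_0\}$, then $g_{i+1}|_{F^{(i+1)}}=(g_i|_{F^{(i)}})\cdot(\textup{in}_{u'}h_i)^{k_0}$, a multiplication by a power of a zero mutable restriction of $h_i$. Reading the chain from $g_N=1$ (whose unique face restriction is $1$) back to $g_0=f$ and composing these steps then exhibits $f|_F$ as built from $1$ by zero mutable mutations and multiplications by zero mutable polynomials, which is a derivation of zero mutability for $f|_F$.

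The main obstacle is the sign bookkeeping in the horizontal case. For $k_0\le 0$ the identity $g_i|_{F^{(i)}}=(g_{i+1}|_{F^{(i+1)}})\cdot(\textup{in}_{u'}h_i)^{-k_0}$ expresses the earlier restriction as a genuine \emph{product} of zero mutable polynomials, which is exactly what the reconstruction from $1$ needs; but a horizontal face at level $k_0>0$ forces a \emph{division} by $(\textup{in}_{u'}h_i)^{k_0}$, and it is not formal that a quotient of zero mutable polynomials is zero mutable. The point to establish is that in a chain contracting $\textup{Newt}(f)$ to a point the non-monomial action of each mutation is concentrated on the negative level sets (where the divisibility condition of Definition \ref{defi:mut} applies and the slices shrink), so that when the face restrictions are reconstructed from $1$ one never needs to divide by a non-monomial factor. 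Making this last reduction precise---equivalently, controlling the divisibility of $g_i|_{F^{(i)}}$ by the relevant powers of $\textup{in}_{u'}h_i$ along the whole chain---is the delicate step, and it is exactly the edgewise divisibility behaviour that the divisibility-step description of zero mutable polynomials is designed to encode.
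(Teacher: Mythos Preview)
Your approach coincides with the paper's: reverse the chain to $1=f_0\to\cdots\to f_n=f$ and induct along it, noting that a newly created facet of $\textup{Newt}(f_i)$ arises as $z^m\cdot h^k$ for some zero mutable $h$ and $k>0$. The paper's proof literally stops at that sentence; you go further and track persisting facets as well, and you correctly observe that clause~(iii) already forces one to prove the statement for arbitrary faces, since a facet of a Minkowski sum decomposes into summand faces of any dimension. Both refinements are genuine improvements over the paper's one-line sketch.

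The obstacle you isolate---a horizontal face at level $k_0>0$ forcing division by $(\textup{in}_{u'}h_i)^{k_0}$ in the reconstruction---is real, but your proposed resolution does not work: nothing prevents a single mutation in a chain $f\to 1$ from expanding some slices while contracting others, so the claim that the ``non-monomial action is concentrated on negative level sets'' fails in general, and the divisibility-step machinery you invoke is developed only later and only for $n=2$. The actual fix in the case the paper uses ($\dim M=2$) is much simpler than you suggest. A horizontal facet is then an edge, and both its restriction and the factor $h$ live in a rank-one sublattice, where by clause~(i) zero mutable means exactly $(1+z^{\pm1})^j z^l$; this class is visibly closed under taking quotients whenever the quotient is again a Laurent polynomial, so the division step is harmless and no further machinery is needed. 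For $\dim M\ge 3$ the horizontal division step is not settled by either your argument or the paper's.
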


\begin{proof}
Let $f$ be zero mutable. Restriction to facets commutes with taking products, so we can assume that $f$ is irreducible. Let $1 = f_0 \mapsto f_1 \mapsto \ldots f_{n-1} \mapsto f_n = f$ be a sequence of mutations from $1$ to $f$. We show that the statement is true for all $f_i$. For $f_0=1$ there is nothing to show. Assume that the statement holds for $f_{i-1}$. A new facet in $f_i$ is obtained from $f_{i-1}$ by multiplying a monomial $z^m$ with a zero mutable $h$. Hence, the coefficients on the facet are zero mutable.
\end{proof}

\begin{cor}
\label{cor:binomial}
Let $f$ be zero mutable and let $m$ be a lattice point on an edge $e$ of the Newton polytope of $f$. The the $z^m$-coefficient of $f$ is $\binom{\ell(e)}{k}$, where $\ell(e)$ is the lattice length of $e$ and $k$ is the lattice distance of $m$ from a vertex of $e$.
\end{cor}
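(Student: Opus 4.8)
The plan is to reduce the statement to the one-variable case and then invoke the classification of one-variable ZMLPs in Definition \ref{defi:zeromut}(i). The key input is the preceding proposition, which says that the restriction of a zero mutable Laurent polynomial to a facet is again zero mutable. First I would upgrade this from facets to arbitrary faces. Writing $P=\textup{Newt}(f)$, the face lattice of $P$ supplies, for any edge $e$, a chain $e=F_0\subset F_1\subset\cdots\subset F_d=P$ in which each $F_i$ is a facet of $F_{i+1}$. Since restriction to a face merely selects the terms supported on that face, and $F_i\subset F_{i+1}$, restriction is transitive, $f|_{F_i}=(f|_{F_{i+1}})|_{F_i}$. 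Hence by induction on codimension, applying the facet-restriction proposition at each step, the restriction $f|_e$ is a zero mutable Laurent polynomial.

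Next I would identify $f|_e$ with a Laurent polynomial in one variable. The segment $e$ spans a rank-$1$ affine sublattice of $M$, so after an integral affine transformation in $\textup{GL}_n(\mathbb{Z})\ltimes\mathbb{Z}^n$ — under which zero mutability is invariant by Remark (1) — I may assume $e=[0,\ell(e)]$ lies on the first coordinate axis and that $f|_e\in\mathbb{C}[x^{\pm1}]$. Because the two endpoints of $e$ are vertices of $P$, their coefficients in $f$ are nonzero, so $\textup{Newt}(f|_e)=e$ is exactly the segment of lattice length $\ell(e)$.

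Now I would apply Definition \ref{defi:zeromut}(i): a one-variable zero mutable Laurent polynomial has the form $(1+x^{\pm1})^k x^l$ (and the product and sublattice clauses (ii), (iii) produce nothing new in one variable, since they preserve this shape). Matching Newton polytopes forces $k=\ell(e)$, and matching the support to the segment $[0,\ell(e)]$ with both endpoint coefficients nonzero forces $l=0$ together with the sign choice $f|_e=(1+x)^{\ell(e)}$. Reading off coefficients, the lattice point at distance $k$ from the chosen vertex (that is, the monomial $x^k$) carries coefficient $\binom{\ell(e)}{k}$, as claimed; the symmetry $\binom{\ell(e)}{k}=\binom{\ell(e)}{\ell(e)-k}$ makes the answer independent of which endpoint is taken as the base vertex.

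I expect the only genuine subtlety to be the passage from facets to arbitrary faces in the first step, and even this is routine once one notes the transitivity of restriction and the existence of saturated chains in the face lattice of a polytope; everything after that is the standard description of one-variable zero mutable Laurent polynomials, so I do not anticipate real difficulty beyond bookkeeping.
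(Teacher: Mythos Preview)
Your argument is correct and is exactly the intended one: the paper gives no separate proof for this corollary, treating it as an immediate consequence of the preceding proposition together with Definition~\ref{defi:zeromut}(i). Your careful iteration through a saturated chain of faces is the right way to pass from facets to edges when $n>2$; in the paper's primary setting $n=2$ edges already are facets, so the corollary follows in one step.
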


\subsection{ZMLPs on polygons: divisibility steps}												

We explain how zero mutable Laurent polynomials on polygons induce tuples of decreasing partitions that we call divisibility steps.

\begin{defi}
\label{defi:div}
Let $f$ be a Laurent polynomial with Newton polygon $\triangle=\textup{Newt}(f)$. For each edge $e$ of $\triangle$ let $m_e$ be a primitive tangent vector of $e$ and let $n_e$ be the primitive inner normal vector. We can write
\[ f = \sum_{k=k_{\textup{min}}}^{k_{\textup{max}}} f_k, \]
where $f_k$ is supported on $\triangle_k = \{m \in \triangle \cap M \ | \ \braket{m,n_e} = k \}$, $k_{\textup{min}} = \min\{k \ | \ \triangle_k \neq \emptyset\}$ and $k_{\textup{max}} = \max\{k \ | \ \triangle_k \neq \emptyset\}$. Let $d_k$ be the multiplicity of $1+z^{m_e}$ in $f_k$, i.e. we can write
\[ f_k = (1+z^{m_e})^{d_k} r_k \]
for a Laurent polynomial $r_k$ not divisible by $1+z^{m_e}$. Note that $d_k$ does not depend on the choice of primitive tangent vector $m_e$, since we can multiply $1+z^{m_e}$ by $z^{-m_e}$. If $f_k=0$, we define $d_k=\infty$, see Example \ref{expl:inf}. Let $d=\min(\{d_k\})$.  Define the \emph{divisibility tuple} of $f$ with respect to $e$ by
\[ \mathbf{div}_e(f) = \left(d_k\right)_{k=k_{\textup{min}}}^{k_{\textup{max}}}. \]
\end{defi}

The divisibility tuples give conditions on the coefficients of a Laurent polynomial. It might happen that these conditions are linearly dependent, so that a smaller tuple gives the same conditions. This leads us to the following definition.

We write $\mathbf{d}\leq \mathbf{d}'$ for tuples $\mathbf{d}=(d_1,\ldots,d_r)$ and $\mathbf{d}'=(d'_1,\ldots,d'_r)$ of the same length if $d_i\leq d'_i$ for all $i=1,\ldots,r$, and hence can talk about the smallest tuple with a given length.

\begin{defi}
\label{defi:reqdiv}
Let $f$ be a Laurent polynomial with Netwon polygon $\triangle=\textup{Newt}(f)$. The \emph{required divisibility tuples} of $f$ are the collection (over the edges $e$ of $\triangle$) of smallest tuples $\mathbf{reqdiv}_e(f)$ of length equal to the length of $\mathbf{div}_e(f)$ that satisfy the following property. If $f'$ is a Laurent polynomial with $\textup{Newt}(f')=\triangle$ and for all edges $e$ of $\triangle$ we have $\mathbf{reqdiv}_e(f)\leq \mathbf{div}_e(f')$, then $\mathbf{div}_e(f)=\mathbf{div}_e(f')$. This is well-defined up to automorphisms of $\triangle=\textup{Newt}(f)$. Note that the elements of $\mathbf{reqdiv}_e(f)$ are finite, while the elements of $\mathbf{div}_e(f)$ might be infinite, see Example \ref{expl:inf}.

Write $\mathbf{reqdiv}_e(f)=(d_k)_{k=k_{\textup{min}}}^{k_{\textup{max}}}$. Define the \emph{required divisibility} with respect to an edge $e$ by $\textup{reqdiv}_e(f)=\sum_{k=k_{\textup{min}}}^{k_{\textup{max}}}d_k$ and the \emph{(total) required divisibility} by $\textup{reqdiv}(f)= \sum_e \textup{reqdiv}_e(f)$, where for parallel edges $e,e'$ we pick one, e.g. $e$, and only include $e$ in the sum, but not $e'$. This does not depend on the choice of $e$, since $\mathbf{reqdiv}_e(f)$ and $\mathbf{reqdiv}_{e'}(f)$ are the reverse of each other, hence $\textup{reqdiv}_e(f)=\textup{reqdiv}_{e'}(f)$.
\end{defi}

\begin{defi}
\label{defi:divstep}
Let $f$ be an Laurent polynomial with Newton polygon $\triangle=\textup{Newt}(f)$. Let $e$ be an edge of $\triangle$ and write $\mathbf{reqdiv}_e(f)=(d_k)_{k=k_{\textup{min}}}^{k_{\textup{max}}}$. 

If $f$ is irreducible, then $\min(\{d_k\})=0$, since otherwise $f$ would be divisible by $(1+z^{m_e})^d$ for $d=\min(\{d_k\})$. In this case, let $\overline{k}_{\textup{max}}=\min(\{k \ | | d_k=0\})$ and define the \emph{divisibility steps} of $f$ with respect to $e$ by
\[ \mathbf{reqdivstep}_e(f)=(d_k-d_{k+1})_{k=k_{\textup{min}}}^{\overline{k}_{\textup{max}}-1}. \]
We will show in Proposition \ref{prop:divstep} that $\mathbf{reqdivstep}_e(f)$ is a decreasing tuple.

If $f$ is reducible, let $f=f_1\cdots f_r$ be a factorization into irreducible components and define $\mathbf{reqdivstep}_e(f)$ as the concatenation of decreasing tuples
\[ \mathbf{reqdivstep}_e(f) = \mathbf{reqdivstep}_e(f_1) \circ \ldots \circ \mathbf{reqdivstep}_e(f_r). \]
\end{defi}

\begin{expl}
\label{expl:tomjerrydiv}
For Tom and Jerry from Example \ref{expl:tomjerrymut}, Figure \ref{fig:tomjerrydiv} shows the divisibility tuples, which are equal to the required divisibility tuples, and the divisibility steps.
\end{expl}

\begin{figure}[h!]
\centering
\begin{tikzpicture}[scale=1]
\draw (1.5,2.6) node{Tom};
\draw[black!40!red] (1.5,-2.9) node{$\mathbf{divstep}(f)=((2,1),(2),(1))$};
\draw[black!40!green] (0,2.2) -- (0,-1) node[below]{$2$};
\draw[black!40!green] (1,2.2) -- (1,-1) node[below]{$0$};
\draw[black!40!green] (2,2.2) -- (2,-1) node[below]{$0$};
\draw[black!40!green] (3,2.2) -- (3,-1) node[below]{$0$};
\draw[->,black!40!red] (0,-1.7) to[bend right=30] node[midway,below]{$2$} (1,-1.7);
\draw[black!40!green] (3.2,0) -- (-1,0) node[left]{$3$};
\draw[black!40!green] (3.2,1) -- (-1,1) node[left]{$1$};
\draw[black!40!green] (3.2,2) -- (-1,2) node[left]{$0$};
\draw[->,black!40!red] (-1.6,0) to[bend left=30] node[midway,left]{$2$} (-1.6,1);
\draw[->,black!40!red] (-1.6,1) to[bend left=30] node[midway,left]{$1$} (-1.6,2);
\draw (0,0) -- (0,2) -- (3,0) -- (0,0);
\fill (0,0) circle (1.5pt) node[below]{$1$};
\fill (1,0) circle (1.5pt) node[below]{$3$};
\fill (2,0) circle (1.5pt) node[below]{$3$};
\fill (3,0) circle (1.5pt) node[below]{$1$};
\fill (0,1) circle (1.5pt) node[left]{$2$};
\fill (1,1) circle (1.5pt) node[below]{$2$};
\fill (0,2) circle (1.5pt) node[left]{$1$};
\end{tikzpicture}
\hspace{5mm}
\begin{tikzpicture}[scale=1]
\draw (1.5,2.6) node{Jerry};
\draw[black!40!red] (1.5,-2.9) node{$\mathbf{divstep}(f)=((3),(1,1),(1))$};
\draw[black!40!green] (0,2.2) -- (0,-1) node[below]{$2$};
\draw[black!40!green] (1,2.2) -- (1,-1) node[below]{$1$};
\draw[black!40!green] (2,2.2) -- (2,-1) node[below]{$0$};
\draw[black!40!green] (3,2.2) -- (3,-1) node[below]{$0$};
\draw[->,black!40!red] (0,-1.7) to[bend right=30] node[midway,below]{$1$} (1,-1.7);
\draw[->,black!40!red] (1,-1.7) to[bend right=30] node[midway,below]{$1$} (2,-1.7);
\draw[black!40!green] (3.2,0) -- (-1,0) node[left]{$3$};
\draw[black!40!green] (3.2,1) -- (-1,1) node[left]{$0$};
\draw[black!40!green] (3.2,2) -- (-1,2) node[left]{$0$};
\draw[->,black!40!red] (-1.6,0) to[bend left=30] node[midway,left]{$3$} (-1.6,1);
\draw (0,0) -- (0,2) -- (3,0) -- (0,0);
\fill (0,0) circle (1.5pt) node[below]{$1$};
\fill (1,0) circle (1.5pt) node[below]{$3$};
\fill (2,0) circle (1.5pt) node[below]{$3$};
\fill (3,0) circle (1.5pt) node[below]{$1$};
\fill (0,1) circle (1.5pt) node[left]{$2$};
\fill (1,1) circle (1.5pt) node[below]{$3$};
\fill (0,2) circle (1.5pt) node[left]{$1$};
\end{tikzpicture}
\caption{Zero mutable Laurent polynomials for Tom (left) and Jerry (right) with (required) divisibility tuples (green) and divisibility steps (red).}
\label{fig:tomjerrydiv}
\end{figure}

\begin{expl}
Figure \ref{fig:reqdiv} shows a zero mutable Laurent polynomial with $\textup{reqdiv}(f) < \textup{div}(f)$. This is the smallest example on a rectangular triangle with this property that we could find.
\end{expl}

\begin{figure}[h!]
\centering
\begin{tikzpicture}[scale=.7]
\draw[black!40!red] (3.5,-3.6) node{$\mathbf{divstep}(f)=((3,2,2),(2,1,1,1),(1))$};
\draw[black!40!green] (0,5.2) -- (0,-1) node[below]{$5$};
\draw[black!40!green] (1,5.2) -- (1,-1) node[below]{$3$};
\draw[black!40!green] (2,5.2) -- (2,-1) node[below]{$2$};
\draw[black!40!green] (3,5.2) -- (3,-1) node[below]{$1$};
\draw[black!40!green] (4,5.2) -- (4,-1) node[below]{$0$};
\draw[black!40!green] (5,5.2) -- (5,-1) node[below]{$0$};
\draw[black!40!green] (6,5.2) -- (6,-1) node[below]{$0$};
\draw[black!40!green] (7,5.2) -- (7,-1) node[below]{$0$};
\draw[black!40!red] (0,-1.5) node[below]{$5$};
\draw[black!40!red] (1,-1.5) node[below]{$3$};
\draw[black!40!red] (2,-1.5) node[below]{$2$};
\draw[black!40!red] (3,-1.5) node[below]{$1$};
\draw[black!40!red] (4,-1.5) node[below]{$0$};
\draw[black!40!red] (5,-1.5) node[below]{$0$};
\draw[black!40!red] (6,-1.5) node[below]{$0$};
\draw[black!40!red] (7,-1.5) node[below]{$0$};
\draw[->,black!40!red] (0,-2.2) to[bend right=30] node[midway,below]{$2$} (1,-2.2);
\draw[->,black!40!red] (1,-2.2) to[bend right=30] node[midway,below]{$1$} (2,-2.2);
\draw[->,black!40!red] (2,-2.2) to[bend right=30] node[midway,below]{$1$} (3,-2.2);
\draw[->,black!40!red] (3,-2.2) to[bend right=30] node[midway,below]{$1$} (4,-2.2);
\draw[black!40!green] (7.2,0) -- (-1,0) node[left]{$7$};
\draw[black!40!green] (7.2,1) -- (-1,1) node[left]{$5$};
\draw[black!40!green] (7.2,2) -- (-1,2) node[left]{$2$};
\draw[black!40!green] (7.2,3) -- (-1,3) node[left]{$0$};
\draw[black!40!green] (7.2,4) -- (-1,4) node[left]{$0$};
\draw[black!40!green] (7.2,5) -- (-1,5) node[left]{$0$};
\draw[black!40!red] (-1.5,0) node[left]{$7$};
\draw[black!40!red] (-1.5,1) node[left]{$4$};
\draw[black!40!red] (-1.5,2) node[left]{$2$};
\draw[black!40!red] (-1.5,3) node[left]{$0$};
\draw[black!40!red] (-1.5,4) node[left]{$0$};
\draw[black!40!red] (-1.5,5) node[left]{$0$};
\draw[->,black!40!red] (-2.2,0) to[bend left=30] node[midway,left]{$3$} (-2.2,1);
\draw[->,black!40!red] (-2.2,1) to[bend left=30] node[midway,left]{$2$} (-2.2,2);
\draw[->,black!40!red] (-2.2,2) to[bend left=30] node[midway,left]{$2$} (-2.2,3);
\draw (0,0) -- (7,0) -- (0,5) -- cycle;
\fill (0,0) circle (2pt) node[below]{$1$};
\fill (1,0) circle (2pt) node[below]{$7$};
\fill (2,0) circle (2pt) node[below]{$21$};
\fill (3,0) circle (2pt) node[below]{$35$};
\fill (4,0) circle (2pt) node[below]{$35$};
\fill (5,0) circle (2pt) node[below]{$21$};
\fill (6,0) circle (2pt) node[below]{$7$};
\fill (7,0) circle (2pt) node[below]{$1$};
\fill (0,1) circle (2pt) node[below]{$5$};
\fill (1,1) circle (2pt) node[below]{$25$};
\fill (2,1) circle (2pt) node[below]{$50$};
\fill (3,1) circle (2pt) node[below]{$50$};
\fill (4,1) circle (2pt) node[below]{$25$};
\fill (5,1) circle (2pt) node[below]{$5$};
\fill (0,2) circle (2pt) node[below]{$10$};
\fill (1,2) circle (2pt) node[below]{$33$};
\fill (2,2) circle (2pt) node[below]{$37$};
\fill (3,2) circle (2pt) node[below]{$15$};
\fill (4,2) circle (2pt) node[below]{$1$};
\fill (0,3) circle (2pt) node[below]{$10$};
\fill (1,3) circle (2pt) node[below]{$19$};
\fill (2,3) circle (2pt) node[below]{$8$};
\fill (0,4) circle (2pt) node[below]{$5$};
\fill (1,4) circle (2pt) node[below]{$4$};
\fill (0,5) circle (2pt) node[below]{$1$};
\end{tikzpicture}
\caption{$f$ with $\mathbf{div}_e(f)$ (green) different from $\mathbf{reqdiv}_e(f)$ (red).}
\label{fig:reqdiv}
\end{figure}

\begin{expl}
\label{expl:inf}
For an example with $\textup{div}(f)=\infty$, see Figure \ref{fig:inf}.
\end{expl}

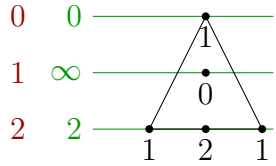
\begin{figure}[h!]
\centering
\begin{tikzpicture}[scale=.75]
\draw[black!40!green] (2.2,0) -- (-1,0) node[left]{$2$};
\draw[black!40!green] (2.2,1) -- (-1,1) node[left]{$\infty$};
\draw[black!40!green] (2.2,2) -- (-1,2) node[left]{$0$};
\draw[black!40!red] (-2,0) node[left]{$2$};
\draw[black!40!red] (-2,1) node[left]{$1$};
\draw[black!40!red] (-2,2) node[left]{$0$};
\draw (0,0) -- (2,0) -- (1,2) -- cycle;
\fill (0,0) circle (2pt) node[below]{$1$};
\fill (1,0) circle (2pt) node[below]{$2$};
\fill (2,0) circle (2pt) node[below]{$1$};
\fill (1,1) circle (2pt) node[below]{$0$};
\fill (1,2) circle (2pt) node[below]{$1$};
\end{tikzpicture}
\caption{A zero mutable Laurent polynomial $f$ with $\textup{div}(f)=\infty$.}
\label{fig:inf}
\end{figure}

\begin{expl}
Figure \ref{fig:reddiv} shows an example of a reducible zero mutable Laurent polynomial $f$ with $\textup{reddiv}(f)>0$. Let $e$ be the left edge and let $e'$ be the right edge of $\triangle=\textup{Newt}(f)$, so that $e$ and $e'$ are parallel with $m_e=m_{e'}=(0,1)$. For $e$ we have $(d_k)=(3,1,2)$ and for $e'$ we have $(d_k)=(2,1,3)$. Hence, we have $\textup{reddiv}_{\mathbb{R}(0,1)}(f)=1$ and $\mathbf{reqdiv}_e(f)=\mathbf{div}_e(f)=(2,0)$, $\mathbf{reqdiv}_{e'}(f)=\mathbf{div}_{e'}(f)=(1,0)$ and $\mathbf{reqdivstep}_e(f)=(2)$, $\mathbf{reqdivstep}_{e'}(f)=(1)$.
\end{expl}

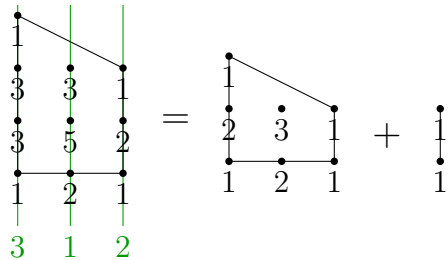
\begin{figure}[h!]
\centering
\begin{tikzpicture}[scale=.7]
\draw[black!40!green] (0,3.2) -- (0,-1) node[below]{$3$};
\draw[black!40!green] (1,3.2) -- (1,-1) node[below]{$1$};
\draw[black!40!green] (2,3.2) -- (2,-1) node[below]{$2$};
\draw (0,0) -- (2,0) -- (2,2) -- (0,3) -- cycle;
\fill (0,0) circle (2pt) node[below]{$1$};
\fill (1,0) circle (2pt) node[below]{$2$};
\fill (2,0) circle (2pt) node[below]{$1$};
\fill (0,1) circle (2pt) node[below]{$3$};
\fill (1,1) circle (2pt) node[below]{$5$};
\fill (2,1) circle (2pt) node[below]{$2$};
\fill (0,2) circle (2pt) node[below]{$3$};
\fill (1,2) circle (2pt) node[below]{$3$};
\fill (2,2) circle (2pt) node[below]{$1$};
\fill (0,3) circle (2pt) node[below]{$1$};
\draw (3,1) node{\large$=$};
\end{tikzpicture}
\begin{tikzpicture}[scale=.7]
\draw (0,0) -- (2,0) -- (2,1) -- (0,2) -- cycle;
\fill (0,0) circle (2pt) node[below]{$1$};
\fill (1,0) circle (2pt) node[below]{$2$};
\fill (2,0) circle (2pt) node[below]{$1$};
\fill (0,1) circle (2pt) node[below]{$2$};
\fill (1,1) circle (2pt) node[below]{$3$};
\fill (2,1) circle (2pt) node[below]{$1$};
\fill (0,2) circle (2pt) node[below]{$1$};
\draw (3,.5) node{\large$+$};
\draw (0,-2);
\end{tikzpicture}
\begin{tikzpicture}[scale=.7]
\draw (0,0) -- (0,1);
\fill (0,0) circle (2pt) node[below]{$1$};
\fill (0,1) circle (2pt) node[below]{$1$};
\draw (0,-2);
\end{tikzpicture}
\caption{A reducible zero mutable Laurent polynomial $f$ with $\textup{reddiv}(f)>0$.}
\label{fig:reddiv}
\end{figure}

\begin{defi}
\label{defi:mutve}
Let $f$ be a Laurent polynomial with Newton polytope $\triangle=\textup{Newt}(f)$ of dimension $\textup{dim}(\triangle)\leq 2$. Let $v$ be a vertex of $\triangle$. We define $\textup{mut}_{(\varphi,h)}(v)$ as follows. Let $f=\sum f_k$ as in Definition \ref{defi:mut}. Consider $k=\varphi(v)$. There are two cases.
\begin{enumerate}[(1)]
\item If $f_k$ is supported only on $v$ (this can only happen if $v$ is minimal or maximal with respect to $\varphi)$, then we have one of the following.
\begin{enumerate}[(a)]
\item If $k=0$, then $\textup{mut}_{(\varphi,h)}(f_k)=f_k$ and we define $\textup{mut}_{(\varphi,h)}(v)=v$.
\item Otherwise, $k>0$ (since $f_k\in\mathbb{C}$ must be divisible by $h\in\mathbb{C}[\textup{ker }\varphi_0]\setminus\mathbb{C}$) and $\textup{mut}_{(\varphi,h)}(f_k)=h^kf_k$ is supported on a line segment $L$. In this case we define $\textup{mut}_{(\varphi,h)}(v)=L$.
\end{enumerate}
\item Otherwise, $f_k$ is supported on a line segment, and $v$ is one of its endpoints. This means $v$ is minimal with respect to a linear form $\psi$ orthogonal to $\varphi$. Again, there are two cases.
\begin{enumerate}[(a)]
\item If $\textup{mut}_{(\varphi,h)}(f_k)=h^kf_k$ is supported on a single point (this can only happen if $k<0$), then define $\textup{mut}_{(\varphi,h)}(v)$ to be this point.
\item Otherwise, $\textup{mut}_{(\varphi,h)}(f_k)=h^kf_k$ is supported on a line segment $L$. Define $\textup{mut}_{(\varphi,h)}(v)$ to be the endpoint of $L$ that is minimal with respect to $\psi$.
\end{enumerate}
\end{enumerate}
Now if $e=\overline{v,w}$ is the edge of $\triangle$ connecting $v$ and $w$, then define $\textup{mut}_{(\varphi,h)}(e)$ as follows.
\begin{enumerate}[(1)]
\item If $\textup{mut}_{(\varphi,h)}(v)$ and $\textup{mut}_{(\varphi,h)}(w)$ are vertices, define $\textup{mut}_{(\varphi,h)}(e) = \overline{\textup{mut}_{(\varphi,h)}(v),\textup{mut}_{(\varphi,h)}(w)}$. If $\varphi$ is constant on $e$, then we might have $\textup{mut}_{(\varphi,h)}(v)=\textup{mut}_{(\varphi,h)}(w)$, so that this is a single vertex. Otherwise, it is an edge of $\textup{mut}_{(\varphi,h)}(\triangle)$.
\item If at least one of $\textup{mut}_{(\varphi,h)}(v)$ or $\textup{mut}_{(\varphi,h)}(w)$ is a line segment, define $\textup{mut}_{(\varphi,h)}(e)$ to be the unique edge of $\textup{mut}_{(\varphi,h)}(\triangle)$ whose vertices are contained in or equal to $\textup{mut}_{(\varphi,h)}(v)$ or $\textup{mut}_{(\varphi,h)}(w)$, respectively.
\end{enumerate}
Figure \ref{fig:mutve} explains the definitions above.
\end{defi}

\begin{figure}[h!]
\centering
\begin{tikzpicture}[scale=1.5]
\draw (0,0) -- (2,0) -- (1,2) -- (0,1) -- cycle;
\fill (0,0) circle (1pt) node[left]{$v_2$};
\fill (1,0) circle (1pt);
\fill (2,0) circle (1pt) node[right]{$v_3$};
\fill (0,1) circle (1pt) node[left]{$v_1$};
\fill (1,1) circle (1pt);
\fill (1,2) circle (1pt) node[above]{$v_4$};
\draw (0,.5) node[left]{$e_1$};
\draw (.8,0) node[below]{$e_2$};
\draw (1.5,1) node[right]{$e_3$};
\draw (.5,1.5) node[left]{$e_4$};
\draw[->] (3,1) -- (3.5,1) node[above]{$\textup{mut}_{(\braket{(1,0),\cdot},1+y)}$} -- (4.5,1);
\draw (6,0) -- (8,0) -- (8,1) -- (7,2) -- cycle;
\fill (6,0) circle (1pt);
\draw[<-] (6,0) to[bend right=10] (5,-.5) node[below]{$\textup{mut}(v_1)=\textup{mut}(v_2)=\textup{mut}(e_1)$};
\fill (7,0) circle (1pt);
\fill (8,0) circle (1pt);
\fill (7,1) circle (1pt);
\fill (8,1) circle (1pt);
\fill (7,2) circle (1pt) node[above]{$\textup{mut}(v_4)$};
\draw (6.8,0) node[below]{$\textup{mut}(e_2)$};
\draw (8,.5) node[right]{$\textup{mut}(v_3)$};
\draw (7.5,1.5) node[right]{$\textup{mut}(e_3)$};
\draw (6.5,1) node[left]{$\textup{mut}(e_4)$};
\end{tikzpicture}
\caption{A mutation explaining the definitions of $\textup{mut}_{(\varphi,h)}(v)$ and $\textup{mut}_{(\varphi,h)}(e)$.}
\label{fig:mutve}
\end{figure}
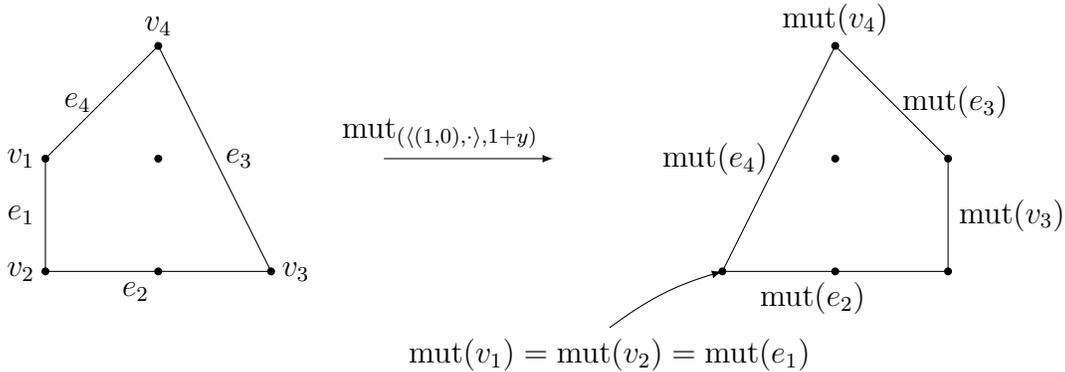

\begin{lem}
\label{lem:div}
Let $g(x)=\sum_{k\in\mathbb{Z}} g_kx^k \in \mathbb{C}[x^{\pm 1}]$ be a Laurent polynomial in one variable $x$ that is divisible by $(1+x)^d$. Then for all $a,b\in\mathbb{Z}$ and $i\in\mathbb{N}$ the Laurent polynomial $\tilde{g}(x)=\sum_k \binom{ak+b}{i} g_kx^k$ is divisible by $(1+x)^{d-i}$.
\end{lem}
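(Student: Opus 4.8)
The plan is to recognize the coefficientwise multiplication by $\binom{ak+b}{i}$ as a differential operator and to track how such operators affect divisibility by $1+x$. Let $\theta = x\,\frac{d}{dx}$ be the Euler operator, which acts diagonally on monomials by $\theta(x^k)=k\,x^k$ and maps $\CC[x^{\pm1}]$ to itself. Then the operator $a\theta+b$ multiplies the coefficient $g_k$ by $ak+b$, and since $\binom{n}{i}=\frac{1}{i!}\,n(n-1)\cdots(n-i+1)$ is a polynomial of degree $i$ in $n$, we have
\[ \tilde g = \frac{1}{i!}\prod_{j=0}^{i-1}\bigl(a\theta + b - j\bigr)\,g. \]
The factors $a\theta+b-j$ are all polynomials in $\theta$, hence commute, so the order of composition is irrelevant.

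The key step I would isolate is the sublemma that a single linear factor lowers the order of divisibility by at most one: if $p\in\CC[x^{\pm1}]$ is divisible by $(1+x)^e$ for some $e\ge1$ and $a,c$ are constants, then $(a\theta+c)p$ is divisible by $(1+x)^{e-1}$. To see this, write $p=(1+x)^e q$ with $q\in\CC[x^{\pm1}]$ and compute
\[ \theta p = x\,p' = (1+x)^{e-1}\bigl(e\,x\,q + x(1+x)\,q'\bigr), \]
which is divisible by $(1+x)^{e-1}$; since $p$ itself is divisible by $(1+x)^{e-1}$ as well, so is $(a\theta+c)p=a\,\theta p+c\,p$.

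To finish, I would apply the sublemma $i$ times to the factorization above, starting from $g$, which is divisible by $(1+x)^d$ by hypothesis. Each factor reduces the guaranteed order of divisibility by one, so after applying all $i$ factors the result is divisible by $(1+x)^{d-i}$ whenever $d\ge i$; the leading scalar $\frac{1}{i!}$ is a nonzero constant and does not affect divisibility. When $d<i$ the chain reaches divisibility order $0$ and the assertion $(1+x)^{d-i}$, with nonpositive exponent, is vacuous, so nothing more is needed.

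The computation above is essentially the whole argument, and I do not expect a serious obstacle. The only points requiring care are to confirm that $\theta$ preserves $\CC[x^{\pm1}]$ (clear, since $\theta(x^k)=k\,x^k$), that the hypothesis $e\ge1$ of the sublemma is respected along the chain (once the order drops to $0$ the remaining claim is trivial), and that the factors commute so that applying them in the natural order is legitimate.
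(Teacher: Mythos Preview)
Your proof is correct. Both your argument and the paper's rest on the same mechanism---a single differentiation lowers the order of divisibility by $1+x$ by at most one---but the implementations differ. The paper substitutes $x\mapsto x^a$, multiplies by $x^b$, applies the ordinary $i$-th derivative, and then formally undoes the substitution via $x\mapsto x^{1/a}$; you instead express $\binom{ak+b}{i}$ as a degree-$i$ polynomial in the Euler operator $\theta=x\,\frac{d}{dx}$ and apply your sublemma $i$ times. Your route is more direct: it avoids the formal substitution $x\mapsto x^{1/a}$ (which in the paper's version requires checking that the intermediate expression lies in $\mathbb{C}[x^{\pm a}]$ and is slightly awkward when $a\le 0$), and the inductive step is cleanly isolated. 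The paper's route has the minor advantage that the Leibniz rule for the ordinary $i$-th derivative is invoked once rather than iterated.
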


\begin{proof}
Consider the Laurent polynomial
\[ g'(x) = \frac{x^{i-b}}{i!}\frac{d^i}{dx^i}x^b g(x^a) = \frac{x^{i-b}}{i!}\frac{d^i}{dx^i} \sum_{k\in\mathbb{Z}} g_kx^{ak+b} = \binom{ak+b}{i}\sum_{k\in\mathbb{Z}} g_kx^{ak} \]
Let $g(x)$ be divisible by $(1+x)^d$ and write $g(x) = (1+x)^d r(x)$. By the product rule of differentiation we have
\[ g'(x) = (1+x^a)^{d-i} r'(x^a), \]
for some $r'(x)$. Hence, $g'(x)$ is divisible by $(1+x^a)^{d-i}$. Note that $\tilde{g}(x)=g'(x^{1/a})$. Then $\tilde{g}(x)=(1+x)^{d-i}r'(x)$ is divisible by $(1+x)^{d-i}$ as claimed.
\end{proof}

\begin{prop}
\label{prop:zeromut}
Let $f$ be a zero mutable Laurent polynomial with Newton polygon $\triangle$ that admits a sequence of mutations $f\mapsto\ldots\mapsto 1$ to the trivial Laurent polynomial.
\begin{enumerate}[(1)]
\item For each edge $e$ of $\triangle$, the required divisibility tuple $\mathbf{reqdiv}_e(f)=(d_k)_{k=k_{\textup{min}}}^{k_{\textup{max}}}$ is convex in the sense that for all $k$ we have $d_k-d_{k-1}\leq d_{k+1}-d_k$.
\item The required divisibility $\textup{reqdiv}(f)$ is equal to number of lattice points of $\triangle$.
\item The Laurent polynomial $f$ is determined by the collection of tuples $(\mathbf{reqdiv}_e(f))_e$.
\end{enumerate}
\end{prop}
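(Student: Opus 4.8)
The plan is to run a single induction on the length $n$ of a mutation sequence $1=f_0\mapsto f_1\mapsto\cdots\mapsto f_n=f$, proving all three statements together. Since $\textup{mut}_{(\varphi,h)}$ is multiplicative, such a sequence restricts to one on each irreducible factor, and for parts (1) and (3) I would reduce to irreducible $f$ via the concatenation in Definition \ref{defi:divstep}; for part (2) the Newton polygon of a product is the Minkowski sum of the factors, so the count must also absorb the mixed term, which I would track directly through the induction rather than factor by factor. The engine of the induction is the analysis of one step $g\mapsto f=\textup{mut}_{(\varphi,h)}(g)$, where $h=(1+z^{w})^{j}$ is a one-variable ZMLP supported on $\ker\varphi_0=\mathbb{Z}w$: the mutation multiplies the part of $g$ at $\varphi$-level $t$ by $h^{t}$, and the task is to read off $\mathbf{div}_e(f)$ from this. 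It is convenient to record that, for irreducible $f$, once part (1) gives that $\mathbf{reqdivstep}_e(f)=(s_1\ge s_2\ge\cdots)$ is a partition of $\ell(e)$, one has $\textup{reqdiv}_e(f)=\sum_i i\,s_i=\ell(e)+\sum_{k>k_{\textup{min}}}d_k$, so that part (2) becomes the combinatorial identity $\sum_e\sum_{k>k_{\textup{min}}}d_k=I(\triangle)$, where $I(\triangle)$ is the number of interior lattice points.

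For part (1) I would split the edges of $\triangle$ by their position relative to $w$. If $m_e$ is parallel to $w$, then $w=\pm m_e$, every slice $\{\langle\cdot,n_e\rangle=k\}$ lies in a single level set of $\varphi$, and the mutation multiplies that slice by a power $(1+z^{m_e})^{\pm j\varphi_k}$ of $(1+z^{m_e})$ up to a monomial, where $\varphi_k$ is the value of $\varphi$ on the slice and is affine-linear in $k$; hence $d_k(f)=d_k(g)+j\varphi_k$, and adding an affine-linear function to the convex sequence $(d_k(g))$ preserves convexity. The transverse case $m_e\not\parallel w$ is the main obstacle: here the mutation moves mass between slices, and I would view a fixed slice in the direction $m_e$ as a one-variable Laurent polynomial in $u=z^{m_e}$ whose coefficients get reweighted by binomials $\binom{ak+b}{i}$ arising from expanding the position-dependent power of $1+z^{w}$. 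Lemma \ref{lem:div} is precisely what bounds the loss of divisibility by $1+u$ under such a reweighting, and the delicate point is to show that the convex $g$-profile, decremented by the amounts the lemma allows, stays convex for $f$; finally one checks that passing from $\mathbf{div}_e$ (with its $\infty$ entries) to $\mathbf{reqdiv}_e$ does not destroy convexity.

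For part (2), in the form $\sum_e\sum_{k>k_{\textup{min}}}d_k=I(\triangle)$, I would again argue by invariance under one mutation. The left-hand side changes by the divisibility increments already computed for part (1), and the right-hand side changes by $I(\textup{Newt}(f))-I(\textup{Newt}(g))$, which I would compute from the explicit reshaping of the polygon under $\textup{mut}_{(\varphi,h)}$ in Definition \ref{defi:mutve} together with Pick's theorem. Matching the two local changes, with the base case a segment $(1+x)^{\ell}$ verified directly, yields the identity; the reducible case enters here through the Minkowski cross-term, which must be shown to equal the corresponding cross-term in the left-hand side.

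Part (3) I would deduce from (1) and (2) by linear algebra. The conditions $d_{k_{\textup{min}}}=\ell(e)$ force the boundary coefficients to be the binomials of Corollary \ref{cor:binomial} after normalising one vertex to $1$, and the remaining divisibility requirements are linear in the interior coefficients. Sweeping the slices of $\triangle$ inward from each edge and using convexity to ensure that each interior coefficient first occurs in a divisibility condition whose other entries are already known, one solves for the coefficients successively; part (2) guarantees there are exactly enough conditions, with a single relation to spare. The subtlest point of (3), which I expect to be the real obstacle alongside the transverse case of (1), is to show that this leftover relation is automatically consistent, so that the linear system has rank exactly $\#(\triangle\cap M)-1$ and $f$ is pinned down up to the normalising scalar.
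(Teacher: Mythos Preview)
Your plan mirrors the paper's proof closely: both run an induction along the mutation chain, both split the analysis of an edge $\tilde e$ of $\textup{Newt}(\tilde f)$ into the case where $\varphi$ is constant on $\tilde e$ (your ``parallel'' case) and the case where it is not (your ``transverse'' case), and both invoke Lemma~\ref{lem:div} in the transverse case. The parallel case and the lattice-point bookkeeping for part~(2) are essentially as you describe.

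There are, however, two places where your picture diverges from what actually makes the argument work. First, in the transverse case you speak of the convex $g$-profile being \emph{decremented} and then checking that convexity survives. The paper's point is sharper and goes in the opposite direction: using Lemma~\ref{lem:div} together with the inductive convexity of $(d_l)$ one shows that every summand $\tilde f_{l,i}$ (coming from $f_{l-ib}$) has $(1+z^{m_e})$-divisibility at least $d_l$, so $\mathbf{div}_{\tilde e}(\tilde f)\ge\mathbf{div}_e(f)$; any strict increase only creates linear dependence among the conditions, whence $\mathbf{reqdiv}_{\tilde e}(\tilde f)=\mathbf{reqdiv}_e(f)$ exactly. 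This invariance is what propagates convexity for~(1), what makes the change in $\textup{reqdiv}$ come entirely from the parallel edge in~(2), and what makes the counting in~(3) clean. Your ``decremented'' formulation would not give this invariance and would leave both the convexity check in~(1) and the change-matching in~(2) genuinely open.

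Second, your worry in~(3) about a leftover relation and about proving the rank is $\#(\triangle\cap M)-1$ dissolves once you use $\mathbf{reqdiv}$ rather than $\mathbf{div}$: by Definition~\ref{defi:reqdiv} the required-divisibility conditions are the minimal tuple imposing the same constraints, hence linearly independent by construction. Part~(2) then says their number equals the number of interior lattice points, and the boundary coefficients are fixed by Corollary~\ref{cor:binomial}, so~(3) follows without any further consistency check. Relatedly, your detour through irreducible factors and Minkowski cross-terms is unnecessary here: the statement is about $\mathbf{reqdiv}$, which is defined intrinsically (not via a factorisation), and the induction along the mutation chain never needs to know whether $f$ is reducible.
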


\begin{proof}
Let $\tilde{f}$ be a zero mutable Laurent polynomial with Newton polygon $\tilde{\triangle}=\textup{Newt}(\tilde{f})$ and let $\tilde{f} \mapsto f \mapsto \ldots \mapsto 1$ be a sequence of mutations, such that $\tilde{f}=\textup{mut}_{(\varphi,h)}(f)$. Then $f$ admits a sequence of mutation $f\mapsto\ldots\mapsto 1$ and its Newton polytope $\triangle=\textup{Newt}(f)$ is either $1$-dimensional or $2$-dimensional. If $\triangle$ is $1$-dimensional, so that it consists of a single edge $e$, we define $\mathbf{reqdiv}_e(f)=(\ell(e),0)$. If $\triangle$ is $2$-dimensional, we assume that $f$ satisfies the assertions of the proposition. In both cases, we prove that $\tilde{f}$ satisfies the assertions. Then the proposition follows by induction. Let $\tilde{e}$ be an edge of $\tilde{\triangle}=\textup{Newt}(\tilde{f})$.

If $\varphi$ is not constant on $\tilde{e}$, then we see from Definition \ref{defi:mutve} that $\tilde{e}=\textup{mut}_{(\varphi,h)}(e)$ for an edge $e$ of $\triangle$ along which $\varphi$ is not constant. We show that $\mathbf{reqdiv}_{\tilde{e}}(\tilde{f})=\mathbf{reqdiv}_e(f)$. Write $\mathbf{reqdiv}_e(f)=(d_k)_{k=k_{\textup{min}}}^{k_{\textup{max}}}$. By a transformation in the integral affine transformation group $\textup{GL}_2(\mathbb{Z})\ltimes\mathbb{Z}^2$ we can assume that $e$ is horizontal and contains the origin. Write $f=\sum_k f_{k,l}x^ky^l$. Then $d_l$ is the multiplicity of $1+x$ in $f_l=\sum_k f_{k,l}x^k$. Since $h$ must be a zero mutable Laurent polynomial, we have $h=x^{ra}y^{rb}(1+x^ay^b)^s$ for some $a,b,r,s \in \mathbb{Z}$ with $\gcd(a,b)=1$. The first factor just acts as a linear transformation, we can set $r=0$. Moreover, we have $\varphi(m) = \braket{m,n_\varphi} + \varphi_0$, where $n_\varphi$ is an integral normal vector to $(a,b)$, i.e. $n_\varphi=(tb,-ta)$ for some $t\in\mathbb{Z}$. Write $n=st$. Then the mutation is given on the variables $f$ by
$\textup{mut}_{(\varphi,h)}(x) = x(1+x^ay^b)^{nb}$, $\textup{mut}_{(\varphi,h)}(y) = y(1+x^ay^b)^{-na}$, and we have
\begin{eqnarray*}
\tilde{f} &=& \textup{mut}_{(\varphi,h)}(f) = \sum_{k,l} f_{k,l}(1+x^ay^b)^{n(kb-la)} x^ky^l \\
&=& \sum_{k,l} f_{k,l} \sum_i  \binom{n(kb-la)}{i}  x^{k+ia}y^{l+ib} \\
&=& \sum_l\sum_i x^{ia}\sum_k f_{k,l-ib} \binom{n(kb-(l-ib)a)}{i} x^ky^l.
\end{eqnarray*}
Write $\tilde{f}=\sum_l \tilde{f}_l y^l$ with $\tilde{f}_l = \sum_i \tilde{f}_{l,i}$. Then $\tilde{f}_{l,0}=f_l$, which has divisibility $d_l$ with respect to $h=1+x$. By the assumption that $f$ fulfills the assertions of the proposition, $\mathbf{reqdivstep}_e(f)$ is decreasing. Hence, $f_{l-ib}$ has divisibility at least $d_l+ib$. Then, by Lemma \ref{lem:div}, $\tilde{f}_{l,i}$ has divisibility at least $d_l+ib-i \geq d_l$. The divisibility of $\tilde{f}_l$ is the minimum of these divisibilities, hence $\tilde{d}_l\geq d_l$. It follows that $\mathbf{div}_{\tilde{e}}(\tilde{f})\geq\mathbf{div}_e(f)$. If $\mathbf{div}_{\tilde{e}}(\tilde{f})>\mathbf{div}_e(f)$, then the summation that gives $\tilde{f}$ is special. In this case, the conditions that $\mathbf{div}_{\tilde{e}}(\tilde{f})$ imposes on the coefficients of $\tilde{f}$ are linearly dependent, and we still have $\mathbf{reqdiv}_{\tilde{e}}(\tilde{f})=\mathbf{reqdiv}_e(f)$.

If $\varphi$ is constant on $\tilde{e}$, let $\triangle_k$, $k_{\textup{min}}$ and $k_{\textup{max}}$ be as in Definition \ref{defi:div} and let $\varphi_0$ be the linear part of $\varphi$. From Definition \ref{defi:mutve} we see that either $\tilde{e}=\textup{mut}_{(\varphi,h)}(e)$ with $\varphi$ constant on $e$ or $\tilde{e}=\textup{mut}_{(\varphi,h)}(v)$ for a vertex $v$ of $\triangle$. In the former case write $\mathbf{reqdiv}_e(f)=(d_k)_{k=k_{\textup{min}}}^{k_{\textup{max}}}$, in the latter case define $d_k=0$ for all $k=k_{\textup{min}},\ldots,k_{\textup{max}}$. Let $m_e$ be a primitive tangent vector of $e$ and note that $\varphi_0(m_e)=0$. By Definition \ref{defi:zeromut}, $h=(1+z^{m_e})^k z^{lm_e}$ for some $k\in\mathbb{N}$ and $l\in\mathbb{Z}$. We can assume $k=1$ by increasing the slope of $\varphi$. Then by the definition of mutation we have $\mathbf{reqdiv}_{\tilde{e}}(\tilde{f})=(\tilde{d}_k)_{k=k_{\textup{min}}}^{k_{\textup{max}}}$ with
\[ \tilde{d}_k = d_k+\varphi(\triangle_k). \]
Now we show that $\tilde{f}$ fulfills the assertions of the proposition.
\begin{enumerate}[(1)]
\item By assumption the tuples $\mathbf{reqdiv}_{e}(f)$ for edges $e$ of $\triangle$ are convex. Let $\tilde{e}$ be an edge of $\tilde{\triangle}$. If $\varphi$ is not constant along $\tilde{e}$, then we have shown above that $\mathbf{reqdiv}_{\tilde{e}}(\tilde{f})=\mathbf{reqdiv}_{e}(f)$. In particular, this is convex. If $\varphi$ is constant along $\tilde{e}$, then it follows from the formula for $\tilde{d}_k$ above that $\mathbf{reqdiv}_{\tilde{e}}(\tilde{f})$ is convex.
\item First consider the case when $\triangle$ is $2$-dimensional. Then by assumption the number of lattice points of $\triangle$ is $\textup{reqdiv}(f) = \sum_e \textup{reqdiv}_{e}(f)$, where for parallel edges we consider only one of them in the sum, see Definition \ref{defi:reqdiv}. Let $\tilde{e}$ be an edge of $\tilde{\triangle}$. If $\varphi$ is not constant along $\tilde{e}$, then $\mathbf{reqdiv}_{\tilde{e}}(\tilde{f})=\mathbf{reqdiv}_{e}(f)$, hence $\textup{reqdiv}_{\tilde{e}}(\tilde{f})=\textup{reqdiv}_{e}(f)$. If $\varphi$ is constant along $e$, then it follows from the formula for $\tilde{d}_k$ above that 
\[ \textup{reqdiv}_{\tilde{e}}(\tilde{f}) = \textup{reqdiv}_e(f) + \sum_{k=k_{\textup{min}}}^{k_{\textup{max}}} \varphi(\triangle_k). \]
There are $1$ or $2$ such edges in $\tilde{\triangle}$ and if there are $2$, then they must be parallel. In turn, the second term above only appears once in $\textup{reqdiv}(\tilde{f}) = \sum_{\tilde{e}} \textup{reqdiv}_{\tilde{e}}(\tilde{f})$. The second term is the difference between the number of lattice points of $\tilde{\triangle}$ and the number of lattice points of $\triangle$. By assumption, the first term equals the number of lattice points of $\triangle$. Hence, $\textup{reqdiv}(\tilde{f})$ equals the number of lattice points of $\tilde{\triangle}$. If $\triangle$ is $1$-dimensional, then $\textup{reqdiv}_e(f)$ is one less than the number of lattice points of $\triangle$. But we also produce an additional edge along which $\varphi$ is not constant.
\item Let $f$ be a zero mutable Laurent polynomial with Newton polygon $\triangle$. The coefficients of $f$ correspond to lattice point of $\triangle$. By Corollary \ref{cor:binomial}, the coefficients corresponding lattice points on the boundary of $\triangle$ are determined to be certain binomial coefficients. Up to a global factor this is also determined by the first element of $\mathbf{reqdiv}_e(f)$, which is $\ell(e)$. The other elements of $\mathbf{reqdiv}_e(f)$ give linear relations among the coefficients of $f$. The number of relations is equal to $\sum_e (\textup{reqdiv}_e(f)-\ell(e))$. By part (2), this number is equal to the interior lattice points of $\triangle$. By definition of $\mathbf{reqdiv}_e(f)$, the conditions are linearly independent, hence they determine the coefficients of $f$ corresponding to interior lattice points of $\triangle$. In total, this determines all coefficients of $f$. \qedhere
\end{enumerate}
\end{proof}

\begin{cor}
\label{cor:finite}
Let $\triangle$ be a lattice polygon. The number of zero mutable Laurent polynomials supported on $\triangle$ is finite.
\end{cor}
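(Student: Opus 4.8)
The plan is to leverage Proposition \ref{prop:zeromut} to reduce the count to a finite combinatorial bookkeeping problem, treating irreducible and reducible zero mutable Laurent polynomials separately. The key point is that Proposition \ref{prop:zeromut}(3) pins down each relevant $f$ by the finite data $(\mathbf{reqdiv}_e(f))_e$, so it suffices to show this data ranges over a finite set.

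First I would treat an \emph{irreducible} zero mutable $f$ with $\triangle=\textup{Newt}(f)$ of dimension $2$. By Definition \ref{defi:zeromut}(iv) such an $f$ admits a mutation sequence $f\mapsto\ldots\mapsto 1$, so Proposition \ref{prop:zeromut} applies. For each edge $e$ the tuple $\mathbf{reqdiv}_e(f)=(d_k)_{k=k_{\textup{min}}}^{k_{\textup{max}}}$ has length $k_{\textup{max}}-k_{\textup{min}}+1$ bounded by the lattice width of $\triangle$ in the normal direction $n_e$, a quantity depending only on $\triangle$; and its entries are non-negative integers whose sum $\textup{reqdiv}_e(f)$ is at most $\textup{reqdiv}(f)=\#(\triangle\cap M)$ by part (2). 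Bounded length together with bounded entries yields finitely many tuples per edge, hence finitely many collections, hence (by part (3)) finitely many irreducible $f$ supported on $\triangle$. The zero- and one-dimensional cases are immediate: up to a monomial factor the only zero mutable Laurent polynomial supported on a segment of lattice length $\ell$ is $(1+x)^\ell$, and on a point it is $1$.

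Next I would handle a \emph{reducible} zero mutable $f$ by writing $f=f_1\cdots f_r$ as a product of irreducible zero mutable factors (Definition \ref{defi:zeromut}(iii)), so that $\triangle=\textup{Newt}(f_1)+\cdots+\textup{Newt}(f_r)$ is a Minkowski sum. Each summand $\textup{Newt}(f_i)$ is, up to translation, a lattice sub-polygon of $\triangle$ (translate $\textup{Newt}(f_i)$ by a lattice point of a complementary summand), and there are only finitely many such sub-polygons; moreover, since lattice edge lengths add under Minkowski sum, the number $r$ of non-trivial factors is bounded in terms of $\triangle$. Hence there are finitely many multisets of summand polygons realizing $\triangle$, and by the irreducible case there are finitely many irreducible zero mutable Laurent polynomials on each fixed summand. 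Forming products then leaves only finitely many $f$, completing the argument.

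The main obstacle will be the reducible-case bookkeeping rather than any analytic difficulty: one must be careful that Proposition \ref{prop:zeromut} is invoked only for the irreducible factors (it is stated for polynomials admitting a mutation sequence to $1$, which is exactly Definition \ref{defi:zeromut}(iv)), and then control the Minkowski decompositions of $\triangle$ — confirming both the finiteness of lattice Minkowski summands and a uniform bound on the number of factors. By contrast, the bound on tuple entries in the irreducible case, which might at first look like the crux, is immediate from part (2) of Proposition \ref{prop:zeromut}.
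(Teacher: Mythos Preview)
Your proposal is correct and follows essentially the same approach as the paper: use Proposition~\ref{prop:zeromut}(2)--(3) to bound the finitely many possible tuples $(\mathbf{reqdiv}_e(f))_e$ in the irreducible case, then handle the reducible case by factoring into irreducibles. Your treatment of the reducible case via Minkowski decompositions is in fact more careful than the paper's, which simply asserts that products of finitely many irreducibles give finitely many reducibles without spelling out the bound on the number of factors or the finiteness of summand shapes.
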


\begin{proof}
If $f$ is an irreducible zero mutable Laurent polynomial, then it admits a sequence of mutations $f\mapsto\ldots\mapsto 1$ to the trivial Laurent polynomial. By Proposition \ref{prop:zeromut}, (3), $f$ is determined by $(\mathbf{reqdiv}_e(f))_e$. By Proposition \ref{prop:zeromut}, (2), $\textup{reqdiv}(f)=\sum_e \textup{reqdiv}_e(f)$ is equal to the number of lattice points of $\triangle=\textup{Newt}(f)$. There are only finitely many choices of convex tuples $\textup{reqdiv}_e(f)$ that sum up to this finite number. Hence, the number of irreducible zero mutable Laurent polynomials supported on a lattice polygon $\triangle$ is finite. Reducible zero mutable Laurent polynomial factor as a product of finitely many irreducible zero mutable Laurent polynomials. Hence, their number is finite as well.
\end{proof}

\begin{prop}
\label{prop:divstep}
Let $f$ be a zero mutable Laurent polynomial with Newton polygon $\triangle$.
\begin{enumerate}[(1)]
\item For each edge $e$ of $\triangle$, the divisibility steps $\mathbf{divstep}_e(f)$ form a decreasing partition of the lattice length $\ell(e)$.
\item The Laurent polynomial $f$ is determined by the collection of partitions $\mathbf{divstep}_e(f)$.
\end{enumerate}
\end{prop}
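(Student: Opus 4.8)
The plan is to reduce both parts to the case of an irreducible $f$, and there to exploit that the passage from the required divisibility tuple $\mathbf{reqdiv}_e(f)$ to the divisibility steps $\mathbf{divstep}_e(f)$ is a discrete ``differentiation'' whose inverse is summation. For irreducible $f$ the tuple $\mathbf{reqdiv}_e(f)=(d_k)_{k=k_{\textup{min}}}^{k_{\textup{max}}}$ is convex by Proposition \ref{prop:zeromut}(1), while $\mathbf{divstep}_e(f)=(d_k-d_{k+1})_{k=k_{\textup{min}}}^{\overline{k}_{\textup{max}}-1}$ by Definition \ref{defi:divstep}; convex tuples with prescribed endpoints correspond bijectively to decreasing partitions by forming consecutive differences and, in the reverse direction, reverse partial sums. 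This single observation drives both statements.

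For part (1), I first take $f$ irreducible. Convexity of $(d_k)$ says the forward differences $d_{k+1}-d_k$ are non-decreasing, so the steps $d_k-d_{k+1}$ are non-increasing. Moreover, since $\overline{k}_{\textup{max}}$ is the first index with $d_k=0$ and $(d_k)$ is convex with minimum $0$, the sequence is \emph{strictly} decreasing on $[k_{\textup{min}},\overline{k}_{\textup{max}}]$ (if a step were $\le 0$ before $\overline{k}_{\textup{max}}$, convexity would force $(d_k)$ non-decreasing from there on, contradicting $d_{\overline{k}_{\textup{max}}}=0$); hence each step is a positive integer and $\mathbf{divstep}_e(f)$ is a decreasing tuple of positive integers. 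Its sum telescopes to $d_{k_{\textup{min}}}-d_{\overline{k}_{\textup{max}}}$, where $d_{\overline{k}_{\textup{max}}}=0$ by definition and $d_{k_{\textup{min}}}=\ell(e)$: the slice $\triangle_{k_{\textup{min}}}$ is the edge $e$ itself, on which $f$ restricts to $z^v(1+z^{m_e})^{\ell(e)}$ by Corollary \ref{cor:binomial}, of divisibility exactly $\ell(e)$ (this is the first entry of $\mathbf{reqdiv}_e(f)$ recorded in the proof of Proposition \ref{prop:zeromut}(3)). So the sum is $\ell(e)$ and $\mathbf{divstep}_e(f)$ is a decreasing partition of $\ell(e)$. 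For reducible $f=f_1\cdots f_r$ I would use that $\textup{Newt}(f)$ is the Minkowski sum of the $\textup{Newt}(f_i)$, so the edge with inner normal $n_e$ splits as a sum of the corresponding faces and $\ell(e)=\sum_i \ell(e_i)$; the union of the partitions $\mathbf{divstep}_e(f_i)$ of $\ell(e_i)$ is then, as a partition, $\mathbf{divstep}_e(f)$ by Definition \ref{defi:divstep}, a decreasing partition of $\ell(e)$.

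For part (2), for irreducible $f$ the correspondence above is invertible: from the decreasing tuple $\mathbf{divstep}_e(f)=(s_{k_{\textup{min}}},\ldots,s_{\overline{k}_{\textup{max}}-1})$ one recovers $\mathbf{reqdiv}_e(f)$ by setting $d_k=\sum_{j\ge k}s_j$ for $k<\overline{k}_{\textup{max}}$ and $d_k=0$ otherwise, with the index range read off from the fixed polygon $\triangle$. Thus $(\mathbf{divstep}_e(f))_e$ determines $(\mathbf{reqdiv}_e(f))_e$, and Proposition \ref{prop:zeromut}(3) then determines $f$. For reducible $f$ one would first recover the multiset of irreducible factors from the same data and apply the irreducible case to each.

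The step I expect to be the main obstacle is precisely this last reduction for reducible $f$: the partitions $\mathbf{divstep}_e(f)$ are merged unions of the factor partitions, so one must disentangle, simultaneously over all edges, which parts belong to which irreducible factor (equivalently, which Minkowski summand of $\triangle$). My plan would be an induction on the number of lattice points of $\triangle$: peel off one irreducible factor whose Newton polygon is a Minkowski-indecomposable summand, detected through a largest part on a suitable edge, argue that its divstep data is forced, divide it out, and apply the inductive hypothesis to $f/f_i$. Making ``forced'' precise, i.e.\ excluding that two distinct groupings of the merged partitions yield the same global data, is the delicate point; I would combine the convexity from Proposition \ref{prop:zeromut}(1) with the finiteness from Corollary \ref{cor:finite} to control the possibilities.
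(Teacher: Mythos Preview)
Your argument follows the paper's proof essentially verbatim. For the irreducible case in (1) the paper also invokes convexity of $\mathbf{reqdiv}_e(f)$ from Proposition~\ref{prop:zeromut}(1) to get that the steps are decreasing, notes the last step is positive hence all are, and telescopes to $d_{k_{\textup{min}}}-0=\ell(e)$; for reducible $f$ it too just cites the definition as a concatenation and the additivity $\ell(e)=\sum_i\ell(e_i)$. For (2) in the irreducible case the paper says exactly what you do: $\mathbf{divstep}_e(f)$ carries the same information as $\mathbf{reqdiv}_e(f)$, so Proposition~\ref{prop:zeromut}(3) applies.

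Where you diverge is in worrying about the reducible case for (2). The paper does \emph{not} carry out the disentangling argument you sketch; its entire proof of (2) is the single sentence ``$\mathbf{divstep}_e(f)$ contains the same information as $\mathbf{reqdiv}_e(f)$, then the statement follows from Proposition~\ref{prop:zeromut}(3).'' Since Proposition~\ref{prop:zeromut} is stated only for $f$ admitting a mutation sequence to $1$ (hence, in practice, irreducible $f$), the reducible case is indeed left elliptical in the paper. So the obstacle you flag is genuine, but it is not something the paper's proof resolves either; your proposal is at least as complete as the published argument, and your awareness of the gap is a plus rather than a defect.
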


\begin{proof}
Let $f$ be a zero mutable Laurent polynomial with Newton polygon $\triangle$. If $f$ is irreducible, write $\mathbf{reqdiv}_e(f)=(d_k)_{k=k_{\textup{min}}}^{k_{\textup{max}}}$. We have $\min(\{d_k\})=0$, since otherwise $f$ would be reducible. Write $\overline{k}_{\text{max}}=\min\{k \ | \ d_k=0\}$. By definition $\mathbf{divstep}_e(f)=(d_k-d_{k+1})_{k=k_{\textup{min}}}^{\overline{k}_{\textup{max}}-1}$. By part (1), $\mathbf{reqdiv}_e(f)$ is convex, hence $\mathbf{divstep}_e(f)$ is a decreasing tuple. By definition, the last element of $\mathbf{divstep}_e(f)$ is $d_{\overline{k}_{\text{max}}}>0$. Hence, all elements of $\mathbf{divstep}_e(f)$ are positive. They sum up to $d_{k_{\text{min}}}-d_{\overline{k}_{\text{max}}}=\ell(e)-0=\ell(e)$. Hence $\mathbf{divstep}_e(f)$ gives a decreasing partition of $\ell(e)$. If $f$ is irreducible, then by definition $\mathbf{divstep}_e(f)$ is a decreasing tuple of $\ell(e_1)+\ldots+\ell(e_r)=\ell(e)$. This proves (1). For (2), note that $\mathbf{divstep}_e(f)$ contains the same information as $\mathbf{reqdiv}_e(f)$. Then the statement follows from Proposition \ref{prop:zeromut}, (3).
\end{proof}

\begin{lem}
\label{lem:sumsum}
Let $f$ be irreducible and write $\mathbf{divstep}_e(f)=(s_k)_{l=1}^w$. Then $\mathbf{reqdiv}_e(f)=(d_k)_{k=1}^{w+1}$ with $d_k=\sum_{l=k}^w s_l$, in particular $d_{w+1}=0$, and $\textup{reqdiv}_e(f)=\sum_{k=1}^w\sum_{l=k}^w s_l$.
\end{lem}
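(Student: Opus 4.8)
The plan is to unwind Definitions \ref{defi:divstep} and \ref{defi:reqdiv} and reduce everything to a telescoping sum; once the indices are aligned there is essentially no geometry left in the first two assertions. First I would normalise the indexing: I may assume that the relevant portion of $\mathbf{reqdiv}_e(f)$ starts at $k=1$, so that in the notation of Definition \ref{defi:divstep} we have $k_{\textup{min}}=1$ and $\overline{k}_{\textup{max}}=w+1$. By the definition of the divisibility steps this says precisely that $s_l=d_l-d_{l+1}$ for $l=1,\dots,w$, while $d_{w+1}=d_{\overline{k}_{\textup{max}}}=0$ by the definition of $\overline{k}_{\textup{max}}$ as the smallest index with $d_{\overline{k}_{\textup{max}}}=0$ (recall that $f$ is irreducible, so $\min_k d_k=0$ as noted in Definition \ref{defi:divstep}).

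The first two assertions are then immediate telescoping. For $1\le k\le w$,
\[ \sum_{l=k}^{w} s_l = \sum_{l=k}^{w}(d_l-d_{l+1}) = d_k - d_{w+1} = d_k, \]
which is the claimed formula $d_k=\sum_{l=k}^w s_l$, and $d_{w+1}=0$ has already been recorded. Summing over $k$ and using $d_{w+1}=0$ gives
\[ \sum_{k=1}^{w+1} d_k = \sum_{k=1}^{w} d_k = \sum_{k=1}^{w}\sum_{l=k}^{w} s_l, \]
so it remains only to identify the left-hand side with $\textup{reqdiv}_e(f)$.

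This last identification is where the only real content lies. By Definition \ref{defi:reqdiv} we have $\textup{reqdiv}_e(f)=\sum_k d_k$, where the sum runs over the full range $k_{\textup{min}},\dots,k_{\textup{max}}$ of $\mathbf{reqdiv}_e(f)$, which may extend beyond $w+1=\overline{k}_{\textup{max}}$. Thus I must show that every entry $d_k$ with $k>\overline{k}_{\textup{max}}$ vanishes, so that writing $\mathbf{reqdiv}_e(f)=(d_k)_{k=1}^{w+1}$ is legitimate and the truncated sum $\sum_{k=1}^{w+1}d_k$ agrees with the full sum. For this I would invoke the convexity of $\mathbf{reqdiv}_e(f)$ from Proposition \ref{prop:zeromut}(1): the tuple $(d_k)$ is convex and non-negative, it attains its minimum value $0$ at $\overline{k}_{\textup{max}}$, and it also vanishes at the extremal slice $k_{\textup{max}}$, since for $f$ supported on a triangle the slice $\triangle_{k_{\textup{max}}}$ is the single vertex opposite to $e$ and a monomial has divisibility $0$. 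A convex function attaining its minimum at two indices is constant between them, so $d_k=0$ for all $\overline{k}_{\textup{max}}\le k\le k_{\textup{max}}$, and the two sums coincide.

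The main obstacle, and the step I would treat most carefully, is exactly this vanishing of the trailing entries: the telescoping identity is purely formal, but matching the total required divisibility $\textup{reqdiv}_e(f)$ to the truncated double sum requires knowing that $\mathbf{reqdiv}_e(f)$ stays at $0$ once it reaches $\overline{k}_{\textup{max}}$. This can fail for a polygon carrying an edge $e'$ parallel to $e$, where Corollary \ref{cor:binomial} forces the opposite slice to have divisibility $\ell(e')>0$ and the tuple rises again after $\overline{k}_{\textup{max}}$; the convexity argument closes the gap precisely in the triangle setting relevant to this paper.
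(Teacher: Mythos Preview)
Your argument is correct and is essentially the paper's approach spelled out in full: the paper's proof reads, in its entirety, ``This easily follows from the definitions,'' and your telescoping computation is exactly what that sentence is gesturing at.

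You actually go further than the paper in one respect. The identification of $\textup{reqdiv}_e(f)$ with the truncated sum $\sum_{k=1}^{w+1} d_k$ requires that the tail of $\mathbf{reqdiv}_e(f)$ past $\overline{k}_{\textup{max}}$ vanishes, and you supply a clean argument for this via convexity plus the fact that the opposite slice is a single monomial. The paper does not address this point at all; since Lemma~\ref{lem:sumsum} is only ever invoked for primitive rectangular triangles (in the proof of Lemma~\ref{lem:divint}), the parallel-edge obstruction you flag never arises in practice, but you are right that the lemma as stated for a general irreducible $f$ on a polygon is not literally covered by ``the definitions'' alone.
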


\begin{proof}
This easily follows from the definitions.
\end{proof}

In view of Proposition \ref{prop:zeromut}, (2), Proposition \ref{prop:divstep} and Lemma \ref{lem:sumsum}, we are lead to the following conjecture. Some evidence will be given in \S\ref{S:classification}.

\begin{conj}
\label{conj:zmlp}
Irreducible zero mutable Laurent polynomials on $\triangle$ are in one-to-one correspondence with tuples $(\mathbf{p}_e(f))_e$ of decreasing partitions $\mathbf{p}_e(f)=(s_{e,l})_{l=1}^w$ of $\ell(e)$ such that $\sum_e \sum_{k=1}^w\sum_{l=k}^w s_{e,l}$ equals the number of lattice points of $\triangle$.
\end{conj}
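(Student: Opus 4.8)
The plan is to study the map $\Phi\colon f \mapsto (\mathbf{divstep}_e(f))_e$ sending an irreducible zero mutable Laurent polynomial on $\triangle$ to its tuple of divisibility steps, and to show it is a bijection onto the set described. Two of the three defining properties of a bijection are essentially already available. That $\Phi$ is \emph{well-defined}, i.e. lands in the prescribed target, follows from Proposition \ref{prop:divstep}(1) (each $\mathbf{divstep}_e(f)$ is a decreasing partition of $\ell(e)$) together with the identity $\sum_e \sum_{k=1}^{w}\sum_{l=k}^{w} s_{e,l} = \sum_e \textup{reqdiv}_e(f) = \textup{reqdiv}(f)$ provided by Lemma \ref{lem:sumsum}, which equals the number of lattice points of $\triangle$ by Proposition \ref{prop:zeromut}(2). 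That $\Phi$ is \emph{injective} is exactly Proposition \ref{prop:divstep}(2). Hence the genuine content of the conjecture is the \emph{surjectivity} of $\Phi$: every admissible tuple of decreasing partitions must arise from an actual irreducible ZMLP.

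For surjectivity I would first reconstruct, from a given admissible tuple $(\mathbf{p}_e)_e$ with $\mathbf{p}_e=(s_{e,l})_{l=1}^{w_e}$, a candidate convex required-divisibility tuple on each edge via $d_{e,k}=\sum_{l=k}^{w_e} s_{e,l}$ (the inverse of Lemma \ref{lem:sumsum}), padded by zeros so that $\min_k d_{e,k}=0$. These tuples impose divisibility conditions on a Laurent polynomial supported on $\triangle$: by Corollary \ref{cor:binomial} the boundary coefficients are forced to be the binomial coefficients $\binom{\ell(e)}{k}$ up to a global scalar, and the remaining conditions are linear relations among the interior coefficients. The counting hypothesis guarantees, as in the proof of Proposition \ref{prop:zeromut}(3), that the number of these relations matches the number of interior lattice points of $\triangle$. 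The first task is therefore to prove that this linear system is \emph{consistent and of maximal rank}, so that it determines a unique candidate $f$; this nondegeneracy is asserted for genuine ZMLPs in Proposition \ref{prop:zeromut}(3) but must be reproved here, since $f$ is being built from abstract combinatorial data rather than assumed to exist.

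The crux is then to show the candidate $f$ is zero mutable, i.e. admits a sequence of mutations to $1$. I would argue by induction on the number of lattice points of $\triangle$, mirroring the inductive structure of the proof of Proposition \ref{prop:zeromut}. Concretely, I would use the partition data to locate an edge $e$ and a mutation $\textup{mut}_{(\varphi,h)}$ with zero mutable $h=1+z^{m_e}$ (possibly after sharpening the slope of $\varphi$) that \emph{de-mutates} $f$, realizing $f=\textup{mut}_{(\varphi,h)}(f')$ for a Laurent polynomial $f'$ on a strictly smaller polygon $\triangle'$. The transformation rule for divisibility under mutation established in Proposition \ref{prop:zeromut} (the formula $\tilde{d}_k=d_k+\varphi(\triangle_k)$ in the $\varphi$-constant case and $\mathbf{reqdiv}_{\tilde e}(\tilde f)=\mathbf{reqdiv}_e(f)$ otherwise) should then certify that the divisibility-step data of $f'$ is again admissible on $\triangle'$, so that the inductive hypothesis applies and $f'$, hence $f$, is zero mutable.

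The main obstacle is precisely this de-mutation step: there is a real gap between a polynomial \emph{satisfying} the required-divisibility relations and one that can actually be \emph{mutated} down to $1$, and it is not clear a priori that an edge admitting a complexity-reducing reverse mutation always exists, nor that the reduced data remains within the admissible class. Closing this gap amounts to showing that the reqdiv conditions characterize exactly the mutable polynomials, which is the substance of the conjecture and the reason it is established only for the explicit families in Theorem \ref{thm:classification} rather than in general. A secondary difficulty is the bookkeeping around parallel edges in the counting identity, where the conjecture sums over all edges while $\textup{reqdiv}$ in Definition \ref{defi:reqdiv} counts one edge per parallel class; reconciling the two requires care, though it is vacuous for the rectangular triangles that are the paper's main focus.
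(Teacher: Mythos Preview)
The statement you are addressing is a \emph{conjecture} in the paper, not a theorem; the paper offers no proof, only the remark that ``some evidence will be given in \S\ref{S:classification}'' and later the specialization to rectangular triangles as Conjecture~\ref{conj:comb}. So there is no paper proof to compare your proposal against.

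That said, your analysis of the structure of the problem is accurate and matches the paper's implicit reasoning. You correctly observe that well-definedness of $\Phi$ follows from Proposition~\ref{prop:divstep}(1), Lemma~\ref{lem:sumsum}, and Proposition~\ref{prop:zeromut}(2), and that injectivity is Proposition~\ref{prop:divstep}(2). You also correctly identify surjectivity as the genuine open content, and your own final paragraph honestly concedes that the de-mutation step is exactly the gap: one does not know that an admissible tuple can always be reduced by a reverse mutation to a smaller admissible tuple. This is precisely why the paper leaves the statement as a conjecture and verifies it only case-by-case for the families in Theorem~\ref{thm:classification} (Propositions~\ref{prop:a}--\ref{prop:c}), where the de-mutation can be found explicitly. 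Example~\ref{expl:nontriangular} even shows that for $(a,b)=(5,7)$ one cannot always stay within rectangular triangles under mutation, so any general de-mutation argument would need to handle non-triangular intermediate polygons.

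Your secondary observation about parallel edges is also a genuine ambiguity in the conjecture as stated, though it is vacuous for the triangles the paper actually studies.

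In short: your proposal is not a proof but a correct diagnosis of what a proof would require, and you have correctly located the obstruction. The paper agrees with you by not claiming a proof.
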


\begin{defi}
\label{defi:dualtuple}
For a zero mutable $f$ we will call the tuple of partitions dual to the divisibility steps
\[ (\mathbf{a}_e)_e=(\mathbf{divstep}_e(f)^\vee)_e \]
the \emph{tuple of dual partitions} or simply the \emph{dual tuple}.
\end{defi}


\subsection{ZMLPs on rectangular triangles}											
\label{S:rect}

\begin{defi}
A lattice polygon $\triangle$ is a \emph{rectangular triangle} if it can be mapped to the standard triangle $\triangle(a,b)=\textup{Conv}\{(0,0),(a,0),(0,b)\}$ for some $a,b\ge1$ under a transformation in the integral affine transformation group $\textup{GL}_2(\mathbb{Z})\ltimes\mathbb{Z}^2$. Equivalently, $\triangle$ is a lattice triangle with a vertex whose primitive integral outgoing edge direction vectors have determinant $1$. We call a rectangular triangle $\triangle(a,b)$ \emph{primitive} if $\gcd(a,b)=1$.
\end{defi}

\begin{defi}
\label{defi:dualpair}
For $f \in \textup{ZMLP}(a,b)$ we omit the edge of length $1$ in the dual tuple (Definition \ref{defi:dualtuple}) and call $(\mathbf{a},\mathbf{b})$ the \emph{pair of dual partitions} or just the \emph{dual pair} of $f$.
\end{defi}

\begin{expl}
\label{expl:tomjerrydiv}
For Tom and Jerry from Example \ref{expl:tomjerrymut}, the dual pair is $(\mathbf{a},\mathbf{b})=((2,1),(1,1))$ and $(\mathbf{a},\mathbf{b})=((1,1,1),(2))$, respectively.
\end{expl}

\begin{lem}
\label{lem:exists}
If $f \in \textup{ZMLP}(a,b)$, then the dual pair $(\mathbf{a},\mathbf{b})$ satisfies
\[ \max(\mathbf{a}) \leq b, \quad \max(\mathbf{b}) \leq a, \quad \max(\mathbf{a})+\max(\mathbf{b}) \leq \max(a,b). \]
\end{lem}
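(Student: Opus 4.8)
The plan is to read off what $\max(\mathbf{a})$ and $\max(\mathbf{b})$ mean combinatorially and then control them through the geometry of the curve $C=\{f=0\}$ near the origin vertex of $\triangle(a,b)$. Write $e_a,e_b$ for the two legs, of lattice lengths $a,b$. Since the largest part of a conjugate partition equals its number of parts, $\max(\mathbf{a})$ is the number of parts of $\mathbf{divstep}_{e_a}(f)$, which by Definition \ref{defi:divstep} is the index range $\overline{k}_{\max}-k_{\min}$. As $k_{\min}$ is attained on $e_a$ and $k_{\max}$ at the opposite vertex, we get $\max(\mathbf{a})\le k_{\max}-k_{\min}=b$, the lattice height of $\triangle$ normal to $e_a$, and symmetrically $\max(\mathbf{b})\le a$. (For reducible $f$ one sums the number of parts over the irreducible factors and uses additivity of edge lengths.) This disposes of the first two inequalities.

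For the third bound I would pass to the shifted polynomial $\Phi(u,v)=f(u-1,v-1)$ and the two points $A=(0,1)$, $B=(1,0)$. Both lie on $C=\{\Phi=0\}$, since by Corollary \ref{cor:binomial} the $y=0$ row of $f$ is $(1+x)^a$ and the $x=0$ column is $(1+y)^b$. The central claim is $\operatorname{mult}_A(C)\ge\max(\mathbf{a})$. Expanding $\Phi$ at $A$ in coordinates $(1+x,\,y)$, the coefficient of $(1+x)^i y^j$ is a nonzero multiple of $g_j^{(i)}(-1)$, where $g_j(x)$ is the $y^j$-row of $f$; this is nonzero precisely when $i$ is at least the divisibility $d_j:=\mathbf{div}_{e_a}(f)_j$ of $g_j$ by $(1+x)$, so $\operatorname{mult}_A(C)=\min_j(d_j+j)$. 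Now use $\mathbf{reqdiv}_{e_a}(f)\le\mathbf{div}_{e_a}(f)$ together with the convexity of $\mathbf{reqdiv}_{e_a}(f)$ from Proposition \ref{prop:zeromut}(1): convexity, positivity below $\overline{k}_{\max}$ and vanishing at $\overline{k}_{\max}=\max(\mathbf{a})$ force $\mathbf{reqdiv}_{e_a}(f)_j\ge\max(\mathbf{a})-j$, whence $d_j+j\ge\max(\mathbf{a})$ for all $j$ and $\operatorname{mult}_A(C)\ge\max(\mathbf{a})$. Symmetrically $\operatorname{mult}_B(C)\ge\max(\mathbf{b})$.

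With the multiplicity estimates in hand the inequality follows from B\'ezout. Assume without loss of generality $a\ge b$ (the primitive case $a=b$ forces $a=b=1$, which is trivial). Then the only degree-$a$ monomial of $f$ supported on $\triangle(a,b)$ is $x^a$, so $\Phi$ has total degree $a$ with leading form $u^a$; hence $C$ has degree $a$ and meets the line at infinity only at $[0:1:0]$. The line $\ell$ through $A$ and $B$ has a different point at infinity, so $\ell\not\subset C$, and B\'ezout gives $\operatorname{mult}_A(C)+\operatorname{mult}_B(C)\le I_A(\ell,C)+I_B(\ell,C)\le\deg C=a$. Combining yields $\max(\mathbf{a})+\max(\mathbf{b})\le a=\max(a,b)$.

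For reducible $f=f_1\cdots f_r$ I would reduce to the irreducible case: a triangle is Minkowski-indecomposable except into homothetic triangles, so each $\textup{Newt}(f_i)=\triangle(a_i,b_i)$ with $(a_i,b_i)$ proportional to $(a,b)$; then $\max(\mathbf{a})=\sum_i\max(\mathbf{a}^{(i)})$, $\max(\mathbf{b})=\sum_i\max(\mathbf{b}^{(i)})$, and proportionality gives $\sum_i\max(a_i,b_i)=\max(a,b)$, so summing the per-factor inequalities gives the claim. The main obstacle is the estimate $\operatorname{mult}_A(C)\ge\max(\mathbf{a})$: it is exactly here that convexity of the \emph{required} divisibility tuple is indispensable, because the actual tuple $\mathbf{div}_{e_a}(f)$ need not be convex, and one must check that replacing $\mathbf{div}$ by $\mathbf{reqdiv}$ only lowers $d_j+j$ so that the convex lower bound survives.
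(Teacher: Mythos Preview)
Your argument is correct, and for the first two inequalities it coincides with the paper's. For the third inequality, however, you take a genuinely different route. The paper argues by mutation: assuming $\max(\mathbf a)+\max(\mathbf b)>\max(a,b)$, it applies $\textup{mut}_{(\varphi,h)}$ with $\varphi=\braket{(1,0),\cdot}-\max(\mathbf a)$ and $h=1+y$, observes via Proposition~\ref{prop:zeromut} that $\mathbf{divstep}_{e_b}$ is preserved, and then derives a contradiction because the mutated polygon has vertical extent $b-\max(\mathbf a)<\max(\mathbf b)=\ell(\mathbf{divstep}_{e_b})$, too small to carry that many nonzero divisibility steps. Your approach instead interprets $\max(\mathbf a)$ and $\max(\mathbf b)$ as lower bounds for the multiplicities of the curve $\{f=0\}$ at the two points $(-1,0)$ and $(0,-1)$, using convexity of $\mathbf{reqdiv}$ to get $d_j\ge\max(\mathbf a)-j$, and then invokes B\'ezout on the line joining them. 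Both routes ultimately rest on Proposition~\ref{prop:zeromut}: the paper uses invariance of $\mathbf{reqdiv}$ under non-parallel mutations, you use convexity of $\mathbf{reqdiv}$. The paper's argument stays inside the mutation framework that drives the rest of the article; yours is a self-contained classical intersection-theory statement that makes the bound sharp and transparent (for Tom and Jerry one checks $\operatorname{mult}_A+\operatorname{mult}_B=\max(a,b)$ on the nose). One small remark: your reducible paragraph is unnecessary, since $\gcd(a,b)=1$ forces any Minkowski decomposition $\triangle(a,b)=\sum_i\triangle(a_i,b_i)$ with $(a_i,b_i)$ proportional to $(a,b)$ to be trivial, so every $f\in\textup{ZMLP}(a,b)$ is already irreducible.
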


\begin{proof}
Let $e_a$ be the edge of length $a$ in $\triangle=\textup{Newt}(f)$. Then $\mathbf{a}$ is dual to $\mathbf{divstep}_{e_a}(f)$, hence $\max(\mathbf{a})=\ell(\mathbf{divstep}_{e_a}(f))$, the length of the partition $\mathbf{divstep}_{e_a}(f)$. The size of $\triangle$ orthogonal to $e_a$ is equal to $b$. Hence, we have $\ell(\mathbf{divstep}_{e_a}(f))\leq b$. It follows that $\max(\mathbf{a})\leq b$. By symmetry we also get $\max(\mathbf{b})\leq a$. 

For the last inequality, assume on the contrary that we have $\max(\mathbf{a})+\max(\mathbf{b}) > \max(a,b)$. Without loss of generality $\max(\mathbf{a})\geq\max(\mathbf{b})$. Then $\max(\mathbf{a}) > \max(a,b)/2 \geq b/2$. Consider the mutation $\textup{mut}_{(\varphi,h)}$ for $\varphi=\braket{(1,0),\cdot}-\max(\mathbf{a})$ and $h=1+y$. Note that $\varphi$ is not constant along $\tilde{e}_b$. Hence $\mathbf{divstep}_{\tilde{e}_b}(\tilde{f})=\mathbf{divstep}_{e_b}(f)$, as we have shown in the proof of Proposition \ref{prop:zeromut}. The polygon $\textup{mut}_{(\varphi,h)}(\triangle(a,b))$ has height (vertical size) equal to $b-\max(\mathbf{a})$. By the assumption this is smaller than $\max(\mathbf{b})=\ell(\mathbf{divstep}_{e_b}(f))=\ell(\mathbf{divstep}_{\tilde{e}_b}(\tilde{f}))$. Hence, there is a horizontal slice of $\textup{mut}_{(\varphi,h)}(f)$ with length $0$ but divisibility $>0$, which is a contradiction. An example of this phenomenon for $(\mathbf{a},\mathbf{b})=((3,1),(3,1,1))$ is shown in Figure \ref{fig:contradiction}.
\end{proof}

\begin{figure}[h!]
\centering
\begin{tikzpicture}[scale=.8]
\draw[black!40!green] (6,0) -- (-1,0) node[left]{$5$};
\draw[black!40!green] (6,1) -- (-1,1) node[left]{$2$};
\draw[black!40!green] (6,2) -- (-1,2) node[left]{$1$};
\draw[black!40!green] (6,3) -- (-1,3) node[left]{$0$};
\draw[black!40!green] (6,4) -- (-1,4) node[left]{$0$};
\draw[black!40!green] (0,4.5) -- (0,-1) node[below]{$4$};
\draw[black!40!green] (1,4.5) -- (1,-1) node[below]{$2$};
\draw[black!40!green] (2,4.5) -- (2,-1) node[below]{$1$};
\draw[black!40!green] (3,4.5) -- (3,-1) node[below]{$0$};
\draw[black!40!green] (4,4.5) -- (4,-1) node[below]{$0$};
\draw[black!40!green] (5,4.5) -- (5,-1) node[below]{$0$};
\draw (0,0) -- (5,0) -- (0,4) -- cycle;
\draw[->] (7,2) -- (8,2);
\draw[black!40!green] (16,0) -- (9,0) node[left]{$5$};
\draw[black!40!green] (16,1) -- (9,1) node[left]{$2$};
\draw[black!40!green] (16,2) -- (9,2) node[left]{$1$};
\draw[black!40!red] (9,2) node[right]{\Large$\lightning$};
\draw[black!40!green] (10,3) -- (10,-1) node[below]{$1$};
\draw[black!40!green] (11,3) -- (11,-1) node[below]{$0$};
\draw[black!40!green] (12,3) -- (12,-1) node[below]{$0$};
\draw[black!40!green] (13,3) -- (13,-1) node[below]{$0$};
\draw[black!40!green] (14,3) -- (14,-1) node[below]{$1$};
\draw[black!40!green] (15,3) -- (15,-1) node[below]{$2$};
\draw (10,0) -- (15,0) -- (15,2) -- (10,1) -- cycle;
\end{tikzpicture}
\caption{Mutation giving a contradiction to $\max(\mathbf{a})+\max(\mathbf{b}) > \max(a,b)$. The required divisibilities are shown in green. There is no Laurent polynomial with these divisibilities, hence we cannot give the coefficients.}
\label{fig:contradiction}
\end{figure}
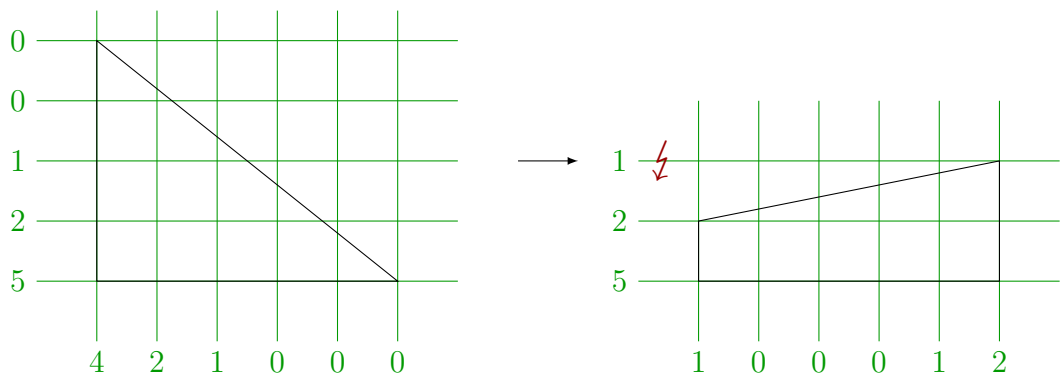

\begin{lem}
\label{lem:divint}
If $f\in\textup{ZMLP}(a,b)$ with $\gcd(a,b)=1$, then the dual pair $(\mathbf{a},\mathbf{b})=((a_i),(b_j))$ satisfies
\[ \sum a_i^2 + \sum b_j^2 = ab+1. \]
\end{lem}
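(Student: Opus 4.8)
The plan is to compute the total required divisibility $\textup{reqdiv}(f)$ in two ways: once as the number of lattice points of $\triangle(a,b)$ via Proposition \ref{prop:zeromut}(2), and once as a sum of per-edge contributions rewritten through the dual pair. Comparing the two counts will produce the identity.

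First I would observe that, since $\gcd(a,b)=1$, the triangle $\triangle(a,b)$ is Minkowski-indecomposable. Its oriented edge vectors $(a,0),(-a,b),(0,-b)$ sum to zero, and the only nonnegative solution of $\alpha(a,0)+\beta(-a,b)+\gamma(0,-b)=0$ is $\alpha=\beta=\gamma=:c$; hence every weak Minkowski summand is a dilate $c\,\triangle(a,b)$, and $c\,\triangle(a,b)$ is a lattice polygon for $0<c<1$ only if $ca,cb\in\mathbb{Z}$, which forces $c\in\mathbb{Z}$ because $\gcd(a,b)=1$. Since $\textup{Newt}(f_1f_2)=\textup{Newt}(f_1)+\textup{Newt}(f_2)$, the ZMLP $f$ is therefore irreducible (up to a monomial), so it admits a sequence of mutations to $1$ and Proposition \ref{prop:zeromut}(2) applies. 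Using that the hypotenuse has lattice length $\gcd(a,b)=1$ (so the boundary contains $a+b+1$ lattice points) and Pick's formula for the $\tfrac{(a-1)(b-1)}{2}$ interior points, a direct count gives
\[ \textup{reqdiv}(f)=\#\big(\triangle(a,b)\cap\mathbb{Z}^2\big)=\frac{(a-1)(b-1)}{2}+(a+b+1)=\frac{ab+a+b+3}{2}. \]

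The heart of the argument is to rewrite the per-edge contribution through the dual partition. Fix an edge $e$, let $\mathbf{s}=\mathbf{divstep}_e(f)=(s_l)_{l=1}^w$ be the corresponding partition of $\ell(e)$, and let $\mathbf{c}=\mathbf{s}^\vee=(c_j)_j$ be its dual. Lemma \ref{lem:sumsum} gives $\textup{reqdiv}_e(f)=\sum_{k=1}^w\sum_{l=k}^w s_l=\sum_{l=1}^w l\,s_l$, where the part $s_l$ is counted once for each $k=1,\dots,l$. The key combinatorial input is the conjugation identity $\sum_{l=1}^w (l-1)s_l=\sum_j\binom{c_j}{2}$: writing $\sum_l (l-1)s_l=\sum_l\sum_{j=1}^{s_l}(l-1)=\sum_{j\ge1}\sum_{l:\,s_l\ge j}(l-1)$ and using that $\{l:\,s_l\ge j\}=\{1,\dots,c_j\}$, the inner sum equals $\binom{c_j}{2}$. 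Since $\sum_l s_l=\sum_j c_j=\ell(e)$, this yields
\[ \textup{reqdiv}_e(f)=\sum_j\binom{c_j}{2}+\sum_j c_j=\tfrac12\Big(\sum_j c_j^2+\ell(e)\Big). \]

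Finally I would sum this over the three edges of $\triangle(a,b)$; a triangle has no parallel edges, so each edge is counted exactly once in $\textup{reqdiv}(f)$. The legs of lengths $a$ and $b$ carry the dual partitions $\mathbf{a}=(a_i)$ and $\mathbf{b}=(b_j)$, while the hypotenuse has lattice length $1$ and hence dual partition $(1)$, contributing $1$ to the sum of squares and $1$ to the sum of lengths. Therefore
\[ \textup{reqdiv}(f)=\tfrac12\Big(\sum_i a_i^2+\sum_j b_j^2+1+a+b+1\Big). \]
Equating this with $\tfrac12(ab+a+b+3)$, clearing the factor $\tfrac12$ and cancelling $a+b$, gives $\sum_i a_i^2+\sum_j b_j^2+2=ab+3$, that is $\sum_i a_i^2+\sum_j b_j^2=ab+1$, as claimed. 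I expect the only genuinely non-routine point to be the conjugation identity, which converts the \emph{a priori} delicate invariant $\textup{reqdiv}_e(f)$ into the clean quantity $\sum_j c_j^2$; the indecomposability remark (which licenses Proposition \ref{prop:zeromut}) and the lattice-point count are standard bookkeeping, the one thing worth double-checking being that the hypotenuse has lattice length exactly $1$, which is precisely where $\gcd(a,b)=1$ enters and makes the final constant balance.
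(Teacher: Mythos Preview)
Your proof is correct and follows essentially the same route as the paper's: compute $\textup{reqdiv}(f)$ once as the lattice-point count via Proposition~\ref{prop:zeromut}(2), once edge-by-edge via Lemma~\ref{lem:sumsum} rewritten through the dual partitions, and equate. Your conjugation identity $\sum_l l\,s_l=\sum_j\binom{c_j+1}{2}$ is exactly the rearrangement the paper performs (it writes $\sum_l l\,|\{i:a_i\ge l\}|=\sum_i\sum_{l=1}^{a_i}l$), so the combinatorial core is identical. The one point where you are more careful than the paper is in justifying that $f$ actually admits a mutation sequence to $1$, which is the hypothesis of Proposition~\ref{prop:zeromut}; your Minkowski-indecomposability argument for $\triangle(a,b)$ when $\gcd(a,b)=1$ is a clean way to secure irreducibility, and the paper's own proof silently assumes this.
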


\begin{proof}
Let $e_a$ be the edge in $\triangle(a,b)$ of length $a$. We have $\mathbf{divstep}_{e_a}(f)=\mathbf{a}^\vee=(s_l)_{l=1}^w$, where $s_l=|\{i \ | \ a_i\geq l\}|$ and $w=\max(\{a_i\})$. By Lemma \ref{lem:sumsum}, $\mathbf{reqdiv}_{e_a}(f)=(d_k)_{k=1}^{w+1}$, where $d_k=\sum_{l=k}^w s_l$ for $k\leq w$ and $d_{w+1}=0$. Then 
\begin{eqnarray*}
\textup{reqdiv}_{e_a}(f) &=& \sum_{k=1}^w d_k = \sum_{k=1}^w\sum_{l=k}^w |\{i \ | \ a_i\geq l\}|=\sum_{l=1}^w\sum_{k=1}^l |\{i \ | \ a_i\geq l\}| =\sum_{l=1}^w l|\{i \ | \ a_i\geq l\}| \\
&=& \sum_i \sum_{l=1}^{a_i} l = \sum_i \frac{a_i^2+a_i}{2} = \frac{1}{2}\left(\sum_i a_i^2 + a\right).
\end{eqnarray*}
Similarly for the edge of length $b$ and the edge of length $1$. In total we have
\[ \textup{reqdiv}(f) = \frac{1}{2}\left(\sum_i a_i^2 + \sum_j b_j^2 + a+b+2\right). \]
The number of lattice points $\triangle(a,b)$ is 
\[ \frac{1}{2}(ab+a+b+3). \]
This can for instance be seen from Pick's theorem, since the area of $\triangle(a,b)$ is $\frac{1}{2}ab$ and the number of boundary lattice points is $a+b$. By Proposition \ref{prop:zeromut}, (2), the number of lattice points is equal to $\textup{reqdiv}(f)$. Equating the above expressions gives the claimed formula.
\end{proof}

\begin{defi}
\label{defi:comb}
Let $\textup{ZMLP}_{\textup{comb}}(a,b)$ be the set of pairs $(\mathbf{a},\mathbf{b})$, where $\mathbf{a}$ is a decreasing partition of $a$ and $\mathbf{b}$ is a decreasing partition of $b$, such that
\[ \sum a_i^2 + \sum b_i^2 = ab+1, \quad \max(\mathbf{a}) \leq b, \quad \max(\mathbf{b}) \leq a, \quad \max(\mathbf{a})+\max(\mathbf{b}) \leq \max(a,b). \]
\end{defi}

\begin{prop}
\label{prop:contained}
If $\gcd(a,b)=1$, then $\textup{ZMLP}(a,b) \subset \textup{ZMLP}_{\textup{comb}}(a,b)$.
\end{prop}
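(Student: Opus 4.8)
The plan is to unwind what the inclusion $\textup{ZMLP}(a,b) \subset \textup{ZMLP}_{\textup{comb}}(a,b)$ actually asserts, and then to read off the four defining conditions of $\textup{ZMLP}_{\textup{comb}}(a,b)$ (Definition \ref{defi:comb}) directly from the results already in hand. Since the members of $\textup{ZMLP}_{\textup{comb}}(a,b)$ are pairs of partitions while those of $\textup{ZMLP}(a,b)$ are Laurent polynomials, the symbol $\subset$ is to be interpreted through the dual pair map $f \mapsto (\mathbf{a},\mathbf{b})$ of Definition \ref{defi:dualpair}. The hypothesis $\gcd(a,b)=1$ enters first at this stage: it forces the hypotenuse of $\triangle(a,b)$ to have lattice length $1$, so that after omitting this length-$1$ edge the dual tuple of Definition \ref{defi:dualtuple} collapses to exactly the pair $(\mathbf{a},\mathbf{b})$ attached to the two legs of lengths $a$ and $b$.

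Next I would confirm that $(\mathbf{a},\mathbf{b})$ is a legitimate candidate element, i.e.\ that $\mathbf{a}$ and $\mathbf{b}$ are decreasing partitions of $a$ and $b$. By Proposition \ref{prop:divstep}(1), the divisibility steps $\mathbf{divstep}_{e_a}(f)$ and $\mathbf{divstep}_{e_b}(f)$ are decreasing partitions of the leg lengths $a$ and $b$; passing to the conjugate (dual) partition preserves the total and again produces a decreasing tuple, so $\mathbf{a}$ is a decreasing partition of $a$ and $\mathbf{b}$ one of $b$. It then remains to verify the numerical constraints of Definition \ref{defi:comb}. The three inequalities $\max(\mathbf{a})\leq b$, $\max(\mathbf{b})\leq a$ and $\max(\mathbf{a})+\max(\mathbf{b})\leq\max(a,b)$ are precisely the content of Lemma \ref{lem:exists}, valid for any $f\in\textup{ZMLP}(a,b)$, and the quadratic identity $\sum a_i^2 + \sum b_j^2 = ab+1$ is exactly Lemma \ref{lem:divint}, whose hypothesis $\gcd(a,b)=1$ is in force. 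Assembling these, the dual pair of any $f\in\textup{ZMLP}(a,b)$ meets every requirement of Definition \ref{defi:comb}, giving the claimed inclusion.

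In truth there is no substantial obstacle: the proposition is a repackaging of Lemmas \ref{lem:exists} and \ref{lem:divint} together with the partition bookkeeping supplied by Proposition \ref{prop:divstep}. The only point deserving a word of care is the reinterpretation of $\subset$ through the dual pair map, and the observation that $\gcd(a,b)=1$ is exactly what trivialises the hypotenuse and lets the dual tuple collapse to a pair; once this is recorded the verification is immediate.
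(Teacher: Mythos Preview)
Your proof is correct and follows exactly the same approach as the paper, which simply states that the proposition is the content of Lemmas \ref{lem:exists} and \ref{lem:divint}. You have unpacked the bookkeeping more carefully (the interpretation of $\subset$ via the dual pair map, the role of $\gcd(a,b)=1$ in trivialising the hypotenuse, and the appeal to Proposition \ref{prop:divstep} for the partition structure), but the underlying argument is identical.
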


\begin{proof}
This is the content of Lemmas \ref{lem:exists} and \ref{lem:divint}.
\end{proof}

\begin{conj}
\label{conj:comb}
If $\gcd(a,b)=1$, then $\textup{ZMLP}(a,b)=\textup{ZMLP}_{\textup{comb}}(a,b)$.
\end{conj}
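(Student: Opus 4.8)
The inclusion $\textup{ZMLP}(a,b)\subseteq\textup{ZMLP}_{\textup{comb}}(a,b)$ is Proposition \ref{prop:contained}, so the entire content is the reverse inclusion: every pair $(\mathbf a,\mathbf b)$ meeting the conditions of Definition \ref{defi:comb} is the dual pair of an honest zero mutable Laurent polynomial on $\triangle(a,b)$. The plan is to prove this by induction on the number of lattice points of $\triangle(a,b)$, equivalently on $ab$, realizing a prescribed pair by transporting it from a strictly smaller rectangular triangle along a single mutation. In the base case $\min(a,b)=1$ the identity $\sum a_i^2+\sum b_j^2=ab+1$ together with $\mathbf a\vdash 1$ forces $\mathbf a=(1)$ and, since then $\sum b_j^2=b$ with all parts at least $1$, forces $\mathbf b=(1,\dots,1)$; thus $\textup{ZMLP}_{\textup{comb}}(1,b)$ is a single pair, realized by the unique ZMLP of Proposition \ref{prop:a}.

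For the inductive step, fix $(\mathbf a,\mathbf b)\in\textup{ZMLP}_{\textup{comb}}(a,b)$ with $a,b\ge 2$. I would reduce using exactly the type of mutation that appears in the proof of Lemma \ref{lem:exists} and in Figure \ref{fig:tomjerrymut}: take $h=1+z^{m_e}$ for one leg $e$ and $\varphi$ constant along $e$ but non-constant along the opposite leg, with level offset governed by the dual data of $e$. First one shows this mutation is admissible, the required divisibilities along the non-constant leg being guaranteed by the convexity of $\mathbf{reqdiv}$ from Proposition \ref{prop:zeromut}(1) and by the inequality $\max(\mathbf a)+\max(\mathbf b)\le\max(a,b)$, which is precisely what rules out the obstruction of Figure \ref{fig:contradiction}. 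One then checks that the image is again a rectangular triangle $\triangle(a',b')$ with $a'b'<ab$, that the dual partition on the preserved leg is carried over unchanged (the computation $\mathbf{divstep}_{\tilde e}(\tilde f)=\mathbf{divstep}_e(f)$ from the proof of Proposition \ref{prop:zeromut}), and that the partition on the contracted leg is transformed into an explicit $\mathbf b'$ through the rule $\tilde d_k=d_k+\varphi(\triangle_k)$ of that same proof.

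The heart of the argument, and the step I expect to be the main obstacle, is to verify that the reduced pair $(\mathbf a',\mathbf b')$ again lies in $\textup{ZMLP}_{\textup{comb}}(a',b')$, i.e. satisfies the three $\max$-inequalities and the sum-of-squares identity of Lemma \ref{lem:divint}. The identity is the delicate part: the example of Figure \ref{fig:reqdiv}, where no part of $\mathbf a$ equals $b$ and no part of $\mathbf b$ equals $a$, shows that the mutation cannot act on the partitions by a naive delete-a-part rule, but must implement the genuine reshuffling of coefficients that the geometric mutation performs, changing several divisibility steps at once. Making this precise amounts to proving a purely combinatorial lemma describing the action of mutation on the pair of dual partitions (through $\mathbf{divstep}$ and Lemma \ref{lem:sumsum}), and showing that this action both preserves membership in $\textup{ZMLP}_{\textup{comb}}$ and strictly decreases $ab$. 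Once this is in place, the induction hypothesis furnishes a zero mutable $f'$ realizing $(\mathbf a',\mathbf b')$, and the inverse mutation $\textup{mut}_{(-\varphi,h)}(f')$ produces a zero mutable $f$ on $\triangle(a,b)$; that its dual pair is the prescribed $(\mathbf a,\mathbf b)$ follows because $f$ is determined by its dual pair (Proposition \ref{prop:divstep}(2)) and the divisibility data transform as in Proposition \ref{prop:zeromut}.

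A secondary but structural difficulty is termination together with primitivity: the contracted legs need not be coprime, so the induction cannot stay a priori inside the primitive world where Lemma \ref{lem:divint} applies. I would address this by first proving a version of the statement for all rectangular triangles, carrying an additional dual partition on the hypotenuse of length $\gcd(a,b)$ and a correspondingly corrected sum-of-squares identity, running the same mutation induction on $ab$ there, and recovering Conjecture \ref{conj:comb} as the special case $\gcd(a,b)=1$ in which the hypotenuse partition is forced to be $(1)$. Termination is then immediate since each mutation strictly decreases $ab$, in agreement with the chains of mutations through rectangular triangles already established for the families of Theorem \ref{thm:classification}.
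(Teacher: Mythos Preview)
The statement you are trying to prove is left as an open \emph{conjecture} in the paper; the paper does not give a proof, only evidence for specific families (Propositions~\ref{prop:a}--\ref{prop:d}) and small cases ($a+b\le 11$). So there is no ``paper's own proof'' to compare against, and any complete proof would be new.

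Your proposal has a concrete gap at the core inductive step. You want to apply a single mutation along one leg so that the image is again a rectangular triangle $\triangle(a',b')$ with $a'b'<ab$. But Example~\ref{expl:nontriangular} shows this is impossible in general: for $(a,b)=(5,7)$ with dual pair $(\mathbf a,\mathbf b)=((4,1),(3,3,1))\in\textup{ZMLP}_{\textup{comb}}(5,7)$, the action of $\beta$ gives $b'=\ell(\mathbf b)\,a-b=3\cdot 5-7=8>7$, the action of $\tau\beta\tau$ gives $a'=\ell(\mathbf a)\,b-a=2\cdot 7-5=9>5$, and $\alpha^{-1}$, $\tau\alpha^{-1}\tau$ are unavailable since $\max(\mathbf b)=3\ne a$ and $\max(\mathbf a)=4\ne b$. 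Thus no triangular mutation strictly decreases $ab$, and the paper explicitly notes that every chain from this $f$ to $1$ must pass through a non-rectangular polygon (Figure~\ref{fig:57}). Your sentence ``One then checks that the image is again a rectangular triangle $\triangle(a',b')$ with $a'b'<ab$'' is precisely the step that fails.

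Your final paragraph anticipates a difficulty with primitivity, but the obstruction above is unrelated to $\gcd$: it is that the class of rectangular triangles is not closed downward under size-decreasing mutations once one insists on realizing a \emph{prescribed} combinatorial pair. Any inductive scheme along these lines would need to leave the rectangular-triangle world (as the paper's example does), which means you must formulate and control $\textup{ZMLP}_{\textup{comb}}$ for arbitrary polygons, not just carry an extra hypotenuse partition. That is essentially Conjecture~\ref{conj:zmlp}, which the paper also leaves open. In short, the ``purely combinatorial lemma'' you outline---that the mutation action preserves membership in $\textup{ZMLP}_{\textup{comb}}$ and strictly decreases $ab$---is false as stated for triangular mutations, and extending it to non-triangular mutations is the whole difficulty of the conjecture.
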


This is Conjecture \ref{conj:zmlp} for rectangular triangles. In \S\ref{S:classification} we provide some evidence for Conjecture \ref{conj:comb}.

\subsection{Triangular mutations}													

\begin{defi}
Let $\textup{ZMLP}_\triangle(a,b)$ be the set of ZMLPs $f$ on $\triangle(a,b)$ for which there exists a sequence of mutations $f \mapsto f_k \mapsto f_{k-1} \mapsto \ldots \mapsto f_1 \mapsto f_0 = 1$ such that in each step $f_i$ is a ZMLP on a rectangular triangle, a line segment or a point.
\end{defi}

\begin{defi}
\label{defi:alphabeta}
For a Laurent polynomial $f$ on a rectangular triangle $\triangle(a,b)$ define the transposition
\[ \tau (f(x,y)) = f(y,x) \]
and the \emph{elementary triangular mutations}, see Figure \ref{fig:alphabeta},
\begin{align*}
 \alpha &= \textup{mut}_{\varphi=\braket{(0,1),\cdot}+a,h=1+y} \\
\beta &= \textup{mut}_{\varphi=\braket{(0,-1),\cdot}-b,h=1+y}
\end{align*}
Note that $\beta^{-1}=\beta$ and that $\alpha^{-1}(f)$ only exists for certain $f$.
\end{defi}

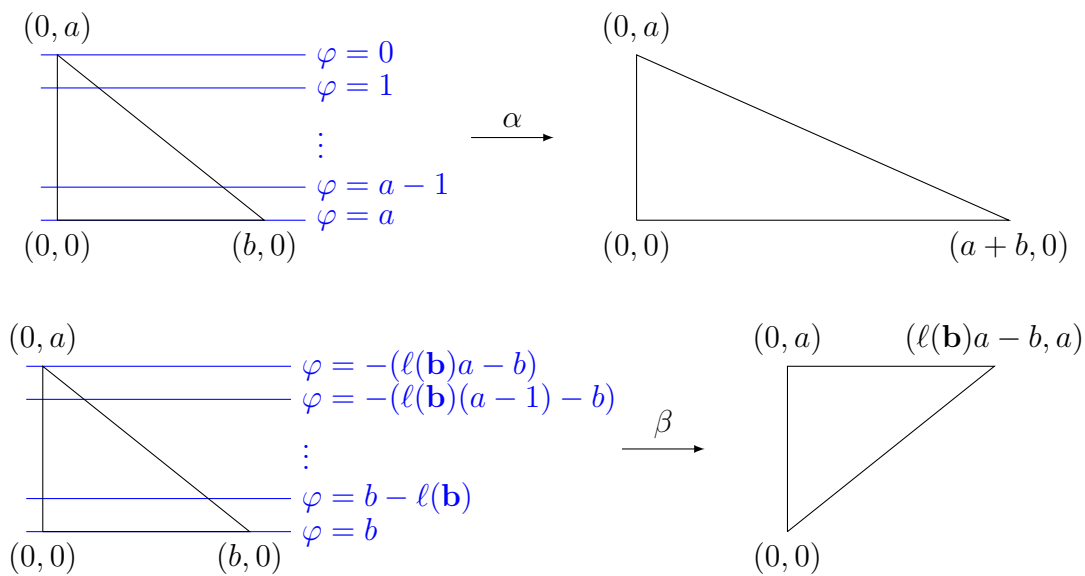
\begin{figure}[h!]
\centering
\begin{tikzpicture}[scale=1.1]
\draw[blue] (-.2,0) -- (3,0) node[right]{$\varphi=a$};
\draw[blue] (-.2,.4) -- (3,.4) node[right]{$\varphi=a-1$};
\draw[blue] (3,1) node[right]{$\vdots$};
\draw[blue] (-.2,1.6) -- (3,1.6) node[right]{$\varphi=1$};
\draw[blue] (-.2,2) -- (3,2) node[right]{$\varphi=0$};
\draw (0,0) node[below]{$(0,0)$} -- (2.5,0) node[below]{$(b,0)$} -- (0,2) node[above]{$(0,a)$} -- (0,0);
\draw[->] (5,1) -- (5.5,1) node[above]{$\alpha$} -- (6,1);
\draw (7+0,0) node[below]{$(0,0)$} -- (7+4.5,0) node[below]{$(a+b,0)$} -- (7+0,2) node[above]{$(0,a)$} -- (7+0,0);
\end{tikzpicture}
\textup{ }\\[5mm]
\begin{tikzpicture}[scale=1.1]
\draw[blue] (-.2,0) -- (3,0) node[right]{$\varphi=b$};
\draw[blue] (-.2,.4) -- (3,.4) node[right]{$\varphi=b-\ell(\mathbf{b})$};
\draw[blue] (3,1) node[right]{$\vdots$};
\draw[blue] (-.2,1.6) -- (3,1.6) node[right]{$\varphi=-(\ell(\mathbf{b})(a-1)-b)$};
\draw[blue] (-.2,2) -- (3,2) node[right]{$\varphi=-(\ell(\mathbf{b})a-b)$};
\draw (0,0) node[below]{$(0,0)$} -- (2.5,0) node[below]{$(b,0)$} -- (0,2) node[above]{$(0,a)$} -- (0,0);
\draw[->] (7,1) -- (7.5,1) node[above]{$\beta$} -- (8,1);
\draw (9+0,0) node[below]{$(0,0)$} -- (9+2.5,2) node[above]{$(\ell(\mathbf{b})a-b,a)$} -- (9+0,2) node[above]{$(0,a)$} -- (9+0,0);
\end{tikzpicture}
\caption{The elementary triangular mutations $\alpha$ and $\beta$.}
\label{fig:alphabeta}
\end{figure}

\begin{prop}
The set $\textup{ZMLP}_\triangle$ is generated from $f=1$ by repeated applications of $\tau$, $\alpha$, $\beta$ and $\alpha^{-1}$.
\end{prop}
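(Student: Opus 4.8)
The plan is to prove the two inclusions separately and to reduce the nontrivial one to a local statement about a single mutation step. For the easy inclusion --- that everything generated from $f=1$ by $\tau,\alpha,\beta,\alpha^{-1}$ lies in $\textup{ZMLP}_\triangle$ --- note that $1\in\textup{ZMLP}_\triangle$, and that each generator is, up to the $\textup{GL}_2(\mathbb{Z})\ltimes\mathbb{Z}^2$-equivalence (and trivially for $\tau$), a single mutation carrying a ZMLP on a rectangular triangle, segment or point to another such object. The generating set is closed under inverses, since $\tau^{-1}=\tau$, $\beta^{-1}=\beta$ and $(\alpha)^{-1}=\alpha^{-1}$. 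Hence if $g=\mu(f)$ for a generator $\mu$ and $f\in\textup{ZMLP}_\triangle$ has a defining chain $f\mapsto\ldots\mapsto 1$, then prepending the rectangular step $g\mapsto f$ (given by $\mu^{-1}$) produces a defining chain for $g$, so $g\in\textup{ZMLP}_\triangle$.

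For the reverse inclusion I would induct on the length of a defining chain $f=f_n\mapsto f_{n-1}\mapsto\ldots\mapsto f_0=1$. It suffices to show that each single mutation $f_i\mapsto f_{i-1}$ between two objects that are ZMLPs on rectangular triangles (or on a segment or a point) equals a word in $\tau,\alpha,\beta,\alpha^{-1}$: since this set is closed under inverses, the reverse step $f_{i-1}\mapsto f_i$ is then also such a word, and composing along the chain writes $f$ as a word applied to $1$. Thus everything reduces to the following classification: \emph{every mutation between ZMLPs on rectangular triangles is a composition of $\tau,\alpha,\beta,\alpha^{-1}$.}

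To prove the classification I would first normalise as in the proof of Proposition \ref{prop:zeromut}: since the factor $h$ is zero mutable it has the form $h=z^{rm}(1+z^{m})^{s}$ with $m$ primitive, the monomial factor is absorbed into the $\textup{GL}_2(\mathbb{Z})\ltimes\mathbb{Z}^2$ normalisation, and one reduces to $s=1$ by the slope-increasing trick used there. Applying $\tau$ to swap the two legs, I would then argue that the primitive direction $m$ must be parallel to a leg of the triangle and, after $\tau$, may be taken to be the fixed leg appearing in Definition \ref{defi:alphabeta}. The remaining freedom is the constant and sign of the affine form $\varphi$; requiring that $\textup{mut}_{(\varphi,h)}(\triangle(a,b))$ again be a triangle rather than a quadrilateral pins $\varphi$ to finitely many values, and matching with Figure \ref{fig:alphabeta} and Definition \ref{defi:mutve} identifies the mutation with exactly $\alpha$, $\alpha^{-1}$ or $\beta$ (the admissible offsets on the two sides, with $\alpha$ and its partial inverse covering the growing side and $\beta$ the folding side). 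The degenerate steps of the chain, where a rectangular triangle meets a segment or a segment meets the point, are handled the same way and appear as the initial applications of $\alpha^{\pm 1}$.

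The main obstacle is the geometric heart of the classification: showing that the direction $m$ is forced to be parallel to a leg --- equivalently, ruling out mutations whose factor points along the hypotenuse or has $s>1$ --- and that the offset of $\varphi$ can only realise $\alpha,\alpha^{-1},\beta$. I would establish this by tracking the distinguished unimodular right-angle vertex: a rectangular triangle is exactly a lattice triangle with a determinant-$1$ vertex, and for the image under $\textup{mut}_{(\varphi,h)}$ to retain such a vertex the shear induced by $h$ must fix one of the two edges through it, which forces $m$ to lie along a leg. The convexity and decreasing-partition constraints of Propositions \ref{prop:zeromut} and \ref{prop:divstep}, together with the inequalities of Lemma \ref{lem:exists}, then leave only the offsets producing $\alpha,\alpha^{-1},\beta$. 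Verifying this rigidity, and checking that it survives the degenerate endpoints of the chain, is where the real work lies.
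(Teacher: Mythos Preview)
Your proposal takes essentially the same approach as the paper: both reduce the statement to the claim that every single mutation carrying a ZMLP on a rectangular triangle (or segment, or point) to another such object can be written as a composition of $\tau$, $\alpha$, $\beta$, $\alpha^{-1}$. The paper's proof is literally a one-sentence assertion of this classification with no further justification, whereas you supply the surrounding framework (the two inclusions, closure under inverses, induction on chain length) and then sketch a plausible strategy for the classification itself --- normalising $h$, arguing that the primitive direction $m$ must lie along a leg by tracking the determinant-$1$ vertex, and then pinning down the offset of $\varphi$. So your write-up is strictly more detailed than what appears in the paper.

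One caution on the sketch: the argument that $m$ must be parallel to a leg is not quite as immediate as you suggest, and the paper does not prove it either. In particular you should not expect to rule out hypotenuse-direction mutations outright; rather, the point (as the paper phrases it) is that any such mutation, if it happens to land on a rectangular triangle, can be \emph{rewritten} as a word in $\tau,\alpha,\beta,\alpha^{-1}$. Your rigidity heuristic via the unimodular vertex is a reasonable starting point, but a full proof would need to check that any mutation preserving the rectangular shape factors through the generators, not merely that the generators are the ``obvious'' ones. Since the paper itself leaves this as an assertion, your outline is already at least as complete as the published argument.
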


\begin{proof}
Let $f$ be a Laurent polynomial on a rectangular triangle $\triangle(a,b)$. All mutations that transform $f$ into a Laurent polynomial on another rectangular triangle can be written as a composition of $\tau$, $\alpha$, $\beta$ and $\alpha^{-1}$.
\end{proof}

\begin{prop}
\label{prop:degree}
The action of $\tau,\alpha,\beta$ on the dual pair $(\mathbf{a},\mathbf{b})$ with $a=\sum a_i$, $b=\sum b_i$, is given by
\begin{align*}
\alpha((a_1,\ldots,a_k),(b_1,\ldots,b_l)) &= (a_1,\ldots,a_k),(a,b_1,\ldots,b_l) \\
\beta((a_1,\ldots,a_k),(b_1,\ldots,b_l)) &= (a_1,\ldots,a_k),(a-b_1,\ldots,a-b_l) \\
\tau((a_1,\ldots,a_k),(b_1,\ldots,b_l)) &= (b_1,\ldots,b_l),(a_1,\ldots,a_k)
\end{align*}
and the action on the degree (sum of partition elements) is
\begin{align*}
|\alpha(\mathbf{a},\mathbf{b})| &= (a,a+b), \\
|\beta(\mathbf{a},\mathbf{b})| &= (a,\ell(\mathbf{b})a-b),
\end{align*}
where $|\mathbf{a}|:=\sum a_i$ is the degree of $\mathbf{a}$ and $\ell(\mathbf{a})$ is the length (number of elements) of $\mathbf{a}$.
\end{prop}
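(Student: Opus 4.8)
The plan is to reduce everything to the mutation bookkeeping already established in the proof of Proposition~\ref{prop:zeromut}, translated through the conjugation $\mathbf{divstep}_e(f)\leftrightarrow(\mathbf a,\mathbf b)$. Recall that $\mathbf a=\mathbf{divstep}_{e_a}(f)^\vee$ and $\mathbf b=\mathbf{divstep}_{e_b}(f)^\vee$, where $e_a,e_b$ are the legs of lattice length $a,b$ and the remaining (length-$1$) edge is discarded. The case of $\tau$ is immediate: transposition $f(x,y)\mapsto f(y,x)$ is the lattice reflection interchanging $e_a$ and $e_b$ and fixing the hypotenuse. Since $\mathbf{divstep}_e$ and conjugation are defined edge-wise and are invariant under $\textup{GL}_2(\mathbb{Z})\ltimes\mathbb{Z}^2$, the two dual partitions simply swap, giving $\tau(\mathbf a,\mathbf b)=(\mathbf b,\mathbf a)$.

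Now consider $\alpha$ and $\beta$. Both have $h=1+y=1+z^{m_{e_b}}$ (mutation along the leg $e_b$) and a $\varphi$ with $\varphi_0(m_{e_b})=0$, so that $\varphi$ is constant along $e_b$ but not along $e_a$. By the case ``$\varphi$ not constant on $\tilde e$'' in the proof of Proposition~\ref{prop:zeromut} we get $\mathbf{reqdiv}_{\tilde e_a}(\tilde f)=\mathbf{reqdiv}_{e_a}(f)$, hence $\mathbf a$ is preserved by both $\alpha$ and $\beta$; this accounts for the first slot in all the formulas. For the edge $e_b$ we are in the complementary case, where the same proof gives $\tilde d_k=d_k+\varphi(\triangle_k)$, with $\mathbf{reqdiv}_{e_b}(f)=(d_k)_{k\ge 0}$ indexed by the lattice distance $k$ from $e_b$, $d_0=\ell(e_b)$ (Corollary~\ref{cor:binomial}), and $\triangle_k$ the slice of $\triangle(a,b)$ at distance $k$. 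For $\alpha$ one reads off $\varphi(\triangle_k)=a-k$, so each first difference $d_k-d_{k+1}$, i.e.\ each entry of $\mathbf{divstep}_{e_b}$, increases by $1$ and the tuple lengthens to $a$; dualizing via Lemma~\ref{lem:sumsum} and the defining property of $\mathbf{reqdiv}$, this is exactly the prepending of a part $a$, so $\mathbf b\mapsto(a,b_1,\dots,b_l)$, a legal decreasing partition because $\max(\mathbf b)\le a$ (Lemma~\ref{lem:exists}). For $\beta$, whose $\varphi$ carries the steeper slope $\ell(\mathbf b)$ visible in Figure~\ref{fig:alphabeta}, the same formula for $\tilde d_k$ together with truncation at the first vanishing of the required divisibility translates, after conjugation, into the complementation $b_j\mapsto a-b_j$ of $\mathbf b$ inside the $\ell(\mathbf b)\times a$ rectangle (reordered decreasingly); that this is an involution recovers $\beta^{-1}=\beta$.

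Finally, the degree formulas are immediate consequences: $|\mathbf a|=a$ is unchanged, $|(a,b_1,\dots,b_l)|=a+b$, and $|(a-b_j)_j|=\ell(\mathbf b)\,a-b$, which simultaneously record the new leg length $\ell(\tilde e_b)$ and agree with the target polygons in Figure~\ref{fig:alphabeta}. I expect the $\beta$ step to be the main obstacle: unlike $\alpha$, where the effect on $\mathbf{divstep}_{e_b}$ is a uniform shift by $1$, here one must pin down the correct affine form of $\varphi(\triangle_k)$ (with slope $\ell(\mathbf b)$), keep careful track of where $\tilde d_k$ is forced to truncate, and verify---using the convexity of $\mathbf{reqdiv}_{e_b}$ from Proposition~\ref{prop:zeromut}(1)---that conjugation returns precisely the complement $a-b_j$ and again a valid decreasing dual partition.
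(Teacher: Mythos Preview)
Your approach is correct and is precisely what the paper's one-line proof (``This is clear from the definition'') leaves to the reader; the paper supplies no further argument. The $\beta$ step you flag as the obstacle is in fact no harder than $\alpha$: taking $\varphi(\triangle_k)=\ell(\mathbf b)\,k-b$ one gets $\tilde d_k=\sum_{i\le k}(s_1-s_i)$, and since the new leg $\tilde e_b$ now sits at $k=a$, reindexing from there yields divstep entries $s_1-s_{a+1-i}$ (with $s_j:=0$ for $j>w$), whose conjugate partition is $\tilde b_j=a-b_{\,l+1-j}$, exactly the box complement you anticipated.
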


\begin{proof}
This is clear from the definition.
\end{proof}

\begin{rem}
Note that by Lemma \ref{lem:exists} we have $\max(\mathbf{b})\leq a$, so $a$ is really the first element of the decreasing tuple $(a,b_1,\ldots,b_l)$ in $\alpha((a_1,\ldots,a_k),(b_1,\ldots,b_l))$.
\end{rem}

\subsection{Classification results: Tom, Jerry, Spike, Tyke and the like}								
\label{S:classification}

Here we compute $\textup{ZMLP}(a,b)$ for $a \leq 3$, $\textup{gcd}(a,b)=1$ and for $a \geq 1$, $b=a+1$ and show that in these cases and for small triangles with $a+b \leq 11$ we have $\textup{ZMLP}(a,b)=\textup{ZMLP}_\triangle(a,b)=\textup{ZMLP}_{\textup{comb}}(1,k)$. The results are summarized in Theorem \ref{thm:classification} and Table \ref{tab:classification}.

\begin{table}[h!]
\bgroup
\def\arraystretch{1.5}
\begin{tabular}{c|c|c|c|c}
& $\triangle(1,k)$ & $\triangle(2,k)$ & $\triangle(3,k)$ & $\triangle(k,k+1)$  \\ \hline
Tom & $(1),(\vec{1})$ & $(1,1),(\vec{2},1)$ & $\begin{matrix} (1,1,1),(\vec{3},1), & k \equiv_3 1 \\ (1,1,1),(\vec{3},2), & k \equiv_3 2 \end{matrix}$ & $(1,\ldots,1),(k,1)$ \\ \hline
Jerry & & $(2),(\vec{2},1,1,1)$ & $\begin{matrix} (3),(\vec{3},1,1,1,1), & k \equiv_3 1 \\ (3),(\vec{3},2,2,2,2), & k \equiv_3 2 \end{matrix}$ & $(k),(1,\ldots,1)$ \\ \hline
Spike & & & $\begin{matrix} (2,1),(\vec{3},2,2), & k \equiv_3 1 \\ (2,1),(\vec{3},1,1), & k \equiv_3 2 \end{matrix}$ & $\begin{matrix} (\tfrac{k}{2},\tfrac{k}{2}),(\tfrac{k}{2}+1,\tfrac{k}{2}), & k \textup{ even} \\ (\tfrac{k+1}{2},\tfrac{k-1}{2}),(\tfrac{k+1}{2},\tfrac{k+1}{2}), & k \textup{ odd} \end{matrix}$ \\ \hline
Tyke & & & $\begin{matrix} (3),(\vec{3},2,2,2,1), & k \equiv_3 1 \\ (3),(\vec{3},2,1,1,1), & k \equiv_3 2 \end{matrix}$ &
\end{tabular}
\egroup
\vspace{5mm}
\caption{Pairs of dual partitions $(\mathbf{a},\mathbf{b})$ for ZMLPs on rectangular triangles.}
\vspace{-5mm}
\label{tab:classification}
\end{table}

\begin{prop}
\label{prop:a}
For $k\geq 1$, the set $\textup{ZMLP}(1,k)$ consists of $1$ element with dual pair $\mathbf{a}=(1),\mathbf{b}=(1,\ldots1)$. Moreover, we have $\textup{ZMLP}(1,k)=\textup{ZMLP}_\triangle(1,k)=\textup{ZMLP}_{\textup{comb}}(1,k)$.
\end{prop}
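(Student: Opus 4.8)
The plan is to prove the three assertions in turn: first that $\textup{ZMLP}_{\textup{comb}}(1,k)$ consists of a single element with the stated dual pair, then that there is an actual ZMLP realizing it (placing it in $\textup{ZMLP}_\triangle(1,k)$), and finally to close the chain of inclusions. The combinatorial count is the natural starting point because it is the cheapest: for $\triangle(1,k)$ we have $a=1$, $b=k$, so $\mathbf{a}$ must be a partition of $1$, forcing $\mathbf{a}=(1)$. The defining equation of $\textup{ZMLP}_{\textup{comb}}$ (Definition \ref{defi:comb}) then reads $1+\sum b_j^2 = k+1$, i.e. $\sum b_j^2 = k$, while $\mathbf{b}$ is a partition of $k$, so $\sum b_j = k$. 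Since $b_j^2 \geq b_j$ with equality iff $b_j=1$, the two equations together force every $b_j=1$, hence $\mathbf{b}=(1,\ldots,1)$ with $k$ entries. The remaining inequalities $\max(\mathbf{a})=1\le k$, $\max(\mathbf{b})=1\le 1$ and $1+1\le\max(1,k)=k$ hold for $k\ge 2$, and the case $k=1$ is the trivial segment, so $\textup{ZMLP}_{\textup{comb}}(1,k)$ is a single element. This is the core computation and I expect it to be entirely routine.

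Next I would exhibit an explicit ZMLP on $\triangle(1,k)$ realizing this dual pair, which simultaneously shows $\textup{ZMLP}(1,k)\ne\emptyset$ and that the element lies in $\textup{ZMLP}_\triangle(1,k)$. The triangle $\triangle(1,k)$ is the $A_{k-1}$-triangle, and the natural candidate is $f=(1+x)^k + y$ (up to monomial factors and affine coordinate changes), whose bottom edge of length $k$ carries the binomial coefficients forced by Corollary \ref{cor:binomial}. To see it is zero mutable and lies in $\textup{ZMLP}_\triangle$, I would produce an explicit chain of elementary triangular mutations reducing it to $1$: the mutation $\textup{mut}_{(\varphi,h)}$ with $\varphi=\braket{(0,1),\cdot}$-type level sets and $h=1+x$ collapses the length-$k$ edge one step at a time, exactly in the style of the Tom/Jerry computations in Figure \ref{fig:tomjerrymut}. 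Reading off the divisibility steps of this $f$ via Definitions \ref{defi:div}--\ref{defi:divstep} gives $\mathbf{divstep}_{e_b}(f)=(1,\ldots,1)$ on the long edge (its dual is $(k)$, so the dual partition $\mathbf{b}$ attached to that edge is $(1,\dots,1)$—I will double-check the transpose bookkeeping against Example \ref{expl:tomjerrydiv}) and the length-$1$ edges contribute $(1)$, matching the dual pair above.

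Finally I would assemble the equalities. Proposition \ref{prop:contained} already gives $\textup{ZMLP}(1,k)\subset\textup{ZMLP}_{\textup{comb}}(1,k)$, and $\textup{ZMLP}_\triangle(1,k)\subset\textup{ZMLP}(1,k)$ is immediate from the definitions (a ZMLP admitting a mutation chain through rectangular triangles is in particular a ZMLP). Since the middle set $\textup{ZMLP}_{\textup{comb}}(1,k)$ has exactly one element and the explicit $f$ above lies in $\textup{ZMLP}_\triangle(1,k)$, the two inclusions
\[
\textup{ZMLP}_\triangle(1,k)\subset\textup{ZMLP}(1,k)\subset\textup{ZMLP}_{\textup{comb}}(1,k)
\]
sandwich a nonempty subset of a singleton, forcing all three sets to coincide and to have the single claimed element. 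The only genuine subtlety I anticipate is the bookkeeping of transpose/dual partitions when matching the explicit divisibility steps of $f$ to the pair $(\mathbf{a},\mathbf{b})$; the rest is a short count plus an explicit mutation sequence, so I do not expect a real obstacle here—this is the base case that the inductive arguments of the later propositions build on.
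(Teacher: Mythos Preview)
Your argument is correct and reaches the same conclusion, but the route to uniqueness differs from the paper's. The paper simply observes that $\triangle(1,k)$ has no interior lattice points, so by Corollary~\ref{cor:binomial} every coefficient of a ZMLP on $\triangle(1,k)$ is forced to be a binomial coefficient on an edge; this pins down $f=(1+x)^k+y$ immediately, without invoking $\textup{ZMLP}_{\textup{comb}}$ or the sum-of-squares identity. You instead show $|\textup{ZMLP}_{\textup{comb}}(1,k)|=1$ via Lemma~\ref{lem:divint} and then use the sandwich $\textup{ZMLP}_\triangle\subset\textup{ZMLP}\subset\textup{ZMLP}_{\textup{comb}}$ from Proposition~\ref{prop:contained}. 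The paper's route is shorter and uses less machinery; your route has the virtue of making explicit the same inclusion pattern that drives Propositions~\ref{prop:b}--\ref{prop:d}, so it is a more uniform base case for the induction.

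One small correction: your aside that ``the case $k=1$ is the trivial segment'' is not right---$\triangle(1,1)$ is the standard triangle, not a segment---and in fact the inequality $\max(\mathbf{a})+\max(\mathbf{b})\le\max(a,b)$ of Definition~\ref{defi:comb} fails for $(a,b)=(1,1)$, so strictly speaking $\textup{ZMLP}_{\textup{comb}}(1,1)$ is empty under that definition. This is a degenerate boundary case (and arguably a wrinkle in the paper's definitions); your sum-of-squares step still forces $\mathbf{a}=\mathbf{b}=(1)$ without using that inequality, and the no-interior-points argument the paper uses bypasses the issue entirely. It would be cleaner either to treat $k=1$ by hand or to adopt the paper's direct argument for the uniqueness part.
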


\begin{proof}
The triangle $\triangle(1,k)$ has no interior lattice points. Hence, a ZMLP on $\triangle(1,k)$ is uniquely determined by Corollary \ref{cor:binomial}. This gives $f = (1+x)^k + y$, which indeed is a ZMLP by the mutations
\[ (1+x)^k + y \xrightarrow{\beta=\alpha^{-b}} 1+y \longrightarrow 1. \]
The pair of dual partitions is $(1),(1,\ldots,1)$. One easily sees that this is an element of $\textup{ZMLP}_\triangle(1,k)$ and $\textup{ZMLP}_{\textup{comb}}(1,k)$.
\end{proof}

\begin{prop}
\label{prop:b}
For $k\geq 3$ and $k\notin 2\mathbb{Z}$, the set $\textup{ZMLP}(2,k)$ consists of $2$ elements, with pair of dual partitions as in Table \ref{tab:classification}. We call them Tom and Jerry. Moreover, we have $\textup{ZMLP}(2,k)=\textup{ZMLP}_\triangle(2,k)=\textup{ZMLP}_{\textup{comb}}(2,k)$.
\end{prop}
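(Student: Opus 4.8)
The plan is to exploit the inclusion chain
\[ \textup{ZMLP}_\triangle(2,k) \subseteq \textup{ZMLP}(2,k) \subseteq \textup{ZMLP}_{\textup{comb}}(2,k), \]
where the first inclusion holds by the definition of $\textup{ZMLP}_\triangle$ and the second is Proposition \ref{prop:contained}, applicable because $\gcd(2,k)=1$ (this is exactly where the hypothesis $k\notin2\mathbb{Z}$ enters). It then suffices to (a) compute $\textup{ZMLP}_{\textup{comb}}(2,k)$ and find that it consists of precisely the two pairs attributed to Tom and Jerry, and (b) realize both of these pairs as dual pairs of elements of $\textup{ZMLP}_\triangle(2,k)$. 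Together these sandwich all three sets between $\{\text{Tom},\text{Jerry}\}$ and force equality.

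For step (a) I would argue directly from Definition \ref{defi:comb}. A decreasing partition of $a=2$ is either $(2)$ or $(1,1)$, and the condition $\max(\mathbf{b})\leq a=2$ forces $\mathbf{b}=(2,\ldots,2,1,\ldots,1)$ with, say, $m$ twos, so that $\sum b_j^2 = 2m+k$. The decisive constraint is $\sum a_i^2 + \sum b_j^2 = 2k+1$: for $\mathbf{a}=(2)$ it reads $4+2m+k=2k+1$, giving $m=(k-3)/2$, and for $\mathbf{a}=(1,1)$ it reads $2+2m+k=2k+1$, giving $m=(k-1)/2$. The oddness of $k$ is exactly what makes $m$ an integer, and $k\geq3$ what makes $m\geq0$; the remaining inequalities $\max(\mathbf{a})\leq b$ and $\max(\mathbf{a})+\max(\mathbf{b})\leq\max(2,k)=k$ are then routine to verify. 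This yields precisely $\mathbf{a}=(1,1),\ \mathbf{b}=(2,\ldots,2,1)$ (Tom) and $\mathbf{a}=(2),\ \mathbf{b}=(2,\ldots,2,1,1,1)$ (Jerry), matching Table \ref{tab:classification}.

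For step (b) I would write down explicit triangular mutation sequences and track their effect on dual pairs via Proposition \ref{prop:degree}, using that $\alpha$ and $\beta$ always apply to a ZMLP on a rectangular triangle, so that each intermediate is again a ZMLP on a rectangular triangle. Since $\alpha$ prepends $a=2$ to $\mathbf{b}$, Tom on $\triangle(2,k)$ arises by applying $\alpha$ a total of $(k-1)/2$ times to the pair $((1,1),(1))$ on $\triangle(2,1)$; the latter is $\tau$ applied to the unique ZMLP on $\triangle(1,2)$ provided by Proposition \ref{prop:a}, hence lies in $\textup{ZMLP}_\triangle$. Similarly Jerry on $\triangle(2,k)$ arises by applying $\alpha$ a total of $(k-3)/2$ times to the pair $((2),(1,1,1))$ on $\triangle(2,3)$; this base case itself lies in $\textup{ZMLP}_\triangle$ via the sequence that starts from the $\triangle(1,3)$ ZMLP of Proposition \ref{prop:a}, applies $\tau$ to reach $((1,1,1),(1))$ on $\triangle(3,1)$, then $\beta$ to reach $((1,1,1),(2))$ on $\triangle(3,2)$, and finally $\tau$ to reach $((2),(1,1,1))$ on $\triangle(2,3)$ (equivalently, the concrete mutations are those displayed in Figure \ref{fig:tomjerrymut}). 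At each stage the new first part of $\mathbf{b}$ equals $a$, which by Lemma \ref{lem:exists} is the largest part, so the partitions remain decreasing.

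Combining (a) and (b) gives
\[ \{\text{Tom},\text{Jerry}\} \subseteq \textup{ZMLP}_\triangle(2,k)\subseteq \textup{ZMLP}(2,k)\subseteq \textup{ZMLP}_{\textup{comb}}(2,k)=\{\text{Tom},\text{Jerry}\}, \]
whence all three sets coincide and contain exactly two elements. The only genuinely delicate point is step (b): one must choose the base cases and generators so that every intermediate Newton polytope stays a rectangular triangle (or a segment or point) and so that the dual pairs evolve under Proposition \ref{prop:degree} to exactly the partitions found in (a). The sum-of-squares computation in step (a) is otherwise entirely routine.
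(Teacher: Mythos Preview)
Your proposal is correct and follows essentially the same strategy as the paper: enumerate the possible dual pairs by a counting constraint and then verify each candidate via triangular mutations. The only cosmetic differences are that the paper phrases the enumeration in terms of $\mathbf{reqdiv}_e(f)$ and Proposition~\ref{prop:zeromut}(2) rather than the sum-of-squares formula in $\textup{ZMLP}_{\textup{comb}}$, and mutates \emph{down} via $\beta$ (e.g.\ $\beta=\alpha^{-(k-1)/2}$ reduces Tom on $\triangle(2,k)$ to the $\triangle(2,1)$ case) rather than \emph{up} via repeated $\alpha$ as you do; these are equivalent.
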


\begin{proof}
By Corollary \ref{cor:binomial} we know that $f$ restricted to the horizontal edge of length $k$ is given by $(1+x)^k$. Consequently, $\mathbf{reqdiv}_{e_b}(f)=(k,d_1,0)$, leaving $d_1$ as the only unknown parameter. There are two possible options for $\mathbf{a}=\mathbf{reqdiv}_{e_a}(f)^\vee$. 

If $\mathbf{a}=(1,1)$, then $\mathbf{reqdiv}_{e_a}(f)=(2,0,\ldots,0)$ and Proposition \ref{prop:zeromut}, (2), gives $d_1=(k-1)/2$. Therefore, $\mathbf{b}=(2,\ldots,2,1)$. This configuration corresponds to Tom, as seen in Table \ref{tab:classification}. The unique Laurent polynomial that satisfies these divisibilities is
\[ f = (1+x)^k + 2y(1+x)^{(k-1)/2} + y^2. \]
Applying the mutation $\beta=\alpha^{-(k-1)/2}$ leads to a Laurent polynomial with pair of dual partitions given by $(1,1),(1)$. This is zero mutable, as confirmed by Proposition \ref{prop:a}. In particular, $f\in\textup{ZMLP}_\triangle(2,k)$, and one easily confirms that $(\mathbf{a},\mathbf{b})\in\textup{ZMLP}_{\textup{comb}}(2,k)$.

If $\mathbf{a}=(2)$, then $\mathbf{reqdiv}_{e_a}(f)=(1,1,0,\ldots,0)$ and Proposition \ref{prop:zeromut}, (2), gives $d_1+1=(k-1)/2$, hence $d_1=(k-3)/2$. This is Jerry from Table \ref{tab:classification}. The unique Laurent polynomial with these divisibilities is
\[ f = (1+x)^k + (2+kx)(1+x)^{(k-3)/2} + y^2. \]
The mutation $\beta=\alpha^{-(k-3)/2}$ leads to a Laurent polynomial with dual pair $(2),(1,1,1)$, and then $\tau\beta\tau$ leads to $(1),(1,1,1)$. The corresponding Laurent polynomial is zero mutable by Proposition \ref{prop:a}. Again, $f\in\textup{ZMLP}_\triangle(2,k)$ and $(\mathbf{a},\mathbf{b})\in\textup{ZMLP}_{\textup{comb}}(2,k)$.
\end{proof}

\begin{prop}
\label{prop:c}
For $k\geq 7$, the set $\textup{ZMLP}(3,k)$ consists of $4$ elements, with pair of dual partitions as in Table \ref{tab:classification}. We call them Tom, Jerry, Spike and Tyke. Moreover, we have $\textup{ZMLP}(3,k)=\textup{ZMLP}_\triangle(3,k)=\textup{ZMLP}_{\textup{comb}}(3,k)$.
\end{prop}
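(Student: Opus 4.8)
The plan is to run the same three-step argument used for Propositions~\ref{prop:a} and~\ref{prop:b}. For $\gcd(3,k)=1$ we already have the chain
\[ \textup{ZMLP}_\triangle(3,k)\ \subseteq\ \textup{ZMLP}(3,k)\ \subseteq\ \textup{ZMLP}_{\textup{comb}}(3,k), \]
where the first inclusion holds because any $f$ admitting a triangular mutation sequence to $1$ is in particular zero mutable (every intermediate $h=1+y$ is zero mutable), so $\textup{ZMLP}_\triangle(3,k)$ is by definition a subset of $\textup{ZMLP}(3,k)$; the second inclusion is Proposition~\ref{prop:contained}. Since an irreducible ZMLP is determined by its dual pair (Proposition~\ref{prop:divstep}(2)), the dual-pair map is injective and it suffices to (A) show $\textup{ZMLP}_{\textup{comb}}(3,k)$ consists of exactly the four pairs in Table~\ref{tab:classification}, and (B) realize each of these four pairs by an $f\in\textup{ZMLP}_\triangle(3,k)$. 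The three sets then coincide and have four elements.

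For (A), a decreasing partition $\mathbf{a}$ of $3$ is $(3)$, $(2,1)$ or $(1,1,1)$, with $\sum a_i^2=9,5,3$ respectively. By Lemma~\ref{lem:exists} every part of $\mathbf{b}$ is at most $a=3$, so writing $\mathbf{b}$ with $n_j$ parts equal to $j\in\{1,2,3\}$ the equations $3n_3+2n_2+n_1=k$ and $\sum a_i^2+\sum b_j^2=3k+1$ from Definition~\ref{defi:comb} (via Lemma~\ref{lem:divint}) combine to
\[ n_1+n_2=\tfrac{1}{2}\bigl(\textstyle\sum a_i^2-1\bigr)=:N,\qquad 3n_3+n_2=k-N, \]
with $N=4,2,1$ for the three choices of $\mathbf{a}$. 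Thus $n_2\in\{0,\dots,N\}$, its residue modulo $3$ is forced by $k-N$, and $n_3=(k-N-n_2)/3$ must be a non-negative integer. For $\mathbf{a}=(1,1,1)$ and $\mathbf{a}=(2,1)$ this leaves a single admissible $n_2$ (giving Tom and Spike), while for $\mathbf{a}=(3)$ two values of $n_2$ are admissible (giving Jerry and Tyke); matching these against $k\bmod 3$ reproduces Table~\ref{tab:classification}. Non-negativity of $n_3$ in the $\mathbf{a}=(3)$ case is exactly what forces $k\ge 7$, so for $k\ge 7$ with $\gcd(3,k)=1$ there are precisely four candidates.

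For (B), I track dual pairs through mutations using Proposition~\ref{prop:degree}: $\alpha$ prepends a part $a=3$ to $\mathbf{b}$, the involution $\beta$ replaces $\mathbf{b}=(b_j)$ by the complement $(3-b_j)$, and $\tau$ swaps the two partitions. Each target $\mathbf{b}$ has the shape $(3^{n_3},\mathbf{c})$ for a fixed short tail $\mathbf{c}$, so applying $\alpha^{-1}$ repeatedly strips the $n_3$ leading $3$'s (this is exactly when $\alpha^{-1}$ is defined, since the largest part stays $3=a$) and reduces $f$ to the base pair $(\mathbf{a},\mathbf{c})$ on a small triangle. It then remains to send the finitely many base pairs to $f=1$ using only $\tau$ and $\beta$ (both always defined), descending to $\triangle(2,\cdot)$ or $\triangle(1,\cdot)$ covered by Propositions~\ref{prop:b} and~\ref{prop:a}. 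For example the Jerry base $((3),(1,1,1,1))$ on $\triangle(3,4)$ reduces by $\tau$ to $((1,1,1,1),(3))$ on $\triangle(4,3)$ and then by $\beta$ to $((1,1,1,1),(1))=\tau(\text{Tom on }\triangle(1,4))$; the Tyke base $((3),(2,2,2,1))$ on $\triangle(3,7)$ reduces by $\beta$ to $((3),(2,1,1,1))$ on $\triangle(3,5)$ and then by $\tau,\beta,\tau$ to Jerry on $\triangle(2,5)$; the $\mathbf{a}=(3)$ base for $k\equiv_3 2$ feeds into these by a single $\beta$, and Tom and Spike reduce analogously to base pairs that are Tom or Jerry on $\triangle(2,3)$ and $\triangle(1,3)$. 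As each base pair descends to an established ZMLP, every candidate lies in $\textup{ZMLP}_\triangle(3,k)$.

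The enumeration in (A) is routine Diophantine bookkeeping once the two reduced equations above are in hand. The substantive step is (B): it is the reverse inclusion $\textup{ZMLP}_{\textup{comb}}(3,k)\subseteq\textup{ZMLP}_\triangle(3,k)$, i.e. the content of Conjecture~\ref{conj:comb} for these triangles, which Proposition~\ref{prop:contained} does not supply. The main obstacle is therefore to produce and verify the explicit mutation sequences uniformly in $k$ and in both residue classes $k\equiv_3 1,2$, checking at each stage that the intermediate dual pairs remain valid and that the recursion actually terminates at the base cases of Propositions~\ref{prop:a} and~\ref{prop:b} (in particular, controlling the degenerate $\beta$-steps where a part $b_j=3=a$ is sent to $0$). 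Organizing the reductions so that the four families share as much of the reduction tree as possible is where the real work lies.
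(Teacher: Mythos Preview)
Your proposal is correct and follows the same overall sandwich strategy as the paper: bound $\textup{ZMLP}(3,k)$ above by a combinatorial enumeration, bound it below by exhibiting explicit triangular mutation sequences for each candidate, and conclude equality of all three sets. The mutation reductions in your part~(B) are essentially those the paper gives (strip leading $3$'s, then reduce the short base pairs via $\beta$ and $\tau$ to the cases of Propositions~\ref{prop:a} and~\ref{prop:b}).

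The one genuine organizational difference is in the enumeration step~(A). The paper does not work through $\textup{ZMLP}_{\textup{comb}}(3,k)$ and the sum-of-squares identity; instead it writes $\mathbf{reqdiv}_{e_b}(f)=(k,d_1,d_2,0)$ and, for each of the three choices of $\mathbf{a}$, solves the convexity inequality $k-d_1\geq d_1-d_2\geq d_2$ together with the lattice-point constraint $d_1+d_2=k-1-\textup{reqdiv}_{e_a}(f)$ from Proposition~\ref{prop:zeromut} to pin down $(d_1,d_2)$ case by case. Your reformulation via the linear system $n_1+n_2=N$, $3n_3+n_2=k-N$ is equivalent (it is exactly the content of Lemma~\ref{lem:divint} unpacked), but it is more uniform across the three $\mathbf{a}$-cases and has the advantage of computing $\textup{ZMLP}_{\textup{comb}}(3,k)$ directly, so the final equality with that set is immediate rather than left as ``one easily sees''. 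The paper's version, on the other hand, stays closer to the divisibility data and makes the role of convexity (hence of Proposition~\ref{prop:zeromut}(1)) more visible.
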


\begin{rem}
Spike and Tyke are dogs appearing regularly in the TV show Tom and Jerry. For $k=5$ there is no Jerry, so we only have $3$ ZMLPs.
\end{rem}

\begin{proof}
We have $\mathbf{reqdiv}_{e_b}(f)=(k,d_1,d_2,0)$ with two unknowns $d_1$ and $d_2$. By Proposition \ref{prop:zeromut}, (1), we have $k-d_1 \geq d_1-d_2 \geq d_2-0$, hence $2d_1-l \leq d_2 \leq d_1/2$. By Proposition \ref{prop:zeromut}, (2), we have $d_1+d_2=k-1-\textup{reqdiv}_{e_a}(f)$, hence $3d_1-k \leq k-1-\textup{reqdiv}_{e_a}(f) \leq 3d_1/2$ and
\[ \frac{2(k-1-\textup{reqdiv}_{e_a}(f))}{3} \leq d_1 \leq \frac{2k-1-\textup{reqdiv}_{e_a}(f)}{3}. \]
We have three options for $\mathbf{a}=\mathbf{divstep}_{e_a}(f)^\vee$.

If $\mathbf{a}=(1,1,1)$, then $\mathbf{reqdiv}_{e_a}(f)=(3,0,\ldots,0)$ and $\textup{reqdiv}_{e_a}(f)=0$. The above inequality gives $d_1=(2k-2)/3$ if $k \equiv_3 1$ and $d_1=(2k-1)/3$ if $k\equiv_3 2$. From $d_1+d_2=k-1$ we get $d_2=(k-1)/3$ if $k \equiv_3 1$ and $d_2=(k-2)/3$ if $k \equiv_3 2$. The dual partition of $\mathbf{reqdiv}_{e_b}(f)=(k,d_1,d_2,0)$ is $\mathbf{b}=(3,\ldots,3,1)$ if $a \equiv_3 1$ and $\mathbf{b}=(3,\ldots,3,2)$ if $k \equiv_3 2$. This is Tom from Table \ref{tab:classification}. The corresponding Laurent polynomial is indeed zero mutable, since the mutation $\beta=\alpha^{(k-1)/3}$ resp. $\beta=\alpha^{(k-2)/3}$ leads to $(1,1,1),(1)$ resp. $(1,1,1),(2)$, which is zero mutable by Proposition \ref{prop:a} resp. Jerry from Proposition \ref{prop:b}.

If $\mathbf{a}=(2,1)$, then $\mathbf{reqdiv}_{e_a}(f)=(3,1,0,\ldots,0)$ and $\textup{reqdiv}_{e_a}(f)=1$. The above inequality gives $d_1=(2k-2)/3$ if $k \equiv_3 1$ and $d_1=(2k-4)/3$ if $k\equiv_3 2$. From $d_1+d_2=k-2$ we get $d_2=(k-4)/3$ if $k \equiv_3 1$ and $d_2=(k-2)/3$ if $k \equiv_3 2$. The dual partition of $\mathbf{reqdiv}_{e_b}(f)=(k,d_1,d_2,0)$ is $\mathbf{b}=(3,\ldots,3,2,2)$ if $a \equiv_3 1$ and $\mathbf{b}=(3,\ldots,3,1,1)$ if $k \equiv_3 2$. This is Spike from Table \ref{tab:classification}. The corresponding Laurent polynomial is indeed zero mutable, since the mutation $\beta=\alpha^{(k-1)/3}$ resp. $\beta=\alpha^{(k-2)/3}$ leads to $(2,1),(2,2)$ resp. $(2,1),(1,1)$. In the first case we apply $\beta$ again to obtain $(2,1),(1,1)$ as well. The Laurent polynomial with dual pair $(2,1),(1,1)$ is zero mutable by Proposition \ref{prop:b}.

If $\mathbf{a}=(3)$, then $\mathbf{reqdiv}_{e_a}(f)=(3,2,1,0,\ldots,0)$ and $\textup{reqdiv}_{e_a}(f)=3$. Hence, we have $d_1+d_2=k-4$ with $(2k-8)/3 \leq d_1 \leq (2k-4)/3$. For $k \equiv_3 1$ we have the two possibilities $d_1=(2k-5)/3,d_2=(k-7)/3$ and $d_1=(2k-8)/3,d_2=(k-4)/3$. They correspond to $\mathbf{b}=(3,\ldots,3,2,2,2,1)$ and $\mathbf{b}=(3,\ldots,3,1,1,1,1)$, respectively, hence to Tyke and Jerry from Table \ref{tab:classification}. Similarly, for $k \equiv_3 2$ we have the two possibilities $d_1=(2k-4)/3,d_2=(k-8)/3$ and $d_1=(2k-7)/3,d_2=(k-5)/3$. They correspond to $\mathbf{b}=(3,\ldots,3,2,2,2,2)$ and $\mathbf{b}=(3,\ldots,3,2,1,1,1)$, respectively, hence to Jerry and Tyke from Table \ref{tab:classification}. By applying $\beta$ repeatedly we can reduce all cases to either $(3),(1,1,1,1)$ or $(3),(2,1,1,1)$. If we now apply $\tau\beta\tau$, we get $(1),(1,1,1,1)$ resp. $(2),(2,1,1,1)$. In the first case we are done by Proposition \ref{prop:a}. In the second case we apply $\beta$ again to obtain $(2),(1,1,1)$. This is Tom from Proposition \ref{prop:b}, hence zero mutable.

In each case, we have only used the transposition $\tau$ and the triangular mutations $\alpha$ and $\beta$. Hence, all Laurent polynomials above are elements of $\textup{ZMLP}_\triangle(3,b)$. One easily sees from the pairs of dual partitions $(\mathbf{a},\mathbf{b})$ in Table \ref{tab:classification} that they are also elements of $\textup{ZMLP}_{\textup{comb}}(3,b)$. This completes the proof.
\end{proof}

\begin{prop}
\label{prop:d}
For $k\geq 3$, the set $\textup{ZMLP}(k,k+1)$ consists of $3$ elements, with pair of dual partitions as in Table \ref{tab:classification}. We call them Tom, Jerry and Spike. Moreover, we have $\textup{ZMLP}(k,k+1)=\textup{ZMLP}_\triangle(k,k+1)=\textup{ZMLP}_{\textup{comb}}(k,k+1)$.
\end{prop}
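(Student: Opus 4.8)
The plan is to exploit the chain of inclusions $\textup{ZMLP}_\triangle(k,k+1) \subseteq \textup{ZMLP}(k,k+1) \subseteq \textup{ZMLP}_{\textup{comb}}(k,k+1)$, where the first inclusion is the definition of $\textup{ZMLP}_\triangle$ and the second is Proposition \ref{prop:contained} (using $\gcd(k,k+1)=1$). It therefore suffices to prove two things: (I) the purely combinatorial set $\textup{ZMLP}_{\textup{comb}}(k,k+1)$ consists of exactly the three dual pairs listed in Table \ref{tab:classification}, and (II) each of Tom, Jerry and Spike lies in $\textup{ZMLP}_\triangle(k,k+1)$. Since the three dual pairs are distinct and a ZMLP is determined by its dual pair (Proposition \ref{prop:divstep}, (2)), the three sets are then squeezed between a set of size $\geq 3$ and a set of size $\leq 3$, so they coincide and have exactly three elements.

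For (I), I would write $a=k$, $b=k+1$, $\alpha=\max(\mathbf{a})$, $\beta=\max(\mathbf{b})$, and set $\delta_a=\alpha k-\sum a_i^2$ and $\delta_b=\beta(k+1)-\sum b_j^2$; both are nonnegative since $\delta_a=\sum a_i(\alpha-a_i)$ and likewise for $\delta_b$. First I would show $\alpha+\beta=k+1$: the defining relation of Definition \ref{defi:comb} gives $k^2+k+1=\sum a_i^2+\sum b_j^2\leq \alpha k+\beta(k+1)$, and writing $\alpha+\beta=k+1-t$ this reads $-kt+\beta\geq 1$; since $\beta\leq k$ this forces $t=0$. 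Substituting back yields the key identity $\delta_a+\delta_b=\beta-1$. The two boundary values are then immediate: $\alpha=1$ forces $\mathbf{a}=(1,\ldots,1)$ and then $\mathbf{b}=(k,1)$, the only partition of $k+1$ with maximum $k$, giving Tom; while $\alpha=k$ forces $\mathbf{a}=(k)$, $\beta=1$, and $\mathbf{b}=(1,\ldots,1)$, giving Jerry.

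The heart of the argument is the middle range $2\leq\alpha\leq k-1$ (so $\beta\geq 2$). Here the parts of $\mathbf{a}$ below the maximum sum to $k-\alpha=\beta-1$ and those of $\mathbf{b}$ below the maximum sum to $\alpha$, so each such part of $\mathbf{a}$ is $\leq\beta-1$ and each such part of $\mathbf{b}$ is $\leq\alpha$. This yields the elementary lower bounds $\delta_b\geq(\beta-\alpha)\alpha$ when $\alpha<\beta$, $\delta_a\geq\beta-1$ when $\alpha=\beta$, and $\delta_a\geq(\alpha-\beta+1)(\beta-1)$ when $\alpha>\beta$. Combined with $\delta_a+\delta_b=\beta-1$: the case $\alpha>\beta$ gives $1\geq\alpha-\beta+1$, a contradiction; the case $\alpha<\beta$ gives $(\alpha-1)(\beta-\alpha-1)\leq 0$, forcing $\beta=\alpha+1$ (hence $k$ even, $\alpha=k/2$); and the case $\alpha=\beta$ forces $k$ odd, $\alpha=(k+1)/2$. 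In each surviving case the relevant bound is attained with equality, which pins down $\mathbf{a}$ and $\mathbf{b}$ uniquely and recovers exactly the Spike dual pair of Table \ref{tab:classification}.

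For (II), I would realize each pair by triangular mutations using the action on dual pairs from Proposition \ref{prop:degree}. Tom is $\alpha$ applied to the unique ZMLP $((1,\ldots,1),(1))$ on $\triangle(k,1)$ (which is the transpose of Proposition \ref{prop:a}), since $\alpha$ prepends $|\mathbf{a}|=k$ to $\mathbf{b}=(1)$; Jerry is $\tau$ after $\beta$ applied to the unique ZMLP on $\triangle(k+1,1)$, which $\beta$ sends to $((1,\ldots,1),(k))$ on $\triangle(k+1,k)$. For Spike I would induct on $k$: Proposition \ref{prop:degree} shows that $\tau$ after $\beta$ carries the Spike pair on $\triangle(k,k+1)$ to the Spike pair on $\triangle(k-1,k)$ (checking the even and odd parities separately), and since $\tau,\beta$ are involutions among the generators of $\textup{ZMLP}_\triangle$, membership propagates upward from the base case $k=2$, where Spike coincides with Tom on $\triangle(2,3)$ and is zero mutable by Proposition \ref{prop:b}. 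The main obstacle is step (I): the delicate point is extracting from the single identity $\delta_a+\delta_b=\beta-1$ enough to eliminate the whole middle range except for one value of $\alpha$, which hinges on the parity-sensitive equality analysis of the deficiency bounds.
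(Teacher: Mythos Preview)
Your proposal is correct, and your treatment of part~(I) is genuinely different from---and in fact cleaner than---the paper's. The paper argues directly at the level of the required divisibility tuples $\mathbf{reqdiv}_{e_b}(f)=(k+1,d_1,\ldots,d_{k-1},0)$: for $\mathbf{a}=(1,\ldots,1)$ convexity together with $\sum d_i=k(k-1)/2$ forces $d_i=k-i$; it then sketches that perturbing any single $d_i$, or all of them uniformly, either breaks convexity or violates $\max(\mathbf{a})+\max(\mathbf{b})\leq k+1$, and that the only surviving ``change of slope'' yields Spike. This is correct but informal (``one can similarly argue for most of the other choices of $\mathbf{a}$''). By contrast, you work entirely inside the combinatorial set $\textup{ZMLP}_{\textup{comb}}(k,k+1)$ and extract everything from the single identity $\delta_a+\delta_b=\beta-1$ with $\alpha+\beta=k+1$, together with the elementary deficiency bounds; this gives a fully rigorous case analysis and makes the sandwich $\textup{ZMLP}_\triangle\subseteq\textup{ZMLP}\subseteq\textup{ZMLP}_{\textup{comb}}$ do all the work. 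One small point worth making explicit in your write-up: your statement that ``the parts of $\mathbf{a}$ below the maximum sum to $k-\alpha$'' tacitly assumes a single maximal part, which is not true in general; however, in each subcase where you actually use the corresponding bound ($\alpha>\beta$ or $\alpha=\beta$ for $\mathbf{a}$, $\alpha<\beta$ for $\mathbf{b}$) the relation $\alpha+\beta=k+1$ forces exactly one maximal part, so the argument goes through. Your part~(II) matches the paper's: Tom via $\alpha$, Jerry via $\tau\beta$, and Spike by induction via $\tau\beta$ down to the $\triangle(2,3)$ base case.
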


\begin{proof}
Consider the case $\mathbf{a}=(1,\ldots,1)$ and write $\mathbf{reqdiv}_{e_b}(f)=(k+1,d_1,\ldots,d_{k-1},0)$. Then $\textup{reqdiv}_{e_a}(f)=0$ and Proposition \ref{prop:zeromut}, (2), gives $\sum_{i=1}^{k-1}d_i = k(k-1)/2$. Since $d_i-d_{i+1}\geq d_{i+1}-d_{i+2}$ the $d_i$ must be convex. This shows $d_i=k-i$. This gives $\mathbf{divstep}_{e_b}(f)=(2,\ldots,1)$ and $\mathbf{b}=(k,1)$. Similarly, for $\mathbf{b}=(1,\ldots,1)$ one can show $\mathbf{a}=(k)$.

Now consider $\mathbf{a}=(2,1,\ldots,1)$, such that $\textup{reqdiv}_{e_a}(f)=1$. Then we have $\sum_{i=1}^{k-1}d_i = k(k-1)/2-1$. But decreasing only one of the $d_i$ breaks the convexity. Hence, there cannot exist a ZMLP with $\mathbf{a}=(2,1,\ldots,1)$. One can similarly argue for most of the other choices of $\mathbf{a}$. The only thing that does not break the convexity will be to (i) lower all $d_i$ at the same time or (ii) increase the slope with which the $d_i$ fall. In (i) we don't change $\ell(\mathbf{divstep}_{e_b}(f))=\max(\mathbf{b})$ but we increase $\ell(\mathbf{divstep}_{e_a}(f))=\max(\mathbf{a})$. This will violate $\max(\mathbf{a})+\max(\mathbf{b})\leq \max(a,b)$ from Lemma \ref{lem:exists}. The case (ii) where the $d_i$ fall with slope $2$ is exactly Spike in Table \ref{tab:classification}. For any higher slope $\rho$, the divisibilities of the other edge $e_a$ will fall with slope $2/\rho$, which again violates convexity.

One easily checks that all cases are elements of $\textup{ZMLP}_{\textup{comb}}(k,k+1)$. We show that they are elements of $\textup{ZMLP}_\triangle(k,k+1)$. For Tom, application of $\tau\alpha^{-1}$ gives $(1),(1,\ldots,1)$. For Jerry, application of $\tau\beta\tau$ gives $(1),(1,\ldots,1)$. For Spike, by repeated application of $\tau\beta$ we get smaller and smaller cases of Spike until we finally arrive at $(2,1),(2,2)$, then at $(1,1),(2,1)$ and then at $(1),(1,1)$.
\end{proof}

\begin{rem}
Similarly, but with more combinatorial efforts, one should be able to find all ZMLPs for other families of rectangular triangles, leading to the numbers in Table \ref{tab:large}.
\end{rem}

\begin{prop}
For $a+b \leq 11$ we have $\textup{ZMLP}_\triangle(a,b)=\textup{ZMLP}(a,b)=\textup{ZMLP}_{\textup{comb}}(a,b)$.
\end{prop}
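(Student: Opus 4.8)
The plan is to prove the triple equality by pinning down the chain
\[ \textup{ZMLP}_\triangle(a,b) \subseteq \textup{ZMLP}(a,b) \subseteq \textup{ZMLP}_{\textup{comb}}(a,b), \]
in which the first inclusion is immediate from the definition of $\textup{ZMLP}_\triangle$ (its elements are zero mutable by construction) and the second is Proposition \ref{prop:contained}. Everything therefore reduces to the reverse inclusion $\textup{ZMLP}_{\textup{comb}}(a,b) \subseteq \textup{ZMLP}_\triangle(a,b)$: each admissible dual pair must be realized by a genuine ZMLP that descends to $1$ through rectangular triangles. Since $a+b\le 11$, there are only finitely many triangles and, for each, $\textup{ZMLP}_{\textup{comb}}(a,b)$ is a finite set of pairs constrained by $\sum a_i^2+\sum b_j^2=ab+1$ (Definition \ref{defi:comb}), so the whole statement is a finite verification.

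First I would clear the cases already covered by the infinite families. Propositions \ref{prop:a}, \ref{prop:b}, \ref{prop:c} and \ref{prop:d}, together with the transposition symmetry $\tau$, settle every primitive $\triangle(a,b)$ with $a+b\le 11$ \emph{except} the two pairs $\triangle(3,5)$ and $\triangle(4,7)$, which fall outside the ranges $a\le 3$, $k\ge 7$, and $b=a+1$. For these I would enumerate $\textup{ZMLP}_{\textup{comb}}$ by hand and, using the explicit action of $\alpha,\beta,\tau$ on dual pairs from Proposition \ref{prop:degree}, exhibit for each admissible pair a sequence of elementary triangular mutations descending to a ZMLP on a triangle $\triangle(1,k)$, a line segment, or a point, all zero mutable by Proposition \ref{prop:a}. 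The descent is run by induction on $a+b$: after applying $\tau$ to arrange $a\le b$, one removes the largest part of $\mathbf{b}$ by $\alpha^{-1}$ when $\max(\mathbf{b})=|\mathbf{a}|$, and applies $\beta$ otherwise, verifying in each step that the resulting pair lies in $\textup{ZMLP}_{\textup{comb}}$ of a strictly smaller triangle; the inductive hypothesis then places it in $\textup{ZMLP}_\triangle$, and the forward mutation returns the original pair to $\textup{ZMLP}_\triangle$.

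The non-primitive triangles with $a+b\le 11$ (namely $\triangle(2,2),\triangle(2,4),\triangle(2,6),\triangle(2,8),\triangle(3,3),\triangle(3,6),\triangle(4,4),\triangle(4,6),\triangle(5,5)$) need separate handling, since the dual-pair formalism of \S\ref{S:rect} was set up under $\gcd(a,b)=1$. Here I would use that every zero mutable $f$ on a non-primitive $\triangle(a,b)$ factors into irreducible zero mutable pieces (Definition \ref{defi:zeromut}(iii)) supported on proper sub-polygons; one checks that the admissible factorizations are products of the already classified primitive ZMLPs, that the combinatorial count on each non-primitive triangle is accounted for by these products, and that such products descend to $1$ by applying triangular mutations factor by factor, so that $\textup{ZMLP}$ and $\textup{ZMLP}_\triangle$ again coincide with the combinatorial side.

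I expect the main obstacle to be controlling the descent so that it both stays inside $\textup{ZMLP}_{\textup{comb}}$ and strictly decreases $a+b$: applying $\beta$ to a partition with many small parts can \emph{increase} the degree, since $|\beta(\mathbf{a},\mathbf{b})|=(a,\ell(\mathbf{b})a-b)$ by Proposition \ref{prop:degree}, so one must choose the reducing move carefully and, where needed, precompose with $\tau$ to act on the correct edge. This is exactly the difficulty that obstructs a uniform proof of Conjecture \ref{conj:comb}; for $a+b\le 11$, however, the finitely many potential obstructions can be ruled out by direct inspection, using the bounds $\max(\mathbf{a})\le b$, $\max(\mathbf{b})\le a$ and $\max(\mathbf{a})+\max(\mathbf{b})\le\max(a,b)$ of Lemma \ref{lem:exists} to limit the number of parts and hence the range of cases that must be examined.
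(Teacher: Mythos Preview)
Your approach for primitive pairs is essentially the paper's: establish the chain $\textup{ZMLP}_\triangle(a,b)\subseteq\textup{ZMLP}(a,b)\subseteq\textup{ZMLP}_{\textup{comb}}(a,b)$ and then close it by a finite verification showing that for each admissible dual pair either $\alpha^{-1}$ exists (equivalently $\max(\mathbf{b})=a$) or $\beta$ strictly decreases $a+b$. The paper does exactly this, by computer, without first peeling off the cases covered by Propositions~\ref{prop:a}--\ref{prop:d}; your reduction to $\triangle(3,5)$ and $\triangle(4,7)$ is a correct and slightly more hands-on variant of the same finite check.

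The genuine gap is your treatment of non-primitive triangles. First, it is almost certainly unnecessary: the inclusion $\textup{ZMLP}(a,b)\subseteq\textup{ZMLP}_{\textup{comb}}(a,b)$ is only established in Proposition~\ref{prop:contained} under $\gcd(a,b)=1$, and all of \S\ref{S:rect} is set up for primitive rectangular triangles, so the proposition should be read with that standing hypothesis. Second, your argument for the non-primitive case is flawed on its own terms. You assert that every ZMLP on a non-primitive $\triangle(a,b)$ factors into irreducible ZMLPs ``supported on proper sub-polygons'', but Definition~\ref{defi:zeromut}(iii)--(iv) does not force reducibility: an irreducible $f$ can perfectly well be zero mutable via a chain of mutations to $1$, and nothing about $\gcd(a,b)>1$ prevents this. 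So the proposed reduction to products of primitive ZMLPs is unjustified. Drop the non-primitive discussion and state explicitly that the claim concerns primitive $\triangle(a,b)$; then your proof coincides with the paper's.
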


\begin{proof}
By definition of $\textup{ZMLP}_\triangle(a,b)$ and Proposition \ref{prop:contained} we have 
\[ \textup{ZMLP}_\triangle(a,b) \subset \textup{ZMLP}(a,b) \subset \textup{ZMLP}_{\textup{comb}}(a,b). \]
There are finitely many cases $a+b\leq 11$ and for each case $\textup{ZMLP}_{\textup{comb}}(a,b)$ is finite. We have verified by direct computation (with a computer) that in all cases $a+b\leq 11$ we have $\textup{ZMLP}_{\textup{comb}}(a,b)\subset\textup{ZMLP}_\triangle(a,b)$ by showing that in each case either $\beta$ decreases $a+b$ or $\alpha^{-1}$ exists and hence decreases $a+b$.
\end{proof}

\begin{expl}
\label{expl:nontriangular}
For $(a,b)=(5,7)$, we have $\textup{ZMLP}_\triangle(a,b)\subsetneq\textup{ZMLP}(a,b)$. Let $f$ be the ZMLP corresponding to $(\mathbf{a},\mathbf{b})=(4,1),(3,3,1)$. Then $f\in\textup{ZMLP}(a,b)\setminus\textup{ZMLP}_\triangle(a,b)$, since any sequence of mutations $f\mapsto\ldots\mapsto 1$ involves a non-triangular mutation. One such sequence is shown in Figure \ref{fig:57}. Note that application of $\beta$ and $\tau\beta\tau$ does not decrease $a+b$.
\end{expl}

\begin{figure}[h!]
\centering
\begin{tikzpicture}[scale=.7]
\draw (0,0) -- (7,0) -- (0,5) -- cycle;
\fill (0,0) circle (2pt) node[below]{\tiny$1$};
\fill (1,0) circle (2pt) node[below]{\tiny$7$};
\fill (2,0) circle (2pt) node[below]{\tiny$21$};
\fill (3,0) circle (2pt) node[below]{\tiny$35$};
\fill (4,0) circle (2pt) node[below]{\tiny$35$};
\fill (5,0) circle (2pt) node[below]{\tiny$21$};
\fill (6,0) circle (2pt) node[below]{\tiny$7$};
\fill (7,0) circle (2pt) node[below]{\tiny$1$};
\fill (0,1) circle (2pt) node[below]{\tiny$5$};
\fill (1,1) circle (2pt) node[below]{\tiny$25$};
\fill (2,1) circle (2pt) node[below]{\tiny$50$};
\fill (3,1) circle (2pt) node[below]{\tiny$50$};
\fill (4,1) circle (2pt) node[below]{\tiny$25$};
\fill (5,1) circle (2pt) node[below]{\tiny$5$};
\fill (0,2) circle (2pt) node[below]{\tiny$10$};
\fill (1,2) circle (2pt) node[below]{\tiny$33$};
\fill (2,2) circle (2pt) node[below]{\tiny$37$};
\fill (3,2) circle (2pt) node[below]{\tiny$15$};
\fill (4,2) circle (2pt) node[below]{\tiny$1$};
\fill (0,3) circle (2pt) node[below]{\tiny$10$};
\fill (1,3) circle (2pt) node[below]{\tiny$19$};
\fill (2,3) circle (2pt) node[below]{\tiny$8$};
\fill (0,4) circle (2pt) node[below]{\tiny$5$};
\fill (1,4) circle (2pt) node[below]{\tiny$4$};
\fill (0,5) circle (2pt) node[below]{\tiny$1$};
\draw[->] (7,2.5) -- (8,2.5);
\draw (9+0,0) -- (9+1,0) -- (9+4,5) -- (9+0,5) -- cycle;
\fill (9+0,0) circle (2pt) node[below]{\tiny$1$};
\fill (9+1,0) circle (2pt) node[below]{\tiny$1$};
\fill (9+0,1) circle (2pt) node[below]{\tiny$5$};
\fill (9+1,1) circle (2pt) node[below]{\tiny$5$};
\fill (9+0,2) circle (2pt) node[below]{\tiny$10$};
\fill (9+1,2) circle (2pt) node[below]{\tiny$13$};
\fill (9+2,2) circle (2pt) node[below]{\tiny$1$};
\fill (9+0,3) circle (2pt) node[below]{\tiny$10$};
\fill (9+1,3) circle (2pt) node[below]{\tiny$19$};
\fill (9+2,3) circle (2pt) node[below]{\tiny$8$};
\fill (9+0,4) circle (2pt) node[below]{\tiny$5$};
\fill (9+1,4) circle (2pt) node[below]{\tiny$14$};
\fill (9+2,4) circle (2pt) node[below]{\tiny$13$};
\fill (9+3,4) circle (2pt) node[below]{\tiny$4$};
\fill (9+0,5) circle (2pt) node[below]{\tiny$1$};
\fill (9+1,5) circle (2pt) node[below]{\tiny$4$};
\fill (9+2,5) circle (2pt) node[below]{\tiny$6$};
\fill (9+3,5) circle (2pt) node[below]{\tiny$4$};
\fill (9+4,5) circle (2pt) node[below]{\tiny$1$};
\draw[->] (13,4) -- (14,4);
\draw (15+0,5) -- (15+0,4) -- (15+1,3) -- (15+4,5) -- cycle;
\fill (15+0,5) circle (2pt) node[below]{\tiny$1$};
\fill (15+1,5) circle (2pt) node[below]{\tiny$4$};
\fill (15+2,5) circle (2pt) node[below]{\tiny$6$};
\fill (15+3,5) circle (2pt) node[below]{\tiny$4$};
\fill (15+4,5) circle (2pt) node[below]{\tiny$1$};
\fill (15+0,4) circle (2pt) node[below]{\tiny$1$};
\fill (15+1,4) circle (2pt) node[below]{\tiny$2$};
\fill (15+2,4) circle (2pt) node[below]{\tiny$1$};
\fill (15+1,3) circle (2pt) node[below]{\tiny$1$};
\draw[->] (19,4) -- (20,4);
\draw (21,5) -- (21,4) -- (22,3) -- cycle;
\fill (21,5) circle (2pt) node[below]{\tiny$1$};
\fill (21,4) circle (2pt) node[below]{\tiny$1$};
\fill (22,3) circle (2pt) node[below]{\tiny$1$};
\draw[->] (21,3) -- (21,2);
\draw (21,1) -- (21,0);
\fill (21,1) circle (2pt) node[below]{\tiny$1$};
\fill (21,0) circle (2pt) node[below]{\tiny$1$};
\draw[->] (21.25,.5) -- (21.75,.5);
\fill (22,.5) circle (2pt) node[below]{\tiny$1$};
\end{tikzpicture}
\caption{A zero mutable Laurent polynomial with non-triangular mutations.}
\label{fig:57}
\end{figure}
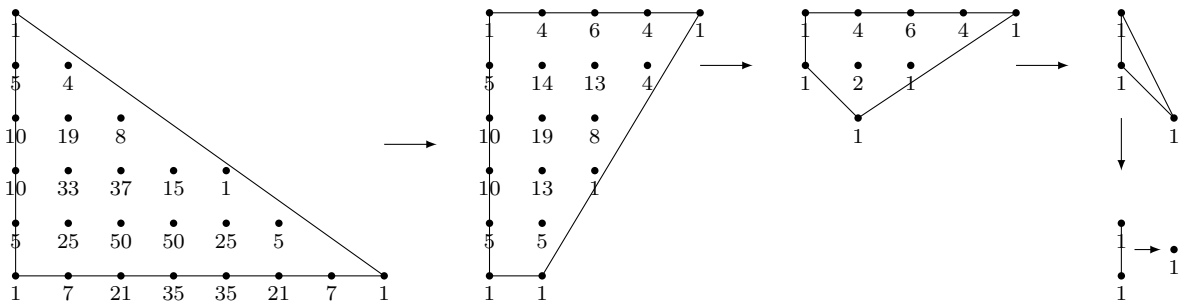

\subsection{Large triangles}													
\label{S:large}

Recall from Definition \ref{defi:comb} that we defined a set $\textup{ZMLP}_{\textup{comb}}(a,b)$ using combinatorial conditions fulfilled by ZMLPs, so that $\textup{ZMLP}(a,b)\subset\textup{ZMLP}_{\textup{comb}}(a,b)$. Given $a,b\in\mathbb{N}$, the cardinality $|\textup{ZMLP}_{\textup{comb}}(a,b)|$ can be easily calculated, e.g. by a computer, and gives an upper bound for $|\textup{ZMLP}(a,b)|$. This leads us to the following conjecture and Table \ref{tab:large}. By Conjecture \ref{conj:comb}, we expect $|\textup{ZMLP}(a,b)|$ to agree with this upper bound.

\begin{table}[h!]
\bgroup
\def\arraystretch{1.25}
\begin{tabular}{c|c}
a & $|\textup{ZMLP}_{\textup{comb}}(a,b)|, b\gg 0$  \\ \hline
1 & $1$ \\ \hline
2 & $2$ \\ \hline
3 & $4$ \\ \hline
4 & $8$ \\ \hline
5 & $11$ \\ \hline
6 & $15$ \\ \hline
7 & $\begin{matrix} 28, & b \equiv 1,2,5,6 \textup{ mod }7 \\ 31, & b \equiv 3,4 \textup{ mod }7 \end{matrix}$
\end{tabular}
\hspace{1cm}
\begin{tabular}{c|c}
k & $|\textup{ZMLP}_{\textup{comb}}(a,a+k)|, a\gg 0$  \\ \hline
1 & $3$ \\ \hline
2 & $3$ \\ \hline
3 & $6$ \\ \hline
4 & $15$
\end{tabular}
\egroup
\vspace{5mm}
\caption{Number of zero mutable Laurent polynomials for large rectangular triangles}
\label{tab:large}
\end{table}

Analyzing Table \ref{tab:large}, we have the following conjecture.

\begin{conj}
\begin{enumerate}[(1)]
\item For each $a\in\mathbb{N}$ there exists $b_0\in\mathbb{N}$ such that for all $b\geq b_0$ the number $|\textup{ZMLP}_{\textup{comb}}(a,b)|$ depends only on $b$ modulo $a$.
\item For each $k\in\mathbb{N}$ there exists $a_0\in\mathbb{N}$ such that for all $a\geq a_0$, $|\textup{ZMLP}_{\textup{comb}}(a,a+k)|$ depends only on $a$ modulo $k$ .
\end{enumerate}
\end{conj}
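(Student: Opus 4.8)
The plan is to work entirely on the combinatorial side, counting the pairs $(\mathbf a,\mathbf b)$ of Definition \ref{defi:comb} directly. The guiding principle in both parts is that the single quadratic constraint $\sum_i a_i^2+\sum_j b_j^2=ab+1$, once combined with the size and cap constraints, can be rewritten as a \emph{fixed} budget that is independent of the growing variable; the count then reduces to enumerating finitely many bounded combinatorial ``remainders'', each subject to one divisibility condition, and it is this divisibility that produces the claimed eventual periodicity.

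For part (1) I would fix $a$, so the finitely many partitions $\mathbf a\vdash a$ are fixed once and for all. For $b\ge 2a$ the caps $\max(\mathbf a)\le b$ and $\max(\mathbf a)+\max(\mathbf b)\le b$ are automatic, leaving only $\max(\mathbf b)\le a$ and the quadratic equation. Writing $D_{\mathbf a}:=\sum_i a_i^2-1$, the quadratic equation is equivalent to
\[ \sum_j b_j\,(a-b_j)=D_{\mathbf a}, \]
a quantity independent of $b$. Since $j(a-j)\ge a-1\ge 1$ for every part $j<a$ while $j(a-j)=0$ for $j=a$, only finitely many multisets $T$ of parts strictly smaller than $a$ can occur, and each such $T$ extends to a unique admissible $\mathbf b$ by adjoining copies of the part $a$ precisely when $a\mid\bigl(b-|T|\bigr)$, where $|T|=\sum_{j\in T}j$. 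Hence for $b$ large
\[ |\textup{ZMLP}_{\textup{comb}}(a,b)|=\sum_{\mathbf a\vdash a}\#\bigl\{\,T:\textstyle\sum_{j\in T}j(a-j)=D_{\mathbf a},\ |T|\equiv b \ (\mathrm{mod}\ a)\,\bigr\}, \]
which manifestly depends only on $b$ modulo $a$.

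For part (2) the difficulty is that along $b=a+k$ both partitions may have parts of size comparable to $a$. Here I would first record the structural identity obtained by setting $p=\max(\mathbf a)$, $q=\max(\mathbf b)$ and $U_{\mathbf a}=pa-\sum_i a_i^2=\sum_i a_i(p-a_i)\ge 0$, $U_{\mathbf b}=qb-\sum_j b_j^2\ge 0$. The quadratic equation becomes $U_{\mathbf a}+U_{\mathbf b}=pa+qb-ab-1$, and substituting $b=a+k$ gives
\[ U_{\mathbf a}+U_{\mathbf b}=qk-1-a\,d,\qquad d:=b-p-q. \]
Nonnegativity of the left-hand side together with $q\le a$ forces $d\in\{0,1,\dots,k-1\}$, equivalently $a+1\le p+q\le a+k$. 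Stratifying by the finitely many values of $d$, and decomposing each partition into a ``balanced backbone'' (determined by its number of parts and its total) plus a bounded fluctuation, I expect each stratum to become a linear family in $a$ that is admissible exactly when $a$ lies in a prescribed residue class modulo $k$; summing over strata then yields dependence only on $a$ modulo $k$. As a consistency check this reproduces the sub-case $\mathbf a=(a)$, where the joint cap forces $\max(\mathbf b)\le k$ and the budget identity collapses to $\sum_j b_j(k-b_j)=k^2-1$, reducing exactly to the part-(1) argument with the role of $a$ played by $k$, hence giving a count periodic in $(a+k)\equiv a\pmod k$.

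\textbf{The main obstacle} is the strata with $d$ small and $q$ close to $a$, where the budget $qk-1-ad$ grows linearly in $a$: the quadratic constraint no longer confines the solutions to a bounded set, and one partition genuinely spreads out (as in the column-type solutions). To handle these I would pass to conjugate partitions, under which ``many small parts'' becomes ``one large part'', and recast the contribution of each stratum as the number of lattice points in a polytope that translates and dilates linearly with $a$; eventual periodicity of such constrained denumerants (an Ehrhart-type statement) should then give that each stratum contributes an eventually periodic function of $a$ with period dividing $k$. Making the backbone-plus-fluctuation decomposition uniform across all strata, and proving that only finitely many linear families occur, is the technical heart of the argument.
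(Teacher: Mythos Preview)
The statement you are attempting is a \emph{conjecture} in the paper; it is stated without proof, motivated only by the numerical evidence in Table~\ref{tab:large}. So there is no paper argument to compare against.

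Your argument for part~(1) is essentially a complete and correct proof, and thus goes beyond what the paper establishes. The key observation that $\sum_j b_j(a-b_j)=\sum_i a_i^2-1$ is independent of $b$, so that the parts of $\mathbf b$ strictly below $a$ form one of finitely many fixed multisets $T$, is exactly the right reduction. Two small points deserve tightening: first, the implication ``$\max(\mathbf a)+\max(\mathbf b)\le b$ is automatic for $b\ge 2a$'' already uses the remaining constraint $\max(\mathbf b)\le a$, so the logic should be phrased as ``given $\max(\mathbf b)\le a$, the joint cap is vacuous for $b\ge 2a$''; second, the threshold $b_0$ must also guarantee $b\ge|T|$ for every admissible $T$, and since $|T|$ can be as large as $a^2-1$ (attained near $\mathbf a=(a)$), you should take $b_0$ of order $a^2$ rather than $2a$. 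With these adjustments your displayed formula for $|\textup{ZMLP}_{\textup{comb}}(a,b)|$ is valid and visibly periodic in $b$ modulo $a$.

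For part~(2) you correctly isolate the obstruction: after stratifying by $d=b-p-q\in\{0,\dots,k-1\}$, the budget $qk-1-ad$ can grow linearly in $a$ on some strata, so the ``finitely many remainders'' mechanism from part~(1) breaks down. Your proposed remedy (passing to conjugate partitions and invoking an Ehrhart/quasi-polynomial argument for lattice points in a linearly varying polytope) is a reasonable heuristic, but as written it is a plan rather than a proof. In particular, you would need to show that the solution set within each stratum really is the lattice-point set of a rational polytope depending affinely on $a$, and that the periods of the resulting quasi-polynomials all divide $k$; neither step is carried out. So part~(2) remains open in your proposal, as it does in the paper.
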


\section{Toric degenerations and log smoothings}									
\label{S:log}

In this section we construct a degeneration of an affine Gorenstein toric variety $X_0$ into a union of toric varieties $\cup X_i$ via a central subdivision of the dual cone. Then we describe how a zero mutable Laurent polynomial defines a log structure on $\cup X_i$. The log singular locus will be determined by the dual partitions of the divisibility steps (Definition \ref{defi:dualtuple}). Then we use \cite{CR} to show that a compatible collection of divisorial extractions $Y_i\rightarrow X_i$ glues to a toroidal crossing space $\cup Y_i$ with smooth log structure over the standard log point and use the results of \cite{FFR} to obtain a smoothing of $X_0$.

\subsection{Toric degenerations}												
\label{S:toricdeg}

Let $M\simeq\mathbb{Z}^n$ be a lattice, let $N=\textup{Hom}(M,\mathbb{Z})$ be the dual lattice and let $M_{\mathbb{R}}=M\otimes_{\mathbb{Z}}\mathbb{R}$ and $N_{\mathbb{R}}=N\otimes_{\mathbb{Z}}\mathbb{R}$ be the vector spaces spanned by $M$ and $N$. Let $\triangle \subset M_{\mathbb{R}}$ be a full-dimensional lattice polytope. Let $\sigma \subset M_{\mathbb{R}} \times \mathbb{R}$ be the cone over $\triangle$,
\[ \sigma := C(\triangle \times \{1\}) := \{\overline{m}=(m,r) \in M_{\mathbb{R}}\times\mathbb{R} \ | \ m \in r\triangle \} \]
and let $\sigma^\vee \subset N_{\mathbb{R}}\times\mathbb{R}$ be the dual cone,
\[ \sigma^\vee = \{\overline{n} \in N_{\mathbb{R}}\times\mathbb{R} \ | \ \overline{n}(\overline{m}) \geq 0 \ \forall \overline{m}\in \sigma\}. \]
Let $X_0$ be the affine toric variety defined by $\sigma$. By definition,
\[ X_0 = \textup{Spec }\mathbb{C}[\sigma^\vee \cap (N\times\mathbb{Z})]. \]
Since $\sigma$ is a cone over a polytope $\triangle$ placed at height $1$, the variety $X_0$ has Gorenstein index $1$, hence, it is Gorenstein. Since $\sigma$ is contained in $M\times\mathbb{R}_{\geq 0}$, its dual $\sigma^\vee$ contains the ray $\rho_0=\mathbb{R}(0,1) \subset N_{\mathbb{R}}\times\mathbb{R}$ in its interior.

\begin{con}
\label{con:toricdeg}
For each facet $\omega_i$ of $\sigma^\vee$, let $\sigma_i^\vee \subset N_{\mathbb{R}}\times\mathbb{R}$ be the cone generated by $\rho_0$ and the rays of $\omega_i$. The cones form the \emph{central subdivision} of $\sigma^\vee$. Let $\varphi : \sigma^\vee \rightarrow \mathbb{R}$ be an integral piecewise linear function whose domains of linearity are the $\sigma_i$ and that takes values $0$ on $\rho_0$ and $\geq 1$ on the primitive generators of $\sigma^\vee$. Let $P_\varphi$ be the upper convex hull of $\varphi$,
\[ P_\varphi = \{(\overline{n},h) \in \sigma^\vee\times\mathbb{R} \ | \ h\geq\varphi(\overline{n})\}. \]
Define a family
\[ \mathcal{X} = \textup{Spec }\mathbb{C}[P_\varphi \cap (N\times\mathbb{Z}^2)] \rightarrow \mathbb{A}^1 = \textup{Spec }\mathbb{C}[\mathbb{N}]. \]
by projection onto the coordinate $z^{(0,\ldots,0,1)}$. The general fiber is isomorphic to $X_0$, since the general fiber of the map of (unbounded) polytopes $P_\varphi \rightarrow \mathbb{R}^1$ defined by projection onto the last component is isomorphic to $\sigma^\vee$. The central fiber $\pi^{-1}(0)$ is a union of toric varieties $\cup X_i$, where $X_i = \textup{Spec }\mathbb{C}[\sigma_i^\vee \cap (N\times\mathbb{Z})]$, since the $\sigma_i^\vee$ form the ``bottom components'' of $P_\varphi$.
\end{con}

\begin{expl}
Although we are interested in the case $n=2$, we describe the case $n=1$ in full generality, since in this case $P_\varphi$ is $3$-dimensional and can be depicted. Consider an interval $\triangle = [0,n] \subset \mathbb{R}$. Then $\sigma = C(\triangle\times\{1\}) \subset\mathbb{R}^2$ is generated by $(0,1)$ and $(n,1)$. The dual cone $\sigma^\vee\subset\mathbb{R}^2$ is generated by $(1,0)$ and $(-1,n)$. The central subdivision is given by the cone $\sigma_1$ generated by $(1,0),(0,1)$ and the cone $\sigma_2$ generated by $(1,0),(-1,n)$. The piecewise linear function $\varphi$ is given by $\varphi|_{\sigma_1}=x$ and $\varphi|_{\sigma_2}=-x$. The upper convex hull $P_\varphi\subset\mathbb{R}^3$ is generated by $(1,0,1)$, $(-1,n,1)$ and $(0,1,0)$. The monoid $P_\varphi\cap\mathbb{Z}^3$ is generated by $(1,0,1)$, $(-1,n,1)$, $(0,1,0)$ and $(0,0,1)$. Then we have
\[ \mathcal{X} = \textup{Spec }\mathbb{C}[z^{(1,0,1)},z^{(-1,n,1)},z^{(0,1,0)},z^{(0,0,1)}] = \textup{Spec }\mathbb{C}[x,y,z,t]/(xy-t^2z^n). \]
For $t\neq 0$ this is isomorphic to the surface $A_{n-1}$ defined by $xy=z^n$. For $t=0$ it is the union of two copies of $\mathbb{A}^2$.
\end{expl}

\begin{figure}[h!]
\centering
\begin{tikzpicture}[scale=1.1]
\draw (-1,1) node{$\sigma=$};
\draw (0,2.2) -- (0,0) -- (3,1.5);
\draw (0,1) node[below]{\tiny$(0,1)$} -- (1,1);
\draw[dashed] (1,1) -- (2,1) node[below]{\tiny$(n,1)$};
\fill (0,1) circle (1pt);
\fill (1,1) circle (1pt);
\fill (2,1) circle (1pt);
\draw[<->] (3.5,1) -- (4.5,1);
\draw (5.4,1) node{$\sigma^\vee=$};
\draw (5.9,2.2) -- (7,0) -- (8.2,0);
\draw (7,0) -- (7,2.2);
\draw (7.5,1) node{\tiny$\varphi=x$};
\draw (6.7,1) node{\tiny$-x$};
\fill (7,0) circle (1pt);
\fill (8,0) circle (1pt) node[below]{\tiny$(1,0)$};
\fill (6,2) circle (1pt) node[below]{\tiny$(-1,n)$};
\fill (7,1) circle (1pt);
\fill (8.5,1) node{$\leadsto$};
\end{tikzpicture}
\begin{tikzpicture}[scale=1.3]
\rotateRPY{-90}{10}{0}
\begin{scope}[RPY]
\draw (-1,0,1) node{$P_\varphi=$};
\draw (0,0,0) -- (1,0,1) -- (1.5,0,1.5);
\draw (0,0,0) -- (-1,2,1) -- (-1.2,2.4,1.2);
\draw (0,0,0) -- (0,2,0) -- (0,3,0);
\fill (1,0,1) circle (1pt) node[below]{\tiny$(1,0,1)$};
\fill (-1,2,1) circle (1pt) node[below]{\tiny$(-1,n,1)$};
\fill (0,1.5,0) circle (1pt) node[below]{\tiny$(0,1,0)$};
\fill (0,0,1) circle (1pt) node[below]{\tiny$(0,0,1)$};
\end{scope}
\end{tikzpicture}
\caption{Cones defining a toric degeneration of an $A_{n-1}$-surface.}
\label{fig:interval}
\end{figure}
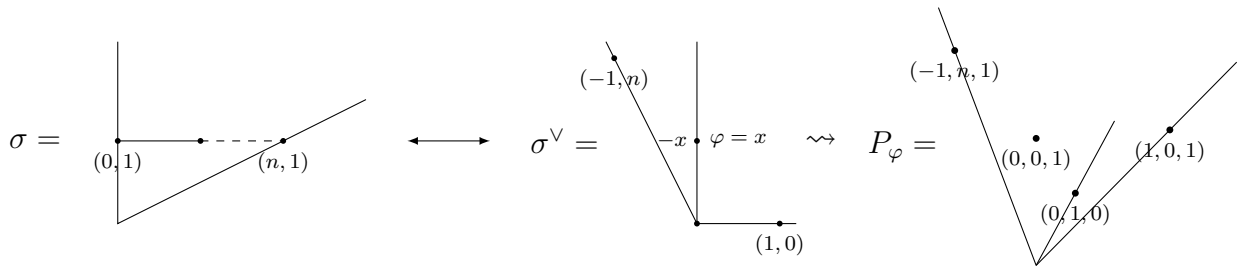

\begin{defi}
\label{defi:type}
A singularity is of type $\tfrac{1}{n}(a,b,c)$ if locally it is isomorphic to the cyclic quotient singularity $\mathbb{A}^3/\mathbb{Z}_n$, where $\mathbb{Z}_n=\mathbb{Z}/n\mathbb{Z}$ acts by $\zeta_n \cdot (a,b,c)\mapsto (\zeta_n^ax,\zeta_n^by,\zeta_n^cz)$.
\end{defi}

\begin{expl}
\label{expl:toricdeg}
Consider a rectangular triangle $\triangle(a,b) = \textup{Conv}\{(0,0),(0,a),(b,0)\}\subset\mathbb{R}^2$. The cone over $\triangle(a,b)$ (placed at height $1$) is $\sigma=\braket{(0,0,1),(0,a,1),(b,0,1)}$ and the dual cone is $\sigma^\vee=\braket{(1,0,0),(0,1,0),(-a,-b,ab)}$. The central subdivision of $\sigma^\vee$ is given by the cones $\sigma_1=\braket{(1,0,0),(0,1,0),(0,0,1)}$, $\sigma_2=\braket{(0,1,0),(-a,-b,ab-a-b),(0,0,1)}$ and $\sigma_3=\braket{(1,0,0),(-a,-b,ab-a-b),(0,0,1)}$. The piecewise linear function $\varphi$ is given by $\varphi|_{\sigma_1}=x+y$, $\varphi|_{\sigma_2}=y-(b-1)x$ and $\varphi|_{\sigma_3}=x-(a-1)y$. The upper convex hull $P_\varphi$ is generated by $(1,0,0,1)$, $(0,1,0,1)$, $(-a,-b,ab,ab-a-b)$, $(0,0,1,0)$, and $(0,0,0,1)$. The family 
\[ \mathcal{X}=\textup{Spec }\mathbb{C}[P_\varphi\cap\mathbb{Z}^4]\rightarrow\mathbb{A}^1 \]
is given by projection to $t=z^{(0,0,0,1)}$. For $t\neq 0$ the fiber is isomorphic to the affine cone over $\mathbb{P}(1,a,b)$. The fiber over $t=0$ has three components, $X_1=X_{\sigma_1}=\mathbb{A}^3$, $X_2=X_{\sigma_2}=\tfrac{1}{a}(1,b)\times\mathbb{A}^1$ and $X_3=X_{\sigma_3}=\tfrac{1}{b}(1,a)\times\mathbb{A}^1$. To see the latter, note that $\sigma_3$ is isomorphic via $\left(\begin{smallmatrix}1&0&0\\0&-1&0\\0&a&1\end{smallmatrix}\right)$ to the cone $\braket{(1,0,0),(-a,b,0),(0,0,1)}$, which gives $X_{\sigma_3}=\tfrac{1}{b}(1,a,0)$.
\end{expl}

\begin{expl}
\label{expl:toricdegtomjerry}
Consider the triangle $\triangle(2,3) = \textup{Conv}\{(0,0),(0,2),(3,0)\}\subset\mathbb{R}^2$. The cone $\sigma$ is generated by $(0,0,1),(0,2,1),(3,0,1)$. The dual cone $\sigma^\vee$ is generated by $(1,0,0)$, $(0,1,0)$, $(-2,-3,6)$. This is isomorphic via $\left(\begin{smallmatrix}1&0&0\\0&1&0\\1&1&1\end{smallmatrix}\right)$ to the cone generated by $(1,0,1)$, $(0,1,1)$, $(-2,-3,1)$, the cone over the triangle (placed at height $1$) $\textup{Conv}\{(1,0),(0,1),(-2,-3)\}$, which is the polar dual $\triangle^\circ$ of $\triangle$. This is always the case when $\triangle$ is a reflexive polytope. The central subdivision is shown in Figure \ref{fig:tomjerrypoly}. As in Example \ref{expl:toricdeg}, the induced toric degeneration $\cup X_i$ has irreducible components $X_1=\mathbb{A}^3$, $X_2=\tfrac{1}{2}(1,1)\times\mathbb{A}^1$ and $X_3=\tfrac{1}{3}(1,1)\times\mathbb{A}^1$.
\end{expl}

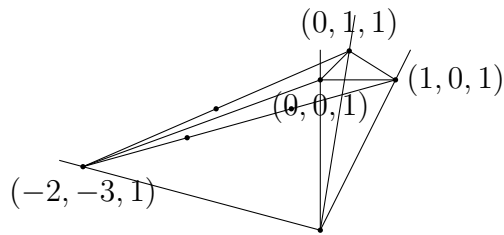
\begin{figure}[h!]
\centering
\begin{tikzpicture}[scale=1]
\rotateRPY{-90}{0}{0}
\begin{scope}[RPY]
\draw (0,0,1) -- (1,0,1);
\draw (0,0,1) -- (0,1,1);
\draw (0,0,1) -- (-2,-3,1);
\draw (1,0,1) -- (0,1,1) -- (-2,-3,1) -- (1,0,1);
\draw (0,0,-1) -- (1.2,0,1.4);
\draw (0,0,-1) -- (0,1.2,1.4);
\draw (0,0,-1) -- (-2.2,-3.3,1.2);
\draw (0,0,-1) -- (0,0,1.4);
\fill (0,0,1) circle (1pt) node[below]{$(0,0,1)$};
\fill (1,0,1) circle (1pt) node[right]{$(1,0,1)$};
\fill (0,1,1) circle (1pt) node[above]{$(0,1,1)$};
\fill (-2,-3,1) circle (1pt) node[below]{$(-2,-3,1)$};
\fill (-1,-1,1) circle (1pt);
\fill (-1,-2,1) circle (1pt);
\fill (0,-1,1) circle (1pt);
\fill (0,0,-1) circle (1pt);
\end{scope}
\end{tikzpicture}
\caption{Central subdivision of $\sigma^\vee$ for the affine cone over $\mathbb{P}(1,2,3)$.}
\label{fig:tomjerrypoly}
\end{figure}

\subsection{Log structures}									
\label{S:log1}

We recall the notion of log structures. For more details see \cite{GSdata}, \S3.

\begin{defi}
A \textup{pre-log structure} on an algebraic space $X$ is a morphism 
\[ \alpha_X : \mathcal{M}_X\rightarrow\mathcal{O}_X \]
of sheaves of (multiplicative) monoids. It is a \emph{log structure} if $\alpha_X : \alpha_X^{-1}(\mathcal{O}_X^\times)\iso\mathcal{O}_X^\times$. The \textit{ghost sheaf} $\overline{\mathcal{M}}_X$ of $\alpha_X$ is the sheaf of monoids defined as the quotient
\[ 1 \rightarrow \mathcal{O}_X^\times \overset{\alpha_X^{-1}}{\longrightarrow} \mathcal{M}_X \rightarrow \overline{\mathcal{M}}_X \longrightarrow 0. \]
\end{defi}

\begin{expl}
Let $X$ be a scheme and $D\subset X$ a closed subset of pure codimension $1$. Let $j : X\setminus D \hookrightarrow X$ be the inclusion and consider
\[ \alpha_{(X,D)} : \mathcal{M}_{(X,D)} := (j_\star\mathcal{O}_{X\setminus D}^\times)\cap\mathcal{O}_X \hookrightarrow \mathcal{O}_X. \]
This is called the \textit{divisorial log structure induced by $D \subset X$}. Here $\mathcal{M}_{(X,D)}$ is the sheaf of functions that are invertible away from $D$. The ghost sheaf $\overline{\mathcal{M}}_{(X,D)}$ measures the order of vanishing of functions along $D$. Its rank at a point $x \in X$ equals the codimension of the smallest stratum of $D$ containing $x$.
\end{expl}

\begin{defi}
Let $\alpha : \mathcal{P}_X \rightarrow \mathcal{O}_X$ be a pre-log structure on $X$. Then the \textit{log structure associated to $\alpha$} is given by the sheaf of monoids
\[ \mathcal{M}_X := \bigslant{\mathcal{P}_X\oplus\mathcal{O}_X^\times}{\left\{(p,\alpha(p)^{-1}) \ | \ p\in\alpha^{-1}(\mathcal{O}_X^\times)\right\}} \]
and $\alpha_X : \mathcal{M}_X \rightarrow \mathcal{O}_X$ is given by $\alpha_X(p,f):=\alpha(p)\cdot f$.
\end{defi}

\begin{defi}
Let $f : X \rightarrow Y$ be a morphism of schemes and let $Y$ carry a log structure $\alpha_Y : \mathcal{M}_Y \rightarrow \mathcal{O}_Y$. The \textit{pullback log structure} of the log structure $\alpha_Y$ along $f$ is the log structure associated to the pre-log structure
\[ \alpha : f^{-1}(\mathcal{M}_Y) \overset{\alpha_Y}{\longrightarrow} f^{-1}(\mathcal{O}_Y) \overset{f^\star}{\longrightarrow} \mathcal{O}_X. \]
\end{defi}

\begin{expl}
\label{expl:ptN}
Consider an affine toric variety $X=\textup{Spec }\mathbb{C}[P]$ with the divisorial log structure induced by $\{0\}\subset X$. Let $\textup{Spec }\mathbb{C}$ be a point with equipped with the pullback log structure along the map to the origin $\textup{Spec }\mathbb{C}\hookrightarrow\mathbb{A}^1$. We denote this points by $\textup{pt}_P$. The log structure can be given explicitly by 
\[ \alpha_X : \mathcal{M}_X = \mathbb{C}^\times \oplus P, (x,p) \mapsto \begin{cases} x & p=0 \\ 0 & p \neq 0 \end{cases}. \]
In the special case $P=\mathbb{N}$ with $X=\mathbb{A}^1$ we call $\textup{pt}_{\mathbb{N}}$ the \textit{standard log point}.
\end{expl}

\begin{rem}
Consider $\cup X_i$ with the pullback of the divisorial log structure by $\cup X_i \subset \mathcal{X}$. This remembers some information of how $\cup X_i$ sits inside $\mathcal{X}$. Hence, it is possible to recover $\mathcal{X}$ and hence $X_0$ from $\cup X_i$. We will define a perturbation of this log structure on $\cup X_i$, such that we recover $X$ instead of $X_0$.
\end{rem}

\begin{defi}
Let $X$ be a scheme. A log structure $\mathcal{M}_X$ on $X$ is \textit{fine} if there is an \'etale open cover $\{U_i\}$ of $X$ such that on $U_i$ there is a finitely generated integral monoid $P_i$ and a pre-log structure $\underline{P_i}\rightarrow\mathcal{O}_{U_i}$ (\textit{chart}) whose associated log structure is isomorphic to the pullback log structure of $\mathcal{M}_X$. Here $\underline{P_i}$ is the constant sheaf of $P_i$. A fine log structure $\mathcal{M}_X$ is \textit{saturated} if $\overline{\mathcal{M}}_{X,x}$ is saturated for all geometric points $x\in X$.
\end{defi}

\begin{defi}
A morphism $f:X\rightarrow Y$ of fine log schemes is \textit{log smooth} if \'etale locally on $X$ and $Y$ it fits into a commutative diagram
\begin{center}
\hspace{0cm}
\begin{xy}
  \xymatrix{
	X \ar[r]\ar[d]							& \textup{Spec }\mathbb{Z}[P] \ar[d] \\
	Y \ar[r] 							& \textup{Spec }\mathbb{Z}[P']
  }
\end{xy} \\
\textup{ }
\end{center}
with the following properties:
\begin{compactenum}[(1)]
\item The horizontal maps induce charts $\underline{P}\rightarrow\mathcal{O}_X$ and $\underline{P}'\rightarrow\mathcal{O}_Y$ for the log structures on $X$ and $Y$, respectively;
\item The induced morphism $X\rightarrow Y\times_{\textup{Spec }\mathbb{Z}[P']} \textup{Spec }\mathbb{Z}[P]$ is a smooth morphism of schemes;
\item The right-hand vertical arrow is induced by a monoid homomorphism $P'\rightarrow P$ with $\textup{ker}(P'^{\textup{gp}}\rightarrow P^{\textup{gp}})$ and the torsion part of $\textup{coker}(P'^{\textup{gp}}\rightarrow P^{\textup{gp}})$ finite groups. Note that if $P$ and $P'$ are toric monoids, then $P^{\textup{gp}}$ and $P'^{\textup{gp}}$ are torsion free.
\end{compactenum}
\end{defi}

\subsection{Log structures with prescribed singular locus}							
\label{S:log2}

From the short exact sequence defining $\overline{\mathcal{M}}_X$ we see that the set of isomorphism classes of log structures with given ghost sheaf should be a subset of $\textup{Ext}^1(\overline{\mathcal{M}}_X^{\textup{gp}},\mathcal{O}_X^\times)$. By \cite{GSdata}, Corollary 3.12, this is indeed the case. More precisely, we have the following.

\begin{defi}
A \emph{polyhedral complex} $\mathcal{P}$ is a set of (possibly empty or non-compact) polytopes $\sigma\subset\mathbb{R}^n$ such that for each $\sigma_1,\sigma_2 \in \mathcal{P}$ we have $\sigma_1\cap\sigma_2\in\mathcal{P}$. A \emph{fan} is a polyhedral complex such that all $\sigma\in\mathcal{P}$ are cones.
\end{defi}

\begin{expl}
The central subdivision from Construction \ref{con:toricdeg} yields a polyhedral complex.
\end{expl}

\begin{defi}
For a polyhedral complex $\mathcal{P}$, let $\mathcal{P}^{[k]}$ be the subset of elements of codimension $k$. For $\omega\in\mathcal{P}^{[1]}$ and $\tau\in\mathcal{P}^{[2]}$ let
$d_\omega$ be a primitive integral normal vector of $\omega$ and define
\[ \epsilon_\tau(\omega) \begin{cases} 0 & \textup{if }\tau\not\subseteq\omega \\ \pm 1 & \textup{if }\tau\subseteq\omega \end{cases} \]
where the sign is chosen compatibly with some orientation on $\tau$ and the orientations on $\omega$ induced by the choices of $d_\omega$.
\end{defi}

\begin{prop}
\label{prop:joint}
Let $X=\cup X_i$ be the central fiber of the toric degeneration of an affine Gorenstein toric variety $X_0$ constructed in Construction \ref{con:toricdeg}. Let $U \subset X$ be an \'etale open set. Then the set of isomorphism classes of log structures on $X$ that are log smooth over the standard log point and such that the ghost sheaf has rank $r+1$ along a codimension $r$ stratum of $\cup X_i$ is in bijection with sections $(f_\omega) \in \Gamma(U,\oplus_{\omega\in\mathcal{P}^{[1]}} \mathcal{O}_{X_\omega}^\times)$ such that for every codimension $2$ face $\tau\in\mathcal{P}^{[1]}$ of the central subdivision $\mathcal{P}$ we have
\[ \prod_{\omega\in\mathcal{P}^{[1]}} d_\omega \otimes f_\omega^{\epsilon_\tau(\omega)}|_{X_\tau} = 1 \in M \otimes_{\mathbb{Z}}\Gamma(U,\mathcal{O}_{X_\tau}^\times). \]
This is called the \emph{joint compatibility condition}.
\end{prop}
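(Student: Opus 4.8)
The plan is to deduce the proposition from the general classification of log structures with a prescribed ghost sheaf from \cite{GSdata}, thereby reducing the problem to an explicit, purely combinatorial computation of an $\mathcal{E}xt$-sheaf over the central subdivision $\mathcal{P}$ of Construction \ref{con:toricdeg}. First I would pin down the ghost sheaf $\overline{\mathcal{M}}_X$ forced by the hypotheses. Log smoothness over the standard log point $\textup{pt}_{\mathbb{N}}$ together with the rank condition means that at a geometric point $x$ lying on the codimension-$r$ stratum of $\cup X_i$ (where $r+1$ of the toric components meet) one has $\overline{\mathcal{M}}_{X,x}\simeq\mathbb{N}^{r+1}$, with the generator of the base monoid $\mathbb{N}$ mapping to the diagonal $(1,\dots,1)$; this is exactly the ghost sheaf of the divisorial log structure pulled back from $\mathcal{X}$. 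By \cite{GSdata}, Corollary 3.12, once this ghost sheaf and the structure map to $\textup{pt}_{\mathbb{N}}$ are fixed, the isomorphism classes of such log structures over an \'etale open $U$ are classified bijectively by the sections $\Gamma\bigl(U,\mathcal{E}xt^1(\overline{\mathcal{M}}_X^{\textup{gp}},\mathcal{O}_X^\times)\bigr)$. So it suffices to compute this $\mathcal{E}xt$-sheaf and to match its sections with the asserted compatible tuples $(f_\omega)$.

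To compute $\mathcal{E}xt^1(\overline{\mathcal{M}}_X^{\textup{gp}},\mathcal{O}_X^\times)$ I would use a cellular resolution of the constructible sheaf $\overline{\mathcal{M}}_X^{\textup{gp}}$ indexed by the cells of $\mathcal{P}$. The strata of $X$ are in order-reversing bijection with the cones of the central subdivision: the maximal cones $\sigma_i^\vee$ give the components $X_i$, the codimension-$1$ cones $\omega\in\mathcal{P}^{[1]}$ give the double loci $X_\omega=X_i\cap X_j\simeq\mathbb{A}^2$, and the codimension-$2$ cones $\tau\in\mathcal{P}^{[2]}$ give the curves $X_\tau\simeq\mathbb{A}^1$, where $\tau\subseteq\omega$ corresponds to $X_\tau\subset\overline{X_\omega}$. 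Locally around $X_\tau$ the group $\overline{\mathcal{M}}_X^{\textup{gp}}$ is generated by the branch generators subject to the diagonal relation, and globalizing this presentation produces a complex whose terms are direct sums over $\mathcal{P}^{[1]}$ and $\mathcal{P}^{[2]}$ of constant sheaves supported on the closed strata, with differentials recording the incidences $\tau\subseteq\omega$ weighted by the signs $\epsilon_\tau(\omega)$ and by the primitive normals $d_\omega$.

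Applying $\mathcal{H}om(-,\mathcal{O}_X^\times)$ to this resolution and taking cohomology is the heart of the argument. The degree-$1$ term becomes $\bigoplus_{\omega\in\mathcal{P}^{[1]}}\mathcal{O}_{X_\omega}^\times$, so a section is a tuple $(f_\omega)$; the coboundary into the $\mathcal{P}^{[2]}$-part sends this tuple at each $\tau$ to $\prod_{\omega} d_\omega\otimes f_\omega^{\epsilon_\tau(\omega)}\big|_{X_\tau}\in M\otimes_{\mathbb{Z}}\mathcal{O}_{X_\tau}^\times$, which is precisely the left-hand side of the joint compatibility condition. Hence $\mathcal{E}xt^1$ is identified with the kernel of this coboundary, and its $U$-sections are exactly the compatible tuples claimed. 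I would also observe that the $\mathcal{H}om$ of the top (maximal-cone) term contributes only global units on the components $X_i$, i.e. the isomorphisms of log structures, so that after passing to isomorphism classes no extra relations survive and the map $(f_\omega)\mapsto[\text{log structure}]$ is a bijection.

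The step I expect to be the main obstacle is the sign-and-direction bookkeeping in the cellular differential: one must fix compatible orientations on the cells of $\mathcal{P}$ and choices of $d_\omega$ so that the abstract coboundary reproduces literally both the $\pm1$ coefficients $\epsilon_\tau(\omega)$ and the $M$-valued factor $d_\omega\otimes(-)$, and one must check exactness of the resolution in the relevant range so that $\mathcal{E}xt^1$ is genuinely the indicated kernel and receives no spurious contribution from $\mathcal{E}xt^{\geq2}$ or from torsion in the cokernels of the maps $d_\omega$ (this is where the Gorenstein index $1$ and the saturation of the cones are used). A secondary point, which I expect to be routine given \cite{GSdata}, is verifying that it is exactly the log-smoothness-over-$\textup{pt}_{\mathbb{N}}$ hypothesis — rather than an arbitrary log structure with this ghost sheaf — that cuts out the full $\mathcal{E}xt^1$ and imposes no further global constraint beyond the local compatibility at the faces $\tau$.
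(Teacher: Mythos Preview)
Your approach is sound in spirit and would work, but it is more roundabout than the paper's. The paper simply cites three results from \cite{GSdata}: Example~3.17 identifies the rank condition on the ghost sheaf with a specific \emph{ghost type} $X^g$ attached to the central subdivision $\mathcal{P}$; Proposition~3.20 says that isomorphism classes of log smooth structures with this ghost type are the sections of a sheaf $\mathcal{LS}_{X^g}$; and Theorem~3.22 computes $\mathcal{LS}_{X^g}$ explicitly as the subsheaf of $\bigoplus_{\omega\in\mathcal{P}^{[1]}}\mathcal{O}_{X_\omega}^\times$ cut out by the joint compatibility condition. The only adaptation is the passage from Gross--Siebert's fan picture to the cone picture (swapping dimension for codimension) and the observation that in the affine situation $\mathcal{P}$ has a single vertex, so no affine cover is needed.

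By contrast, you invoke the more primitive Corollary~3.12 of \cite{GSdata} and propose to compute $\mathcal{E}xt^1(\overline{\mathcal{M}}_X^{\textup{gp}},\mathcal{O}_X^\times)$ by hand via a cellular resolution indexed by $\mathcal{P}$; this is essentially re-deriving the content of Theorem~3.22. There is also a small but genuine imprecision: Corollary~3.12 only embeds the set of log structures with given ghost sheaf as a \emph{subset} of $\textup{Ext}^1$, it does not give a bijection (the paper's own text just before the proposition says exactly this). The step you label a ``secondary point, routine given \cite{GSdata}'' --- that the log smooth structures actually fill out the kernel of your coboundary, and that every compatible tuple comes from a genuine monoid sheaf rather than merely an extension of abelian sheaves --- is not bookkeeping but is precisely the content of Proposition~3.20 and Theorem~3.22. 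So either you end up citing Theorem~3.22 anyway, in which case the cellular resolution is redundant, or you must reprove it, which is real work. The paper's route is shorter because it quotes the packaged result rather than unpacking its proof.
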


\begin{proof}
The statement follows from \cite{GSdata}, Proposition 3.20 and Theorem 3.22, as follows. By \cite{GSdata}, Example 3.17, the central subdivision $\mathcal{P}$ defines what is called a \emph{ghost type} $X^g$. Having this ghost type is equivalent to the condition on the rank of the ghost sheaf stated in the proposition. By \cite{GSdata}, Proposition 3.20, the set of isomorphism classes of smooth log structures on $X$ with the given ghost type is isomorphic to the sections of a sheaf $\mathcal{LS}_{X^g}$. By \cite{GSdata}, Theorem 3.22, $\mathcal{LS}_{X^g}$ is isomorphic to a subsheaf of $\oplus_{\omega\in\mathcal{P}^{[1]}} \mathcal{O}_{X_\omega}^\times$ whose sections satisfy the joint compatibility condition. Note that we work in the cone picture, while \cite{GSdata} work in the fan picture, i.e., with the dual subdivision, so dimension is replaced with codimension. Moreover, we consider the affine case, where $\mathcal{P}$ has only one vertex. In general, one would have to consider an affine cover of $X$ labelled by the vertices of $\mathcal{P}$.
\end{proof}

\subsection{Log structures from zero mutable Laurent polynomials}						
\label{S:log3}

We we describe how a zero mutable Laurent polynomial gives sections $f_\omega$ as in Proposition \ref{prop:joint}.

\begin{con}
\label{con:singlog}
Let $f$ be a zero mutable Laurent polynomial with $2$-dimensional support $\triangle=\textup{Newt}(f)$. Let $(\mathbf{divstep}_e(f)^\vee)_e$ be the tuple of partitions dual to the divisibility steps (Definition \ref{defi:div}). Let $X_0$ be the affine Gorenstein toric variety defined by the cone $\sigma$ over $\triangle$ and let $\mathcal{X}\rightarrow\mathbb{A}^1$ be the toric degeneration defined by the central subdivision $\mathcal{P}$ of $\sigma^\vee$, as in Construction \ref{con:toricdeg}, with central fiber $\cup X_i$. Let $e$ be an edge of $\triangle$. The cone over $e$ gives a facet of $\sigma$. This is dual to an internal $2$-dimensional cone $\omega$ of $\mathcal{P}$, hence to a divisor $X_\omega=X_i\cap X_j\simeq\mathbb{A}^2$ of $\cup X_i$. The cone $\omega$ is generated by the internal point $(0,0,1)$ and a generator of $\sigma^\vee$. Let $u=z^{(0,0,1)}$ and $z$ be the corresponding monomials. Then $\Gamma(X_\omega,\mathcal{O}_{X_\omega}^\times)\simeq\mathbb{C}[u,z]$. Let $\mathbf{divstep}_e(f)^\vee=(d_1,\ldots,d_r)$. Let $\lambda_1,\ldots,\lambda_r\in\mathbb{C}$, be sufficiently general, e.g. irrational. Define the \emph{wall function}
\[ f_\omega = \prod_{i=1}^r (u^{d_i}-\lambda_i z) \in \mathbb{C}[u,z] \simeq \Gamma(X_\omega,\mathcal{O}_{X_\omega}^\times). \]
The degree of $f_\omega$ equals the lattice length $\ell(e)$ of $e$, since $\mathbf{divstep}_e(f)^\vee$ is a partition of $\ell(e)$ by Proposition \ref{prop:divstep}. Locally at the origin, $f_\omega$ has $r$ branches that intersect the line $X_{\rho_0}\simeq\mathbb{A}^1$ with multiplicities $d_1,\ldots,d_r$. The joint compatibility condition for $\tau=\rho_0=\cup_{\omega\in\mathcal{P}^{[1]}} \omega$ translates to the closing condition for the boundary of the polygon $\triangle$, hence it is satisfied.
\end{con}

\begin{expl}
Consider Tom and Jerry from Example \ref{expl:tomjerrydiv}, the zero mutable Laurent polynomials on $\triangle(2,3)$ with tuple of dual partitions $(1),(1,1),(2,1)$ and $(1),(2),(1,1,1)$, respectively. By Example \ref{expl:toricdegtomjerry}, the central subdivision of $\sigma^\vee$ gives a toric degeneration whose central fiber $\cup X_i$ has components $X_1=\mathbb{A}^3$, $X_2=A_1\times\mathbb{A}^1$ and $X_3=A_2\times\mathbb{A}^1$. The internal $2$-dimensional cones $\omega$ of $\mathcal{P}$ correspond to intersections $X_i \cap X_j$. The corresponding wall functions $f_\omega=f_{ij}$ are, for Tom,
\[ f_{23} = u-a_1 z, \qquad f_{13} = (u-b_1z)(u-b_2z), \qquad f_{12} = (u^2-c_1z)(u-c_2z), \]
and for Jerry,
\[ f_{23} = u-a_1 z, \qquad f_{13} = u^2-b_1z, \qquad f_{12} = (u-c_1z)(u-c_2z)(u-c_3z). \]
A sufficiently general choice for the parameters is $a_1=b_1=c_1=0$, $b_2=c_2=-1$, $c_3=1$.
\end{expl}

Note that Proposition \ref{prop:joint} gives a smooth log structure away from the vanishing loci $C_\omega=C_{ij}$ of the sections $f_\omega$. See Figure \ref{fig:intro2} for a picture of $\cup X_i$ and the log singular loci $\cup C_{ij}$ for Tom and Jerry. We explain in the next subsection how this can be extended to a smooth log structure on the whole space. 

\subsection{Divisorial extractions and log resolutions}											
\label{S:divex}

\begin{defi}
\label{defi:divex}
Consider a projective birational morphism $\sigma : Y \rightarrow X$ such that
\begin{enumerate}[(1)]
\item $X$ and $Y$ are quasiprojective $\mathbb{Q}$-factorial algebraic varieties,
\item there exists a unique prime divisor $E \subset Y$ such that $\Gamma=\sigma(E)$ has $\textup{codim}_X \Gamma\geq 2$,
\item $\sigma$ is an isomorphism outside $E$,
\item $-K_Y$ is $\sigma$-ample and the relative Picard number is $\rho(Y/X)=1$.
\end{enumerate}
We call $\sigma : Y \rightarrow X$ a \emph{divisorial contraction} of $E\subset Y$ and a \emph{divisorial extraction} of $\Gamma\subset X$.
\end{defi}

Divisorial contractions, together with flips and flops, form the elementary links of birational maps between Fano fiber spaces in the Sarkisov program \cite{Cor}. There are classifications for divisorial extractions of points \cite{Cor}\cite{Kaw}, of smooth curves \cite{Tzi}, and if $Y$ has only Gorenstein singularities \cite{Cut}. Divisorial extractions of singular curves in $\mathbb{A}^3$ and of curves in singular threefolds have been studied in some cases, but so far there is no complete classification.

\begin{defi}
\label{defi:compatible}
Let $\cup X_i$ be reducible. A collection of divisorial extractions $\sigma_i : Y_i \rightarrow X_i$ is \emph{compatible} if for each $i,j$ we have $\sigma_i^{-1}(X_i\cap X_j)\simeq\sigma_j^{-1}(X_i\cap X_j)$ and if $\sigma_i^{-1}(X_i\cap X_j)$ contains an isolated threefold singularity $P$ of $Y_i$, then $\sigma_j^{-1}(X_i\cap X_j)$ contains an isolated threefold singularity $Q$ of $Y_j$ such that $Y_i$ \'etale locally at $P$ is isomorphic to $Y_j$ \'etale locally at $Q$.
\end{defi}

Let $X_0$ be an affine Gorenstein toric $3$-fold defined by the cone over a lattice polygon $\triangle$ (at height $1$). Let $\cup X_i$ be the central fiber of the toric degeneration of $X_0$ from Construction \ref{con:toricdeg}. Let $\cup C_{ij}\subset X_i\cap X_j$ be the vanishing locus of wall functions $(f_\omega)$ satisfying the joint compatibility condition (Proposition \ref{prop:joint}). Let $Y_i \rightarrow X_i$ be a compatible collection of divisorial extractions of $C=\cup C_{ij}\subset \cup X_i$. By compatibility, we can glue the $Y_i$ to obtain a variety $Y=\cup Y_i$. This is a Gorenstein toroidal crossing space (\cite{SS}, Definition 7.1), in particular a generically toroidal crossing space (\cite{CR}, Definition 2.1). We expect that $\mathcal{LS}_Y=f^\star\mathcal{LS}_X(-C)$, i.e. there is a unique log structure on $Y$ over the standard log point that on $Y\setminus f^{-1}(C)=X\setminus C$ is the given one on $X\setminus C$, and that $\cup Y_i$ is log smooth over the standard log point away from a single point $P$, where \'etale locally it is isomorphic to $(xy=0)\subset\tfrac{1}{r}(1,-1,a,-a)$. Moreover, we expect that the result of \cite{FFR}, Theorem 1.7, generalizes to this setting, so that $\cup Y_i$ can be smoothed to an orbifold with terminal singularities $X$.

\section{Constructing compatible divisorial extractions}											

Let $\triangle$ be a lattice polygon, let $X_0$ be the affine Gorenstein toric surface defined by the cone over $\triangle$ (placed at height $1$) and let $\cup X_i$ be the central fiber of the toric degeneration defined by the central subdivision of the dual cone $\sigma^\vee$, see Construction \ref{con:toricdeg}. Let $\cup C_{ij}$ be a union of curves $C_{ij}\subset D_{ij}=X_i\cap X_j$ and consider the log structure on $\cup X_i$ with singular locus $\cup C_{ij}$, see Construction \ref{con:singlog}. By Conjecture \ref{conj:log}, a compatible collection of divisorial extractions $Y_i\rightarrow X_i$ of $\cup C_{ij}$ is expected to induce a smoothing of $X_0$. In this section we consider rectangular triangles $\triangle(a,b)$. We describe two collections of divisorial extractions $Y_i\rightarrow X_i$ that we expect to be compatible and partially prove this for the families called Tom, Jerry, Spike and Tyke in Table \ref{tab:classification}. Precisely, we prove Theorem \ref{thm:main}.

\subsection{Rectangular triangles: reduction to $\mathbb{A}^3$}								
\label{S:rect}

\begin{con}
\label{con:procedure}
Consider a rectangular triangle $\triangle(a,b)$. By Example \ref{expl:toricdeg}, the toric degeneration $\cup X_i$ from Construction \ref{con:toricdeg} has $3$ components $X_1=\mathbb{A}^3$ and $X_2=\tfrac{1}{a}(1,b)\times\mathbb{A}^1$ and $X_3=\tfrac{1}{b}(1,a)\times\mathbb{A}^1$. All divisors $D_{ij}=X_i\cap X_j$ are isomorphic to $\mathbb{A}^2$. The log singular locus consists is a union of (possibly reducible) curves $C_{12}\subset D_{12}$ and $C_{13}\subset D_{13}$ and $C_{23}\subset D_{23}$. Write $C_{\mathbf{a}}$ for the curve with intersection multiplicities determined by a partition $\mathbf{a}$. To construct log crepant log resolutions, we consider two different collections of divisorial extractions:
\begin{enumerate}[(1)]
\item 
\begin{enumerate}[(a)]
\item The blow up $Y_2\rightarrow X_2$ of a line $C_{(1)}\subset D_{23}\subset X_2=\tfrac{1}{a}(1,b)\times\mathbb{A}^1$.
\item The blow up $Y\rightarrow X_1$ of $C_{\mathbf{a}}\subset D_{12}\subset X_1=\mathbb{A}^3$.
\item A divisorial extraction $Z\rightarrow Y$ of the proper transform $\tilde{C}_{\mathbf{b}}\subset\tilde{D}_{13}\subset Y$.
\end{enumerate}
\item 
\begin{enumerate}[(a)]
\item The blow up $Y_3\rightarrow X_3$ of a line $C_{(1)}\subset D_{23}\subset X_3=\tfrac{1}{b}(1,a)\times\mathbb{A}^1$.
\item The blow up $Y\rightarrow X_1$ of $C_{\mathbf{b}}\subset D_{13}\subset X_1=\mathbb{A}^3$.
\item A divisorial extraction $Z\rightarrow Y$ of the proper transform $\tilde{C}_{\mathbf{a}}\subset\tilde{D}_{12}\subset Y$.
\end{enumerate}
\end{enumerate}
The construction of the divisorial extraction $Z\rightarrow Y$ will depend on the case considered and will be made precise for each case in the following.
\end{con}

\begin{prop}
\label{prop:stepa}
Consider step (a) in Construction \ref{con:procedure}.
\begin{enumerate}[(1)]
\item Let $Y_2\rightarrow X_2=\tfrac{1}{a}(1,b)\times\mathbb{A}^1$ be the blow up of a line $C_{(1)}\subset D_{23} \subset X_2$. Then $\tilde{D}_{13}\simeq A_{a-1}$ and $Y_2$ contains an isolated terminal cyclic quotient singularity of type $\tfrac{1}{a}(1,-1,b)$.
\item Let $Y_3\rightarrow X_3=\tfrac{1}{b}(1,a)\times\mathbb{A}^1$ be the blow up of a line $C_{(1)}\subset D_{23} \subset X_3$. Then $\tilde{D}_{12}\simeq A_{b-1}$ and $Y_3$ contains an isolated terminal cyclic quotient singularity of type $\tfrac{1}{b}(1,-1,a)$.
\end{enumerate}
\end{prop}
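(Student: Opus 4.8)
Since part~(2) is obtained from part~(1) by the transposition $\tau$ of Definition~\ref{defi:alphabeta} (interchanging $a\leftrightarrow b$ and $X_2\leftrightarrow X_3$), it suffices to prove~(1). The plan is to localise near the point $P=\cap_k X_k$: away from $P$ the curve $C_{(1)}$ lies in the smooth locus of $X_2$, where its blow-up is the ordinary blow-up of a smooth curve in a smooth threefold and introduces nothing new, so all the content is concentrated at $P$ and can be extracted from a single toric computation.

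Following Example~\ref{expl:toricdeg} I would present $X_2=\tfrac1a(1,b)\times\mathbb{A}^1=X_{\sigma_2}$ with $\sigma_2=\braket{\rho_x,\rho_y,\rho_w}$ in the lattice $N=N_S\oplus\mathbb{Z}$, $N_S=\mathbb{Z}^2+\mathbb{Z}\cdot\tfrac1a(1,b)$, so that $D_{23}=V(\rho_x)$, the boundary divisor $D_{13}$ of the statement (the one meeting $C_{(1)}$ at $P$, corresponding to the edge of length $a$) is $V(\rho_y)$, and the common one-dimensional stratum $X_{\rho_0}=V(\braket{\rho_x,\rho_y})$ is the singular locus of $X_2$, with transverse type $\tfrac1a(1,b)$. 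By Construction~\ref{con:singlog} the wall function on $D_{23}\simeq\mathbb{A}^2$ (coordinates $u=z^{(0,0,1)}$ and $z$) is $f_\omega=u-\lambda z$; since $u$ and $z$ extend to the global invariant functions $w$ and $y^a$ on $X_2$, the map $u\mapsto u-\lambda z$ is an automorphism of $X_2$ straightening $C_{(1)}$ to the torus-invariant curve $V(\braket{\rho_x,\rho_w})$. Consequently $Y_2\to X_2$ is the toric star subdivision of $\sigma_2$ at $\rho_{\mathrm{new}}=\rho_x+\rho_w=(1,0,1)$, producing the two charts $\sigma_A=\braket{\rho_{\mathrm{new}},\rho_w,\rho_y}$ and $\sigma_B=\braket{\rho_{\mathrm{new}},\rho_x,\rho_y}$.

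It then remains to read off the two charts. Computing $N/(\mathbb{Z}\rho_{\mathrm{new}}+\mathbb{Z}\rho_w+\mathbb{Z}\rho_y)\simeq\mathbb{Z}_a$ and writing its generator $\tfrac1a(1,b,0)$ in the basis $\{\rho_{\mathrm{new}},\rho_w,\rho_y\}$ identifies $X_{\sigma_A}$ with $\tfrac1a(1,-1,b)$ in the sense of Definition~\ref{defi:type}; as $\gcd(a,b)=1$ all three weights are units modulo $a$, so $\sigma_A$ contributes a single isolated quotient singularity at $P=V(\sigma_A)$, and since $\tfrac1r(1,-1,c)$ is the normal form of a three-dimensional terminal cyclic quotient singularity (Reid--Tai) it is terminal. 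The chart $\sigma_B$ carries only the transform of the old singular line $X_{\rho_0}$, which does not pass through $P$. Finally, to obtain $\tilde D_{13}\simeq A_{a-1}$ I would compute the proper transform $V(\rho_y)$ in the quotient lattice $N(\rho_y)=N/\mathbb{Z}\rho_y$: the images of $\rho_w,\rho_{\mathrm{new}},\rho_x$ span a fan whose unique non-smooth cone has multiplicity $a$ and, by the same age computation, type $\tfrac1a(1,-1)=A_{a-1}$.

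The step I expect to be the crux is the passage to the star subdivision: one must justify that straightening $C_{(1)}$ by a global automorphism of the \emph{singular} variety $X_2$ is legitimate and that ``blowing up $C_{(1)}$'' means blowing up its reduced ideal, so that the modification over $P$ really is the star subdivision at $\rho_x+\rho_w$ rather than a weighted variant, and that the new singularity at $P$ is genuinely isolated and disjoint from the persisting line $X_{\rho_0}$. Granting this, the remaining identifications---of $\tfrac1a(1,-1,b)$ together with its isolatedness and terminality, and of the $A_{a-1}$ transform---are routine lattice and Reid--Tai computations.
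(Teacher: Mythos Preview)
Your proof is correct and follows essentially the same approach as the paper: reduce to part~(1) by symmetry, straighten $C_{(1)}$ to a torus-invariant line so that the blow-up becomes a toric star subdivision, and then read off the quotient singularity $\tfrac{1}{a}(1,-1,b)$ and the $A_{a-1}$ surface from the resulting cones. The paper is slightly terser (it simply asserts ``without loss of generality $C_{(1)}$ is toric'' and computes in the standard lattice with ray generators $(1,0,0),(0,1,0),(0,-b,a)$, adding the ray $(1,-b,a)$), whereas you work in the refined lattice $N_S\oplus\mathbb{Z}$ and supply the automorphism and terminality arguments explicitly; these are cosmetic differences.
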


\begin{proof}
We prove (1), then (2) follows by symmetry. The variety $\tfrac{1}{a}(1,b)\times\mathbb{A}^1$ is naturally a toric variety and the divisor $D_{23}\simeq\mathbb{A}^2$ is a toric divisor. Without loss of generality, we can assume that the line $C_{(1)}$ is a toric line that is different from the singular locus of $\tfrac{1}{a}(1,b)\times\mathbb{A}^1$. Then the blow up of $C_{(1)}$ can be performed as a toric blow up. The cone of $\tfrac{1}{a}(1,b)\times\mathbb{A}^1 = \tfrac{1}{a}(1,b,0)$ has ray generators $(1,0,0),(0,1,0),(0,-b,a)$. The facet spanned by $(1,0,0),(0,-b,a)$ corresponds to $C_{(1)}$. The blow up along $C_{(1)}$ corresponds to the star subdivision of the cone obtained by adding a ray generated by $(1,0,0)+(0,-b,a)=(1,-b,a)$. The resulting fan has a cone $\braket{(1,0,0),(0,1,0),(1,-b,a)}$. By \cite{CLS}, Theorem 11.4.21, this cone defines the terminal cyclic quotient singularity $X_{\sigma_3}=\tfrac{1}{a}(1,-1,b)$. The divisor $\tilde{D}_{13}$ corresponds to the ray $(0,1,0)$. The projection of $\braket{(1,0,0),(0,1,0),(1,-b,a)}$ to the normal space of this ray is isomorphic to the cone $\braket{(1,0),(1,a)}$. Hence, $\tilde{D}_{13}\simeq \tfrac{1}{a}(1,-1)=A_{a-1}$.
\end{proof}

\begin{prop}
\label{prop:divisor}
Consider steps (b) and (c) in Construction \ref{con:procedure}.
\begin{enumerate}[(1)]
\item Let $Y\rightarrow X_1=\mathbb{A}^3$ be the blow up of a curve $C_{\mathbf{a}}\subset D_{12}\subset X_1=\mathbb{A}^3$ and let $Z\rightarrow Y$ be a divisorial extraction of $\tilde{C}_{\mathbf{b}}\subset\tilde{D}_{13}\subset Y$. Then the proper transform of $\tilde{D}_{13}$ in $Z$ is isomorphic to $A_{a-1}=\tfrac{1}{a}(1,-1)$.
\item Let $Y\rightarrow X_1=\mathbb{A}^3$ be the blow up of a curve $C_{\mathbf{b}}\subset D_{13}\subset X_1=\mathbb{A}^3$ and let $Z\rightarrow Y$ be a divisorial extraction of $\tilde{C}_{\mathbf{a}}\subset\tilde{D}_{12}\subset Y$. Then the proper transform of $\tilde{D}_{12}$ in $Z$ is isomorphic to $A_{b-1}=\tfrac{1}{b}(1,-1)$.
\end{enumerate}
\end{prop}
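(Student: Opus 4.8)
The two statements are interchanged by the involution of $X_1=\mathbb{A}^3$ swapping the two coordinate hyperplanes $D_{12}$ and $D_{13}$ (equivalently $x\leftrightarrow y$), which also swaps $\mathbf{a}\leftrightarrow\mathbf{b}$ and $a\leftrightarrow b$; since both steps (b) and (c) take place inside the symmetric ambient $X_1=\mathbb{A}^3$, I would prove (1) and deduce (2) formally. Fix coordinates $\mathbb{A}^3=\operatorname{Spec}\mathbb{C}[x,y,u]$ with $u=z^{(0,0,1)}$ the height coordinate, $D_{12}=\{y=0\}$, $D_{13}=\{x=0\}$, and $\cap_kX_k$ the $u$-axis $\{x=y=0\}$. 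By Construction \ref{con:singlog} the curve blown up in step (b) is
\[ C_{\mathbf{a}}=V\bigl(y,\ \textstyle\prod_{i=1}^{\ell(\mathbf{a})}(u^{a_i}-\lambda_i x)\bigr)\subset D_{12}, \qquad \textstyle\sum_i a_i=a, \]
with generic $\lambda_i$, and likewise $C_{\mathbf{b}}=V(x,\prod_j(u^{b_j}-\mu_j y))\subset D_{13}$.

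The core of the argument is to identify the strict transform of $D_{13}$. First I would observe that $C_{\mathbf{a}}$ is disjoint from $D_{13}$ away from the origin and meets it in the fat point $C_{\mathbf{a}}\cap D_{13}=V(y,u^a)$ of length $a=|\mathbf{a}|$ (setting $x=0$ annihilates all the $\lambda_i$). Since $C_{\mathbf{a}}\not\subset D_{13}$ and this scheme is $0$-dimensional, the strict transform of $D_{13}$ under the blow-up $Y\to X_1$ of $C_{\mathbf{a}}$ equals the blow-up of $D_{13}\simeq\mathbb{A}^2$ along the restricted ideal $(y,u^a)$, because base change of a blow-up along a divisor-avoiding center is the blow-up of the restricted ideal. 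A direct chart computation then identifies $\tilde D_{13}$ with the toric surface having one chart $\{yv=u^a\}$ and one smooth chart, so it carries the singularity $A_{a-1}=\tfrac1a(1,-1)$. I would emphasize that this depends only on $a$, not on the finer partition $\mathbf{a}$ nor on $\mathbf{b}$, and that it agrees with the surface produced from the $X_2$–side in Proposition \ref{prop:stepa} — which is exactly what the compatibility in Definition \ref{defi:compatible} demands at the resulting threefold singularity.

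It then remains to control step (c). The plan is to show that the divisorial extraction $Z\to Y$ of $\tilde C_{\mathbf{b}}$ leaves the isomorphism type of $\tilde D_{13}$ unchanged, so that its proper transform in $Z$ is again $A_{a-1}$. Passing to the $\{yv=u^a\}$ chart, the branches of $\tilde C_{\mathbf{b}}$ take the form $\{y=u^{b_j}/\mu_j,\ v=\mu_j u^{a-b_j}\}$, and since $\max(\mathbf{b})\le a$ by Lemma \ref{lem:exists} they run through the $A_{a-1}$ point. Realizing the extraction as an explicit (weighted) blow-up in suitable charts of the singular threefold $Y$, I would verify that the induced birational map $\tilde{\tilde D}_{13}\to\tilde D_{13}$ is an isomorphism — equivalently, that $\tilde C_{\mathbf{b}}$ is Cartier on $\tilde D_{13}$ along the locus where the extraction is supported — so the $A_{a-1}$ surface is preserved.

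The hard part will be step (c): the extracted object is a \emph{singular, reducible} curve passing through a singular point of the \emph{singular} threefold $Y$, which is precisely the situation (noted after Definition \ref{defi:divex}) for which no general classification of divisorial extractions is available. I therefore do not expect a uniform argument; instead I would build the extraction from an explicit local model adapted to each family (Tom, Jerry, Spike, Tyke of Table \ref{tab:classification}), confirm in each case that it is a genuine divisorial extraction ($\rho(Z/Y)=1$, $-K_Z$ relatively ample, crepant), and check that its restriction to $\tilde D_{13}$ does not disturb the $A_{a-1}$ singularity. Carrying out the Cartier/discrepancy bookkeeping at the threefold singularity, and ensuring that the two reduction routes of Construction \ref{con:procedure} yield étale-locally matching models, is where I anticipate the real difficulty.
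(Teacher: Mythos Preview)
Your treatment of step~(b) is correct and essentially the same as the paper's: both compute the strict transform $\tilde D_{13}$ directly and find the $A_{a-1}$ surface. The paper writes the blow-up as $\{xU=f(y,z)V\}\subset\mathbb{A}^3\times\mathbb{P}^1$ and restricts to the hyperplane $D_{13}$, using that $f(0,z)=z^a$; you phrase the same computation via base-change of the blow-up along the restricted ideal. These are equivalent.

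Where you go astray is step~(c). You treat it as the hard part, planning to construct the divisorial extraction explicitly for each family in Table~\ref{tab:classification}, verify the axioms of Definition~\ref{defi:divex}, and then check a Cartier condition. But the proposition is \emph{conditional}: it assumes a divisorial extraction $Z\to Y$ of $\tilde C_{\mathbf b}\subset\tilde D_{13}$ is given, and asks only for the isomorphism type of the proper transform of $\tilde D_{13}$. The construction of such extractions with the correct singularity type is precisely Conjecture~\ref{conj:divex}, handled case-by-case in Propositions~\ref{prop:An}, \ref{prop:tom1}, \ref{prop:tom2}, \ref{prop:jerrytyke} --- not here.

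Under that hypothesis the paper's argument for~(c) is one sentence: $Z\to Y$ is an isomorphism off $\tilde C_{\mathbf b}$, and along $\tilde C_{\mathbf b}$ it only modifies normal directions, so the proper transform of any surface \emph{containing} $\tilde C_{\mathbf b}$ is isomorphic to that surface. No Cartier check, no family-by-family analysis, no appeal to Lemma~\ref{lem:exists}. You have conflated the easy isomorphism-type assertion of this proposition with the genuinely hard existence problem deferred to Conjecture~\ref{conj:divex}; separating the two is the organizing principle of \S\ref{S:rect}.
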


\begin{proof}
We prove (1), then (2) follows by symmetry. Choose coordinates on $\mathbb{A}^3$ such that $D_{12}=\{x=0\}$ and $C_{\mathbf{a}}=\{x=f(y,z)=0\}\subset\mathbb{A}^3_{x,y,z}$. Let $Y\rightarrow\mathbb{A}^3$ be the blow up of $C_{\mathbf{a}}\subset D_{12}$. Then $Y=\{xU=f(y,z)V\}\subset\mathbb{A}^3_{x,y,z}\times\mathbb{P}^1_{U,V}$. On the affine chart $U=1$ this is smooth and on $V=1$ it is given by $Y=\{xu=f(y,z)\}\subset\mathbb{A}^4_{x,y,z,u}$. Since $C_{\mathbf{a}}$ has intersection multiplicity $a$ with $D_{12}\cap D_{13}=\{x=y=0\}$ at the origin, we have $f(0,z)=z^n$ and the proper transform of $D_{13}=\{y=0\}$ is $\tilde{D}_{13}=\{xu=z^a\}\subset\mathbb{A}^3_{x,z,u}$. Hence, $\tilde{D}_{13}\simeq A_{a-1}$.

If $Z\rightarrow Y$ is a divisorial extraction of a curve $\tilde{C}_{\mathbf{b}}$ that is contained in a surface $\tilde{D}_{13}$, then the proper transform of $\tilde{D}_{13}$ in $Z$ is isomorphic to $\tilde{D}_{13}$, since $Z\rightarrow Y$ is an isomorphism away from $\tilde{C}_{\mathbf{b}}$ and in the tangent directions of $\tilde{C}_{\mathbf{b}}$, it only affects to normal directions to $\tilde{C}_{\mathbf{b}}$. Hence, the proper transform of $\tilde{D}_{13}$ is isomorphic to $A_{a-1}$.
\end{proof}

By Propositions \ref{prop:stepa} and \ref{prop:divisor}, to obtain a compatible collection of divisorial extractions, we have to show the following.

\begin{conj}
\label{conj:divex}
If $(\mathbf{a},\mathbf{b})$ is the pair of dual partitions (Definition \ref{defi:dualpair}) for some zero mutable Laurent polynomial $f\in\textup{ZMLP}(a,b)$, then we have one of the following:
\begin{enumerate}[(1)]
\item There exists a divisorial extraction $Z\rightarrow \mathbb{A}^3$ as in Construction \ref{con:procedure}, (1), such that $Z$ has an isolated singularity of type $\tfrac{1}{a}(1,-1,b)$.
\item There exists a divisorial extraction $Z\rightarrow \mathbb{A}^3$ as in Construction \ref{con:procedure}, (2), such that $Z$ has an isolated singularity of type $\tfrac{1}{b}(1,-1,a)$.
\end{enumerate}
\end{conj}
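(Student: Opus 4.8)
The plan is to reduce, case by case along Table \ref{tab:classification}, to an explicit toric/toroidal computation of the single divisorial extraction in step (c) of Construction \ref{con:procedure}. The point is that Conjecture \ref{conj:divex} is exactly the remaining piece needed for compatibility: by Propositions \ref{prop:stepa} and \ref{prop:divisor} the two descriptions of the intersection surface agree ($\tilde D_{13}\simeq A_{a-1}$ on both the $Y_2$--side and the $Z\to\mathbb{A}^3$--side of collection (1), and symmetrically $\tilde D_{12}\simeq A_{b-1}$ for collection (2)), and Proposition \ref{prop:stepa}(1) already produces the isolated singularity $\tfrac{1}{a}(1,-1,b)$ on the $Y_2$--side. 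Thus, to verify Definition \ref{defi:compatible} it remains only to produce a \emph{matching} isolated $\tfrac{1}{a}(1,-1,b)$ (resp. $\tfrac{1}{b}(1,-1,a)$) on the $Z$ obtained from $\mathbb{A}^3$, which is precisely the assertion of the conjecture. For each $f$ I would choose collection (1) or (2) according to which partition, $\mathbf a$ or $\mathbf b$, is carried out first.

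First I would fix the coordinates of the proof of Proposition \ref{prop:divisor}, so that on $X_1=\mathbb{A}^3_{x,y,z}$ we have $D_{12}=\{x=0\}$, $D_{13}=\{y=0\}$, and the triple locus $X_{\rho_0}=\cap_k X_k=\{x=y=0\}$ is the $z$-axis. Following Construction \ref{con:singlog}, I would take $C_{\mathbf a}=\{x=f(y,z)=0\}$ with $f(y,z)=\prod_i(z^{a_i}-\lambda_i y)$ and $C_{\mathbf b}=\{y=g(x,z)=0\}$ with $g(x,z)=\prod_j(z^{b_j}-\mu_j x)$ for generic $\lambda_i,\mu_j$, so that the branches meet $X_{\rho_0}$ with the multiplicities prescribed by $\mathbf a$ and $\mathbf b$ respectively. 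After the blow up $Y\to\mathbb{A}^3$ of $C_{\mathbf a}$, Proposition \ref{prop:divisor}(1) gives on the relevant chart $Y=\{xu=f(y,z)\}\subset\mathbb{A}^4_{x,y,z,u}$ with $\tilde D_{13}=\{xu=z^a\}\simeq A_{a-1}$; I would then track the proper transform $\tilde C_{\mathbf b}\subset\tilde D_{13}$ and record its branch data in the toric coordinates of $A_{a-1}$.

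The core step is to realize the divisorial extraction $Z\to Y$ of $\tilde C_{\mathbf b}$ as a single weighted blow up adapted to the embedding $\tilde C_{\mathbf b}\subset\tilde D_{13}\simeq A_{a-1}\subset Y$. Since $A_{a-1}=\tfrac{1}{a}(1,-1)$ is toric and the target $\tfrac{1}{a}(1,-1,b)$ sits inside the same $A_{a-1}$ — its cone $\braket{(1,0,0),(0,1,0),(1,-b,a)}$ is exactly the one produced on the $Y_2$--side in Proposition \ref{prop:stepa}(1) — I would choose weights on $(x,z,u)$ compatible with the $A_{a-1}$ toric structure and with the multiplicity vector $\mathbf b$, so that the extraction introduces a single exceptional divisor $E$ with $\rho(Z/Y)=1$ and $-K_Z$ relatively ample, checking conditions (1)--(4) of Definition \ref{defi:divex}. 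The singularity of $Z$ is then read off in the chart where $E$ meets the proper transform of $\tilde D_{13}$; the computation should yield precisely the cone $\braket{(1,0,0),(0,1,0),(1,-b,a)}$, i.e. $\tfrac{1}{a}(1,-1,b)$ by \cite{CLS}, Theorem 11.4.21, and one must confirm that this is the unique singular point (the generic point of $E$ and all remaining charts being smooth). Statement (2) follows by interchanging the roles of $a,b$ and of $D_{12},D_{13}$.

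The main obstacle will be this last step for the families in which $\mathbf b$ has several parts of unequal size, namely Jerry and especially Tyke: there $\tilde C_{\mathbf b}$ is reducible and/or singular inside $A_{a-1}$, so a naive blow up along it need not be a divisorial extraction — it may introduce several exceptional divisors, or non-isolated and non-terminal singularities. The delicate content is the existence of a \emph{single} monomial valuation, adapted simultaneously to the partition $\mathbf b$ and to the toric surface $A_{a-1}$, whose associated weighted blow up contracts to one isolated $\tfrac{1}{a}(1,-1,b)$ point. This is exactly what separates Tom, Jerry and Tyke — where such a valuation exists and is exhibited in Propositions \ref{prop:maindivex}, \ref{prop:An}, \ref{prop:tom1}, \ref{prop:tom2} and \ref{prop:jerrytyke} — from Spike, where the existence of the required extraction remains open as Conjecture \ref{conj:spike}.
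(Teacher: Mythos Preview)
The statement is a \emph{conjecture}; the paper does not prove it in general but only for the families Tom, Jerry and Tyke in Table~\ref{tab:classification}, leaving Spike open. Your proposal correctly identifies this scope and your first two paragraphs (the setup via Propositions~\ref{prop:stepa} and~\ref{prop:divisor}, the coordinate choices, and the description of $\tilde D_{13}\simeq A_{a-1}$) match the paper's framework.

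The gap is in your ``core step''. You propose to realize $Z\to Y$ for every case as a single weighted blow up governed by a monomial valuation adapted to $\mathbf b$, but you never specify the weights or argue that such a valuation exists; this is exactly the hard part, and the paper does \emph{not} proceed this way. The paper's actual structure has two ingredients you are missing. First, an \emph{inductive reduction} via the elementary triangular mutations $\alpha,\beta$ (Proposition~\ref{prop:induction}): if Conjecture~\ref{conj:divex}(1) holds for $(\mathbf a,\mathbf b)$ then it holds for $\alpha(\mathbf a,\mathbf b)$ and $\beta(\mathbf a,\mathbf b)$, because after the first blow up the extra component $\tilde C_{(a)}$ becomes disjoint from $\tilde C_{\mathbf b}$ (for $\alpha$), or the curves $\tilde C_{\mathbf b}$ and $\tilde C_{\tilde{\mathbf b}}$ differ by the $x\leftrightarrow u$ symmetry of $A_{a-1}$ (for $\beta$). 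This collapses each infinite family to finitely many base cases. Second, for the base cases the paper does \emph{not} find a weighted blow up: for the $A_n$ base case it writes down an explicit morphism $\tfrac1n(1,-1,-1)\to Y$ (Proposition~\ref{prop:An}); for Jerry and Tyke it \emph{quotes} Ducat's thesis (Propositions~\ref{prop:ducat} and~\ref{prop:ducat2}), whose divisorial extractions of an $(m{+}2)$-fold line arrangement in $A_m\subset\mathbb A^3$ are built by serial unprojection, not by a single toric weighted blow up. Your hope that ``a single monomial valuation, adapted simultaneously to the partition $\mathbf b$ and to the toric surface $A_{a-1}$'' handles Jerry and Tyke is precisely what is not established; the paper outsources this to Ducat, and without that input (or the inductive mechanism that reduces to it) your argument does not close.
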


\begin{prop}
\label{prop:maindivex}
Conjecture \ref{conj:main} follows from Conjecture \ref{conj:divex}.
\end{prop}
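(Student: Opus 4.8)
The plan is to use the two procedures of Construction~\ref{con:procedure} as the candidate collections and to verify directly the two requirements of the compatibility condition in Definition~\ref{defi:compatible}, feeding in Propositions~\ref{prop:stepa} and~\ref{prop:divisor} for everything except the isolated singularity on the $\mathbb{A}^3$-component, which is exactly what Conjecture~\ref{conj:divex} supplies. Assuming Conjecture~\ref{conj:divex}, one of its two alternatives holds; suppose we are in case~(1), case~(2) being obtained verbatim after applying the transposition $\tau$ interchanging $X_2$ and $X_3$ (equivalently $a$ and $b$). Then I would take $\sigma_2\colon Y_2\to X_2$ to be the blow up of the line $C_{(1)}\subset D_{23}$ of Proposition~\ref{prop:stepa}(1), and $\sigma_1\colon Z\to X_1=\mathbb{A}^3$ to be the composite of the blow up of $C_{\mathbf{a}}\subset D_{12}$ with the divisorial extraction of $\tilde C_{\mathbf{b}}\subset\tilde D_{13}$ whose existence, with the prescribed isolated singularity, is the hypothesis. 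The third modification $\sigma_3\colon Y_3\to X_3$ is then forced by demanding that its boundary divisors coincide with the proper transforms already produced on the neighbouring components.

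Next I would check condition~(1) of Definition~\ref{defi:compatible}, namely $\sigma_i^{-1}(X_i\cap X_j)\simeq\sigma_j^{-1}(X_i\cap X_j)$ for each pair. Away from the divisor along which the extractions act nontrivially this is automatic, since a divisorial extraction is an isomorphism outside the extracted curve and only affects normal directions to it, so the relevant proper transforms are unchanged. On the distinguished divisor the content is that Proposition~\ref{prop:divisor}(1) computes its proper transform in $Z$ to be $A_{a-1}=\tfrac{1}{a}(1,-1)$, while Proposition~\ref{prop:stepa}(1) computes the proper transform on the $X_2$-side to be the same $A_{a-1}$; hence the two agree and $Y_3$ glues consistently. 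Then I would check condition~(2), the matching of isolated threefold singularities: Proposition~\ref{prop:stepa}(1) produces on $Y_2$ an isolated terminal cyclic quotient singularity of type $\tfrac{1}{a}(1,-1,b)$, and Conjecture~\ref{conj:divex}(1) produces on $Z$ an isolated singularity of the very same type $\tfrac{1}{a}(1,-1,b)$. Two copies of the same cyclic quotient singularity are \'etale locally isomorphic, which is precisely condition~(2). This is the one step where Conjecture~\ref{conj:divex} is indispensable: Propositions~\ref{prop:stepa} and~\ref{prop:divisor} already give the matching $A_{a-1}$ surface on both sides and the singularity type on the quotient-singularity side, and Conjecture~\ref{conj:divex} is exactly the assertion that the $\mathbb{A}^3$-side carries the matching singularity.

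The part I expect to be the real obstacle is not these two matchings, which are formal once the surface and the singularity type are known, but the verification that the induced $\sigma_3\colon Y_3\to X_3$ is a genuine divisorial extraction and that the three local pictures glue consistently near the central point $P=\cap C_{ij}$ where all the log singular curves meet. I would handle this by a toric computation: the extraction of the line $C_{(1)}$ is toric by Proposition~\ref{prop:stepa}, so the modifications of $X_2$, $X_3$ and their shared divisor $D_{23}$ are given by explicit star subdivisions, and near $P$ the total space should be \'etale locally $(xy=0)\subset\tfrac{1}{r}(1,-1,a,-a)$ as predicted by Conjecture~\ref{conj:log}(1); checking that the extraction furnished by Conjecture~\ref{conj:divex} restricts to this same local model would close the argument.
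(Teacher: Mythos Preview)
Your first two paragraphs are exactly the paper's argument: the one-line proof in the paper just cites Propositions~\ref{prop:stepa} and~\ref{prop:divisor}, and you have correctly unpacked what that means---match the $A_{a-1}$ surfaces via Propositions~\ref{prop:stepa} and~\ref{prop:divisor}, and match the $\tfrac{1}{a}(1,-1,b)$ singularities via Proposition~\ref{prop:stepa} on one side and Conjecture~\ref{conj:divex} on the other.

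Your third paragraph, however, is chasing something that is not part of the statement. The consistency of the local picture near the triple point $P$ and the assertion that one obtains $(xy=0)\subset\tfrac{1}{r}(1,-1,a,-a)$ there is the content of Conjecture~\ref{conj:log}(1), not of Conjecture~\ref{conj:main}. Definition~\ref{defi:compatible} only asks for pairwise matching of the preimage surfaces $\sigma_i^{-1}(X_i\cap X_j)$ and of the \'etale-local singularity types sitting on them; there is no triple-overlap or global gluing condition to verify at this stage. So once you have done the $A_{a-1}$ and $\tfrac{1}{a}(1,-1,b)$ matchings you are finished with Conjecture~\ref{conj:main}, and the toric computation you sketch at the end is unnecessary here. (Your observation that the paper is somewhat informal about specifying $\sigma_3$ is fair, but note that the paper's proof does not address this either; it is treated as implicit in Construction~\ref{con:procedure}.)
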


\begin{proof}
This is the content of Propositions \ref{prop:stepa} and \ref{prop:divisor}.
\end{proof}

In the rest of this chapter, we prove Conjecture \ref{conj:divex}, hence Conjecture \ref{conj:main}, for the cases called Tom, Jerry and Tyke in Table \ref{tab:classification}.

\subsection{$A_n$-triangles}													
\label{S:An}

Consider an $A_{n-1}$ triangle $\triangle(1,n)$. By Proposition \ref{prop:a}, there is only one ZMLP on $\triangle(1,n)$. Its pair of dual partitions is $(1),(1,\ldots,1)$, hence $C_{\mathbf{a}}=C_{(1)}$ is a line and $C_{\mathbf{b}}=C_{(1,\ldots,1)}$ is a plane $n$-fold intersection of lines.

\begin{prop}
\label{prop:An}
For $(a,b)=(1,n)$ both (1) and (2) in Conjecture \ref{conj:divex} are true:
\begin{enumerate}[(1)]
\item Let $Z\rightarrow \mathbb{A}^3$ be the composition of the blow up $Y\rightarrow\mathbb{A}^3$ of $C_{(1)}\subset D_{12}\simeq\mathbb{A}^2\subset\mathbb{A}^3$ and the blow up $Z \rightarrow Y$ of $\tilde{C}_{(1,\ldots,1)}\subset\tilde{D}_{13}\subset Y$. Then $Z$ is smooth.
\item Let $Y\rightarrow \mathbb{A}^3$ be the blow up of $C_{(1,\ldots,1)}\subset D_{13}\simeq\mathbb{A}^2\subset\mathbb{A}^3$. Then there exists a divisorial extraction $Z \rightarrow Y$ of $\tilde{C}_{(1)}\subset\tilde{D}_{12}\subset Y$ such that $Z$ has an isolated singularity of type $\tfrac{1}{n}(1,-1,1)\simeq \tfrac{1}{n}(1,-1,-1)$.
\end{enumerate}
\end{prop}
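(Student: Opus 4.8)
The plan is to make the wall functions of Construction \ref{con:singlog} explicit and then carry out the two blow-up sequences in coordinates. Put $X_1=\mathbb{A}^3=\operatorname{Spec}\mathbb{C}[x,y,z]$ with $D_{12}=\{x=0\}$, $D_{13}=\{y=0\}$ and $X_{\rho_0}=D_{12}\cap D_{13}=\{x=y=0\}$ the common line. Since $\mathbf{a}=(1)$, the wall function on $D_{12}$ is a single linear form, so after a linear change of coordinates (which preserves $D_{12}$, $D_{13}$ and, being trivial modulo $y$, the ideal of $C_{(1,\dots,1)}$) we may take $C_{(1)}=\{x=z=0\}$, a smooth line meeting $X_{\rho_0}$ transversally at the origin. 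Since $\mathbf{b}=(1,\dots,1)$, the wall function on $D_{13}$ is $g=\prod_{i=1}^n(z-\lambda_i x)$ with the $\lambda_i$ distinct, so $C_{(1,\dots,1)}=\{y=0,\ g=0\}$ is a union of $n$ distinct lines through the origin, each meeting $C_{(1)}$ there.

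For part (1) I would first blow up the smooth line $C_{(1)}$, i.e.\ the ideal $(x,z)$, so that $Y$ is smooth. In the chart $z=xu$ the proper transform of $C_{(1,\dots,1)}$ is $\{y=0,\ \prod_i(u-\lambda_i)=0\}=\bigsqcup_i\{y=0,\ u=\lambda_i\}$: the $n$ branches have been \emph{separated} into $n$ disjoint smooth lines $\tilde C_{(1,\dots,1)}$ inside the smooth surface $\tilde D_{13}$, crossing the exceptional divisor in $n$ distinct points, and the chart $x=zw$ confirms they do not recombine. Blowing up these disjoint smooth curves in the smooth threefold $Y$ gives a smooth $Z$. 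Both maps are blow ups of smooth centres in smooth varieties, hence divisorial contractions, which settles (1).

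For part (2) I would blow up the singular curve $C_{(1,\dots,1)}$ first. The blow up of the complete-intersection ideal $(y,g)$ has one smooth chart and the chart $Y=\{yv=g(x,z)\}\subset\mathbb{A}^4_{x,y,z,v}$; a Jacobian computation (using that $\nabla g$ vanishes only at the $n$-fold point of the concurrent lines, which forces distinctness of the $\lambda_i$) shows $Y$ has a single singular point at the origin, a $cA_{n-1}$ point, with $\tilde D_{12}=\{x=0\}\cap Y=\{yv=z^n\}\cong A_{n-1}$ and $\tilde C_{(1)}=\{x=z=v=0\}$ the ruling through its singular point, as in Proposition \ref{prop:divisor}(2). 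The key point is that the \emph{naive} blow up of $\tilde C_{(1)}$ fails: it separates the $n$ branches and produces several terminal points rather than one. Instead I would extract $\tilde C_{(1)}$ by the weighted blow up with weights $(x,z,v)=(1,1,n)$ and weight $0$ on $y$. By homogeneity of $g$, the proper transform of $Y$ in the $x$-chart $(x=s,\ z=z_1s,\ v=v_1s^n)$ is $\{yv_1=\prod_i(z_1-\lambda_i)\}$, which is smooth since $\prod_i(z_1-\lambda_i)$ and its derivative share no zero; the $z$-chart is smooth by symmetry. In the $v$-chart $(x=x_1t,\ z=z_1t,\ v=t^n)$, modulo $\mu_n$ acting by $\tfrac1n(-1,-1,1)$ on $(x_1,z_1,t)$, the equation becomes $y=g(x_1,z_1)$, so there $Z\cong\mathbb{A}^3_{x_1,z_1,t}/\tfrac1n(-1,-1,1)$, an isolated cyclic quotient singularity; and $\tfrac1n(-1,-1,1)\cong\tfrac1n(1,-1,1)\cong\tfrac1n(1,-1,-1)$ after permuting coordinates and negating weights. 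One also checks the proper transform of $\tilde D_{12}$ remains $A_{n-1}$, consistent with Proposition \ref{prop:divisor}(2).

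The main obstacle is part (2): one must pin down weights so that exactly one terminal cyclic quotient singularity survives while every other chart is smooth, and then verify that $Z\to Y$ genuinely satisfies Definition \ref{defi:divex}, in particular $\rho(Z/Y)=1$ and $-K_Z$ being $\sigma$-ample for the weighted blow up. A technical point to treat with care is the $\mathbb{Q}$-factoriality of the intermediate $Y$ at its $cA_{n-1}$ point (for $n=2$ this is the conifold); I would address this either by reading the statement as the local identification of the final singularity, or by passing to a small $\mathbb{Q}$-factorialisation, which does not affect the computation of $Z$ away from the exceptional locus.
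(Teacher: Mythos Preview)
Your argument is correct and matches the paper's proof in both parts: for (1) the paper also observes that after blowing up the line $C_{(1)}$ the $n$ lines separate and can be blown up as disjoint smooth curves, and for (2) the paper constructs exactly your weighted blow up (presented there as the explicit map $(u,y,z)\mapsto(u^n,yu,zu,\prod_i(z+\lambda_i y))$ factoring through $\tfrac1n(1,-1,-1)$, which the subsequent remark identifies with the weighted blow up of $\mathbb{A}^4$ with weights $(0,n,1,1)$). Your caveat about $\mathbb{Q}$-factoriality of the intermediate $Y$ is well observed; the paper does not address it either and simply checks that $Z\to Y$ is an isomorphism off the exceptional divisor with the required image.
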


\begin{proof}
For (1), write $C_{(1,\ldots,1)}=\cup_{i=1}^n L_i$. Let $Y \rightarrow \mathbb{A}^3$ be the blow up of a line $C_{(1)}\subset D_{12}$. The proper transforms $\tilde{L_i}$ of the $n$ lines $L_i$ become disjoint in $Y$. Hence, the blow up $Z\rightarrow Y$ of $\tilde{C}_{(1,\ldots,1)}=\cup_{i=1}^n \tilde{L_i}$ is isomorphic to the successive blow up of the lines $\tilde{L}_1,\ldots,\tilde{L}_n$. In each step we blow up a smooth variety along a smooth subvariety. Hence, the result $Z$ is smooth.

Now consider (2). After a change of coordinates we can assume that $C_{(1,\ldots,1)}=\cup_{i=1}^n L_i$ is given by $\{x=\prod_{i=1}^n (z + \lambda_i y)=0\}\subset\mathbb{A}^3_{x,y,z}$, with $\lambda_i\neq 0$, and that the line $C_{(1)}$ is given by $\{y=z=0\}\subset\mathbb{A}^3_{x,y,z}$. The blow up $Y\rightarrow\mathbb{A}^3$ at $C_{(1,\ldots,1)}$ is given by $\{xU=V\prod_{i=1}^n (z + \lambda_i y)\}\subset\mathbb{A}^3\times\mathbb{P}_{U,V}^1$. On the affine chart $U=1$ this is smooth. On the chart $V=1$ it is given by
\[ Y = \left\{xu = \prod_{i=1}^n (z + \lambda_i y)\right\} \subset \mathbb{A}^4. \]
The proper transform of $D_{12}=\{y=0\}$ is given by $\tilde{D}_{12}=\{xu=z^n\}\simeq A_{n-1}$. The proper transform of the line $C_{(1)}$ is $\tilde{C}_{(1)}=\{u=y=z=0\}\subset\tilde{D}_{12}$, the $x$-axis. We explicitly write down a divisorial extraction $Z\rightarrow Y$ of $\tilde{C}_{(1)}\subset Y$ as follows. Define a morphism $\mathbb{A}^3_{u,y,z}\rightarrow Y$ by the corresponding homomorphism of coordinate rings 
\begin{eqnarray*}
\bigslant{\mathbb{C}[u,y,z,x]}{(xu-\prod_{i=1}^n(z+\lambda_i y))} &\rightarrow& \mathbb{C}[u,y,z] \\
(u,y,z,x) &\mapsto& \left(u^n,yu,zu,\prod_{i=1}^n(z+\lambda_i y)\right).
\end{eqnarray*}
The morphism eliminates $x=\prod_{i=1}^n(z+\lambda_i y)$ and factors through $\mathbb{A}^3\rightarrow\mathbb{A}^3/\mathbb{Z}_n=\tfrac{1}{n}(1,-1,-1)$, since $u^n$, $yu$ and $zu$ are invariant under the $\mathbb{Z}_n$-action with weights $(1,-1,-1)$. Hence, we get a morphism $Z=\tfrac{1}{n}(1,-1,-1)\rightarrow Y$. This is an isomorphism away from $E=\{u=0\}\subset Z$ and the image of $E$ is given by $\tilde{C}_{(1)}=\{u=y=z=0\}$. Hence, $Z$ is a divisorial extraction of $\tilde{C}_{(1)}$.
\end{proof}

\begin{rem}
Note that the morphism $\frac{1}{n}(1,-1,-1)\rightarrow Y\subset\mathbb{A}^4$ in the proof above is the restriction of the toric weighted blow up of $\mathbb{A}_{x,u,y,z}^4$ with weights $(0,n,1,1)$. The corresponding subdivision of the standard cone $\braket{e_1,e_2,e_3,e_4}$ adds a ray $(0,n,1,1)$ and by \cite{CLS}, Theorem 11.4.21, the cone $\braket{e_1,e_3,e_4,(0,n,1,1)}$ corresponds to a singularity $\frac{1}{n}(0,1,-1,-1)=\frac{1}{n}(1,-1,-1)\times\mathbb{A}_x^1$. Setting $x=\prod_{i=1}^n (z + \lambda_i y)$ gives a singularity $\frac{1}{n}(1,-1,-1)$.
\end{rem}

\begin{rem}
For $n=2$ we can see this torically. In this case, $C_{(1,1)}$ is the union of two intersecting lines. This is a complete intersection defined by $x=yz=0$ in suitable coordinates. The blow up $Y \rightarrow X$ of $C_{(1,1)}$ is given by $\{xU=yzV\}\subset\mathbb{A}_{x,y,z}^3 \times \mathbb{P}_{U,V}^1$. In the affine chart $V=1$ we have $\{xu=yz\}\subset\mathbb{A}^4$, which is the affine toric variety defined by the cone with generators $(0,0,0),(1,0,1),(0,1,1),(1,1,1)$. The blow up $Z \rightarrow Y$ of the line $\tilde{C}_{(1)}$ corresponds to the subdivision by $(0,1,1)+(1,1,1)=(1,2,2)$. The resulting fan contains a cone generated by $(0,0,1),(1,0,1),(1,1,2)$. This is isomorphic via the transformation $\left(\begin{smallmatrix}1&0&0\\0&1&0\\-1&0&1\end{smallmatrix}\right)$ to the cone generated by $(0,0,1),(1,0,0),(0,2,1)$. Hence, by \cite{CLS}, Theorem 11.4.21, $Z$ has an isolated singularity of type $\tfrac{1}{2}(1,-1,-1)=\tfrac{1}{2}(1,1,1)$.
\end{rem}

\subsection{Induction step: the geometry of elementary triangular mutations}					

Recall the elementary triangular mutations $\alpha$ and $\beta$ from Definition \ref{defi:alphabeta}.

\begin{prop}
\label{prop:induction}\
\begin{enumerate}[(1)]
\item If Conjecture \ref{conj:divex}, (1), holds for $(\mathbf{a},\mathbf{b})$, then it holds for $\alpha(\mathbf{a},\mathbf{b})$ and $\beta(\mathbf{a},\mathbf{b})$. 
\item If Conjecture \ref{conj:divex}, (2), holds for $(\mathbf{a},\mathbf{b})$, then it holds for $\tau\alpha\tau(\mathbf{a},\mathbf{b})$ and $\tau\beta\tau(\mathbf{a},\mathbf{b})$.
\end{enumerate}
\end{prop}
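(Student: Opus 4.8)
The plan is to deduce part (2) formally from part (1) using the transposition symmetry, and to reduce part (1) to a local calculation on the surface $A_{a-1}$ produced by the first blow up. First I would record that $\tau$ interchanges the two procedures of Construction \ref{con:procedure}: after relabelling $X_2\leftrightarrow X_3$ and $x\leftrightarrow y$ and comparing the target singularities $\tfrac1a(1,-1,b)\leftrightarrow\tfrac1b(1,-1,a)$, procedure (1) for the transposed pair $(\mathbf b,\mathbf a)$ is literally procedure (2) for $(\mathbf a,\mathbf b)$. Hence Conjecture \ref{conj:divex}(1) holds for $(\mathbf a,\mathbf b)$ if and only if Conjecture \ref{conj:divex}(2) holds for $\tau(\mathbf a,\mathbf b)=(\mathbf b,\mathbf a)$. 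Granting part (1), part (2) is then immediate: if (2) holds for $(\mathbf a,\mathbf b)$ then (1) holds for $(\mathbf b,\mathbf a)$, so by part (1) it holds for $\alpha(\mathbf b,\mathbf a)$ and $\beta(\mathbf b,\mathbf a)$, and transposing back yields (2) for $\tau\alpha\tau(\mathbf a,\mathbf b)$ and $\tau\beta\tau(\mathbf a,\mathbf b)$.

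For part (1) the key input is Proposition \ref{prop:degree}: both $\alpha$ and $\beta$ fix the first partition $\mathbf a$ and modify only $\mathbf b$. Consequently step (b) of Construction \ref{con:procedure}(1)---the blow up $Y\to\mathbb A^3$ of $C_{\mathbf a}$, together with its exceptional surface $\tilde D_{13}\simeq A_{a-1}$ furnished by Proposition \ref{prop:divisor}(1)---is unchanged by the mutation, and the problem collapses to step (c): constructing the divisorial extraction of the modified curve $\tilde C_{\mathbf b'}$ inside the \emph{fixed} surface $A_{a-1}=\tfrac1a(1,-1)$ and checking that it still yields a single isolated singularity of the required type. Here I would first confirm that the target type is preserved: $\alpha$ sends $b\mapsto a+b\equiv b$ and $\beta$ sends $b\mapsto\ell(\mathbf b)a-b\equiv -b\pmod a$, and $\tfrac1a(1,-1,-b)\simeq\tfrac1a(1,-1,b)$ after applying the automorphism $\zeta\mapsto\zeta^{-1}$ of $\mathbb Z_a$ and swapping the first two weights.

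I would then treat $\beta$ and $\alpha$ separately. For $\beta$ the passage $b_j\mapsto a-b_j$ should be realized by the involution $\iota$ of $A_{a-1}=\{xu=z^a\}$ exchanging $x\leftrightarrow u$ (equivalently the two coordinates of the $\mathbb Z_a$-cover), under which the contact order of a branch with one ruling is replaced by its complement with the other; a local contact-order computation should verify $\iota(\tilde C_{\mathbf b})=\tilde C_{\beta\mathbf b}$. Since the extraction depends only on the germ of the configuration along the central curve, transporting the given extraction by $\iota$ produces the desired extraction of $\tilde C_{\beta\mathbf b}$, whose singularity is $\iota$-isomorphic to $\tfrac1a(1,-1,b)$. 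For $\alpha$ the modification appends a new branch of the maximal multiplicity $a$; I would show this branch is tangent to the ruling contracted by the extraction, so that it fits into the same toric weighted blow up used in Proposition \ref{prop:An}, the shift $b\mapsto a+b$ being invisible modulo $a$, and then recompute the relevant cone via \cite{CLS}, Theorem 11.4.21 to confirm a single $\tfrac1a(1,-1,b)$.

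I expect the $\alpha$ case to be the main obstacle: one must check that adjoining the maximal-multiplicity branch creates no extra singular points and that the morphism remains a genuine divisorial extraction (relative Picard number one with $-K$ relatively ample), rather than acquiring a second exceptional divisor. A secondary technical point is making the $\beta$-involution act on the ambient $Y=\{xu=f(y,z)\}$ and not merely on $A_{a-1}$, which I expect to resolve by working in the \'etale germ along the central curve, where the extraction is determined.
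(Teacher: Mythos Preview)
Your overall architecture matches the paper's: you deduce (2) from (1) by the $\tau$-symmetry, you observe that $\alpha$ and $\beta$ leave $\mathbf a$ fixed so step (b) of Construction~\ref{con:procedure} is unchanged, you verify the target type $\tfrac1a(1,-1,b)$ is unaffected modulo $a$, and your treatment of $\beta$ via the involution $x\leftrightarrow u$ of $A_{a-1}$ is exactly the paper's argument. On this last point your ``secondary technical concern'' dissolves immediately: the swap $x\leftrightarrow u$ is already a global involution of the chart $Y=\{xu=f(y,z)\}\subset\mathbb A^4$, not just of the slice $\tilde D_{13}=\{y=0\}$, so no \'etale localisation is needed.

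The one substantive gap is your plan for $\alpha$. You propose to show that the new branch of multiplicity $a$ ``fits into the same toric weighted blow up used in Proposition~\ref{prop:An}'', but the extraction $Z\to Y$ of $\tilde C_{\mathbf b}$ supplied by hypothesis is an arbitrary one satisfying Conjecture~\ref{conj:divex}(1); it is not in general toric, and there is no ``same'' weighted blow up to enlarge. The paper's argument is much simpler and sidesteps all of the difficulties you anticipate: after the blow up of $C_{\mathbf a}$, the proper transform of a multiplicity-$a$ branch in $\tilde D_{13}=\{xu=z^a\}$ is $\{x=\lambda z^a\}$, i.e.\ $\{u=\lambda^{-1}\}$, which lies entirely in the smooth locus of $Y$ and is \emph{disjoint} from $\tilde C_{\mathbf b}$ (whose branches $\{x=\lambda_i z^{b_i}\}$ with $b_i<a$ all pass through the $A_{a-1}$-point). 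Hence one simply blows up $\tilde C_{(a)}$ separately---this introduces no singularities---and uses the given extraction of $\tilde C_{\mathbf b}$ unchanged. With this observation the $\alpha$ case is the easy one, not the ``main obstacle''.
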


\begin{proof}
We prove (1), then (2) follows by symmetry. By Proposition \ref{prop:degree} we have $\alpha(a,b)=(a,a+b)$ and $\beta(a,b)=(a,\ell a-b)$ for some $\ell\in\mathbb{N}$. Note that $\tfrac{1}{a}(1,-1,a+b)\simeq\tfrac{1}{a}(1,-1,b)$ and $\tfrac{1}{a}(1,-1,\ell a-b)\simeq\tfrac{1}{a}(1,-1,-b)\simeq\tfrac{1}{a}(1,-1,b)$. Hence, we have to show that application of $\alpha$ and $\beta$ does not change the type of the isolated singularity produced by divisorial extraction. Write $\mathbf{a}=(a_1,\ldots,a_k)$, $\mathbf{b}=(b_1,\ldots,b_l)$.

By Proposition \ref{prop:degree}, $\alpha(\mathbf{a},\mathbf{b})=(\mathbf{a},\tilde{\mathbf{b}})$ with $\tilde{\mathbf{b}}=(a,b_1,\ldots,b_l)$. Hence, $C_{\tilde{\mathbf{b}}}=C_{(a)}\cup C_{\mathbf{b}}$, where as usual we write $C_{(a)}$ for a curve that intersects $X_1\cap X_2\cap X_3\simeq\mathbb{A}^1$ in a single point with multiplicity $a$. Let $Y\rightarrow\mathbb{A}^3$ be the blow up of $C_{\mathbf{a}}\subset D_{12}$. Then the proper transforms $\tilde{C}_{(a)}$ and $\tilde{C}_{\mathbf{b}}$ are disjoint. If we blow up $\tilde{C}_{(a)}$ and perform the divisorial extraction of $\tilde{C}_{\mathbf{b}}$ that satisfies Conjecture \ref{conj:divex}, (1), then the resulting divisorial extraction $Z\rightarrow Y$ of $\tilde{C}_{\tilde{\mathbf{b}}}$ also satisfies Conjecture \ref{conj:divex}, (1).

By Proposition \ref{prop:degree}, $\beta(\mathbf{a},\mathbf{b})=(\mathbf{a},\tilde{\mathbf{b}})$ with $\tilde{\mathbf{b}}=(a-b_1,\ldots,a-b_l)$. By Proposition \ref{prop:divisor}, $\tilde{D}_{13}\simeq A_{a-1}$. Write $\tilde{D}_{13}=\{xu=z^a\}\subset\mathbb{A}^3_{x,z,u}$. An element $b_i$ of $\mathbf{b}$ corresponds to a curve $C_{(b_i)}$ whose proper transform under this blow up is given by $\tilde{C}_{(b_i)}=\{x=z^{b_i}\}\subset\tilde{D}_{13}$. Similarly, $a-b_i$ corresponds to $\tilde{C}_{(a-b_i)}=\{x=z^{a-b_i}\}=\{u=z^{b_i}\}\subset\tilde{D}_{13}$. Hence, $\tilde{C}_{\mathbf{b}}$ and $\tilde{C}_{\tilde{\mathbf{b}}}$ are isomorphic under exchanging the $x$- and $u$-coordinate. The divisorial extraction of $\tilde{C}_{\mathbf{b}}$ that satisfies Conjecture \ref{conj:divex}, (1), can be composed with this isomorphism to obtain a divisorial extraction $Z\rightarrow Y$ of $\tilde{C}_{\tilde{\mathbf{b}}}$ that satisfies Conjecture \ref{conj:divex}, (1).

This proves the statement.
\end{proof}

\begin{rem}
From Propositions \ref{prop:An} and \ref{prop:induction} it follows that if $(\mathbf{a},\mathbf{b})\in\textup{ZMLP}_{\textup{comb}}(a,b)$ can be obtained from $(1),(1,\ldots,1)$ via only $\alpha,\beta$ or only $\tau\alpha\tau,\tau\beta\tau$, then it satisfies Conjecture \ref{conj:divex}. However, for most $(\mathbf{a},\mathbf{b})\in\textup{ZMLP}_{\textup{comb}}(a,b)$ this is not the case, and we have to explicitly construct divisorial extractions $Z\rightarrow\mathbb{A}^3$ satisfying Conjecture \ref{conj:divex} in these cases.
\end{rem}

\subsection{Tom}														
\label{S:tom}

\begin{prop}
\label{prop:tom1}
For case Tom with $(a,b)=(k,k+1)$, i.e., $\mathbf{a}=(1,\ldots,1)$ and $\mathbf{b}=(k,1)$, Conjecture \ref{conj:divex}, (1) is true: Let $Z\rightarrow \mathbb{A}^3$ be the composition of the blow up $Y\rightarrow\mathbb{A}^3$ of $C_{(1,\ldots,1)}\subset D_{12}\simeq\mathbb{A}^2\subset\mathbb{A}^3$ and the blow up $Z \rightarrow Y$ of $\tilde{C}_{(k,1)}\subset\tilde{D}_{13}\subset Y$. Then $\tilde{D}_{13}\simeq A_{k-1}$ and $Z$ has an isolated singularity of type $\tfrac{1}{k}(1,-1,k+1)=\tfrac{1}{k}(1,-1,1)\simeq\tfrac{1}{k}(1,-1,-1)$.
\end{prop}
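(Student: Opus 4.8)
The plan is to realise both maps explicitly in affine coordinates, reduce the local analysis at the relevant point to Proposition \ref{prop:An}(2), and treat the extra branch of $C_{(k,1)}$ separately. First I would choose coordinates on $X_1=\mathbb{A}^3_{x,y,z}$ as in the proof of Proposition \ref{prop:divisor}, so that $D_{12}=\{x=0\}$, $D_{13}=\{y=0\}$ and the triple line $D_{12}\cap D_{13}$ is $\{x=y=0\}$. Since $\mathbf{a}=(1,\ldots,1)$ is the partition of $a=k$ into $k$ ones, $C_{(1,\ldots,1)}\subset D_{12}$ is a union of $k$ distinct lines through the origin, which (as in the proof of Proposition \ref{prop:An}) we may write as $\{x=\prod_{i=1}^k(z+\lambda_i y)=0\}$ with the $\lambda_i$ nonzero and distinct. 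Blowing this up and passing to the chart $V=1$ gives $Y=\{xu=\prod_{i=1}^k(z+\lambda_i y)\}\subset\mathbb{A}^4_{x,y,z,u}$; since $\prod_i(z+\lambda_i\cdot0)=z^k$, Proposition \ref{prop:divisor}(1) gives $\tilde D_{13}=\{xu=z^k\}\simeq A_{k-1}$, which is the first claim. A Jacobian check shows that $Y$ is smooth away from the origin and has an isolated singular point there.

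Next I would locate the proper transform of $C_{(k,1)}=C_{(k)}\cup C_{(1)}$, where $C_{(1)}$ is a line and $C_{(k)}$ a branch of contact order $k$ with the triple line; concretely one may take $C_{(1)}=\{y=z=0\}$ and $C_{(k)}=\{y=0,\ x=c_1z^k\}$, which meet only at the origin and carry the correct intersection data with the triple line. Restricting $u=\prod_i(z+\lambda_i y)/x$ to each branch shows that $\tilde C_{(1)}$ is the $x$-axis $\{u=y=z=0\}$, passing through the origin, which is the $A_{k-1}$ singular point of $Y$, whereas $\tilde C_{(k)}$ passes through the point $u=1/c_1\neq0$, a smooth point of $Y$ away from the origin. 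In particular $\tilde C_{(1)}$ and $\tilde C_{(k)}$ are disjoint in $Y$, so the extraction of $\tilde C_{(k,1)}$ decomposes into two independent local operations.

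Along $\tilde C_{(k)}$ we are blowing up a smooth curve inside the smooth locus of $Y$, which keeps that region smooth, exactly as in Proposition \ref{prop:An}(1). Along $\tilde C_{(1)}$ we are in precisely the situation of Proposition \ref{prop:An}(2) with $n=k$: $\tilde C_{(1)}$ is the $x$-axis through the $A_{k-1}$ point of $\{xu=z^k\}$, and the required extraction $Z\to Y$ is the restriction to $Y$ of the toric weighted blow-up of $\mathbb{A}^4_{x,u,y,z}$ with weights $(0,k,1,1)$ from the remark following that proposition. By \cite{CLS}, Theorem 11.4.21, the associated star subdivision yields a cone defining an isolated cyclic quotient singularity of type $\tfrac1k(1,-1,-1)$; since $b=k+1\equiv1\pmod k$ this is $\tfrac1k(1,-1,k+1)=\tfrac1k(1,-1,1)\simeq\tfrac1k(1,-1,-1)$, which is the second claim. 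Alternatively, one may bypass the coordinate computation by noting that the dual pair of Tom equals $\alpha\bigl((1,\ldots,1),(1)\bigr)$, that Conjecture \ref{conj:divex}(1) holds for $(1,\ldots,1),(1)$ by Proposition \ref{prop:An}(2) together with the symmetry exchanging its two cases, and then invoking Proposition \ref{prop:induction}(1).

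The main obstacle is the extraction along $\tilde C_{(1)}$. For $k\geq3$ the singularity $\tfrac1k(1,-1,-1)$ is not a hypersurface singularity, so a naive blow-up of the curve—whose charts would present $Z$ again as a hypersurface in $\mathbb{A}^4$—cannot produce it; the extraction must be taken as the weighted blow-up of Proposition \ref{prop:An}(2), and the real work is in checking that the weighted star subdivision meets $Y$ in exactly this quotient singularity and in no other. The remaining care is to confirm that $\tilde C_{(k)}$ genuinely separates from the origin after the first blow-up, so that the smooth blow-up along $\tilde C_{(k)}$ and the weighted extraction along $\tilde C_{(1)}$ combine into a single divisorial extraction of $\tilde C_{(k,1)}$ whose only singularity is $\tfrac1k(1,-1,-1)$.
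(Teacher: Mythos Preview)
Your proposal is correct and follows essentially the same approach as the paper: the paper's proof simply observes that $(\mathbf{a},\mathbf{b})=\alpha((1,\ldots,1),(1))$ and invokes Propositions~\ref{prop:An} and~\ref{prop:induction}, then summarises the geometric reason exactly as you do---after the first blow-up the proper transforms $\tilde{C}_{(k)}$ and $\tilde{C}_{(1)}$ are disjoint, the former lies in the smooth locus, and the latter yields the $\tfrac{1}{k}(1,-1,-1)$ singularity via Proposition~\ref{prop:An}(2). Your ``alternatively'' paragraph is in fact the paper's entire argument; the explicit coordinate computation you give first is a welcome expansion of what the paper leaves implicit.
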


\begin{proof}
Note that $(\mathbf{a},\mathbf{b})=\alpha((1,\ldots,1),(1))$. Then the statement follows from Propositions \ref{prop:An} and \ref{prop:induction}. Let us summarize the reason again. After the blow up $Y\rightarrow\mathbb{A}^3$ of $C_{(1,\ldots,1)}$, the proper transform of $C_{(k,1)}$ is a disjoint union $\tilde{C}_{(k,1)}=\tilde{C}_{(k)}\cup \tilde{C}_{(1)}$. The curve $\tilde{C}_{(k)}$ is disjoint from the singularities of $Y$. Hence, its blow up does not produce any singularities. The blow up of $\tilde{C}_{(1)}$ produces an isolated singularity of type $\tfrac{1}{k}(1,-1,-1)$ as shown in the proof of Proposition \ref{prop:An}, (2).
\end{proof}

\begin{prop}
\label{prop:tom2}
For case Tom with $(a,b)=(2,k)$ or $(a,b)=(3,k)$, Conjecture \ref{conj:divex}, (1) is true. More generally:
\begin{enumerate}[(a)]
\item Let $(a,b)=(a,ka+1)$, $\mathbf{a}=(1,\ldots,1)$ and $\mathbf{b}=(a,\ldots,a,1)$. Let $Y\rightarrow\mathbb{A}^3$ be the blow up of $C_{\mathbf{a}}\subset D_{12}\subset \mathbb{A}^3$ and let $Z\rightarrow Y$ be the blow up of $\tilde{C}_{\mathbf{b}}\subset\tilde{D}_{13}$. Then $Z$ has an isolated singularity of type $\tfrac{1}{a}(1,-1,ka+1)=\tfrac{1}{a}(1,-1,1)\simeq\tfrac{1}{a}(1,-1,-1)$.
\item Let $(a,b)=(a,ka+a-1)$, $\mathbf{a}=(1,\ldots,1)$ and $\mathbf{b}=(a,\ldots,a,a-1)$. Let $Y\rightarrow\mathbb{A}^3$ be the blow up of $C_{\mathbf{a}}\subset D_{12}\subset \mathbb{A}^3$ and let $Z\rightarrow Y$ be the blow up of $\tilde{C}_{\mathbf{b}}\subset\tilde{D}_{13}$. Then $Z$ has an isolated singularity of type $\tfrac{1}{a}(1,-1,ka+a-1)=\tfrac{1}{a}(1,-1,-1)$.
\end{enumerate}
\end{prop}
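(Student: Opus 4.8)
The plan is to reduce both families to the inductive machinery of Propositions \ref{prop:An}, \ref{prop:induction}, \ref{prop:degree} and \ref{prop:divisor}, exactly as in the proof of Proposition \ref{prop:tom1}, which is the special case ``one copy of $a$'' of part (a). The starting observation, read off from the mutation bookkeeping of Proposition \ref{prop:degree}, is that both dual pairs arise from the base pair $((1,\ldots,1),(1))$ on $\triangle(a,1)$ (with $a$ ones in the first slot) by a chain of elementary triangular mutations: for part (a) one has $(\mathbf{a},\mathbf{b})=\alpha^k((1,\ldots,1),(1))$, since each $\alpha$ prepends a part equal to $|\mathbf{a}|=a$ to $\mathbf{b}$; for part (b) one has $(\mathbf{a},\mathbf{b})=\alpha^k\beta((1,\ldots,1),(1))$, since the single $\beta$ sends the part $1$ to $a-1$ before the $\alpha$'s prepend the $a$'s. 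I would first record that the base pair satisfies Conjecture \ref{conj:divex}, (1): applying the transposition $\tau$ to Proposition \ref{prop:An}, (2) (which treats $\triangle(1,a)$ with dual pair $((1),(1,\ldots,1))$, interchanging $X_2\leftrightarrow X_3$, $D_{12}\leftrightarrow D_{13}$ and the two statements of Conjecture \ref{conj:divex}) yields the divisorial extraction for $\triangle(a,1)$ with dual pair $((1,\ldots,1),(1))$ and isolated singularity $\tfrac{1}{a}(1,-1,1)\simeq\tfrac{1}{a}(1,-1,-1)$.

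Next I would run the induction. By Proposition \ref{prop:induction}, (1), validity of Conjecture \ref{conj:divex}, (1) is preserved under both $\alpha$ and $\beta$, and the argument there shows that the \emph{type} of the produced isolated singularity is unchanged, since $\tfrac{1}{a}(1,-1,a+b)\simeq\tfrac{1}{a}(1,-1,b)$ (for $\alpha$) and $\tfrac{1}{a}(1,-1,\ell a-b)\simeq\tfrac{1}{a}(1,-1,-b)\simeq\tfrac{1}{a}(1,-1,b)$ (for $\beta$). Tracking the third weight from the base value $b=1$: part (a) lands on $\tfrac{1}{a}(1,-1,ka+1)$ and part (b), after the $\beta$-step sending $1\mapsto a-1$, on $\tfrac{1}{a}(1,-1,ka+a-1)$; both reduce modulo $a$ to $\tfrac{1}{a}(1,-1,-1)$, as claimed. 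Proposition \ref{prop:divisor}, (1) simultaneously identifies $\tilde{D}_{13}\simeq A_{a-1}$ at each stage.

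It then remains to check that the \emph{prescribed} morphism in the statement — blow up $C_{\mathbf{a}}\subset D_{12}$, then blow up the whole proper transform $\tilde{C}_{\mathbf{b}}\subset\tilde{D}_{13}$ — agrees with the stepwise extraction produced by the induction. Following Proposition \ref{prop:tom1}, I would argue that after blowing up $C_{\mathbf{a}}=C_{(1,\ldots,1)}$ the proper transform $\tilde{C}_{\mathbf{b}}$ splits into the $k$ disjoint high-multiplicity branches $\tilde{C}_{(a)}$ together with the single low-multiplicity branch ($\tilde{C}_{(1)}$ in part (a), $\tilde{C}_{(a-1)}$ in part (b)), all mutually disjoint, and that the high-multiplicity branches avoid the unique singular point of $Y$. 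Blowing them up then introduces no singularities, so only the low-multiplicity branch is responsible for the singularity of $Z$, reproducing the extraction of Proposition \ref{prop:An}, (2) up to the $x\leftrightarrow u$ exchange on $\tilde{D}_{13}\simeq A_{a-1}$ needed in part (b).

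I expect the main obstacle to be exactly this last compatibility: verifying, in explicit local coordinates on $Y=\{xu=\prod_{i=1}^a(z+\lambda_iy)\}\subset\mathbb{A}^4$, that the $k$ branches $\tilde{C}_{(a)}$ separate from one another and from the low-multiplicity branch and genuinely miss the singular point, so that the single blow-up of $\tilde{C}_{\mathbf{b}}$ factors as an iterated blow-up of smooth centers followed by the one extraction that creates $\tfrac{1}{a}(1,-1,-1)$. Once this branch separation is in hand, the singularity computation is routine via Proposition \ref{prop:induction} and \cite{CLS}, Theorem 11.4.21.
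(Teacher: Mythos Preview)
Your proposal is correct and follows essentially the same approach as the paper: reduce to Proposition \ref{prop:An}(2) via the induction step Proposition \ref{prop:induction}(1), writing $(\mathbf{a},\mathbf{b})$ in part (a) as $\alpha^k((1,\ldots,1),(1))$ and in part (b) as the same up to one extra $\beta$. The only cosmetic difference is that the paper phrases (b) as $\beta$ applied to the outcome of (a) rather than as $\alpha^k\beta$ applied to the base pair, and it omits the explicit branch-separation discussion you spell out (this is already contained in the proof of Proposition \ref{prop:induction}).
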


\begin{proof}
For (a), note that $(1,\ldots,1),(a,\ldots,a,1)=\alpha\circ\ldots\circ\alpha((1,\ldots,1),1)$. Then the statement follows from Proposition \ref{prop:An}, (2), and Proposition \ref{prop:induction}, (1). 

For (b), note that $(1,\ldots,1),(a,\ldots,a,a-1)=\beta((1,\ldots,1),(a,\ldots,a,1))$. Then the statement follows from (a) and Proposition \ref{prop:induction}, (1).
\end{proof}

\subsection{Divisorial extractions of singular curves in smooth $3$-folds}		

Note that for the cases called Jerry and Tyke in Table \ref{tab:classification} we have $\mathbf{a}=(a)$. The blow up $Y\rightarrow\mathbb{A}^3$ of the smooth curve $C_{(a)}\subset D_{12}$ is smooth and by Proposition \ref{prop:divisor} we have $\tilde{D}_{13}\simeq A_{a-1}$. Hence, we are led to consider divisorial extractions of singular curves in $\mathbb{A}^3$. This case has been studied by Ducat \cite{Ducat} and he provided an algorithm that proves the existence of divisorial extractions for an infinite family of singular curves:

\begin{prop}[\cite{Ducat}, Theorem 0.3]
\label{prop:ducat}
Let $m,k\geq 1$, and define a sequence of integers by the recurrence relation
\[ Q_0 = 0, Q_1 = 1, Q_{j+1}=mQ_k-Q_{j-1} \ \forall j \geq 1. \]
Let $a=Q_k+Q_{k-1}$ and $r=Q_k+Q_{k+1}$. Let $C\subset A_{r-1}$ be a curve with $m+2$ branches meeting the singular point of $A_{r-1}$ with multiplicity $Q_k$ each.
\footnote{If $A_{r-1}=\{xy=z^r\}\subset\mathbb{A}^3$, then such a branch can be given by $\{x=z^{Q_k}\}\subset A_{r-1}$.}
Then there exists a divisorial extraction $Z\rightarrow\mathbb{A}^3$ of $C$ such that $Z$ has an isolated singularity of type $\tfrac{1}{r}(1,-1,a)$.
\end{prop}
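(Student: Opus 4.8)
The plan is to prove this by an explicit, essentially toric, construction of $Z$, organised as an induction on $k$ whose inductive step carries out one step of the Hirzebruch--Jung continued fraction attached to $r/a$, with base case a mild generalisation of Proposition~\ref{prop:An}. First I would fix the explicit model $A_{r-1}=\{xy=z^r\}\subset\mathbb{A}^3_{x,y,z}$ and take $C=\bigcup_{i=1}^{m+2}\{x=\lambda_i z^{Q_k}\}$ for sufficiently general $\lambda_i$, so that each branch meets the origin with multiplicity $Q_k$ and no two branches are tangent to higher order than forced by lying on $A_{r-1}$. The numerical backbone is the determinant identity $Q_k^2-Q_{k-1}Q_{k+1}=1$, proved by the telescoping computation $Q_{k+1}Q_{k-1}-Q_k^2=\cdots=Q_2Q_0-Q_1^2=-1$. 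This gives $a Q_k=Q_{k-1}r+1$, hence $aQ_k\equiv 1\pmod{r}$ and $\gcd(a,r)=1$, so that $\tfrac{1}{r}(1,-1,a)$ is a genuine terminal cyclic quotient threefold singularity. Equivalently the recurrence is encoded by $\left(\begin{smallmatrix}m&-1\\1&0\end{smallmatrix}\right)^{j}=\left(\begin{smallmatrix}Q_{j+1}&-Q_j\\Q_j&-Q_{j-1}\end{smallmatrix}\right)$, and it is this $\mathrm{SL}_2(\mathbb{Z})$ structure that I would exploit to peel off one factor per inductive step.

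For the construction itself I would generalise the quotient presentation used in the proof of Proposition~\ref{prop:An}(2), where the extraction of $n$ lines was realised as the restriction of a toric weighted blow-up of $\mathbb{A}^4$ with weights $(0,n,1,1)$, producing $\tfrac{1}{n}(1,-1,-1)$. Here I would exhibit $Z$ as a chart of a weighted blow-up of $\mathbb{A}^3$ (after passing to the cyclic cover uniformising $A_{r-1}$) whose weight vector is read off from the continued fraction of $r/a$, i.e. from the $Q_j$. Concretely, one inductive step is a single weighted blow-up that separates the configuration: the proper transform of $C$ becomes a curve of the same shape but with the smaller multiplicity $Q_{k-1}$ sitting on the smaller surface $A_{r'-1}$ with $r'=Q_{k-1}+Q_k$, reducing to case $k-1$, while the newly created exceptional divisor and the transported singularity assemble into the claimed $\tfrac{1}{r}(1,-1,a)$. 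This mirrors, at the level of geometry, the mutation moves $\alpha,\beta$ of Proposition~\ref{prop:induction}. The singularity type at each stage is identified exactly as elsewhere in the paper, by a star subdivision of the relevant cone together with \cite{CLS}, Theorem~11.4.21, using $aQ_k\equiv 1\pmod{r}$ to match the quotient weights.

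To conclude I would verify the four conditions of Definition~\ref{defi:divex} for the composite $Z\to\mathbb{A}^3$: $\mathbb{Q}$-factoriality holds because $Z$ has only quotient singularities; the map is an isomorphism away from the exceptional divisor by construction, since it only modifies normal directions to $C$ exactly as in Proposition~\ref{prop:divisor}; and the discrepancy of the extracted divisor is positive, so $-K_Z$ is $\sigma$-ample. Genericity of the $\lambda_i$ is used to guarantee that every intermediate centre is either smooth or is the unique singular stratum, so that no extraneous singularities survive on $Z$ beyond the single isolated $\tfrac{1}{r}(1,-1,a)$.

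The main obstacle I anticipate is twofold. First, showing that the construction yields a genuine \emph{Mori} divisorial extraction --- that there is a \emph{unique} exceptional prime divisor and that $\rho(Z/\mathbb{A}^3)=1$ --- rather than merely some birational modification contracting a divisor onto $C$; controlling the relative Picard number through a chain of weighted blow-ups is delicate, since each step a priori creates new divisors that must be shown either to be contracted or to dominate the base, so that after running the minimal model program relatively over $\mathbb{A}^3$ only one divisor remains. Second, and relatedly, certifying that genericity of the branches forces all intermediate singularities to coalesce into a single terminal quotient point --- this is precisely where the full continued-fraction numerology of the $Q_j$, and not merely the identity $Q_k^2-Q_{k-1}Q_{k+1}=1$, is genuinely needed.
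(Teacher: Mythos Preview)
The paper does not prove this proposition: it is quoted verbatim as \cite{Ducat}, Theorem~0.3, and no argument is supplied. So there is no ``paper's own proof'' to compare your proposal against; the statement functions here purely as a black box feeding into Corollary~\ref{cor:ducat2} and Proposition~\ref{prop:jerrytyke}.

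That said, a brief comment on your sketch versus what Ducat actually does. Ducat's proof is not the inductive toric/weighted-blow-up construction you outline; it proceeds by \emph{serial unprojection} (of Kustin--Miller type), building the graded ring of $Z$ over $\mathbb{A}^3$ directly by successively adjoining unprojection variables. This sidesteps exactly the obstacle you identify at the end: one never has to run a relative MMP or track Picard numbers through a chain of modifications, because the output of unprojection is already the anticanonical model, so $-K_Z$ is relatively ample and $\rho(Z/\mathbb{A}^3)=1$ come for free. Your approach, by contrast, would need to show that the composite of $k$ weighted blow-ups, followed by whatever contractions are needed, leaves a single exceptional divisor --- and as you correctly flag, this is the genuinely hard part. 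The continued-fraction numerology you set up (the identity $Q_k^2-Q_{k-1}Q_{k+1}=1$ and the $\mathrm{SL}_2$ recursion) is correct and is indeed what governs the combinatorics, but it does not by itself control the birational geometry in the way you need. So your plan is a reasonable heuristic for \emph{why} the answer should be $\tfrac{1}{r}(1,-1,a)$, but it is not the route by which the result is actually established, and closing the gap you name would amount to reproving Ducat's theorem by a different and harder method.
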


\begin{cor}
\label{cor:ducat2}
Let $C\subset A_m$ be an $m+2$-fold intersection of lines. Then there exists a divisorial extraction $Z\rightarrow\mathbb{A}^3$ of $C$ such that $Z$ has an isolated singularity of type $\tfrac{1}{m+1}(1,-1,-1)$.
\end{cor}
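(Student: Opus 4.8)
The plan is to obtain Corollary \ref{cor:ducat2} as the single special case $k=1$ of Proposition \ref{prop:ducat}, taking the integer $m$ of the corollary to be exactly the parameter $m$ of the proposition. First I would record the relevant values of the recurrence $Q_0=0$, $Q_1=1$, $Q_{j+1}=mQ_j-Q_{j-1}$: for $k=1$ we have $Q_k=Q_1=1$, $Q_{k-1}=Q_0=0$ and $Q_{k+1}=Q_2=mQ_1-Q_0=m$. Feeding these into the definitions in Proposition \ref{prop:ducat} gives $a=Q_k+Q_{k-1}=1$ and $r=Q_k+Q_{k+1}=m+1$, so that the ambient surface is $A_{r-1}=A_m$, exactly as in the statement of the corollary.

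Next I would check that the hypothesis of the corollary matches that of the proposition under this specialization. Proposition \ref{prop:ducat} requires $C\subset A_{r-1}$ to have $m+2$ branches, each meeting the singular point of $A_{r-1}$ with multiplicity $Q_k$. Since $Q_k=1$, each branch meets with multiplicity $1$; writing $A_{r-1}=\{xy=z^r\}$ as in the footnote of Proposition \ref{prop:ducat}, such a branch is $\{x=z^{Q_k}\}=\{x=z\}$, i.e. a line through the singular point in the sense used throughout the paper (cf. $C_{(1,\ldots,1)}$ in \S\ref{S:An}). Thus an $m+2$-fold intersection of lines in $A_m$ is precisely a curve satisfying the hypothesis of Proposition \ref{prop:ducat} with $k=1$, and the proposition produces a divisorial extraction $Z\rightarrow\mathbb{A}^3$ of $C$ with an isolated singularity of type $\tfrac{1}{r}(1,-1,a)=\tfrac{1}{m+1}(1,-1,1)$.

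The only remaining point is to identify the singularity type. I would note that a cyclic quotient singularity $\tfrac{1}{n}(w_1,w_2,w_3)$ is unchanged under permuting the weights (relabelling coordinates) and under simultaneously negating all weights (replacing the generator $\zeta_n$ by $\zeta_n^{-1}$). Applying these gives $\tfrac{1}{m+1}(1,-1,1)\simeq\tfrac{1}{m+1}(-1,1,-1)\simeq\tfrac{1}{m+1}(1,-1,-1)$, which is the asserted type.

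Since Corollary \ref{cor:ducat2} is a literal specialization of Proposition \ref{prop:ducat}, I expect no genuine obstacle: the entire content is the bookkeeping of the recurrence at $k=1$ together with the elementary equivalence of quotient-singularity types. The only thing that needs care is matching conventions, namely confirming that a multiplicity-$1$ branch on $A_m$ really coincides with the paper's notion of a line, and that the output weight $a=1$ of Proposition \ref{prop:ducat} lands in the same isomorphism class as the $\tfrac{1}{m+1}(1,-1,-1)$ appearing elsewhere (e.g. Propositions \ref{prop:An} and \ref{prop:tom1}), so that this corollary slots directly into the induction there.
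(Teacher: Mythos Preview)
Your proposal is correct and matches the paper's proof exactly: the paper's entire argument is the single sentence ``This is the case $k=1$ of Proposition \ref{prop:ducat}.'' You have simply unpacked the bookkeeping (the values $Q_0=0$, $Q_1=1$, $Q_2=m$, hence $a=1$, $r=m+1$) and the elementary identification $\tfrac{1}{m+1}(1,-1,1)\simeq\tfrac{1}{m+1}(1,-1,-1)$, all of which is accurate.
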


\begin{proof}
This is the case $k=1$ of Proposition \ref{prop:ducat}.
\end{proof}

\begin{prop}[\cite{Ducat}, \S3.4.1]
\label{prop:ducat2}
Let $C\subset A_2$ be a curve with $4$ branches that meet the singular point of $A_2$ with multiplicities $(1,1,1,2)$. Then there exists a divisorial extraction $Z\rightarrow\mathbb{A}^3$ of $C$ such that $Z$ has an isolated singularity of type $\tfrac{1}{3}(1,-1,-1)$.
\end{prop}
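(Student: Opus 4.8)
The statement is Ducat's \cite{Ducat}, \S3.4.1, and since it is \emph{not} an equimultiple case it does not follow from Proposition \ref{prop:ducat} or Corollary \ref{cor:ducat2}; the plan is therefore to reconstruct Ducat's explicit construction in coordinates adapted to this section. First I would fix $A_2=\{xy=z^3\}\subset\mathbb{A}^3_{x,y,z}$ and write the four branches of $C$ explicitly: the three multiplicity-one branches $\{x=\lambda_i z,\ y=z^2/\lambda_i\}$ for distinct nonzero $\lambda_1,\lambda_2,\lambda_3$, and the multiplicity-two branch $\{x=\mu z^2,\ y=z/\mu\}$, all passing through the singular point of $A_2$ with the prescribed multiplicities $(1,1,1,2)$ (read off against the ruling $\{x=0\}$). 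To make the local geometry transparent I would pass to the cyclic cover $\mathbb{A}^2_{s,w}\to A_2$ given by $(x,y,z)=(s^3,w^3,sw)$, under which the branches become the smooth curves $\{w=s^2/\lambda_i\}$ and $\{s=\mu w^2\}$; note that the first three are tangent to the $s$-axis to order two while the last is tangent to the $w$-axis, so the four branches are \emph{not} all tangent in the same direction.

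The goal is to produce a projective birational $\sigma:Z\to\mathbb{A}^3$ satisfying conditions (1)--(4) of Definition \ref{defi:divex} with $\Gamma=C$, together with a single isolated singularity of type $\tfrac13(1,-1,-1)$ on $Z$. Following the template of Proposition \ref{prop:An}, (2) and the Remark after it, I would present $Z$ through an explicit chart: take the $\mathbb{Z}_3$-cover $\mathbb{A}^3_{p,q,t}$ with the weight $(1,-1,-1)$ action, and construct a $\mathbb{Z}_3$-invariant polynomial map $\mathbb{A}^3_{p,q,t}\to\mathbb{A}^3_{x,y,z}$ (necessarily incorporating the branch equations, as $\prod(z+\lambda_i y)$ does in Proposition \ref{prop:An}, (2)) whose image relations present $Z$ as the total space of the extraction and whose exceptional divisor $E=\{p=0\}$ is contracted precisely onto $C$. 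Conceptually this realizes $\sigma$ as a weighted blow-up in the spirit of the Remark after Proposition \ref{prop:An}, corrected by Ducat's explicit unprojection of the pair $(C\subset A_2)$, which is what yields a cyclic quotient point rather than a smooth or Gorenstein point.

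With $\sigma$ in hand the verification splits cleanly. Conditions (1)--(3) of Definition \ref{defi:divex} -- $\mathbb{Q}$-factoriality of $Z$, the existence of a unique exceptional prime divisor $E$ with $\sigma(E)=C$, and $\sigma$ being an isomorphism away from $E$ -- follow from the chart description; since there is a single exceptional divisor, $\rho(Z/\mathbb{A}^3)=1$ is automatic, and the $\sigma$-ampleness of $-K_Z$ in (4) I would confirm by a discrepancy computation from the weights. The singularity type is then read off from the quotient chart as $\tfrac13(1,-1,-1)$, and one must check in addition that the three multiplicity-one branches are genuinely separated by $\sigma$ so that no further singular points are created. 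Combined with Propositions \ref{prop:divisor} and \ref{prop:stepa}, this base case is exactly what feeds the induction of the Jerry and Tyke families.

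The hard part will be precisely the mixed multiplicities $(1,1,1,2)$: unlike the equimultiple situation of Proposition \ref{prop:ducat} and Corollary \ref{cor:ducat2}, no single toric weighted blow-up simultaneously extracts all four branches with the correct discrepancy, because in the cover the multiplicity-two branch $\{s=\mu w^2\}$ is transverse to the other three. This branch has to be absorbed by the non-toric unprojection step, and the delicate point -- the content of Ducat's \S3.4.1 computation, which I would reproduce to finish -- is to show that this step introduces exactly one quotient singularity of type $\tfrac13(1,-1,-1)$ and leaves $Z$ otherwise smooth, with $E$ irreducible and dominating $C$.
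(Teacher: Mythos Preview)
The paper does not give a proof of this proposition at all: it is quoted verbatim from Ducat's thesis, with the citation \cite{Ducat}, \S3.4.1, standing in lieu of an argument. So there is nothing in the paper to compare your proposal against --- the paper treats this as an input, not something to be re-derived.

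Your outline is a plausible sketch of how one might reconstruct Ducat's argument, and you correctly identify that the equimultiple machinery of Proposition~\ref{prop:ducat} does not apply. But as written it is not a proof: the central step --- actually writing down the $\mathbb{Z}_3$-invariant map from $\mathbb{A}^3_{p,q,t}$ to $\mathbb{A}^3_{x,y,z}$ that realizes $Z$, and checking it has exactly the claimed singularity --- is deferred to ``reproducing Ducat's \S3.4.1 computation''. Ducat's method in that section is an explicit serial unprojection, not a single weighted-blow-up chart in the style of Proposition~\ref{prop:An}, (2); the analogy you draw is suggestive but the actual construction is more involved. If you want to give a self-contained proof rather than a citation, you would need to carry out that unprojection explicitly (introducing auxiliary variables and relations step by step) and verify terminality and $\mathbb{Q}$-factoriality of the result directly, which is the substance of Ducat's work.
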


\begin{cor}
\label{cor:ducat}
Let $C\subset A_2$ be a curve with $4$ branches that meet the singular point of $A_2$ with multiplicities $(1,1,1,1)$ or $(1,1,1,2)$ or $(1,2,2,2)$ or $(2,2,2,2)$. Then there exists a divisorial extraction $Z\rightarrow\mathbb{A}^3$ of $C$ such that $Z$ has an isolated singularity of type $\tfrac{1}{3}(1,-1,-1)$.
\end{cor}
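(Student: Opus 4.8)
The plan is to treat the four prescribed branch-multiplicity types as two pairs that are exchanged by an involution of $A_2$, so that everything reduces to the two cases already handled by Ducat's results. Concretely, writing $A_2=\{xu=z^3\}\subset\mathbb{A}^3_{x,z,u}$, the linear involution $\iota\colon(x,z,u)\mapsto(u,z,x)$ is an automorphism of $\mathbb{A}^3$ that preserves $A_2$, and I would use it to swap the types $(2,2,2,2)\leftrightarrow(1,1,1,1)$ and $(1,2,2,2)\leftrightarrow(1,1,1,2)$.

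First I would dispose of the two base cases. A curve with four branches of multiplicity $1$ meeting the singular point of $A_2$ is exactly the hypothesis of Corollary \ref{cor:ducat2} with $m=2$, so the type $(1,1,1,1)$ yields a divisorial extraction $Z\to\mathbb{A}^3$ with isolated singularity $\tfrac{1}{3}(1,-1,-1)$. The type $(1,1,1,2)$ is precisely Proposition \ref{prop:ducat2}, which gives the same singularity type $\tfrac{1}{3}(1,-1,-1)$.

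Next I would record how $\iota$ acts on branch multiplicities; this is the computation already appearing in the proof of Proposition \ref{prop:induction} for the mutation $\beta$. A branch of $C$ meeting the singular point with multiplicity $d$ may be taken to be $\{x=z^d\}\subset A_2$, which coincides with $\{u=z^{3-d}\}$ since $u=z^3/x$ on $A_2$. Applying $\iota$ sends this branch to $\{u=z^d\}=\{x=z^{3-d}\}$, a branch of multiplicity $3-d$. Hence $\iota$ carries a curve with branch multiplicities $(d_1,d_2,d_3,d_4)$ to one with multiplicities $(3-d_1,3-d_2,3-d_3,3-d_4)$; in particular it exchanges $(2,2,2,2)$ with $(1,1,1,1)$ and $(1,2,2,2)$ with $(1,1,1,2)$. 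Since all multiplicities here lie in $\{1,2\}$, we have $3-d\in\{1,2\}$, so no branch moves off the singular point.

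Finally I would assemble the reduction. If $C$ has type $(2,2,2,2)$, set $C'=\iota(C)$, which has type $(1,1,1,1)$; by the first base case there is a divisorial extraction $\sigma'\colon Z\to\mathbb{A}^3$ of $C'$ with $Z$ carrying the isolated singularity $\tfrac{1}{3}(1,-1,-1)$. Then $\sigma:=\iota\circ\sigma'\colon Z\to\mathbb{A}^3$ is a divisorial extraction of $\iota(C')=C$: because $\iota$ is an automorphism of $\mathbb{A}^3$, the morphism $\sigma$ is again projective birational, the variety $Z$ is unchanged (so its singularity is still $\tfrac{1}{3}(1,-1,-1)$), and the exceptional divisor now contracts to $\iota(C')=C$. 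The type $(1,2,2,2)$ is handled identically, reducing instead to Proposition \ref{prop:ducat2}. I do not expect a genuine obstacle here: the substantive input is Ducat's existence results, and the only things to verify are the elementary branch-multiplicity bookkeeping under $\iota$ and the fact that composing an extraction with a global automorphism of the base preserves both the extraction property and the singularity type.
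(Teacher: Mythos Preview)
Your proposal is correct and follows essentially the same approach as the paper: the paper also handles $(1,1,1,1)$ via Corollary~\ref{cor:ducat2} with $m=2$, $(1,1,1,2)$ via Proposition~\ref{prop:ducat2}, and then reduces $(2,2,2,2)$ and $(1,2,2,2)$ to these by the involution of $A_2=\{xy=z^3\}$ swapping $x$ and $y$. Your write-up is in fact more careful than the paper's one-line justification, spelling out both the branch-multiplicity bookkeeping and why postcomposing with an automorphism of $\mathbb{A}^3$ preserves the divisorial-extraction property and the singularity type of $Z$.
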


\begin{proof}
The case $(1,1,1,1)$ is Corollary \ref{prop:ducat2} with $m=2$. The case $(1,1,1,2)$ is Proposition \ref{prop:ducat2}. The cases $(1,2,2,2)$ and $(2,2,2,2)$ follow from the cases $(1,1,1,2)$ and $(1,1,1,1)$, since $\{x=z^1\}$ and $\{x=z^2\}$ are isomorphic in $A_2=\{xy=z^3\}$ via exchanging $x$ and $y$.
\end{proof}

\begin{expl}
Consider a triple of intersection of lines $C\subset A_1\subset Y=\mathbb{A}^3$. This is the case $m=1$ of Corollary \ref{cor:ducat2}. This example was first described in \cite{PR}. A geometric interpretation of the divisorial extraction $Z\rightarrow\mathbb{A}^3$ was given by Hironaka as follows, see \cite{Ducat}, Remark 3.5.

Let $Y'$ be the blow up of $Y=\mathbb{A}^3$ at the singular point $P$ of the $A_{a-1}$ containing $C$ followed by the blowup of the proper transform of $C$. The exceptional locus has two components $E$ and $E'$ dominating $P$ and $C$ respectively. Assuming the tangent directions of the branches of $C$ at $P$ are distinct then $E$ is a del Pezzo surface of degree $6$. Consider the three $-1$-curves of $E$ that don't lie in the intersection $E \cap E'$. They have normal bundle $\mathcal{O}_{Y'}(-1)\oplus\mathcal{O}_{Y'}(-1)$ so we can flop them. After the flop $E$ becomes a plane $\hat{E} \cong \mathbb{P}^2$ with normal bundle $\mathcal{O}_{\hat{Y}}(-2)$, so we can contract it to a point $\hat{P}$ to get $Z$ with a $\tfrac{1}{2}$-quotient singularity. The ordinary blowup $\textup{Bl}_{C}Y$ along $C$ is the midpoint of this flop and we end up with the following diagram.
\begin{equation*}
\begin{xy}
\xymatrix{
& E \cup E' \subset Y' \ar[dl]\ar[dr]\ar@{--}[rr]^{\textup{flop}} & & \hat{E} \cup \hat{E}' \subset \hat{Y} \ar[dl]\ar[dr] & \\
P \subset C \subset Y & & \textup{Bl}_{C}Y & & \hat{P} \subset \hat{E}' \subset Z
}
\end{xy}
\end{equation*}

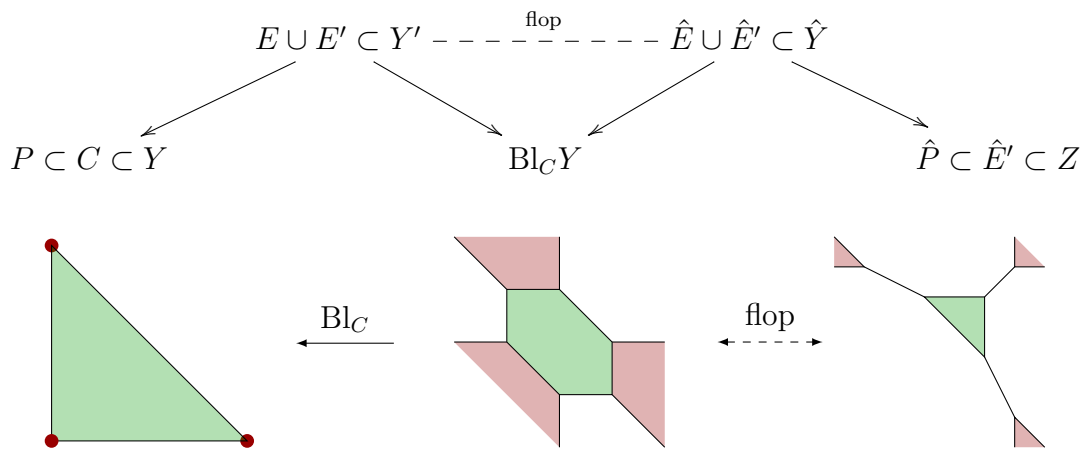
\begin{figure}[h!]
\centering
\begin{tikzpicture}[scale=1.3]
\fill[black!40!red] (2,0) circle (2pt);
\fill[black!40!red] (0,2) circle (2pt);
\fill[black!40!green,opacity=.3] (0,0) -- (2,0) -- (0,2);
\draw (0,0) -- (2,0) -- (0,2) -- cycle;
\fill[black!40!red] (0,0) circle (2pt);
\draw[<-] (2.5,1) -- (3,1) node[above]{$\textup{Bl}_{C}$} -- (3.5,1);
\draw (4,1);
\end{tikzpicture}
\begin{tikzpicture}[scale=.7]
\fill[black!40!red,opacity=.3] (5,3) -- (6,2) -- (7,2) -- (7,3);
\fill[black!40!red,opacity=.3] (9,1) -- (8,1) -- (8,0) -- (9,-1);
\fill[black!40!red,opacity=.3] (7,-1) -- (7,0) -- (6,1) -- (5,1);
\fill[black!40!green,opacity=.3] (6,1) -- (7,0) -- (8,0) -- (8,1) -- (7,2) -- (6,2);
\draw (6,1) -- (7,0) -- (8,0) -- (8,1) -- (7,2) -- (6,2) -- cycle;
\draw (6,1) -- (5,1);
\draw (7,0) -- (7,-1);
\draw (8,0) -- (9,-1);
\draw (8,1) -- (9,1);
\draw (7,2) -- (7,3);
\draw (6,2) -- (5,3);
\draw[<->,dashed] (10,1) -- (11,1) node[above]{flop} -- (12,1);
\end{tikzpicture}
\begin{tikzpicture}[scale=.4]
\fill[black!40!red,opacity=.3] (12,3) -- (13,3) -- (12,4);
\fill[black!40!red,opacity=.3] (18,-3) -- (18,-2) -- (19,-3);
\fill[black!40!red,opacity=.3] (18,4) -- (18,3) -- (19,3);
\fill[black!40!green,opacity=.3] (15,2) -- (17,0) -- (17,2);
\draw (15,2) -- (17,0) -- (17,2) -- cycle;
\draw (15,2) -- (13,3);
\draw (12,3) -- (13,3) -- (12,4);
\draw (17,0) -- (18,-2);
\draw (18,-3) -- (18,-2) -- (19,-3);
\draw (17,2) -- (18,3);
\draw (18,4) -- (18,3) -- (19,3);
\end{tikzpicture}
\caption{Exceptional divisors of the blow up of a point $P$, followed by the blow up of the curve $C$ and a flop. The divisors $E$, $\hat{E}$ dominating $P$ are shown in green and the divisors $E$, $\hat{E}'$ dominating $C$ in red.}
\label{fig:hironaka}
\end{figure}
\end{expl}

We give a similar geometric interpretation for the other cases of Corollary \ref{cor:ducat2}. The construction is based on an idea by Alessio Corti and was worked out together with him.

\begin{con}
Consider an $m+2$-fold intersection of lines $C\subset A_m\subset Y=\mathbb{A}^3$. Write $A_m=\{xy=z^{m+1}\}$. Let $L_1=\{y=z=0\}$ be the $x$-axis and $L_2=\{x=z=0\}$ the $y$-axis. The irreducible components of $C$ are of the form $C_i=\{x=\lambda_iz\}\subset A_m$, for some $\lambda_i\neq 0$. They intersect each other transversely, intersect the $x$-axis with multiplicity $1$, and the $y$-axis with multiplicity $m$, respectively. Let $Y'$ be the blow up of $Y=\mathbb{A}^3$ at the singular point of $A_m$, i.e. the origin. Let $E=\mathbb{P}^2$ be the exceptional divisor. Then $E$ intersects the proper transform of $A_m$ in the proper transforms $\tilde{L}_1$ and $\tilde{L}_2$. The blow up reduces the intersection multiplicity by $1$, hence the proper transforms $\tilde{C}_i$ of the irreducible components of $C$ are disjoint. They are disjoint from $\tilde{L}_1$ and intersect $\tilde{L}_2$ with multiplicity $m-1$. The normal bundle of $L_1$ in $Y'$ is $N_{L_1/Y'}\simeq\mathcal{O}_{\mathbb{P}^1}(-1) \oplus \mathcal{O}_{\mathbb{P}^1}(1)$. Let $\tilde{\tilde{Y}}$ be the $(m-1)$-weighted blow up of $\tilde{Y}$ along the line $L_1$, such that the exceptional divisor is $\tilde{E}=\mathbb{P}(\mathcal{O}(-1)\oplus\mathcal{O}(m-1))$, which is isomorphic to the Hirzebruch surface $\mathbb{F}_m$. The proper transforms of $\tilde{C}_i$ intersect $\tilde{E}$ in $m+2$ distinct points $P_i$. Through each $m+1$ of the $m+2$ points $P_i$, there is a section $S_i$ of $E'\simeq\mathbb{F}_m$. The situation is summarized in Figure \ref{fig:jerryexc}, which should be compared with the left picture of Figure \Ref{fig:hironaka}.

\begin{figure}[h!]
\begin{tikzpicture}[scale=2.5]
\draw (0,0) -- (1,0) -- (0,1) -- (0,0);
\draw (0,0) -- (1,0) -- (3,-1) -- (0,-1) -- (0,0);
\draw[blue] (0,1) -- (0,0) to[bend right=10] (3,-1);
\draw[black!40!green] (.55,-.4) -- (.6,-.31) to[bend left=90] (1.2,-.565) to[bend right=90] (1.8,-.76) -- (1.82,-.7);
\fill[red] (.6,-.31) circle (.7pt);
\fill[red] (1.2,-.565) circle (.7pt);
\fill[red] (1.8,-.76) circle (.7pt);
\fill[red] (2.4,-.9) circle (.7pt);
\end{tikzpicture}
\caption{The excpetional locus $E \cup \tilde{E} = \mathbb{P}^2 \cup \mathbb{F}_{a-1}$ in $\tilde{\tilde{Y}}$, showing the proper transforms of $L_1,L_2$ (blue), the points $P_i$ of intersection with $C_{\mathbf{b}}$ (red) and the sections $S_i$ (green).}
\label{fig:jerryexc}
\end{figure}
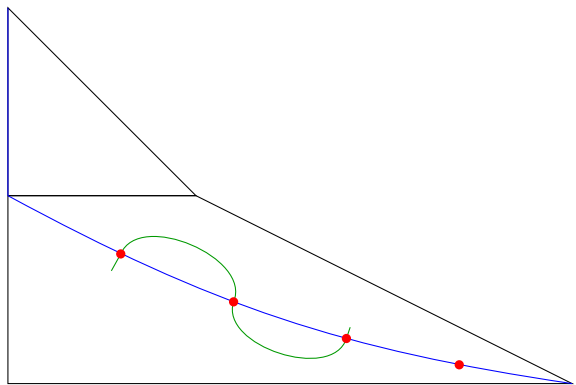

Let $Y'$ be the blow up of $\tilde{\tilde{Y}}$ along the disjoint curves $\tilde{C}_i$. The total transform $E'$ of $\tilde{E}$ is isomorphic to the blow up of $\mathbb{F}_m$ in the $m+2$ points $P_i$. The proper transforms $S'_i$ of the sections $S_i$ have normal bundle $N_{S'_i/Y'}=\mathcal{O}(-1)\oplus\mathcal{O}(-1)$, hence they are floppable.

Let $\hat{Y}$ be the flop of $Y'$ along the $S'_i$. After the flop, the divisor $E'$ becomes $\hat{E}\cong\mathbb{F}_m$ and the exceptional lines over the points $P_i$ become sections in $\hat{E}\cong\mathbb{F}_m$. This is similar to the case $m=1$ (see Figure \ref{fig:hironaka}), where the exceptional $\mathbb{P}^2$ after the blow up at three points and the flop of three lines became a $\mathbb{P}^2$ again. Here it comes from a symmetry between the points $P_i$ and the sections $S_i$, respectively the exceptionals over $P_i$ and the proper transforms of $S_i$.

The flop is given by a blow up and a contraction $Y' \leftarrow \tilde{Y}' \rightarrow \hat{Y}$. We have $\tilde{S}'_i \simeq \mathbb{P}^1 \times\mathbb{P}^1 \subset \tilde{Y}'$ mapping to $S'_i\simeq\mathbb{P}^1$ in $Y'$ and to $\hat{S}_i \simeq \mathbb{P}^1$ in $\hat{Y}$. Let $F_i$ be the fiber trough $P_i$ in $\tilde{\tilde{Y}}$ and write $F'_i$, $\tilde{F}'_i$, $\hat{F}_i$ for the respective proper transforms. Note that $\tilde{F}'_i \cdot \tilde{S}'_j = \delta_{ij}=\big\{\begin{smallmatrix}1,&i=j\\0,&i\neq j\end{smallmatrix}$, since for $i\neq j$ the curves $F_i$ and $S_j$ meet transversely in a point $P_i$ that gets blown up.

We compute the normal bundle $N_{\hat{E}/\hat{Y}}$. The Picard group of $\mathbb{F}_m$ is generated by the class of a fiber $f$ and the class of the special section $e$, with $f^2=0,e^2=-m,f\cdot e=1$. 

We have
\[ N_{\hat{E}/\hat{Y}} \cdot f = (\hat{E}\cdot\hat{F}_i)_{\hat{Y}}=(\tilde{E}'\cdot \tilde{F}'_i)_{\tilde{Y}'_i} \]
and
\[ -1 = N_{\tilde{E}/\tilde{\tilde{Y}}}\cdot f = (\tilde{E}\cdot F_i)_{\tilde{\tilde{Y}}} = (E'\cdot F'_i)_{Y'} = ((\tilde{E}'+\sum \tilde{S}_j)\cdot \tilde{F}'_i)_{\tilde{Y}'} = (\tilde{E}'\cdot\tilde{F}'_i)_{\tilde{Y}'}+1, \]
hence $N_{\hat{E}/\hat{Y}} \cdot f = -2$. Moreover, we have
\[ N_{\hat{E}/\hat{Y}} \cdot e = N_{\tilde{E}/\tilde{\tilde{Y}}} \cdot e = m-1. \]
This shows $N_{\hat{E}/\hat{Y}}\simeq\mathcal{O}_{\mathbb{F}_m}(-(m+1)f-2e)$.

We show that the union $E\cup \hat{E} = \mathbb{P}^2 \cup \mathbb{F}_m$ can be contracted, and that the contraction has an isolated singularity of type $\tfrac{1}{m+1}(1,-1,-1)$. To do this we show that $E=\mathbb{P}^2$ and $\hat{E}=\mathbb{F}_m$ with the given normal bundles can obtained as toric divisors in a toric variety. Then we show that the toric blow down of $E\cup \hat{E}$ describes a toric variety with the claimed singularity.

Consider the fan with rays generated by
\[ \rho_0=(-1,-1,-1),\rho_1=(1,0,0),\rho_2=(0,1,0),\rho_3=(1,1,m+1),\rho'=(1,1,m),\rho=(1,1,m-1). \]
This can be obtained by a weighted blow up of $\mathbb{P}^3$ at a torus fixed point followed by the blow up along two torus fixed lines. We took a complete fan, i.e. a compact variety, to be able to use intersection theory. The divisor corresponding to the ray $\rho=(1,1,m-1)$ is isomorphic to $\mathbb{P}^2$. To see this, note that the normal fan to $(1,1,m-1)$ is generated by 
\[ (1,0,0)+\mathbb{R}(1,1,m-1), \quad (0,1,0)+\mathbb{R}(1,1,m-1), \quad (-1,-1,0)+\mathbb{R}(1,1,m-1), \]
hence isomorphic to the fan generated by $(1,0),(0,1),(-1,-1)$, which is the fan of $\mathbb{P}^2$. Similarly, the divisor corresponding to the ray $\rho'=(1,1,m)$ is isomorphic to $\mathbb{F}_m$, since modulo $\mathbb{R}(1,1,m)$ we have
\[ \rho_2=(0,1,0), \quad \rho_3=(0,0,1), \quad \rho=(0,0,-1), \quad \rho_1=(0,-1,-m). \]
We have $N_{D'/X} \cdot f = D'^2\cdot D_2$ and $N_{D'/X} \cdot e = D'^2\cdot D$. Comparing the components of the rays we get the equations
\begin{eqnarray*}
-D_0 + D_1 + D_3 + D' + D &=& 0, \\
-D_0 + D_2 + D_3 + D' + D &=& 0, \\
-D_0 + (m+1) D_3 + mD' + (m-1)D &=& 0.
\end{eqnarray*}
From the first equation we get $D'=D_0-D_1-D_3-D$, hence
\[ N_{D'/X} \cdot f = D'D_2(D_0-D_1-D_3-D) = 0-0-1-1 = -2. \]
We have $D'D_2D_0=0$ and $D'D_2D_1=0$, since the corresponding rays don't span a cone in the fan, and $D'D_2D_3=D'D_2D=1$, because the corresponding rays span a standard cone (with determinant $1$). From the third equation minus $m-1$ times the second equation we get $D'=(m-1)D_2-2D_3-(m-2)D_0$, hence
\[ N_{D'/X} \cdot e = D'D((m-1)D_2-2D_3-(m-2)D_0) = (m-1)-0-0 = m-1. \]
We have $D'DD_2=1$, since the corresponding rays span a standard cone, $D'DD_3=0$, since the rays lie on a plane, and $D'DD_0=0$, since the corresponding rays don't span a cone in the fan. As above, $N_{D'/X} \cdot f=-2$ and $N_{D'/X} \cdot e=m-1$ imply $N_{D'/X}=\mathcal{O}_{D'}(-(m+1)f-2e)$.

Now we can forget the cones $\rho$ and $\rho'$, which corresponds to a contraction of $D\cup D'$. The resulting fan contains a cone spanned by $(1,0,0),(0,1,0),(1,1,m+1)$. By \cite{CLS}, Theorem 11.4.21, this gives a singularity of type $\tfrac{1}{m+1}(1,-1,-1)$, as claimed.
\end{con}

\subsection{Jerry and Tyke}								
\label{S:jerrytyke}

Recall the collections of divisorial extractions from Construction \ref{con:procedure} and recall Conjecture \ref{conj:divex}.

\begin{prop}
\label{prop:jerrytyke}
For the cases Jerry and Tyke in Table \ref{tab:classification}, Conjecture \ref{conj:divex}, (1), is true.
\end{prop}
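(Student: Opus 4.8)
The plan is to exploit the single feature common to every Jerry and Tyke entry in Table \ref{tab:classification}, namely that $\mathbf{a}=(a)$ is a one-element partition, and to combine this with the induction of Proposition \ref{prop:induction}, (1), and the base-case divisorial extractions of Corollaries \ref{cor:ducat2}, \ref{cor:ducat} and Proposition \ref{prop:ducat2}. Since $\mathbf{a}=(a)$, the curve $C_{(a)}\subset D_{12}$ is a single smooth branch, so the blow up $Y\rightarrow\mathbb{A}^3$ of $C_{(a)}$ is smooth and, by Proposition \ref{prop:divisor}, (1), the proper transform $\tilde{D}_{13}$ is isomorphic to $A_{a-1}$. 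Thus in every case Conjecture \ref{conj:divex}, (1), reduces to producing a divisorial extraction of the singular curve $\tilde{C}_{\mathbf{b}}\subset A_{a-1}$ with isolated singularity $\tfrac{1}{a}(1,-1,b)$, where (as in the proof of Proposition \ref{prop:induction}) a branch of $\mathbf{b}$ of multiplicity $b_j$ meets the singular point of $A_{a-1}$ with multiplicity $b_j$.

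First I would dispatch the base cases directly by Ducat's results. For Jerry on $\triangle(k,k+1)$ we have $(\mathbf{a},\mathbf{b})=((k),(1,\ldots,1))$, so $\tilde{C}_{\mathbf{b}}\subset A_{k-1}$ is a $(k+1)$-fold intersection of lines; this is the $m=k-1$ case of Corollary \ref{cor:ducat2} and yields $\tfrac{1}{k}(1,-1,-1)\simeq\tfrac{1}{k}(1,-1,k+1)$, as required. For the families with $a=3$ I would strip the leading $3$'s from $\mathbf{b}$ to obtain the base configurations: Jerry with $k\equiv_3 1$ reduces to $((3),(1,1,1,1))$, Jerry with $k\equiv_3 2$ to $((3),(2,2,2,2))$, Tyke with $k\equiv_3 1$ to $((3),(2,2,2,1))$, and Tyke with $k\equiv_3 2$ to $((3),(2,1,1,1))$. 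Each is one of the four four-branch curves in $A_2$ covered by Corollary \ref{cor:ducat} (with $(2,2,2,1)$ and $(2,1,1,1)$ the decreasing reorderings of $(1,2,2,2)$ and $(1,1,1,2)$, the latter being the genuinely new input of Proposition \ref{prop:ducat2}), so each admits a divisorial extraction with singularity $\tfrac{1}{3}(1,-1,-1)$. Likewise Jerry on $\triangle(2,k)$ has base case $((2),(1,1,1))$, three lines in $A_1$, which is the $m=1$ case of Corollary \ref{cor:ducat2}.

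With the base cases established I would run the induction. By Proposition \ref{prop:degree}, $\alpha$ fixes $\mathbf{a}$ and prepends $a$ to $\mathbf{b}$, so applying $\alpha$ repeatedly to each base case rebuilds the string of leading $a$'s and reproduces exactly the partitions of Table \ref{tab:classification}; the required numbers of applications are $(k-3)/2$, $(k-4)/3$, $(k-8)/3$, $(k-7)/3$ and $(k-5)/3$ respectively, all nonnegative integers in the stated ranges. By Proposition \ref{prop:induction}, (1), each $\alpha$ preserves the validity of Conjecture \ref{conj:divex}, (1), and since $\tfrac{1}{a}(1,-1,a+b)\simeq\tfrac{1}{a}(1,-1,b)$ the singularity type remains in the required class $\tfrac{1}{a}(1,-1,b)$ throughout. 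The involution $\beta$, sending $b_j\mapsto a-b_j$, moreover identifies the two residue-class base cases, so one base case per family would suffice.

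The genuinely hard input is external, namely the existence of the base-case divisorial extractions of singular curves in $A_1$ and $A_2$, which is precisely the content of Corollaries \ref{cor:ducat2}, \ref{cor:ducat} and Proposition \ref{prop:ducat2}. Granting these, the only work on our side is bookkeeping: checking that after blowing up $C_{(a)}$ the branch multiplicities of $\mathbf{b}$ are preserved (so the base configurations really are the named four-branch curve in $A_2$ and three-line curve in $A_1$), and verifying the cyclic-quotient identifications $\tfrac{1}{a}(1,-1,-1)\simeq\tfrac{1}{a}(1,-1,b)$ at each base case. I expect neither to present a real obstacle; the crux of the argument is the reduction to $A_{a-1}$ afforded by $\mathbf{a}=(a)$ together with the matching of each Jerry and Tyke partition to an $\alpha$-tower over a Ducat base case.
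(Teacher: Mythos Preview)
Your proposal is correct and follows essentially the same approach as the paper: reduce via $\alpha$ (and $\beta$) using Proposition \ref{prop:induction}, (1), to a short list of base cases with $\mathbf{a}=(a)$, blow up the smooth curve $C_{(a)}$ so that $\tilde{D}_{13}\simeq A_{a-1}$, and then invoke Ducat's results for the resulting singular curve in $A_{a-1}$. The only cosmetic difference is that the paper reduces all the way down to the two base cases $((a),(1,\ldots,1))$ and $((3),(1,1,1,2))$ using both $\alpha$ and $\beta$, whereas you stop the $\alpha$-reduction one step earlier and list four $a=3$ base cases, then appeal to Corollary \ref{cor:ducat} (which already packages the $\beta$-symmetry $x\leftrightarrow y$ in $A_2$); you note yourself at the end that $\beta$ collapses these pairs.
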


\begin{proof}
Let $(\mathbf{a},\mathbf{b})$ be a pair corresponding to a case called Jerry or Tyke. From Table \ref{tab:classification} we see that $(\mathbf{a},\mathbf{b})$ can be obtained via repeated application of $\alpha$ and $\beta$ (see Proposition \ref{prop:degree}) from $(3),(1,1,1,2)$ or $(a),(1,\ldots,1)$, where the second tuple has length $a+1$. We use Proposition \ref{prop:induction}, (1), to reduce to these cases.

Let $Y \rightarrow \mathbb{A}^3$ be the blow up of the smooth curve $C_{(a)}\subset D_{12}$. Then $Y$ is smooth. By Proposition \ref{prop:divisor}, $\tilde{D}_{13}\simeq A_{a-1}$. Then $\tilde{C}_{\mathbf{b}}\subset A_{a-1}\subset Y=\mathbb{A}^3$ has $a+1$ branches intersecting the singular point of $A_{a-1}$ with multiplicities $(1,\ldots,1)$, or we have $a=3$ and $(1,1,1,2)$. For the first case, we get a divisorial extraction $Z\rightarrow Y$ as claimed from Corollary \ref{cor:ducat2}, for the second case we get it from Proposition \ref{prop:ducat2}.
\end{proof}



\end{document}